\theoremstyle{plain}
\newtheorem{theorem}{Theorem}[section]
\newtheorem{lemma}[theorem]{Lemma}
\newtheorem{corollary}[theorem]{Corollary}
\newtheorem{problem}[theorem]{Problem}
\theoremstyle{definition}
\newtheorem{definition}[theorem]{Definition}
\newtheorem{remark}[theorem]{Remark}
\newcommand{\linspan}{\mathop{\rm span}\nolimits}
\newcommand{\supp}{\mathop{\rm supp}\nolimits}
\newcommand{\diver}{\mathop{\rm div}\nolimits}
\newcommand{\rest}{\left.\kern-2\nulldelimiterspace\right|_}
\newcommand{\norm}[2]{\left|#1\right|_{#2}}
\newcommand{\ex}{\mathrm{e}}
\newcommand{\p}{\partial}
\newcommand{\ed}{\mathrm d}
\newcommand{\N}{{\mathbb N}}
\newcommand{\Z}{{\mathbb Z}}
\newcommand{\R}{{\mathbb R}}
\newcommand{\T}{{\mathbb T}}
\newcommand{\E}{{\mathbb E}}
\newcommand{\D}{{\mathrm D}}
\newcommand{\AAA}{{\mathbf A}}
\newcommand{\BBB}{{\mathbf B}}
\newcommand{\CCC}{{\mathbf C}}
\newcommand{\RRR}{{\mathbf R}}
\newcommand{\nnn}{\mathbf n}
\newcommand{\ttt}{\mathbf t}
\newcommand{\aA}{{\mathcal A}}
\newcommand{\BB}{{\mathcal B}}
\newcommand{\EE}{{\mathcal E}}
\newcommand{\FF}{{\mathcal F}}
\newcommand{\GG}{{\mathcal G}}
\newcommand{\HH}{{\mathcal H}}
\newcommand{\II}{{\mathcal I}}
\newcommand{\KK}{{\mathcal K}}
\newcommand{\LL}{{\mathcal L}}
\newcommand{\MM}{{\mathcal M}}
\newcommand{\NN}{{\mathcal N}}
\newcommand{\OO}{{\mathcal O}}
\newcommand{\RR}{{\mathcal R}}
\newcommand{\sS}{{\mathcal S}}
\newcommand{\WW}{{\mathcal W}}
\newcommand{\XX}{{\mathcal X}}
\newcommand{\YY}{{\mathcal Y}}
\newcommand{\ZZ}{{\mathcal Z}}
\newcommand{\eqrmref}[1]{(\roman{rmnum})} 
\newcommand*{\Bigcdot}{\raisebox{-.25ex}{\scalebox{1.25}{$\cdot$}}}
\begin{document}
\title{Feedback boundary stabilization to trajectories
for 3D Navier--Stokes equations}
\author{S\'ergio S.~Rodrigues$^*$}
\thanks{$^*$Johann Radon Institute for Computational and Applied Mathematics,
\"OAW, Altenbergerstra{\normalfont\ss}e 69, A-4040 Linz, Austria.\quad Fax: +43 732 2468 5212.\\
E-mail: {\small\tt sergio.rodrigues@oeaw.ac.at},\quad Tel: +43 732 2468 5241.\hfill \today}


\begin{abstract}
Given a nonstationary trajectory
of the Navier--Stokes system, a finite-dimensional feedback boundary
controller stabilizing locally the system to the given trajectory is derived. Moreover the controller is
supported in a given open subset of the boundary
of the domain containing the fluid.

In a first step a controller is
derived that stabilizes the linear Oseen--Stokes system ``around the given trajectory'' to zero;
for that a corollary of a suitable truncated boundary observability inequality, the
regularizing property for the system, and some
standard techniques of the optimal control theory are used. Then it is shown 
that the same controller also stabilizes,
locally, the Navier--Stokes system to the given trajectory.
\end{abstract}

\maketitle

{\em Keywords: }
Navier--Stokes system,\; exponential stabilization,\; boundary feedback control.

{\em MSC 2010: } 35Q30,\; 93D15,\; 93B52

\pagestyle{myheadings} \markboth{\sc S.~S.~Rodrigues}{\scriptsize\sc Boundary stabilization for {3D} Navier--Stokes equations}

{\footnotesize
\tableofcontents
}
%
%
%
%
%
%
%
%
%
%


\section{Introduction}
Let $\Omega\subset\R^3$ be a connected open bounded subset located locally on
one side of its smooth boundary $\Gamma=\p\Omega$, and let $I\subseteq\R$ be a nonempty open interval. The
Navier--Stokes system, in~$I\times\Omega$, controlled through the boundary reads
\begin{equation}\label{sys-u-bdry}
 \p_t u+\langle u\cdot\nabla\rangle u-\nu\Delta u+\nabla p_u+h=0, \quad \diver u
 =0,\quad u\rest \Gamma =\gamma+\zeta
\end{equation}
where~$\zeta$ is a control
taking values in a suitable subspace of square-integrable functions
in~$\Gamma$ whose support in~$x$ is contained in a given open
subset $\Gamma_{\rm c}\subseteq\Gamma$. Furthermore, as usual, $u=(u_1,u_2,u_3)$ and~$p_u$,
defined for $(t,\,x_1,\,x_2,\,x_3)\in I\times\Omega$, are the unknown velocity field and
pressure of the fluid, $\nu>0$ is the viscosity, the operators $\nabla$ and $\Delta$ are respectively the
well known gradient and Laplacian in the space variables $(x_1,\,x_2,\,x_3)$, $\langle
u\cdot\nabla\rangle v$ stands for $(u\cdot\nabla v_1,\,u\cdot\nabla v_2,\,u\cdot\nabla v_3)$,
$\diver u\coloneqq \p_{x_1}u_1+\p_{x_2}u_2+\p_{x_3}u_3$, and $h$ and $\gamma$~are fixed external forces.

Suppose we are given a targeted (reference, desired) solution $\hat u(t)=\hat u(t,\,x)$ of~\eqref{sys-u-bdry}
with $I=(0,\,+\infty)$ and $\zeta=0$. If $\hat u$ is stationary, $\hat u(t)=\hat u(0,\,x)=\hat u_0$, then the problem of
stabilization to $\hat u_0$ is now quite well understood. Namely, it was proven
that, for any initial
function~$u_0$ sufficiently close to~$\hat u_0$ one can find  a
square integrable control $\zeta\in L^2((0,\,+\infty),\,L^2(\Gamma,\,\R^3))$, such that
the corresponding solution $u(t)$, supplemented with the initial
condition
\begin{equation} \label{ini-u}
u(0,x)=u_0(x)
\end{equation}
is defined on~$[0,+\infty)$ and $u(t)$ goes to $\hat u_0$ exponentially as time $t$ goes to $+\infty$;
we refer the reader to the
works~\cite{Badra09-cocv,Barbu12,BarbuLas12,BadTakah11,Fursikov01,Fursikov04,Raymond06,Raymond07,RaymThev10}.

Again for a stationary targeted trajectory $\hat u=\hat u_0$, the analogous result hold also in the case of an internal control under
Dirichlet boundary conditions:
\begin{equation}\label{sys-u-int}
 \p_t u+\langle u\cdot\nabla\rangle u-\nu\Delta u+\nabla p_u+h+\eta=0, \quad \diver u =0,
\qquad u\rest \Gamma =0.
\end{equation}
where now $\eta$ is a control supported in a given open subset
$w\subseteq\Omega$. For details we refer
to~\cite{Barbu03,BarbuLasTri06,BarbuTri04}.

Here, we are particularly interested in the case where the targeted trajectory $\hat u$ is nonstationary
(i.e., $\hat u=\hat u(t)$ depends on time),
a situation that often can occur in real world applications, as in the case suitable (say non-gradient)
external forces ($h$ and $\gamma$)
depend on time. Also, since they are important and often required in applications,
we look for controls obeying some general constraints like to be
given in feedback form, finite-dimensional, and supported in a given (small) open subset.

In~\cite{BarRodShi11}, an {\em internal} stabilizing
finite-dimensional feedback controller was found for the case of
nonstationary targeted solutions. Then, one question arises: can we find a similar
{\em boundary} controller?  The methods used in the particular case of a stationary
targeted solution,
use some (spectral-like) properties of the (time-independent) Oseen--Stokes operator
$u\mapsto \nu\Delta u-\BB(\hat u_0)u+\nabla p_u$ and/or of its ``adjoint''
$v\mapsto \nu\Delta v-\BB^*(\hat u_0)v+\nabla p_v$, which seem to give us no hint for the nonstationary case. Here
$\BB(\hat u_0)v\coloneqq\langle \hat u_0\cdot\nabla\rangle v+\langle v\cdot\nabla\rangle \hat u_0$ and~$\BB^*(\hat u_0)$ is the formal adjoint of~$\BB(\hat u_0)$.

Also the constraints on the boundary control, imply that the procedure in~\cite{FurIma99},
that allow to derive suitable boundary results from internal ones is (or may be) no longer sufficient to
derive the wanted boundary stabilization result.

Departing from an exact controllability result in~\cite{Rod14-na}, suitable truncated boundary observability inequalities have been derived for the
(linear) Oseen--Stokes system in~\cite{Rod15-cocv}. These results
will enable us to follow the procedure in~\cite{BarRodShi11} in order
to construct a boundary stabilizing finite-dimensional controller to a given nonstationary targeted solution.
To prove that the control can be taken in feedback form, and to find the feedback rule,
we will need to overcome some technical regularity/compatibility issues.

More precisely, we will find the feedback rule by considering, at a first step, the linear Oseen--Stokes system
\begin{subequations}\label{sys-oseen}
\begin{align}
 &\p_t v+\BB(\hat u)v-\nu\Delta v+\nabla p_v=0, \label{sys-oseen-dyn}\\
 &\diver v =0, \qquad v\rest \Gamma =\zeta,\label{sys-oseen-div-bd}
\end{align}
\end{subequations}
and we look for the control~$\zeta$ that minimizes a suitable cost~$J(v,\,\zeta)$; by the Karush-Kuhn-Tucker Theorem and the dynamical programming principle we will conclude that the control is given in feedback form.
Though this is a standard procedure, also used in~\cite{BarRodShi11}, in the boundary control case we meet some nontrivial regularity/compatibility issues.
Roughly speaking in the internal case to have the feedback rule at time $t$ we need to know~$q(t)\in L^2(\Omega,\,\R^3)$
where~$q$ is a suitable Lagrange multiplier, associated with the constraint~\eqref{sys-oseen-dyn}, that solves a system ``adjoint'' to~\eqref{sys-oseen},
while in the boundary case we will need to know~$\nnn\cdot\nabla q(t)+p_{q(t)}\nnn$ on the boundary~$\Gamma$, where~$\nnn$ is the unit outward normal vector to the boundary~$\Gamma$ and~$p_{q(t)}$ is a function
depending on~$q(t)$. Thus, we will need more regularity for~$q$. 
One possible way to get more regularity for~$q$ is to consider a cost functional~$J(v,\,\zeta)$ that penalizes~$v$ in a larger (i.e., less regular) space, but we need to keep enough regularity for the optimal
solution~$v$ to give a meaning to~$v(t)$, because we recall we want the control~$\zeta$ in feedback form, that is,
we want~$\zeta$, in~\eqref{sys-oseen-div-bd}, as a function of~$v(t)$: $\zeta(t)=K(t,\,v(t))$. We shall guarantee enough
regularity for both~$q$ and~$v$ by considering an appropriate cost functional and an appropriate auxiliary extended system (cf.~\cite{Badra09-cocv}).

We shall prove the following Theorem, whose exact formulation is given in
Section~\ref{S:nonlinear}.

\noindent {\sc Main Theorem:} {\it Let~$(\hat u,p_{\hat u})$ be a
global smooth solution for problem~\eqref{sys-u-bdry}, with
$\zeta=0$ and $t\in\R_0=(0,\,+\infty)$, such that
\[
|\hat u|_{L^\infty(\R_0\times\Omega,\,\R^3)}+\sup_{\tau\in[0,\,+\infty)}|\p_t\hat
u|_{L^2((\tau,\,\tau+1),\,L^\sigma(\Omega,\,\R^3))}+\sup_{\tau\in[0,\,+\infty)}|\nabla\hat
u|_{L^2((\tau,\,\tau+1),\,L^3(\Omega,\,\R^9))}\le R
\]
where $R>0$ and $\sigma>\frac{6}{5}$ are constants.

Then for
any $\lambda>0$ and any open subset $\Gamma_{\rm c}\subseteq\Gamma=\p\Omega$
there are an integer $M=M(R,\lambda,\Gamma_{\rm c})\ge1$, an
$M$-dimensional space $\EE_M\subset \{f\in C^2(\Gamma,\R^3)\mid f\rest{\Gamma\setminus\Gamma_{\rm c}}=0\}$, and
a family of continuous linear operators $\mathbf K_{\hat u}^{\lambda,\,t}$, $t\ge0$,
from a suitable subset of $L^2(\Omega,\R^3)$ into $\EE_M$, such that the following
assertions hold.
\begin{itemize}
\item[\rm(a)] The function $t\mapsto \mathbf K_{\hat u}^{\lambda,\,t}$ is continuous in
the weak operator topology, and its operator norm is bounded by a
constant depending only on~$R$, $\lambda$, and~$\Gamma_{\rm c}$.
\item[\rm(b)] For any divergence free function $u_0\in
H^1(\Omega,\R^3)$ such that the difference~$u_0-\hat u(0)$ is
sufficiently small in the $H^1(\Omega,\R^3)$-norm and $(u_0-\hat u(0))\rest\Gamma\in \EE_M$,
problem~\eqref{sys-u-bdry}--\eqref{ini-u} with~$\zeta(t)=(u_0-\hat u(0))\rest\Gamma+\int_0^t\mathbf K_{\hat u}^{\lambda,\,r}(u-\hat u(r))\,\ed r$
has a unique global solution $(u,p_u)$,
which satisfies the inequality
\begin{equation*}
|u(t)-\hat u(t)|_{H^1(\Omega,\R^3)}^2\le C\ex^{-\lambda t}|u_0-\hat
u(0)|_{H^1(\Omega,\R^3)}^2, \quad t\ge0.
\end{equation*}
\end{itemize}
}

At this point we should say that this Theorem remains true for the two-dimensional (2D) case.
Though we will focus on the three-dimensional (3D) case, the procedure is still valid for the two-dimensional one.

The fact that the control appears in integral form is meaningful from the physical point of view; indeed, since $u$ is the velocity of the
fluid then, roughly, the integral form means that we are accelerating (or forcing) the fluid particles through the boundary.
From the physical and practical point of view this is more natural than instantaneously imposing the velocity of the boundary particles.

Though the integral feedback form of the controller, we will also show that the control
is defined pointwise in time, that is, the control $\zeta(t)$ at time $t>0$ depends only on $u(t)-\hat u(t)$, and not on
the trace $(u_0-\hat u(0))\rest\Gamma$
as the integral feedback form could suggest.

The feedback control we are going to construct will have both tangent and normal components. In some particular cases,
in the case the targeted trajectory $\hat u$ is stationary, the stabilization
of the Navier--Stokes system by normal boundary controls is proven in~\cite{BalLiuKrstic01,Barbu07,Munteanu12,Munteanu3D12,VazquezKrstic05}.
In the general case, stabilization to a stationary solution in the 2D case has been achieved in~\cite{Barbu12}, under some general conditions,
by means of oblique controls. This oblique stabilization result also holds in the 3D case for the linear Oseen--Stokes system.
We would like to refer also to the work~\cite{Barbu_TAC13} where the idea in~\cite{Barbu12} is used for
boundary stabilization to a stationary solution of parabolic equations, and leads to a simple algorithm to construct the
stabilizing controller. Finally for the stabilization to a stationary trajectory by means of tangential controls we refer
to~\cite{BarbuLasTri06,BarbuLas12}.

\medskip
The rest of the paper is organized as follows. In
Section~\ref{S:prelim}, we introduce some functional spaces arising
in the theory of the Navier--Stokes equations and recall some
well-known facts. Sections~\ref{S:linear-exist} and~\ref{S:linear-feed} are devoted
to studying the linearized problem, that is, the Oseen--Stokes system; in Section~\ref{S:linear-exist} we prove the existence of a stabilizing control and
in Section~\ref{S:linear-feed} we prove that the control can be taken in feedback form. Finally in Section~\ref{S:nonlinear}
we establish the main result of the paper on local exponential
stabilization of the Navier--Stokes system. The Appendix gathers a few more remarks concerning some points in the main text.

\medskip\noindent
{\bf Notation.}
We write~$\R$, $\Z$, and~$\N$ for the sets of real, integer, and nonnegative
integer numbers, respectively, and we define $\R_a\coloneqq (a,\,+\infty)$ for all $a\in\R$, and $\mathbb
N_0\coloneqq \mathbb N\setminus\{0\}$. We denote by $\Omega\subset\R^3$ a
bounded domain with a smooth boundary $\Gamma=\p\Omega$.
Given a vector function $v\colon (t,x_1,\,x_2,\,x_3)\mapsto v(t,x_1,\,x_2,\,x_3)\in\R^k$, $k\in\N_0$, defined in
an open subset of $\R\times\Omega$, its partial
time derivative~$\frac{\p v}{\p t}$ will be denoted by $\p_tv$. Also the spatial partial
derivatives~$\frac{\p v}{\p x_i}$ will be denoted by $\p_{x_i}v$.

Given an open interval $I\subseteq\R$, then we write
$W(I,\,X,\,Y)\coloneqq \{f\in L^2(I,\,X)\mid \p_tf\in L^2(I,\,Y)\}$, 
where the derivative $\p_tf$ is taken in the sense of
distributions. This space is endowed with the natural norm
$|f|_{W(I,\,X,\,Y)}\coloneqq \bigl(|f|_{L^2(I,\,X)}^2+|\p_tf|_{L^2(I,\,Y)}^2\bigr)^{1/2}$. In the case $X=Y$ we write $H^1(I,\,X)\coloneqq W(I,\,X,\,X)$.
Again, if $X$ and $Y$ are endowed
with a scalar product, then also $W(I,\,X,\,Y)$ is.
The space of continuous linear mappings from $X$ into $Y$ will be denoted by $\LL(X\to Y)$. In the case $X=Y$ we write simply~$\LL(X)$. 

$\overline C_{\left[a_1,\dots,a_k\right]}$ denotes a function of nonnegative variables~$a_j$ that
increases in each of its arguments.

$C,\,C_i$, $i=1,2,\dots$, stand for unessential positive constants.

\section{Preliminaries}\label{S:prelim}

\subsection{Functional spaces}\label{sS:funspaces}

Let $\Omega\subset \R^3$ be a connected bounded domain of class $C^\infty$
located locally on one side of its boundary
$\Gamma=\partial \Omega$, with~$\int_\Gamma\ed\Gamma<+\infty$.

We recall some spaces appearing in the study of the
system~\eqref{sys-u-bdry} (cf.~\cite{Rod14-na,Rod15-cocv}). We start by the Lebesgue and Sobolev subspaces
\begin{align*}
L^r_{\diver}(\Omega,\,\R^3)&\coloneqq\{u\in L^r(\Omega,\R^3)\mid \diver u=0\text{ in
}\Omega\},\quad 1\leq r\leq+\infty,\\
H^s_{\diver}(\Omega,\,\R^3)&\coloneqq\{u\in H^s(\Omega,\R^3)\mid \diver u=0\text{ in
}\Omega\},\quad s\geq0.
\end{align*}

The incompressibility condition allows us to define the trace of
$u\cdot\nnn$ on the boundary $\Gamma=\p\Omega$, where~$\nnn$ is the unit outward
normal vector to the boundary~$\Gamma$, and then to write
\begin{equation*}
H\coloneqq\{u\in
L^2_{\diver}(\Omega,\,\R^3)\mid u\cdot\nnn=0 \text{ on }\Gamma\},\quad
H_{\rm c}\coloneqq\{u\in L^2_{\diver}(\Omega,\,\R^3)\mid  u\cdot\nnn=0 \text{ on
}\Gamma\setminus\overline{\Gamma_{\rm c}}\},
\end{equation*}
where $\Gamma_{\rm c}$ is an open subset of $\Gamma$. Some spaces of more regular vector fields we find throughout the
paper are
\begin{align}
V\coloneqq\{u\in H^1_{\diver}(\Omega,\,\R^3) \mid  u=0 \text{ on }\Gamma\},&\quad
V_{\rm c}\coloneqq\{u\in H^1_{\diver}(\Omega,\,\R^3) \mid  u=0 \text{ on
}\Gamma\setminus\overline{\Gamma_{\rm c}}\},\notag\\
\D(L)&\coloneqq V\cap H^2(\Omega,\R^3).\label{DL}
\end{align}

The spaces $H^s_{\diver}(\Omega,\,\R^3)$ are endowed with the scalar
product inherited from $H^s(\Omega,\R^3)$;
the spaces~$H$ and $H_{\rm c}$  with that inherited from $L^2(\Omega,\R^3)$; the
spaces~$V$ and $V_{\rm c}$ with that inherited from $H^1(\Omega,\R^3)$; and $\D(L)$ with
that inherited from $H^2(\Omega,\R^3)$. Notice
that denoting by~$\Pi$ the orthogonal projection in~$L^2(\Omega,\R^3)$
onto~$H$, it is well known that $\D(L)$ coincides with the
domain $\{u\in V| Lu\in H\}$ of the Stokes operator
$L\coloneqq-\nu\Pi\Delta$. That is the reason for the notation.

Next, fix a constant $\sigma>\frac{6}{5}$. For any pair of real numbers $a,\,b$, with $a<b$,
we introduce the Banach spaces~$\WW^{(a,\,b)|\rm wk}$ and~$\WW^{(a,\,b)|\rm st}$  of
the measurable vector functions $u=(u_1,u_2,u_3)$, defined in~$(a,\,b)\times\Omega$, satisfying
\begin{equation}\label{Wabspaces}
\begin{array}{rcl}
|u|_{\WW^{(a,\,b)|\rm wk}}&\coloneqq&\left(|u|_{L^\infty((a,\,b),\,L_{\diver}^\infty(\Omega,\,\R^3))}^2
+|\p_t u|_{L^2((a,\,b),\,L^\sigma(\Omega,\,\R^3))}^2\right)^{\frac{1}{2}}<\infty,\\
|u|_{\WW^{(a,\,b)|\rm st}}&\coloneqq&\left(|u|_{\WW^{(a,\,b)|\rm wk}}^2
+|\nabla u|_{L^2((a,\,b),\,L^3(\Omega,\,\R^9))}^2\right)^{\frac{1}{2}}<\infty.
\end{array}
\end{equation}
and also the Morrey-like spaces
\begin{equation}\label{Wspaces}
\begin{array}{rcl}
 \WW^{\rm wk}&:=&\biggl\{u\;\Bigr|\; \sup_{\tau\in[0,\,+\infty)}\bigl|u\rest{(\tau,\,\tau+1)}\bigr|_{\WW^{(\tau,\,\tau+1)|\rm wk}}
 <+\infty\biggr\},\\
 \WW^{\rm st}&:=&\biggl\{u\;\Bigr|\; \sup_{\tau\in[0,\,+\infty)}\bigl|u\rest{(\tau,\,\tau+1)}\bigr|_{\WW^{(\tau,\,\tau+1)|\rm st}}
 <+\infty\biggr\};
\end{array}
\end{equation}
endowed with the norms
${\displaystyle
|u|_{\WW^{\rm wk}}:=\sup_{\tau\ge0}|u|_{\WW^{(\tau,\,\tau+1)|\rm wk}}}$, and ${\displaystyle
|u|_{\WW^{\rm st}}:=\sup_{\tau\ge0}|u|_{\WW^{(\tau,\,\tau+1)|\rm st}}}$.

\begin{remark}
The lower bound~$\frac{6}{5}$ for~$\sigma$
is motivated from the results in~\cite{FerGueImaPuel04,Rod15-cocv}. 
\end{remark}

We recall that, in~\cite{FurGunzHou02}, 
the set of traces $u\rest \Gamma$ at the boundary $\Gamma$ of the
elements $u$ in the space
$
W((a,\,b),\,H^s_{\diver}(\Omega,\,\R^3),\,H^{s-2}(\Omega,\,\R^3))
$
is completely characterized, for each $s>\frac{1}{2}$, with
$s\notin\left\{\frac{3}{2},\,\frac{5}{2}\right\}$. Denoting
that trace space by $G^s_{\rm av}((a,\,b),\,\Gamma)$, we have that $v\mapsto v\rest{\Gamma}$ is continuous:
\[
\bigl|v\rest{\Gamma}\bigr|_{G^s_{\rm av}((a,\,b),\,\Gamma)}
\leq C_1|w|_{W((a,\,b),\,H^s_{\diver}(\Omega,\,\R^3),\,H^{s-2}(\Omega,\,\R^3))}
\]
and, there is an extension
$
E_s\colon G^s_{\rm av}((a,\,b),\,\Gamma)\to W((a,\,b),\,
H^s_{\diver}(\Omega,\,\R^3),\,H^{s-2}(\Omega,\,\R^3)),
$
which is continuous:
\[
(E_sw)\rest \Gamma=w\mbox{ and }|E_sw|_{W((a,\,b),\,H^s_{\diver}(\Omega,\,\R^3),\,H^{s-2}(\Omega,\,\R^3))}
\leq C_2|w|_{G^s_{\rm av}((a,\,b),\,\Gamma)}.
\]
We will use only the cases $s=1$ and $s=2$. From~\cite[Section~2.2]{FurGunzHou02} we know that
\[
G^s_{\rm av}((a,\,b),\,\Gamma)=G_\ttt^s((a,\,b),\,\Gamma)\oplus G_{\nnn,\rm av}^s((a,\,b),\,\Gamma)\nnn,
\]
with
\[
 \begin{array}{rcl}
 G_\ttt^1((a,\,b),\,\Gamma)&=&L^2((a,\,b),\,H^{\frac{1}{2}}(\Gamma,\,T\Gamma))\cap
H^{\frac{1}{2}}((a,\,b),\,H^{-\frac{1}{2}}(\Gamma,\,T\Gamma))\\
\hspace*{-.35em}G_{\nnn,\rm av}^1((a,\,b),\,\Gamma)&=&L^2((a,\,b),\,H^{\frac{1}{2}}_{\rm av}(\Gamma,\,\R))\cap
H^{\frac{3}{4}}((a,\,b),\,H^{-1}_{\rm av}(\Gamma,\,\R)) \\
G_\ttt^2((a,\,b),\,\Gamma)&=&L^2((a,\,b),\,H^{\frac{3}{2}}(\Gamma,\,T\Gamma))\cap
H^{\frac{3}{4}}((a,\,b),\,H^{0}(\Gamma,\,T\Gamma))\\
\hspace*{-.35em}G_{\nnn,\rm av}^2((a,\,b),\,\Gamma)&=&L^2((a,\,b),\,H^{\frac{3}{2}}_{\rm av}(\Gamma,\,\R))\cap
H^{1}((a,\,b),\,H^{-\frac{1}{2}}_{\rm av}(\Gamma,\,\R))
 \end{array}
\]
where the subscript~{``$\rm av$''} stands for the zero averaged condition, that is, $H^{r}_{\rm av}(\Gamma,\,\R)
\coloneqq\{f\in H^{r}(\Gamma,\,\R)\mid \int_\Gamma f\,\ed\Gamma=0\}$, and~$T\Gamma$ stands for the tangent bundle of the
manifold~$\Gamma$, that is, elements of~$T\Gamma$ are (tangent) vector fields in $\Gamma$.

For technical reasons we relax a little the trace spaces: we define the superspace $G^s((a,\,b),\,\Gamma)$ of $G^s_{\rm av}((a,\,b),\,\Gamma)$
by just omitting the average constraint:
\begin{align}\label{Gi_relax}
G^s((a,\,b),\,\Gamma)&\coloneqq G_\ttt^s((a,\,b),\,\Gamma)\oplus G_{\nnn}^s((a,\,b),\,\Gamma)\nnn,
\end{align}
with  (cf.~\cite[Section~2.1]{Rod15-cocv}) $G_{\nnn}^1((a,\,b),\,\Gamma)\coloneqq L^2((a,\,b),\,H^{\frac{1}{2}}(\Gamma,\,\R))\cap
H^{\frac{3}{4}}((a,\,b),\,H^{-1}(\Gamma,\,\R))$ and 
$G_{\nnn}^2((a,\,b),\,\Gamma)\coloneqq L^2((a,\,b),\,H^{\frac{3}{2}}(\Gamma,\,\R))\cap
H^{1}((a,\,b),\,H^{-\frac{1}{2}}(\Gamma,\,\R))$.

\subsection{The control space}\label{sS:ct-space}
Let us write $L^2(\Omega,\R^3)=H\oplus H^\bot$, where~$H^\bot=\{\nabla\xi\mid\xi\in H^1(\Omega,\,\R)\}$ denotes the
orthogonal complement of~$H$ in~$L^2(\Omega,\R^3)$, and denote by
$\Pi$ the orthogonal projection
\begin{equation}\label{ProjH}
 \Pi\colon L^2(\Omega,\R^3)\to H
\end{equation}
 in
$L^2(\Omega,\R^3)$ onto $H$. For each positive integer $N$, we now
define the $N$-dimensional space $H_N\subset H$ as follows: let
$\{e_i\mid i\in\mathbb N_0\}$ be an orthonormal basis in~$H$
formed by eigenfunctions of the Stokes operator $L$, whose
domain is defined by~\eqref{DL}, and let
$0<\alpha_1\leq\alpha_2\leq\dots$ be the corresponding
eigenvalues, $Le_i=\alpha_ie_i$, then put
\begin{equation}\label{HN}
H_N\coloneqq\linspan\{e_i\mid i\leq N\}\subset \D(L)\subset H
\end{equation}
and denote by $\Pi_N$ the orthogonal projection $\Pi_N\colon H\to H_N$
in $H$ onto $H_N$.

Let $\OO\subseteq\Gamma$ be a connected open subset of the boundary $\Gamma$,
localized on one side of its boundary. We suppose that $\OO$ is a
$C^\infty$-smooth manifold, either boundaryless or with $C^\infty$-smooth boundary $\p\OO$.
Let $\{\pi_i\mid i\in\mathbb N_0\}$ be an orthonormal basis
in~$L^2(\OO,\,\R)$ formed by the eigenfunctions of the
{Laplace--de~Rham} (or Laplace--Beltrami) operator  $\Delta_{\OO}$ on
the smooth manifold $\OO$, under Dirichlet boundary
conditions, $\pi_i(p)=0$ for all $p\in\p\OO$. Analogously let $\{\tau_i\mid i\in\mathbb N_0\}$ be an orthonormal
basis in~$L^2(\OO,\,T\OO)$ formed by the
vector fields that are eigenfunctions of $\Delta_{\OO}$ on
$T\OO$, also under Dirichlet boundary conditions in the case $\p\OO\ne\emptyset$,
$\tau_i(p)=0\in T_p\Gamma$ for all $p\in\p\OO$. It is known that $\pi_i$ and $\tau_i$ $(i\in\mathbb N_0)$ are smooth.
Let $0\leq\beta_1\leq\beta_2\leq\dots$, and
$0\leq\gamma_1\leq\gamma_2\leq\dots$ be the eigenvalues associated
with the systems $\{\pi_i\mid i\in\mathbb N_0\}$ and $\{\tau_i\mid
i\in\mathbb N_0\}$, respectively.

We may write $L^2(\OO,\,\R^3)$ as an orthogonal sum
$
L^2(\OO,\,\R^3)=L^2(\OO,\,\R)\nnn\oplus
L^2(\OO,\,T\OO)
$.
Notice that $\{\pi_i\nnn\mid i\in\N_0\}$ is an
orthonormal basis for $L^2(\OO,\,\R)\nnn=\{f\nnn\mid f\in L^2(\OO,\,\R)\}$, and the system
$\{\pi_i\nnn\mid i\in\N_0\}\cup \{\tau_i\mid i\in\N_0\}$ is an
orthonormal basis in the space $L^2(\OO,\,\R^3)$.

Define, for each $M\in\N_0$, the space
\begin{equation}\label{L2M}
L^2_{M}(\OO,\,\R^3)\coloneqq\linspan
\{\pi_i\nnn,\,\tau_i\mid i\in\N_0,\,i\leq M\}
\end{equation}
and, denote by $P_M^\OO$ the orthogonal projection
$P_M^\OO\colon L^2(\OO,\,\R^3)\to L^2_{M}(\OO,\,\R^3)$ in
$L^2(\OO,\,\R^3)$ onto $L^2_{M}(\OO,\,\R^3)$.

As in~\cite[Section~2.2]{Rod15-cocv}, we suppose that the control region~$\Gamma_c$ and~$\OO$ satisfy
\begin{equation}\label{GammacO}
 \overline{\Gamma_{\rm c}}=\supp(\chi)\mbox{ for some }\chi\in C^2(\Gamma,\,\R);\mbox{ and }
 \Gamma_{\rm c}\subseteq\overline{\Gamma_{\rm c}}\subseteq\OO\subseteq\Gamma.
\end{equation}

Let us define the control space
\begin{equation}\label{ct-space}
\EE_M\coloneqq \left\{\zeta\mid\zeta(t)=\Xi\kappa(t),\mbox{ and }\kappa\in H^1(\R_0,\,\R^{2M})\right\}
\end{equation}
with
\begin{equation}\label{Xi}
\begin{array}{rclll}
 \Xi\colon\R^{2M}&\to& E_M&\coloneqq& \linspan\{\chi\E_{0}^\OO P_{\chi^\bot}^\OO\pi_i\nnn,\,\chi\E_{0}^\OO\tau_i\mid i\in\N_0, i\le M\}\\
 z&\mapsto&\Xi z&\coloneqq& \sum_{i=1}^M\chi\E_{0}^\OO \left(z_iP_{\chi^\bot}^\OO(\pi_i\nnn)
+z_{M+i}\tau_i\right);
\end{array}
\end{equation}
where $\E_{0}^\OO\colon L^2(\OO,\,\R)\to L^2(\Gamma,\,\R)$  stands for the extension by zero outside the subset
$\OO$, and $ P_{\chi^\bot}^\OO\colon L^2(\OO,\,\R^3)\to\{\chi\nnn\rest{\OO}\}^\bot$ stands for the orthogonal projection in $L^2(\OO,\,\R^3)$ onto
$\{\chi\nnn\rest{\OO}\}^\bot=\{f\in L^2(\OO,\,\R^3)\mid (f,\,\chi\nnn)_{L^2(\OO,\,\R^3)}=0\}$:
\[
\begin{cases}
\E_{0}^\OO\xi\rest\OO\coloneqq\xi\\
\E_{0}^\OO\xi\rest{\Gamma\setminus\overline\OO}\coloneqq0
\end{cases}\hspace*{-1em},\quad\mbox{and}\quad P_{\chi^\bot}^\OO v\coloneqq v-\textstyle\frac{(v,\,\chi\nnn)_{L^2(\OO,\,\R^3)}}{\int_\OO\chi^2\,\ed\OO}\chi\nnn\rest\OO.
\]

\begin{remark}
 Notice that the function $\zeta=\Xi z$ satisfies
the zero-average compatibility condition: $\int_\Gamma \zeta(t)\cdot\nnn\,\ed\Gamma
=\sum_{i=1}^M z_i(t)\int_\OO P_{\chi^\bot}^\OO(\pi_i\nnn)\cdot\chi\nnn\rest\OO
\,\ed\OO=0$.
\end{remark}

In particular, observe that the controls, in~$\EE_M$, are supported in ${[0,\,+\infty)}\times\overline{\Gamma_{\rm c}}$
and take their values $\zeta(t)$ in the
finite-dimensional space~$E_M$,
for each $t\in[0,\,+\infty)$.

Let us be given a constant $\lambda>0$ and two (fixed) regular enough functions $h$ and
$\gamma$; in addition we suppose that $\hat u$ is also regular enough and solves, in $\R_0\times\Omega$,
the Navier--Stokes system~\eqref{sys-u-bdry} with $\zeta=0$, and a suitable pressure function $p_u=p_{\hat u}$.
Given $u(0)$ close enough to $\hat u(0)$,
with $(u(0)-\hat u(0))\rest\Gamma\in E_M$, our goal is to find
a (time-dependent) feedback linear controller $v\mapsto \mathbf{K}_{\hat u}^{\lambda,\,t}v\in\R^{2M}$
such that the solution of the
problem~\eqref{sys-u-bdry}, with
$\zeta=(u(0)-\hat u(0))\rest\Gamma+\Xi \int_0^t\mathbf{K}_{\hat u}^{\lambda,\,r}(u-\hat u(r))\,\ed r$,
is defined for all $t\geq0$ and converges
exponentially to~$\hat u$, with rate $\frac{\lambda}{2}$, that is,
\[
|u(t)-\hat u(t)|_{H^1_{\diver}(\Omega,\,\R^3)}^2\le
C\,\ex^{-\lambda t}|u(0)-\hat u(0)|_{H^1_{\diver}(\Omega,\,\R^3)}^2\quad\mbox{for
$t\ge0$},
\]
where $C$ is independent of $u(0)-\hat u(0)$ and time~$t$.
It will be clarified later in Section~\ref{S:nonlinear} what we mean by ``regular enough'',
``close enough'' and ``solution''.

\medskip
Notice that seeking a solution
of~\eqref{sys-u-bdry}--\eqref{ini-u} in the form $u=\hat u+v$, formally we obtain the following
equivalent problem for~$v$:
\begin{equation}\label{sys-vnonlin}
\begin{array}{rclcrcl}
 \p_t v+\BB(\hat u)v
 +\langle v\cdot\nabla\rangle v-\nu\Delta v+\nabla p_v &=&0,&\;&\diver  v & = & 0,\\
 v\rest \Gamma&=&\zeta,&\; &v(0)&=&u_0-\hat u(0),
\end{array}
\end{equation}
with $p_v=p_u-p_{\hat u}$, and $\BB(\hat u)v$ stands for $\langle \hat
u\cdot\nabla\rangle v+\langle v\cdot\nabla\rangle \hat u$. We can see
that it suffices to study the problem of stabilization of system~\eqref{sys-vnonlin} to the zero solution.
We shall start by deriving the (global) stabilization of the Oseen--Stokes system, in $\R_0\times\Omega$:
\begin{equation}\label{sys-v}
\begin{array}{rclcrcl}
 \p_t v+\BB(\hat u)v
 -\nu\Delta v+\nabla p_v&=&0,&\;& \diver v &=&0,\\
 v\rest \Gamma &=&\zeta,&\;& v(0)&=&v_0,
\end{array}
\end{equation}
to the zero solution; from which we shall derive the local result for~\eqref{sys-vnonlin}.

\subsection{Weak and strong solutions, and admissible initial conditions}\label{sS:exist-solut}
We briefly recall some notions and results from~\cite{Rod14-na,Rod15-cocv} concerning the weak and strong solutions for
the Oseen--Stokes system, in a
bounded cylinder $(a,\,b)\times\Omega$, with $a,\,b$ real numbers, $0\le a<b$.
\begin{equation}\label{sys-vg}
\begin{array}{rclcrcl}
 \p_t v+\BB(\hat u)v
 -\nu\Delta v+\nabla p_v+g&=&0,&\;& \diver v &=&0,\\
 v\rest \Gamma &=&\zeta=K\eta,&\;& v(a)&=&v_0.
\end{array}
\end{equation}

Recall the extensions $E_s$, $s\in\{1,\,2\}$, in Section~\ref{sS:funspaces}.
\begin{definition}\label{D:wsol-l}
Given $\hat u\in \WW^{(a,\,b)|\rm wk}$, $v_0\in L^2_{\diver}(\Omega,\,\R^3)$,
$g\in L^2((a,\,b),\,H^{-1}(\Omega,\,\R^3))$, and $\zeta\in G^1_{\rm av}((a,\,b),\,\Gamma)$;
we say that $v$, in the space
$W((a,\,b),\,H^1_{\diver}(\Omega,\,\R^3),\,H^{-1}(\Omega,\,\R^3))$,
is a weak solution for system~\eqref{sys-vg}, if $y\coloneqq v-E_1\zeta\in
L^2((a,\,b),\,V,\,V')$ is a weak solution for
\begin{equation}\label{sys-y-l}
\begin{array}{rclcrcl}
 \p_t y+\BB(\hat u)y-\nu\Delta
y+\nabla p_y+f&=&0,&\quad&\diver y&=&0,\\
 y\rest \Gamma &=&0,&\quad& y(a)&=&y_0
\end{array}
\end{equation}
with $f=g+\p_tE_1\zeta+\BB(\hat u)E_1\zeta-\nu\Delta E_1\zeta$, and
$y_0=v_0-E_1\zeta(a)\in H$. Here weak solution for~\eqref{sys-y-l}
is understood in the classical sense as in~\cite[Chapter~1, Sections~6.1 and~6.4]{Lions69},
\cite[Sections~2.4 and~3.2]{Temam95}, \cite[Chapter~3, Section~3]{Temam01}.
\end{definition}

\begin{definition}\label{D:ssol-l}
Given $\hat u\in \WW^{(a,\,b)|\rm st}$, $v_0\in H^1_{\diver}(\Omega,\,\R^3)$,
$g\in L^2((a,\,b),\,L^2(\Omega,\,\R^3))$, and also $\zeta\in G^2_{\rm av}((a,\,b),\,\Gamma)$;
we say that $v$, in the space
$W((a,\,b),\,H^2_{\diver}(\Omega,\,\R^3),\,L^2(\Omega,\,\R^3))$,
is a strong solution for system~\eqref{sys-vg}, if $y\coloneqq v-E_2\zeta\in
L^2((a,\,b),\,\D(L),\,H)$ is a strong solution for system~\eqref{sys-y-l}
with $f=g+\p_tE_2\zeta+\BB(\hat u)E_2\zeta-\nu\Delta E_2\zeta$, and
$y_0=v_0-E_2\zeta(a)\in V$. Again, strong solution for~\eqref{sys-y-l}
is understood in the classical sense as in~\cite[Section~2.4]{Temam95}.
\end{definition}

In the case our control~$\zeta$ is in the space $\EE_M$ a natural question is: what are
the admissible initial vector fields $v_0$, 
if we want to guarantee the existence of a weak solution?
Notice that, from~\eqref{ct-space}, $\zeta$ takes the form $\zeta=\Xi \kappa$ with $\kappa\in H^1(\R_0,\,\R^{2M})$. It is also
not hard to check that the mapping $\Xi$, in the definition of the control space~\eqref{ct-space},
maps $H^1((a,\,b),\,\R^{2M})$ into $G^2_{\rm av}((a,\,b),\,\Gamma)\subset G^1_{\rm av}((a,\,b),\,\Gamma)$ continuously, that is, our control
is of the form as in~\eqref{sys-vg} with $K=\Xi$.

The set of admissible weak initial conditions for system~\eqref{sys-vg}, with $\zeta\in\EE_M$, is given by
$\aA_{\Xi_1}=H+\HH_{\Xi_1}$, with $\HH_{\Xi_1}\coloneqq E_1\Xi H^1((a,\,b),\,\R^{2M})(a)=\{\gamma(a)\mid\gamma=
E_1\Xi \eta\mbox{ and }\eta\in H^1((a,\,b),\,\R^{2M})\}$.

Similarly, the set of admissible strong initial conditions for system~\eqref{sys-vg}, with $\zeta\in\EE_M$,
is given by $\aA_{\Xi_2}\coloneqq V+\HH_{\Xi_2}$, with
$\HH_{\Xi_2}\coloneqq E_2\Xi H^1((a,\,b),\,\R^{2M})(a)$.

Moreover $\HH_{\Xi_1}$, $\aA_{\Xi_1}$,  $\HH_{\Xi_2}$ and $\aA_{\Xi_2}$ are Hilbert
spaces, with associated range norms
\begin{equation}\label{range-norms}
\begin{array}{rcl}
 |u|_{\HH_{\Xi_i}}&\coloneqq&\inf\left\{|\eta|_{H^1((a,\,b),\,\R^{2M})}\mid 
u=E_i\Xi\eta(a),\;
\eta\in H^1((a,\,b),\,\R^{2M})\right\},\\
|u|_{\aA_{\Xi_1}}&\coloneqq&\inf\left\{|(w,\,z)|_{H\times\HH_{\Xi_1}}\mid u=w+z\mbox{ and }(w,\,z)\in H\times\HH_{\Xi_1}\right\},\\
|u|_{\aA_{\Xi_2}}&\coloneqq&\inf\left\{|(w,\,z)|_{V\times\HH_{\Xi_2}}\mid u=w+z\mbox{ and }(w,\,z)\in V\times\HH_{\Xi_2}\right\}.
\end{array}
\end{equation}

\begin{theorem}\label{T:exist-wsol-v}
If $\hat u\in\WW^{(a,\,b)|\rm wk}$, $g\in L^2((a,\,b),\,H^{-1}(\Omega,\,\R^3))$,
$v_0\in \aA_{\Xi_1}$, and $\eta\in H^1((a,\,b),\,\R^{2M})$, with $v_0-E_1\Xi \eta(a)\in H$, then there is a weak
solution $v$ in $W((a,\,b),\,H^1_{\diver}(\Omega,\,\R^3),\,H^{-1}(\Omega,\,\R^3))$
for system~\eqref{sys-vg}, with $\zeta=\Xi \eta$. Moreover $v$ is unique and depends continuously on
the given data $(v_0,\,g,\,\eta)$:
\begin{align*}
 &\quad |v|_{W((a,\,b),\,H^1_{\diver}(\Omega,\,\R^3),\,H^{-1}(\Omega,\,\R^3))}^2\\
&\leq
\overline C_{\left[|\hat u|_{\WW^{(a,\,b)|\rm wk}}\right]}
\left(|v_0|_{L^2_{\diver}(\Omega,\,\R^3)}^2+|g|_{L^2((a,\,b),\,H^{-1}(\Omega,\,\R^3))}^2+|\eta|_{H^1((a,\,b),\,\R^{2M})}^2\right).
\end{align*}
\end{theorem}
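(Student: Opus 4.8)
The plan is to reduce the statement about $v$ for system~\eqref{sys-vg} to the classical existence/uniqueness/continuous-dependence theory for the homogeneous-boundary system~\eqref{sys-y-l}, exactly along the lines already set up in Definition~\ref{D:wsol-l}. First I would recall that $\Xi$ maps $H^1((a,\,b),\,\R^{2M})$ continuously into $G^2_{\rm av}((a,\,b),\,\Gamma)\subset G^1_{\rm av}((a,\,b),\,\Gamma)$ (this is noted in the text just before the statement), so that $\zeta=\Xi\eta$ is an admissible boundary datum and $E_1\zeta=E_1\Xi\eta\in W((a,\,b),\,H^1_{\diver}(\Omega,\,\R^3),\,H^{-1}(\Omega,\,\R^3))$ with norm controlled by $C|\eta|_{H^1((a,\,b),\,\R^{2M})}$. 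Set $y_0\coloneqq v_0-E_1\Xi\eta(a)\in H$ (this is precisely the hypothesis $v_0-E_1\Xi\eta(a)\in H$, and its $H$-norm is bounded by $|v_0|_{\aA_{\Xi_1}}$-type quantities, which in turn is bounded by $|v_0|_{L^2_{\diver}}+C|\eta|_{H^1}$), and define the forcing term $f=g+\p_tE_1\zeta+\BB(\hat u)E_1\zeta-\nu\Delta E_1\zeta$.

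The key step is then to check that $f\in L^2((a,\,b),\,V')$ with the quantitative bound
\[
|f|_{L^2((a,\,b),\,V')}\le \overline C_{\left[|\hat u|_{\WW^{(a,\,b)|\rm wk}}\right]}\bigl(|g|_{L^2((a,\,b),\,H^{-1}(\Omega,\,\R^3))}+|\eta|_{H^1((a,\,b),\,\R^{2M})}\bigr).
\]
Here $g\in L^2((a,\,b),\,H^{-1})$ embeds continuously in $L^2((a,\,b),\,V')$; the term $\p_tE_1\zeta\in L^2((a,\,b),\,H^{-1})$ by the mapping property of $E_1$; the viscous term $\nu\Delta E_1\zeta\in L^2((a,\,b),\,H^{-1})$ since $E_1\zeta\in L^2((a,\,b),\,H^1_{\diver})$; and the only term needing care is $\BB(\hat u)E_1\zeta=\langle\hat u\cdot\nabla\rangle E_1\zeta+\langle E_1\zeta\cdot\nabla\rangle\hat u$. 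Using $\hat u\in L^\infty((a,\,b),\,L^\infty_{\diver}(\Omega,\,\R^3))$ (part of the $\WW^{(a,\,b)|\rm wk}$-norm), the first summand is bounded in $L^2((a,\,b),\,L^2)$ by $|\hat u|_{L^\infty L^\infty}|\nabla E_1\zeta|_{L^2L^2}$; for the second summand, since $\hat u\in L^\infty((a,\,b),L^\infty)$ does not directly control $\nabla\hat u$, I would instead pair it against a test function $\varphi\in V$ and integrate by parts, moving the derivative onto $\varphi$ and $E_1\zeta$: $\int_\Omega\langle E_1\zeta\cdot\nabla\rangle\hat u\cdot\varphi = -\int_\Omega\langle E_1\zeta\cdot\nabla\rangle\varphi\cdot\hat u$ (using $\diver E_1\zeta=0$ and $E_1\zeta\cdot\nnn$, $\varphi$ traces appropriately — more carefully one uses $\varphi\in V$ so $\varphi|_\Gamma=0$, which kills the boundary term), giving a bound by $|\hat u|_{L^\infty L^\infty}|E_1\zeta|_{L^2 L^2}|\nabla\varphi|_{L^2}$. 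Hence $\BB(\hat u)E_1\zeta\in L^2((a,\,b),\,V')$ with the claimed bound; this transposition trick is, I expect, the only mildly delicate point, and it is standard in the Oseen--Stokes literature cited (e.g.~\cite{Rod14-na,Rod15-cocv}).

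With $f\in L^2((a,\,b),\,V')$ and $y_0\in H$ in hand, the classical theory for~\eqref{sys-y-l} — existence, uniqueness, and continuous dependence of a weak solution $y\in L^2((a,\,b),\,V,\,V')=W((a,\,b),\,V,\,V')$, as in \cite[Ch.~1, \S6.1 and \S6.4]{Lions69}, \cite[\S2.4, \S3.2]{Temam95}, \cite[Ch.~3, \S3]{Temam01} — yields a unique $y$ with
\[
|y|_{W((a,\,b),\,V,\,V')}^2\le \overline C_{\left[|\hat u|_{\WW^{(a,\,b)|\rm wk}}\right]}\bigl(|y_0|_H^2+|f|_{L^2((a,\,b),\,V')}^2\bigr).
\]
(The dependence of the constant on $|\hat u|_{\WW^{(a,\,b)|\rm wk}}$ enters through the coercivity/Gronwall estimate for the $\BB(\hat u)$-term in~\eqref{sys-y-l}, again handled by transposition as above and absorption.) Finally I set $v\coloneqq y+E_1\Xi\eta$, which by Definition~\ref{D:wsol-l} is the desired weak solution of~\eqref{sys-vg} with $\zeta=\Xi\eta$; it lies in $W((a,\,b),\,H^1_{\diver}(\Omega,\,\R^3),\,H^{-1}(\Omega,\,\R^3))$ since both summands do, and combining the two displayed estimates with $|y_0|_H\le|v_0|_{L^2_{\diver}}+C|\eta|_{H^1}$ gives the asserted inequality. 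Uniqueness for $v$ follows from uniqueness for $y$: if $v^1,v^2$ are two weak solutions with the same data, then $y^i=v^i-E_1\Xi\eta$ solve~\eqref{sys-y-l} with identical $f$ and $y_0$, hence coincide, hence $v^1=v^2$. The whole argument is a routine "lifting the boundary datum" reduction; no new analytic input beyond the cited classical results and the continuity of $E_1$ and $\Xi$ is needed.
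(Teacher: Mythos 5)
The paper does not contain a proof of Theorem~\ref{T:exist-wsol-v}; the result is recalled from~\cite{Rod14-na,Rod15-cocv} in the passage ``We briefly recall some notions and results\ldots''. Your reconstruction is correct and is the route already scripted by Definition~\ref{D:wsol-l}: lift the boundary datum by $E_1\Xi\eta$ (using the continuity of $\Xi$ into $G^1_{\rm av}$ and of $E_1$ into $W((a,\,b),\,H^1_{\diver}(\Omega,\,\R^3),\,H^{-1}(\Omega,\,\R^3))$), pass to the homogeneous problem~\eqref{sys-y-l} with $y_0=v_0-E_1\Xi\eta(a)\in H$ and $f=g+\p_tE_1\zeta+\BB(\hat u)E_1\zeta-\nu\Delta E_1\zeta$, verify $f\in L^2((a,\,b),\,V')$, invoke the classical linear theory, and set $v=y+E_1\Xi\eta$. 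One phrasing worth tightening: for $\hat u$ merely in $L^\infty((a,\,b),\,L^\infty_{\diver}(\Omega,\,\R^3))$ the expression $\langle E_1\zeta\cdot\nabla\rangle\hat u$ is not a genuine function, so the ``integration by parts'' you invoke is really the \emph{definition} of this term as an element of $V'$, namely $\langle\langle E_1\zeta\cdot\nabla\rangle\hat u,\,\varphi\rangle_{V',\,V}\coloneqq-\int_\Omega\langle E_1\zeta\cdot\nabla\rangle\varphi\cdot\hat u\,\ed\Omega$, which uses $\diver E_1\zeta=0$ and $\varphi\rest\Gamma=0$ (the derivative goes only onto $\varphi$, not onto $E_1\zeta$, contrary to your wording). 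The same transposition is what makes the weak form of $\BB(\hat u)y$ in~\eqref{sys-y-l} meaningful and lets the Gr\"{o}nwall-type energy estimate close with a constant depending only on $|\hat u|_{\WW^{(a,\,b)|\rm wk}}$. With that reading, every step of your argument is sound, and the stated quadratic estimate follows from the triangle inequality and the continuity of $E_1\Xi$.
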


\begin{theorem}\label{T:exist-ssol-v}
If $\hat u\in\WW^{(a,\,b)|\rm st}$, $g\in L^2((a,\,b),\,L^2(\Omega,\,\R^3))$,
 $v_0\in \aA_{\Xi_2}$ and $\eta\in H^1((a,\,b),\,\R^{2M})$,  with $v_0-E_2\Xi \eta(a)\in V$, then there is a strong
solution $v$ in  $W((a,\,b),\,H^2_{\diver}(\Omega,\,\R^3),\,L^2(\Omega,\,\R^3))$
for system~\eqref{sys-vg}, with $\zeta=\Xi \eta$. Moreover $v$ is unique and depends continuously on
the given data $(v_0,\,g,\,\eta)$: 
\begin{align*}
 &\quad |v|_{W((a,\,b),\,H^2_{\diver}(\Omega,\,\R^3),\,L^2(\Omega,\,\R^3))}^2\\
 &\leq \overline C_{\left[|\hat u|_{\WW^{(a,\,b)|\rm st}}\right]}
\left(|v_0|_{H^1_{\diver}(\Omega,\,\R^3)}^2+|g|_{L^2((a,\,b),\,L^2(\Omega,\,\R^3))}^2+|\eta|_{H^1((a,\,b),\,\R^{2M})}^2\right).
\end{align*}
\end{theorem}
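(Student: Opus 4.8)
The plan is to reduce the strong-solution statement for the nonhomogeneous-boundary system~\eqref{sys-vg} to the classical strong-solution theory for the homogeneous Stokes-type system~\eqref{sys-y-l}, exactly along the lines laid down in Definition~\ref{D:ssol-l}. First I would observe that the hypotheses are tailored so that the lifting $E_2(\Xi\eta)$ lies in $W((a,\,b),\,H^2_{\diver}(\Omega,\,\R^3),\,L^2(\Omega,\,\R^3))$: indeed $\Xi$ maps $H^1((a,\,b),\,\R^{2M})$ continuously into $G^2_{\rm av}((a,\,b),\,\Gamma)$ (noted right after Definition~\ref{D:ssol-l}), and $E_2$ maps $G^2_{\rm av}((a,\,b),\,\Gamma)$ continuously into $W((a,\,b),\,H^2_{\diver}(\Omega,\,\R^3),\,H^0(\Omega,\,\R^3))$ (Section~\ref{sS:funspaces}), so
\[
|E_2\Xi\eta|_{W((a,\,b),\,H^2_{\diver}(\Omega,\,\R^3),\,L^2(\Omega,\,\R^3))}\le C\,|\eta|_{H^1((a,\,b),\,\R^{2M})}.
\]
Set $y\coloneqq v-E_2\Xi\eta$, $y_0\coloneqq v_0-E_2\Xi\eta(a)$, and $f\coloneqq g+\p_tE_2\Xi\eta+\BB(\hat u)E_2\Xi\eta-\nu\Delta E_2\Xi\eta$, so that $v$ is a strong solution of~\eqref{sys-vg} iff $y$ is a strong solution of~\eqref{sys-y-l} with these data.

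The second step is to check that $(y_0,\,f)$ are admissible data for the classical strong theory, i.e. $y_0\in V$ and $f\in L^2((a,\,b),\,L^2(\Omega,\,\R^3))$. The first is exactly the hypothesis $v_0-E_2\Xi\eta(a)\in V$. For $f$, the terms $g$ and $\nu\Delta E_2\Xi\eta$ are in $L^2((a,\,b),\,L^2)$ by hypothesis and by the $H^2$-regularity of the lifting; $\p_tE_2\Xi\eta\in L^2((a,\,b),\,L^2)$ by the definition of $W$; and for the Oseen term one estimates, using $\hat u\in\WW^{(a,\,b)|\rm st}\subset L^\infty((a,\,b)\times\Omega,\,\R^3)$ with $\nabla\hat u\in L^2((a,\,b),\,L^3(\Omega,\,\R^9))$ and $E_2\Xi\eta\in L^\infty((a,\,b),\,H^2)\hookrightarrow L^\infty((a,\,b),\,W^{1,6}\cap L^\infty)$,
\[
|\BB(\hat u)E_2\Xi\eta|_{L^2((a,\,b),\,L^2)}\le C\,\overline C_{\left[|\hat u|_{\WW^{(a,\,b)|\rm st}}\right]}|\eta|_{H^1((a,\,b),\,\R^{2M})},
\]
by Hölder (for $\langle\hat u\cdot\nabla\rangle E_2\Xi\eta$ pair $L^\infty$ with $L^2$ in space; for $\langle E_2\Xi\eta\cdot\nabla\rangle\hat u$ pair $L^\infty$ with $L^2$, or $L^6$ with $L^3$, in space and then absorb the $L^2$-in-time factor of $\nabla\hat u$). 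Collecting,
\[
|f|_{L^2((a,\,b),\,L^2(\Omega,\,\R^3))}^2\le \overline C_{\left[|\hat u|_{\WW^{(a,\,b)|\rm st}}\right]}\bigl(|g|_{L^2((a,\,b),\,L^2)}^2+|\eta|_{H^1((a,\,b),\,\R^{2M})}^2\bigr).
\]

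The third step is to invoke the classical existence/uniqueness and the a~priori estimate for strong solutions of~\eqref{sys-y-l} (the reference~\cite[Section~2.4]{Temam95} cited in Definition~\ref{D:ssol-l}, adapted to include the zeroth-order Oseen perturbation $\BB(\hat u)$, whose contribution is controlled because $\hat u$ is bounded), giving a unique $y\in W((a,\,b),\,\D(L),\,H)=L^2((a,\,b),\,\D(L))\cap H^1((a,\,b),\,H)$ with
\[
|y|_{W((a,\,b),\,\D(L),\,H)}^2\le\overline C_{\left[|\hat u|_{\WW^{(a,\,b)|\rm st}}\right]}\bigl(|y_0|_{V}^2+|f|_{L^2((a,\,b),\,L^2)}^2\bigr).
\]
Then $v=y+E_2\Xi\eta$ is the desired strong solution; summing the estimate for $y$, the estimate for $f$, the bound $|y_0|_V\le|v_0|_{H^1_{\diver}}+C|\eta|_{H^1}$, and the lifting bound yields the claimed inequality. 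Uniqueness of $v$ follows from uniqueness of $y$ together with the fact that the map $v\mapsto(y,\eta)$ is well defined once $\eta$ is fixed (two strong solutions with the same data share the same $\zeta=\Xi\eta$, hence the same lifting, hence the same $y$). I would present Definition~\ref{D:wsol-l}'s weak counterpart, Theorem~\ref{T:exist-wsol-v}, as entirely analogous with $E_2$, $\D(L)$, $H$ replaced by $E_1$, $V$, $V'$ and $L^2$-in-space data by $H^{-1}$-in-space data.

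The main obstacle I anticipate is not the abstract reduction but the bookkeeping in the second step: verifying that the Oseen term $\BB(\hat u)E_2\Xi\eta$ genuinely lands in $L^2((a,\,b),\,L^2(\Omega,\,\R^3))$ with a constant of the advertised form requires the Sobolev embeddings $H^2(\Omega)\hookrightarrow L^\infty(\Omega)\cap W^{1,6}(\Omega)$ in $3$D together with the interpolation of $\hat u$'s regularity ($L^\infty$ in space for $\hat u$ itself, $L^3$ in space for $\nabla\hat u$), and one must be a little careful that the term $\langle E_2\Xi\eta\cdot\nabla\rangle\hat u$ only uses the $\WW^{(a,\,b)|\rm st}$-norm (which includes the $L^2_t L^3_x$ control of $\nabla\hat u$) and not more. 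Everything else is a direct transcription of the homogeneous theory through the continuous lifting $E_2\circ\Xi$.
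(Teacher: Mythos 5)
The paper does not prove Theorem~\ref{T:exist-ssol-v}; it recalls it from~\cite{Rod14-na,Rod15-cocv} (see the opening lines of Section~\ref{sS:exist-solut}), so there is no in-paper proof to compare against. Your reduction to the homogeneous-boundary system~\eqref{sys-y-l} via the lifting $E_2\Xi\eta$ is the natural strategy, and indeed it is the one Definition~\ref{D:ssol-l} already encodes: the whole content of the theorem is the data-regularity bookkeeping in your Step~2 plus the classical strong estimate for~\eqref{sys-y-l} perturbed by $\BB(\hat u)$.

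There is one inaccuracy in that bookkeeping. From $E_2\Xi\eta\in W((a,b),H^2_{\diver}(\Omega,\R^3),L^2(\Omega,\R^3))$ the standard Lions--Magenes interpolation gives only $E_2\Xi\eta\in C([a,b],H^1(\Omega,\R^3))$, not $L^\infty((a,b),H^2(\Omega,\R^3))$; so the chain $L^\infty_tH^2_x\hookrightarrow L^\infty_t(W^{1,6}\cap L^\infty)_x$ and the first ($L^\infty$-with-$L^2$) H\"older pairing you propose for $\langle E_2\Xi\eta\cdot\nabla\rangle\hat u$ are not available for the abstract lifting $E_2$. The correct route is the second one you offer: $C([a,b],H^1)\hookrightarrow L^\infty((a,b),L^6(\Omega,\R^3))$ in 3D, and then $L^6\cdot L^3\subset L^2$ in space together with $L^\infty\cdot L^2\subset L^2$ in time, using only the $L^2_tL^3_x$ control of $\nabla\hat u$ from~$\WW^{(a,b)|\rm st}$. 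That pairing closes the estimate; the $L^\infty_tH^2_x$ claim should simply be dropped. (Alternatively, you could replace $E_2$ by the concrete ``product-form'' lifting $\sum_i\eta_i(t)\widehat F\phi_i$, as in Section~\ref{S:nonlinear}, which does give time-pointwise $H^2$-regularity, and Remark~\ref{R:sol-ind-ext} guarantees the solution and the admissible set do not depend on the choice of extension.) With that small repair, your plan is correct and complete.
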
%

\begin{remark}\label{R:sol-ind-ext}
The weak solution given in Theorem~\ref{T:exist-wsol-v} does not depend on
the extension~$E_1$. Also, the set of admissible weak initial conditions is independent of
$E_1$ (cf.~\cite[Rems.~3.2 and~3.4]{Rod14-na}).
Analogously, the strong solution given in Theorem~\ref{T:exist-ssol-v}, and the set of admissible strong initial
conditions are independent of $E_2$.
\end{remark}

\subsection{Smoothing property}\label{sS:smooth-prop}
The following Lemma will play a key role.
\begin{lemma}\label{L:smooth-prop}
Let us be given $\hat u\in\WW^{(a,\,b)|\rm st}$, $g\in L^2((a,\,b),\,L^2(\Omega,\,\R^3))$, 
$v_0\in \aA_{\Xi_1}$, and also $\eta\in H^1((a,\,b),\,\R^{2M})$, with $v_0-E_1\Xi \eta(a)\in H$;
then for the weak solution $v$ of system~\eqref{sys-vg} with $\zeta=\Xi\eta$,
we have
$(\Bigcdot-a)v\in W((a,\,b),\,H^2_{\diver}(\Omega,\,\R^3)
,\,L^2(\Omega,\,\R^3))$, and
\begin{align*}
&\quad|(\Bigcdot-a)v|_{W((a,\,b),\,H^2_{\diver}(\Omega,\,\R^3)
,\,L^2(\Omega,\,\R^3))}^2\\
&\le\overline C_{\left[|\hat u|_{\WW^{(a,\,b)|\rm st}}\right]}
\left(|v_0|_{L^2_{\diver}(\Omega,\,\R^3)}^2 +|g|_{L^2((a,\,b),\,L^2(\Omega,\,\R^3))}^2
+|\eta|_{H^1((a,\,b),\,\R^{2M})}^2\right).
\end{align*}
\end{lemma}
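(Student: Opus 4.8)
The plan is to reduce the smoothing statement to the already-established existence theory by splitting the time interval and exploiting the parabolic regularization of the Oseen--Stokes system away from the initial time. First I would fix a point $c\in(a,\,b)$, say $c=\frac{a+b}{2}$, and treat the two subintervals $(a,\,c)$ and $(c,\,b)$ separately. On $(c,\,b)$ the weight $t\mapsto(t-a)$ is bounded below by $\frac{b-a}{2}>0$, so it suffices to show $v\rest{(c,\,b)}\in W((c,\,b),\,H^2_{\diver}(\Omega,\R^3),\,L^2(\Omega,\R^3))$; by Theorem~\ref{T:exist-ssol-v} this follows once we know $v(c)\in\aA_{\Xi_2}$ with $v(c)-E_2\Xi\eta(c)\in V$ (and $\hat u\in\WW^{(c,\,b)|\rm st}$, $g\in L^2((c,\,b),\,L^2)$, which hold by restriction). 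So the real work is the interval $(a,\,c)$, and in particular establishing the intermediate regularity of $v$ at times $t>a$.

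The key step is the following bootstrap. Write $v=y+E_1\Xi\eta$ where $y\in L^2((a,\,b),\,V,\,V')$ is the weak solution of~\eqref{sys-y-l} with forcing $f=g+\p_tE_1\Xi\eta+\BB(\hat u)E_1\Xi\eta-\nu\Delta E_1\Xi\eta$; since $\Xi$ maps $H^1((a,\,b),\R^{2M})$ into $G^2_{\rm av}\subset G^1_{\rm av}$ and $E_1$ is continuous into $W((a,\,b),H^1_{\diver},H^{-1})$, and since $\hat u\in\WW^{(a,\,b)|\rm st}\subset\WW^{(a,\,b)|\rm wk}$ gives $\hat u\in L^\infty$, one checks $f\in L^2((a,\,b),\,H^{-1}(\Omega,\R^3))$ with the quantitative bound by $\overline C_{[|\hat u|_{\WW^{(a,\,b)|\rm st}}]}(|g|^2+|\eta|^2)$ — here I would separately note that the term $\BB(\hat u)E_1\Xi\eta$ is in $L^2(H^{-1})$ because $\hat u\in L^\infty$ and $E_1\Xi\eta\in L^2(H^1)$, while $\langle E_1\Xi\eta\cdot\nabla\rangle\hat u$ needs $\nabla\hat u\in L^2(L^3)$ (available from $\WW^{\rm st}$) paired with $E_1\Xi\eta\in L^2(L^6)$. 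Then I invoke the standard weighted smoothing estimate for the homogeneous-boundary Oseen--Stokes problem~\eqref{sys-y-l}: if $y_0\in H$ and $f\in L^2((a,\,b),H^{-1})$ then $(\Bigcdot-a)^{1/2}y\in L^2((a,\,b),\D(L))$ and $(\Bigcdot-a)^{1/2}\p_ty\in L^2((a,\,b),H)$, equivalently $(\Bigcdot-a)y\in W((a,\,b),\D(L),H)$, with the norm controlled by $\overline C_{[|\hat u|_{\WW^{(a,\,b)|\rm st}}]}(|y_0|_H^2+|f|_{L^2(H^{-1})}^2)$. This is the parabolic smoothing for the (non-autonomous, but with coefficients in the right Morrey-type classes) Stokes-type operator; it is classical for the Stokes operator and the lower-order term $\BB(\hat u)$ is handled perturbatively using $\hat u\in L^\infty$ and $\nabla\hat u\in L^2(L^3)$. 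Adding back $E_1\Xi\eta$, which already lies in $W((a,\,b),H^2_{\diver},L^2)$ because $\Xi\eta\in G^2_{\rm av}$ and $E_2$ (hence, by Remark~\ref{R:sol-ind-ext}, the solution is independent of the extension) lands in that space, and multiplying by the bounded weight, gives $(\Bigcdot-a)v\in W((a,\,c),H^2_{\diver},L^2)$ with the claimed estimate. In particular $v(c)\in H^1_{\diver}(\Omega,\R^3)$ and, since $v(c)=y(c)+E_1\Xi\eta(c)$ with $y(c)\in V$ (from $(\Bigcdot-a)y\in C([a,\,b],V)$ by interpolation $W(\D(L),H)\hookrightarrow C(V)$) and $E_1\Xi\eta(c)-E_2\Xi\eta(c)\in V$ by Remark~\ref{R:sol-ind-ext}, we get $v(c)-E_2\Xi\eta(c)\in V$, i.e. $v(c)\in\aA_{\Xi_2}$ with the admissibility condition, closing the argument on $(c,\,b)$ as above.

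The main obstacle I anticipate is the compatibility/regularity bookkeeping at the gluing time $c$: one must show that the value $v(c)$ of the weak solution is a legitimate \emph{strong} initial datum, i.e. lies in $\aA_{\Xi_2}$ with $v(c)-E_2\Xi\eta(c)\in V$, uniformly and with the right dependence on the data — this is exactly where the extension-independence (Remark~\ref{R:sol-ind-ext}) and the weighted smoothing for~\eqref{sys-y-l} must be combined carefully. A secondary technical point is verifying that the perturbed forcing $f$ genuinely sits in $L^2(H^{-1})$ (not merely $L^2(V')$) with a bound polynomial in $|\hat u|_{\WW^{\rm st}}$, so that the constant in the conclusion has the stated form $\overline C_{[|\hat u|_{\WW^{(a,\,b)|\rm st}}]}$; this rests on the embeddings $H^1\hookrightarrow L^6$ and the Morrey bound $\nabla\hat u\in L^2(L^3)$, and is routine but must be done explicitly. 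Once these are in place, summing the estimates on $(a,\,c)$ and $(c,\,b)$ (the weight being harmless on the latter) yields the lemma.
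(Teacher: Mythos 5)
Your proof takes a genuinely different and more elaborate route than the paper, and the key intermediate estimate you invoke is false. The paper's proof is a one-step reduction: it observes that $w\coloneqq(\Bigcdot-a)v$ itself solves a system of the form~\eqref{sys-vg}, with forcing $(\Bigcdot-a)g-v\in L^2((a,\,b),\,L^2(\Omega,\,\R^3))$ (since $v$, being a weak solution, is already in $L^2(L^2)$ with the bound from Theorem~\ref{T:exist-wsol-v}), boundary data $\Xi(\Bigcdot-a)\eta$, and zero initial datum $w(a)=0$. Since $0\in V$ the strong compatibility condition is trivially satisfied, and Theorem~\ref{T:exist-ssol-v} applies directly — no interval splitting, no weighted estimate, no bookkeeping of $v(c)\in\aA_{\Xi_2}$ at an intermediate time.

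The gap in your argument is the ``standard weighted smoothing estimate'' you invoke for~\eqref{sys-y-l}, namely that $y_0\in H$ and $f\in L^2((a,\,b),\,H^{-1}(\Omega,\,\R^3))$ imply $(\Bigcdot-a)^{1/2}y\in L^2((a,\,b),\,\D(L))$. This is false: the time weight cures only the roughness of the initial datum, not that of the forcing, which keeps injecting $H^{-1}$-level irregularity at every positive time. For instance on the $j$th Stokes mode take $f_j\equiv 1$ and $y_j(a)=0$; then $\alpha_jy_j(t)=1-\ex^{-\alpha_j(t-a)}$, so $\int_a^b(t-a)\,\alpha_j^2|y_j(t)|^2\,\ed t\to(b-a)^2/2$ as $\alpha_j\to+\infty$, whereas the squared $L^2(H^{-1})$-norm of that mode is $(b-a)\alpha_j^{-1}\to0$, so no bound of the desired form can hold. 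A weighted smoothing estimate does hold when $f\in L^2((a,\,b),\,L^2(\Omega,\,\R^3))$, and you could arrange that by decomposing $v=\tilde y+E_2\Xi\eta$ rather than $v=y+E_1\Xi\eta$ (then $-\nu\Delta E_2\Xi\eta\in L^2(L^2)$, and $\tilde y(a)=v_0-E_2\Xi\eta(a)$ still lies in $H$ because $E_1\Xi\eta(a)-E_2\Xi\eta(a)$ is divergence-free with zero trace); your parenthetical citing Remark~\ref{R:sol-ind-ext} suggests you sensed this, but the decomposition you actually set up uses $E_1$, so the argument as written fails at the smoothing step. Even after the $E_2$ fix, you would still need to prove a weighted maximal-regularity estimate for the non-autonomous Oseen--Stokes operator including the $\BB(\hat u)$ perturbation — nontrivial extra machinery that the paper avoids entirely by its multiplication-by-weight trick.
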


\begin{proof}
Since $v$ solves~\eqref{sys-vg}, it turns out that also $w=(\Bigcdot-a)v$ does, with different data:
\[
\begin{array}{rclcrcl}
 \p_t w+\BB(\hat u)w
 -\nu\Delta w+\nabla (\Bigcdot-a)p_v+(\Bigcdot-a)g-v&=&0,&\;& \diver w &=&0,\\
 w\rest \Gamma &=&\Xi(\Bigcdot-a)\eta,&\;& w(a)&=&0.
\end{array}
\]
Then, from Theorem~\ref{T:exist-ssol-v}, we can derive that the norm $|w|_{W((a,\,b),\,H^2_{\diver}(\Omega,\,\R^3),\,L^2(\Omega,\,\R^3))}^2$
is bounded by $\overline C_{\left[|\hat u|_{\WW^{(a,\,b)|\rm st}}\right]}
\left(|(\Bigcdot-a)g-v|_{L^2((a,\,b),\,L^2(\Omega,\,\R^3))}^2+|(\Bigcdot-a)\eta|_{H^1((a,\,b),\,\R^{2M})}^2\right)$;
thus Lemma~\ref{L:smooth-prop} follows from $|v|_{L^2((a,\,b),\,L^2(\Omega,\,\R^3))}^2
\le|v|_{W((a,\,b),\,H^1_{\diver}(\Omega,\,\R^3),\,H^{-1}(\Omega,\,\R^3))}^2$ and Theorem~\ref{T:exist-wsol-v}.
\end{proof}

\section{The Oseen--Stokes system: existence of a stabilizing control}\label{S:linear-exist}
Using a controllability result from~\cite{Rod15-cocv}, we shall
construct a control, in the space~$\EE_M$, exponentially stabilizing the linear system~\eqref{sys-v} to the zero solution. 
\begin{definition}
We say that $v_0$ is a weak, respectively strong, admissible initial condition for system~\eqref{sys-v}, with
$\zeta=\Xi\kappa\in\EE_M$, if it is a
weak, respectively strong, admissible initial condition for the same system in $(0,\,1)\times\Omega$
with $\zeta=\Xi\kappa\rest{(0,\,1)}\in\Xi H^1((0,\,1),\,\R^{2M})$.
\end{definition}

Let $\aA_{\Xi_1}$ be the set of admissible initial conditions for controls~$\zeta=\Xi\eta$ in~$\EE_M$, that is,
$\aA_{\Xi_1}=H+E_1\Xi H^1((0,\,1),\,\R^{2M})(0)$.

We introduce the mappings $Q_f^M\colon \R^{2M}\to\R^{2M}$, and $Q_l^M\colon \R^{2M}\to\R^{2M}$ defined by
$Q_f^My\coloneqq (z_1,\,z_2,\,\dots,\,z_{2M})$
with $z_i=y_i$ if $1\le i\le M$, and $z_i=0$ if
$M+1\le i\le 2M$; and $Q_l^M\coloneqq 1_{\R^{2M}}- Q_f^M$.
That is, $Q_f^M$ is the orthogonal projection onto
the first~$M$ coordinates, and~$Q_l^M$ the orthogonal projection onto
the last~$M$ coordinates.

In this Section we prove the following:

\begin{theorem}\label{T:ex.st-ct.lin}
Let us be given $\lambda>0$ and $\hat u\in\WW^{\rm st}$. Then there exists
$M=\overline C_{\left[|\hat u|_{\WW^{\rm st}},\lambda\right]}\geq1$ with the following property: for each $(v_0,\,\kappa^0_\tau)\in \aA_{\Xi_1}\times Q_l^M\R^{2M}$,
there exists a ``control'' vector function
$\kappa^{\hat u,\lambda}=\kappa^{\hat u,\lambda}(v_0,\,\kappa^0_\tau)\in H^1(\R_0,\,\R^{2M})$ such that the weak
solution~$v$ of system~\eqref{sys-v} in $\R_0\times\Omega$, with $\zeta=\Xi\kappa^{\hat u,\lambda}$, satisfies the inequality
\begin{equation*} 
|v(t)|_{L^2_{\diver}(\Omega,\,\R^3)}^2\le\overline C_{\left[|\hat u|_{\WW^{\rm st}},\lambda\right]}\ex^{-\lambda t}
\left(|v_0|_{L^2_{\diver}(\Omega,\,\R^3)}^2+|\kappa^0_\tau|_{\R^{2M}}^2\right), \quad t\geq0.
\end{equation*}
Moreover, for $0\le\hat\lambda<\lambda$, the mapping
$(v_0,\,\kappa^0_\tau)\mapsto \kappa^{\hat u,\lambda}(v_0,\,\kappa^0_\tau)$ is linear and satisfies:
\begin{align*} 
\bigl|\ex^{\frac{\hat\lambda}{2}\Bigcdot}\kappa^{\hat u,\lambda}(v_0,\,\kappa^0_\tau)\bigr|_{H^1(\R_0,\,\R^{2M})}^2
&\le \overline{C}_{\left[|\hat u|_{\WW^{\rm st}},\lambda,\frac{1}{(\lambda-\hat\lambda)}\right]}
\left(|v_0|_{L^2_{\diver}(\Omega,\,\R^3)}^2+|\kappa^0_\tau|_{\R^{2M}}^2\right).
\end{align*}
\end{theorem}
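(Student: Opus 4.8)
The plan is to combine an exact/approximate controllability result from~\cite{Rod15-cocv} for the Oseen--Stokes system on a bounded time interval with a scaling/iteration argument à la~\cite{BarRodShi11}, exploiting the smoothing property (Lemma~\ref{L:smooth-prop}) and the continuous-dependence estimates (Theorems~\ref{T:exist-wsol-v} and~\ref{T:exist-ssol-v}). The dimension~$M$ will be chosen large enough so that, on the unit interval~$(0,\,1)$, one can steer the solution from any admissible initial datum to a state of size as small as we like at time~$t=1$, while controlling the cost in terms of the data. Concretely, I would first record the one-step estimate: for $(v_0,\,\kappa^0_\tau)\in\aA_{\Xi_1}\times Q_l^M\R^{2M}$ there is $\kappa\in H^1((0,\,1),\,\R^{2M})$ with $Q_l^M\kappa(0)=\kappa^0_\tau$, such that the weak solution~$v$ of~\eqref{sys-v} on~$(0,\,1)\times\Omega$ with $\zeta=\Xi\kappa$ satisfies $|v(1)|_{L^2_{\diver}}^2\le\delta\,(|v_0|_{L^2_{\diver}}^2+|\kappa^0_\tau|_{\R^{2M}}^2)$ for a prescribed small~$\delta=\delta(\lambda)$, together with $|\kappa|_{H^1((0,\,1),\,\R^{2M})}^2\le C\,(|v_0|_{L^2_{\diver}}^2+|\kappa^0_\tau|_{\R^{2M}}^2)$. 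The subtle point here is the role of $\kappa^0_\tau$: the controllability statement in~\cite{Rod15-cocv} produces a control whose ``last'' components may be prescribed at the initial instant (this is exactly why the state space of the scheme is $\aA_{\Xi_1}\times Q_l^M\R^{2M}$ rather than just $\aA_{\Xi_1}$), and one must check that at $t=1$ the control can also be arranged so that $v(1)\in H$ and the next iterate's $\kappa^0_\tau$-component is well defined; this is where the compatibility of traces at the matching time and the fact that $\Xi$ maps into $G^2_{\rm av}\subset G^1_{\rm av}$ enter.

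Next I would iterate over the intervals $(n,\,n+1)$, $n\in\N$: having produced $v$ on $(0,\,n)$ with $v(n)$ small, apply the one-step result with initial datum $v(n)$ and the inherited tangential-control trace, obtaining $\kappa$ on $(n,\,n+1)$ and a decay $|v(n+1)|_{L^2_{\diver}}^2\le\delta\,(|v(n)|_{L^2_{\diver}}^2+|\kappa^0_{\tau,n}|_{\R^{2M}}^2)$. Summing a geometric series (with $\delta<\ex^{-\lambda}$, say, after possibly enlarging~$M$) gives $|v(n)|_{L^2_{\diver}}^2\le C\,\ex^{-\lambda n}(|v_0|_{L^2_{\diver}}^2+|\kappa^0_\tau|_{\R^{2M}}^2)$; the in-between decay $|v(t)|_{L^2_{\diver}}^2\le C\ex^{-\lambda t}(\dots)$ for all $t\ge0$ then follows from the continuous-dependence bound of Theorem~\ref{T:exist-wsol-v} applied on each $(n,\,n+1)$, since the data there are controlled by $\ex^{-\lambda n}$ times the original data. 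Concatenating the pieces $\kappa\rest{(n,\,n+1)}$ yields a global $\kappa^{\hat u,\lambda}\in H^1_{\rm loc}(\R_0,\,\R^{2M})$; one checks the junctions match (continuity of $\kappa$, hence $H^1$ across integer times), using that the ``last'' components are passed on as the initial condition of the next step. Linearity of $(v_0,\,\kappa^0_\tau)\mapsto\kappa^{\hat u,\lambda}$ is inherited from the linearity of the controllability operator in~\cite{Rod15-cocv} and of the solution map.

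For the weighted global estimate, fix $0\le\hat\lambda<\lambda$ and split $\bigl|\ex^{\frac{\hat\lambda}{2}\Bigcdot}\kappa^{\hat u,\lambda}\bigr|_{H^1(\R_0,\,\R^{2M})}^2$ as a sum over $(n,\,n+1)$. On each such interval $\ex^{\hat\lambda t}\le\ex^{\hat\lambda(n+1)}$, and $\ed_t(\ex^{\frac{\hat\lambda}{2}t}\kappa)=\frac{\hat\lambda}{2}\ex^{\frac{\hat\lambda}{2}t}\kappa+\ex^{\frac{\hat\lambda}{2}t}\p_t\kappa$, so the $H^1$ norm on the interval is controlled by $\ex^{\hat\lambda(n+1)}(1+\hat\lambda^2)\,|\kappa\rest{(n,\,n+1)}|_{H^1((n,\,n+1),\,\R^{2M})}^2$, which by the one-step cost estimate is $\le C\ex^{\hat\lambda(n+1)}(|v(n)|_{L^2_{\diver}}^2+|\kappa^0_{\tau,n}|_{\R^{2M}}^2)\le C\ex^{\hat\lambda(n+1)}\ex^{-\lambda n}(|v_0|_{L^2_{\diver}}^2+|\kappa^0_\tau|_{\R^{2M}}^2)$. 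Summing $\sum_{n\ge0}\ex^{-(\lambda-\hat\lambda)n}=\frac{1}{1-\ex^{-(\lambda-\hat\lambda)}}\le\frac{C}{\lambda-\hat\lambda}$ gives the claimed bound with constant $\overline C_{[|\hat u|_{\WW^{\rm st}},\lambda,\frac{1}{\lambda-\hat\lambda}]}$. The main obstacle I anticipate is not the summation but the clean formulation and proof of the one-step lemma with the prescribed-tangential-component feature: one must invoke the truncated boundary observability/controllability result of~\cite{Rod15-cocv} in a form that (i) uses controls valued in the finite-dimensional space $E_M$ with $M$ depending only on $|\hat u|_{\WW^{\rm st}}$ and the desired contraction $\delta$, (ii) allows the ``free'' part $\kappa^0_\tau$ of the control to be prescribed at the left endpoint, and (iii) delivers an $H^1$-in-time control with cost linear in the data — and then verify that the smoothing Lemma~\ref{L:smooth-prop} upgrades the weak solution enough to make $v(n)$ and the passed-on trace meaningful at each integer time.
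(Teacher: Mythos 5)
Your overall architecture matches the paper's: a one-step estimate on unit intervals, iteration giving a geometric decay, concatenation, and a weighted summation for the $H^1$-cost bound. The summation step, including the correct appearance of $\frac{1}{\lambda-\hat\lambda}$ in the constant, is worked out correctly, and you correctly identify that $\kappa^0_\tau$ is there to match the admissibility of the initial state and must be absorbed on the first interval.

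However, there is a genuine gap in how the one-step contraction is obtained, and it is precisely the core mechanism of the proof. You assert that the controllability result of~\cite{Rod15-cocv} lets one ``steer the solution to a state of size as small as we like at time $t=1$''. This is not what Theorem~\ref{T:ct-PiNvb=0.2} gives, and in fact a control valued in a \emph{fixed} finite-dimensional space $E_M$ cannot achieve arbitrary smallness of $v(1)$ directly. What the cited result gives is the truncated condition $\Pi_N v(n+1)=0$ (annihilation of the first $N$ Stokes modes). The smallness is then manufactured in two additional moves that your sketch omits: (i) the smoothing Lemma~\ref{L:smooth-prop}, together with the fact that the control $\zeta=K^\OO_t\bar\eta^{\hat u,n}$ vanishes near $t=n+1$ by construction of the cutoffs $\varphi,\tilde\varphi$, gives $v(n+1)\in V$ with $|v(n+1)|_V^2\le\overline C_{[|\hat u|_{\WW^{\rm st}}]}|v_n|_H^2$; (ii) the Poincar\'e-type spectral inequality $\alpha_N|v(n+1)|_H^2\le|v(n+1)|_V^2$, valid \emph{because} $\Pi_N v(n+1)=0$, converts this into $|v(n+1)|_H^2\le\alpha_N^{-1}\overline C|v_n|_H^2$. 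Only then does choosing $N$ (hence $M$) large enough, so that $\alpha_N\ge\ex^\lambda\overline C$, produce the one-step contraction $|v_{n+1}|_H^2\le\ex^{-\lambda}|v_n|_H^2$. Without spelling out this chain, the dependence $M=\overline C_{[|\hat u|_{\WW^{\rm st}},\lambda]}$ and the decay rate $\lambda$ are unjustified.

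A second, smaller inaccuracy: you describe the tangential datum $\kappa^0_\tau$ as ``passed on as the initial condition of the next step''. In the paper's construction this is not what happens. On $(0,1)$ a cutoff $\phi$ with $\phi=1$ near $0$ and $\phi=0$ near $1$ is used so that $\kappa_\phi(v_0,\kappa^0_\tau)$ matches the admissibility of $v_0$ at $t=0$ but vanishes near $t=1$; likewise, on each $(n,n+1)$ the control $K^\OO_t\bar\eta^{\hat u,n}$ vanishes near both endpoints by $\supp(\varphi)\subset[n+\delta,n+1-\delta]$. Consequently $\kappa^{\hat u,\lambda}(n)=0$ for every $n\in\N_0$, $v(n)\in V\subset H$, and the concatenated control is trivially $H^1$ across integer times. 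This is a cleaner and more robust bookkeeping than ``passing on'' a nonzero tangential trace, and it is what guarantees both the admissibility of the initial datum $v_n$ for the next iteration and the applicability of the smoothing lemma at the right endpoint.

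In summary: same global strategy, but you should make explicit (a) that $M$ is chosen via the spectral gap $\alpha_N$ in tandem with the truncated controllability and the smoothing estimate, and (b) that the cutoffs force the control to vanish near integer times, which is what makes the iteration and concatenation go through.
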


\subsection{Auxiliary results}
Since the trace $\kappa(0)$, at time $t=0$, is well defined
for any given $\kappa\in H^1(\R_0,\,\R^{2M})$, we can easily obtain the
explicit form of the spaces $\aA_{\Xi_1}$ and $\aA_{\Xi_2}$, of all
admissible weak and strong initial conditions, for controls in the space
$\EE_M$ defined in~\eqref{ct-space}.

Writing $v\in\aA_{\Xi_1}$ as
$v=u+(E_1\Xi\kappa)(0)$, where $u\in H$ and $\kappa\in H^1((0,\,1),\,R^{2M})$,
from the fact that~$E_1\Xi\kappa\in W((0,\,1),\,H^1(\Omega,\,\R^3),\,H^{-1}(\Omega,\,\R^3))\subset C([0,\,1],\,L^2(\Omega,\,\R^3)$,
and the continuity of the mapping $u\mapsto u\cdot\nnn$ from $L^2_{\diver}(\Omega,\,\R^3)$ into $H^{-\frac{1}{2}}(\Gamma,\,\R)$,
we obtain that $(v\cdot\nnn)\nnn=\lim_{t\searrow0}((E_1\Xi\kappa)\cdot\nnn)(t)\nnn=\lim_{t\searrow0}((E_1\Xi\kappa(t))\cdot\nnn)\nnn
=\lim_{t\searrow0}((\Xi\kappa(t))\cdot\nnn)\nnn=((\Xi\kappa(0))\cdot\nnn)\nnn$.

It follows that
\begin{equation}\label{Axi1}
\aA_{\Xi_1}=\left\{u\in L^2_{\diver}(\Omega,\,\R^3)\mid
(u\cdot\nnn)\nnn\rest\Gamma
=\Xi Q^M_f z,\mbox{ for some }z\in \R^{2M}\right\}.
\end{equation}

Analogously, we can conclude that the set of strong admissible conditions is given by
\begin{equation}\label{Axi2}
\aA_{\Xi_2}=\left\{u\in H^1_{\diver}(\Omega,\,\R^3)\mid
u\rest\Gamma
=\Xi z,
\mbox{ for some }z\in \R^{2M}\right\}.
\end{equation}

\begin{remark}\label{R:traceun}
Notice that for a given $u\in L^2_{\diver}(\Omega,\,\R^3)$, we may set
$\langle u\cdot\nnn,\,\psi\rangle_{H^{-\frac{1}{2}}(\Gamma,\,\R),\,H^{\frac{1}{2}}(\Gamma,\,\R)}
\coloneqq(u,\,\nabla \RR_\Gamma^1\psi)_{L^2(\Omega,\,\R)}$, where $\RR_\Gamma^1:H^{\frac{1}{2}}(\Gamma,\,\R)\to H^1(\Omega,\,\R)$
is a continuous linear
right inverse of the trace mapping $\Psi\mapsto \Psi\rest\Gamma$; in particular we have
$(\RR_\Gamma^1\psi)\rest\Gamma=\psi$ and the mapping
$u\mapsto u\cdot\nnn\rest\Gamma$ is linear and continuous: $\norm{u\cdot\nnn}{H^{-\frac{1}{2}}(\Gamma,\,\R)}
\le\norm{u}{H}\norm{\RR_\Gamma^1}{\LL\left(H^{\frac{1}{2}}(\Gamma,\,\R)\to H^1(\Omega,\,\R)\right)}$.
See, for example, \cite[Chapter~1, Section~1.3]{Temam01} and~\cite[Chapter~1, Section~8.2]{LioMag72-I}. Actually, writing
$\langle v\cdot\nnn,\,\psi\rangle_{H^{-\frac{1}{2}}(\Gamma,\,\R),\,H^{\frac{1}{2}}(\Gamma,\,\R)}
\coloneqq(u,\,\nabla \RR_\Gamma^1\psi)_{L^2(\Omega,\,\R)}+(\diver u,\,\RR_\Gamma^1\psi)_{L^2(\Omega,\,\R)}$ we can see that the trace
$(v\cdot\nnn)\rest\Gamma\in\LL(L^2_{\rm d}\to H^{-\frac{1}{2}}(\Gamma,\,\R))$ is well defined, with
$L^2_{\rm d}\coloneqq\{z\in L^2(\Omega,\,\R^3)\mid \diver z\in L^2(\Omega,\,\R)\}$ endowed with the norm $\norm{v}{L^2_{\rm d}}\coloneqq
(\norm{z}{L^2(\Omega,\,\R^3)}^2+\norm{\diver z}{L^2(\Omega,\,\R)}^2)^\frac{1}{2}$. Notice that~$L^2_{\diver}(\Omega,\,\R^3)$ is a closed subspace of~$L^2_{\rm d}$.
\end{remark}

\begin{corollary}\label{C:Ad-closed}
The space of admissible weak initial conditions~$\aA_{\Xi_1}$ is a closed subset of $L^2_{\diver}(\Omega,\,\R^3)$, and
the space of admissible strong initial conditions $\aA_{\Xi_2}$  is a closed subset of $H^1_{\diver}(\Omega,\,\R^3)$.
\end{corollary}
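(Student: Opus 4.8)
The plan is to read off closedness directly from the explicit descriptions~\eqref{Axi1} and~\eqref{Axi2}, using that each is the preimage of a closed set under a continuous linear map. First I would treat~$\aA_{\Xi_1}$. By~\eqref{Axi1}, a vector field $u\in L^2_{\diver}(\Omega,\,\R^3)$ lies in~$\aA_{\Xi_1}$ precisely when $(u\cdot\nnn)\nnn\rest\Gamma$ belongs to the set $\Xi Q^M_f\R^{2M}$. By Remark~\ref{R:traceun}, the map $u\mapsto u\cdot\nnn\rest\Gamma$ is continuous from $L^2_{\diver}(\Omega,\,\R^3)$ into $H^{-\frac12}(\Gamma,\,\R)$, hence so is $u\mapsto (u\cdot\nnn)\nnn\rest\Gamma$ into the corresponding space of normal vector fields on~$\Gamma$. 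On the other hand $\Xi Q^M_f\R^{2M}$ is a finite-dimensional linear subspace (of dimension at most~$M$), hence closed. Therefore $\aA_{\Xi_1}$ is the preimage of a closed set under a continuous map, and is closed in $L^2_{\diver}(\Omega,\,\R^3)$.

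Next I would treat~$\aA_{\Xi_2}$ in the same way. By~\eqref{Axi2}, $u\in H^1_{\diver}(\Omega,\,\R^3)$ belongs to $\aA_{\Xi_2}$ exactly when the full trace $u\rest\Gamma$ lies in the finite-dimensional space $\Xi\R^{2M}\subseteq E_M$. The trace operator $u\mapsto u\rest\Gamma$ is continuous from $H^1_{\diver}(\Omega,\,\R^3)$ into $H^{\frac12}(\Gamma,\,\R^3)$, and $\Xi\R^{2M}$ is again finite-dimensional, hence closed. So $\aA_{\Xi_2}$ is again a preimage of a closed set under a continuous linear map, and is closed in $H^1_{\diver}(\Omega,\,\R^3)$.

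There is no real obstacle here; the content is entirely contained in the identifications~\eqref{Axi1}--\eqref{Axi2} already established and in the continuity of the relevant trace maps recorded in Remark~\ref{R:traceun}. The only point requiring a line of justification is that the target sets $\Xi Q^M_f\R^{2M}$ and $\Xi\R^{2M}$ are closed, which is immediate since they are continuous linear images of the finite-dimensional space $\R^{2M}$, hence finite-dimensional subspaces of the respective trace spaces. I would also remark that closedness of $\aA_{\Xi_1}$ (resp.~$\aA_{\Xi_2}$) as a subset of $L^2_{\diver}(\Omega,\,\R^3)$ (resp.~$H^1_{\diver}(\Omega,\,\R^3)$) is consistent with it being a Hilbert space under the range norm~\eqref{range-norms}, the two topologies agreeing on $\aA_{\Xi_i}$.
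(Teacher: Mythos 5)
Your proof is correct and follows exactly the route the paper implicitly has in mind: the corollary is stated immediately after the explicit characterizations~\eqref{Axi1}--\eqref{Axi2} and Remark~\ref{R:traceun}, and closedness is read off by writing each set as the preimage of a finite-dimensional (hence closed) subspace under the continuous trace map. The one small point worth making explicit, which you do, is that $\Xi Q^M_f\R^{2M}$ and $\Xi\R^{2M}$ are closed in the trace spaces because any finite-dimensional subspace of a Hausdorff topological vector space is closed.
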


Recalling the orthogonal projection $\Pi$, in~\eqref{ProjH}, we have:
\begin{lemma}\label{L:eq.normsL2div}
The norms $|u|_{L^2_{\diver}(\Omega,\,\R^3)}$ and $\Bigl(|\Pi u|_H^2
+|u\cdot\nnn|_{H^{-\frac{1}{2}}_{\rm av}(\Gamma,\,\R)}^2\Bigr)^{\frac{1}{2}}$,
are equivalent in $L^2_{\diver}(\Omega,\,\R^3)$.
\end{lemma}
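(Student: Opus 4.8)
The plan is to show that the two norms are comparable by exhibiting a two-sided bound. Recall the orthogonal decomposition $L^2(\Omega,\R^3)=H\oplus H^\bot$ with $H^\bot=\{\nabla\xi\mid\xi\in H^1(\Omega,\R)\}$, and that $\Pi$ is the orthogonal projection onto $H$. For $u\in L^2_{\diver}(\Omega,\R^3)$ write $u=\Pi u+\nabla\xi$ for a suitable $\xi\in H^1(\Omega,\R)$ (unique up to an additive constant). Since $\Pi u\in H$ we have $\Pi u\cdot\nnn=0$ on $\Gamma$, hence $u\cdot\nnn=\partial_\nnn\xi\rest\Gamma$ as elements of $H^{-\frac12}(\Gamma,\R)$. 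Moreover, because $\diver u=0$ and $\diver\Pi u=0$, the potential $\xi$ is harmonic: $\Delta\xi=0$ in $\Omega$. Also the average constraint is automatic: integrating $\diver u=0$ over $\Omega$ gives $\int_\Gamma u\cdot\nnn\,\ed\Gamma=0$, so indeed $u\cdot\nnn\in H^{-\frac12}_{\rm av}(\Gamma,\R)$ and the second expression in the statement makes sense.

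The inequality ``$\gtrsim$'' is the easy direction. Since $H$ and $H^\bot$ are orthogonal, $|u|_{L^2_{\diver}(\Omega,\R^3)}^2=|\Pi u|_H^2+|\nabla\xi|_{L^2(\Omega,\R^3)}^2$. The trace-type bound from Remark~\ref{R:traceun}, namely the continuity of $u\mapsto u\cdot\nnn$ from $H$ (more generally from $L^2_{\rm d}$) into $H^{-\frac12}(\Gamma,\R)$ applied to $\nabla\xi=u-\Pi u\in L^2_{\rm d}$ (note $\diver\nabla\xi=0$), gives $|u\cdot\nnn|_{H^{-\frac12}_{\rm av}(\Gamma,\R)}\le C|\nabla\xi|_{L^2(\Omega,\R^3)}\le C|u|_{L^2_{\diver}(\Omega,\R^3)}$, and trivially $|\Pi u|_H\le|u|_{L^2_{\diver}(\Omega,\R^3)}$. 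Hence $|\Pi u|_H^2+|u\cdot\nnn|_{H^{-\frac12}_{\rm av}(\Gamma,\R)}^2\le C|u|_{L^2_{\diver}(\Omega,\R^3)}^2$.

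For the reverse inequality ``$\lesssim$'' the point is to control $|\nabla\xi|_{L^2(\Omega,\R^3)}$ by $|u\cdot\nnn|_{H^{-\frac12}_{\rm av}(\Gamma,\R)}$. This is a standard elliptic estimate for the Neumann problem: $\xi$ solves $\Delta\xi=0$ in $\Omega$, $\partial_\nnn\xi=u\cdot\nnn$ on $\Gamma$, and this problem is solvable precisely because of the zero-average compatibility condition, with the a~priori bound $|\xi|_{H^1(\Omega,\R)/\R}\le C|u\cdot\nnn|_{H^{-\frac12}_{\rm av}(\Gamma,\R)}$ (the variational formulation: $\xi$ is the unique element, modulo constants, with $\int_\Omega\nabla\xi\cdot\nabla\varphi\,\ed x=\langle u\cdot\nnn,\varphi\rest\Gamma\rangle_{H^{-\frac12},H^{\frac12}}$ for all $\varphi\in H^1(\Omega,\R)$; testing with $\varphi=\xi$ and using the trace theorem and Poincaré--Wirtinger yields the bound). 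Since $|\nabla\xi|_{L^2(\Omega,\R^3)}$ only sees $\xi$ modulo constants, this gives $|\nabla\xi|_{L^2(\Omega,\R^3)}\le C|u\cdot\nnn|_{H^{-\frac12}_{\rm av}(\Gamma,\R)}$, and therefore $|u|_{L^2_{\diver}(\Omega,\R^3)}^2=|\Pi u|_H^2+|\nabla\xi|_{L^2(\Omega,\R^3)}^2\le C\bigl(|\Pi u|_H^2+|u\cdot\nnn|_{H^{-\frac12}_{\rm av}(\Gamma,\R)}^2\bigr)$. The main (really the only) technical obstacle is this last elliptic regularity step, i.e.\ the well-posedness and $H^1$-bound for the Neumann Laplacian with $H^{-\frac12}$ boundary data on the smooth domain $\Omega$; this is classical (see, e.g., \cite[Chapter~1]{LioMag72-I}, \cite[Chapter~1]{Temam01}), and once it is invoked the equivalence of the two norms follows immediately from the orthogonal decomposition above.
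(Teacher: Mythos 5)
Your proposal is correct and follows essentially the same route as the paper: decompose $u=\Pi u+\nabla\xi$ with $\xi$ the zero-averaged potential, observe that $\xi$ is harmonic with Neumann data $u\cdot\nnn$, and obtain the hard direction from the variational identity $|\nabla\xi|_{L^2}^2=\langle u\cdot\nnn,\,\xi\rest\Gamma\rangle_{H^{-1/2},H^{1/2}}$ together with the trace theorem and the Poincar\'e--Wirtinger inequality. The only cosmetic difference is that you spell out the easy direction (continuity of $u\mapsto u\cdot\nnn$ from $L^2_{\diver}$ into $H^{-1/2}_{\rm av}$) explicitly, whereas the paper leaves it implicit via Remark~\ref{R:traceun}.
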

\begin{proof}
Writing $L^2(\Omega,\,\R^3)=H\oplus H^\bot$, each $v\in L^2(\Omega,\,\R^3)$ can be rewritten as
\begin{equation}\label{Pnabla}
 v=\Pi v+\nabla P_\nabla v,\quad\mbox{with }\int_\Omega P_\nabla v\,\ed\Omega=0;
\end{equation}
in this way the mapping $v\mapsto P_\nabla v\in H^1(\Omega,\,\R)$ is
well defined. Further, if $\diver v\in L^2(\Omega,\,\R)$ we can see that $P_\nabla v$ solves
the system
$\Delta P_\nabla v=\diver v,\,\nabla P_\nabla v\cdot\nnn=v\cdot\nnn$.
It follows that if $v\in L^2_{\diver}(\Omega,\,\R^3)$, then $|\nabla P_\nabla v|_{L^2(\Omega,\,\R^3)}^2
=\langle v\cdot\nnn,\,P_\nabla v\rangle_{H^{-\frac{1}{2}}(\Gamma,\,\R),\,H^{\frac{1}{2}}(\Gamma,\,\R)}$; since
$P_\nabla v$ is zero averaged in $\Omega$, there is a constant $C>0$ such that
$|\nabla P_\nabla v|_{L^2(\Omega,\,\R^3)}^2
\le C|v\cdot\nnn|_{H^{-\frac{1}{2}}(\Gamma,\,\R)}|\nabla P_\nabla v|_{L^2(\Omega,\,\R^3)}$; so
$|\nabla P_\nabla v|_{L^2(\Omega,\,\R^3)}
\le C|v\cdot\nnn|_{H^{-\frac{1}{2}}(\Gamma,\,\R)}\le C_1|v|_{L^2_{\diver}(\Omega,\,\R^3)}$.
\end{proof}

From now, for convenience, we suppose $\aA_{\Xi_1}$ and $\aA_{\Xi_2}$ endowed with the norm inherited from $L^2_{\diver}(\Omega,\,\R^3)$
and from $H^1_{\diver}(\Omega,\,\R^3)$, respectively; from Corollary~\ref{C:Ad-closed} the spaces
$\aA_{\Xi_1}$ and $\aA_{\Xi_2}$ are Hilbert spaces.

\begin{remark}
In Section~\ref{sS:exist-solut}, we have considered the spaces of admissible conditions endowed
with a suitable range norm also making them Hilbert spaces. Changing the norms now to those inherited from
$L^2_{\diver}(\Omega,\,\R^3)$
and $H^1_{\diver}(\Omega,\,\R^3)$ will not cause any trouble concerning continuity properties. Indeed, we have that
$\aA_{\Xi_1}$ and $\aA_{\Xi_2}$ endowed with the range norm are continuously embedded in $L^2_{\diver}(\Omega,\,\R^3)$ and
$H^1_{\diver}(\Omega,\,\R^3)$, respectively, (cf.~\cite[Section~2.4]{Rod15-cocv}). Thus, from
the completeness of both norms they are necessary equivalent (cf.~\cite[Corollary~2.8]{Brezis11}).
\end{remark}

Next, we define the space
\begin{equation}\label{N.sp}
\NN\coloneqq \left\{z\in\R^{2M}\biggl| \Xi z=\chi\E_0^\OO P_{\chi^\bot}^\OO
\sum_{i=1}^M (z_i\pi_i\nnn +z_{M+i}\tau_i)=0\right\}.
\end{equation}

Let $P_{\NN}\colon \R^{2M}\to\NN$ stand for the orthogonal projection in $\R^{2M}$ onto $\NN$. Denoting
the orthogonal subspace $\NN^\bot$ to $\NN$, we also denote $P_{\NN^\bot}=I_{\R^{2M}}-P_{\NN}\colon \R^{2M}\to\NN^\bot$ the orthogonal 
projection in $\R^{2M}$ onto $\NN^\bot$.
Recall also the projections $Q^M_f\colon \R^{2M}\mapsto \R^{2M}$ and $Q^M_l\colon \R^{2M}\mapsto \R^{2M}$, 
onto the first~$M$ coordinates and onto the last~$M$ coordinates, respectively.
We have the following property whose
proof is given in the Appendix, Section~\ref{sS:comut}.
\begin{equation}\label{comut}
PQ=QP\mbox{ for all }(P,\,Q)\in\{P_{\NN},\,P_{\NN^\bot}\}\times\{Q^M_f,\,Q^M_l\}.
\end{equation}
Notice that, for any $u\in\aA_{\Xi_1}$ there exists at least one $z\in\R^{2M}$ such that $(u\cdot\nnn)\nnn\rest\Gamma
=\Xi Q^M_fz=\chi\E_{0}^\OO \sum_{i=1}^M\left(z_iP_{\chi^\bot}^\OO\pi_i\nnn\right)$; it follows, using~\eqref{comut}, that the mapping
\begin{equation}\label{zun}
 u\mapsto z^{u\cdot\nnn}\coloneqq P_{\NN^\bot}Q^M_f z=Q^M_f P_{\NN^\bot}z
\end{equation}
is continuous from $\aA_{\Xi_1}$ onto $P_{\NN^\bot}Q^M_f\R^{2M}$
and $(u\cdot\nnn)\nnn\rest\Gamma=\Xi z^{u\cdot\nnn}$. 
Indeed, to check that the mapping is well defined, we set another vector $w\in\R^{2M}$ such that $(u\cdot\nnn)\nnn\rest\Gamma
=\Xi Q^M_fw$, then necessarily $\Xi Q^M_f(w-z)=0$ which means that $P_{\NN^\bot}Q^M_f(w-z)=0$. On the other hand
the continuity of the mapping $u\mapsto u\cdot\nnn$ and the fact
that both $|u\cdot\nnn|_{H^{-\frac{1}{2}}_{\rm av}(\Gamma,\,\R)}$ and $|z^{u\cdot\nnn}|_{\R^{2M}}$ are norms in the finite
dimensional space $\Xi Q^M_f\R^{2M}=\{u\cdot\nnn\mid u\in\aA_{\Xi_1}\}$ (and, so necessarily equivalent) give us
\begin{equation}\label{zun.cont}
 |z^{u\cdot\nnn}|_{\R^{2M}}\le C|u|_{L^2_{\diver}(\Omega,\,\R^3)}.
\end{equation}

\begin{lemma}\label{L:eq-normsAd1}
The norms $|u|_{L^2_{\diver}(\Omega,\,\R^3)}$ and $\left(|\Pi u|_H^2
+|z^{u\cdot\nnn}|_{\NN^\perp}^2\right)^{\frac{1}{2}}$ are equivalent in $\aA_{\Xi_1}$;
further they make $\aA_{\Xi_1}$ a Hilbert space.
\end{lemma}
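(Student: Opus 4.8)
The plan is to show the two stated quantities are equivalent norms on $\aA_{\Xi_1}$ by writing each $u\in\aA_{\Xi_1}$ in the natural decomposition $u=\Pi u+\nabla P_\nabla u$ coming from \eqref{Pnabla}, and then relating the second piece to $z^{u\cdot\nnn}$. First I would verify that the map $u\mapsto(\Pi u,\,z^{u\cdot\nnn})$ is well defined and linear on $\aA_{\Xi_1}$; linearity of $u\mapsto\Pi u$ is immediate, and linearity and well-definedness of $u\mapsto z^{u\cdot\nnn}$ into $\NN^\bot Q^M_f\R^{2M}$ was already established in the discussion around \eqref{zun}. Next I would check that $u\mapsto\bigl(|\Pi u|_H^2+|z^{u\cdot\nnn}|_{\NN^\perp}^2\bigr)^{1/2}$ is genuinely a norm, i.e.\ that it separates points of $\aA_{\Xi_1}$: if $\Pi u=0$ and $z^{u\cdot\nnn}=0$, then $(u\cdot\nnn)\nnn\rest\Gamma=\Xi z^{u\cdot\nnn}=0$, so $P_\nabla u$ solves $\Delta P_\nabla u=0$, $\nabla P_\nabla u\cdot\nnn=u\cdot\nnn=0$ with zero average, hence $\nabla P_\nabla u=0$ and $u=\Pi u=0$.

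For the quantitative equivalence, one direction is essentially already in hand: from \eqref{zun.cont} we have $|z^{u\cdot\nnn}|_{\R^{2M}}\le C|u|_{L^2_{\diver}(\Omega,\,\R^3)}$, and $|\Pi u|_H\le|u|_{L^2_{\diver}(\Omega,\,\R^3)}$ since $\Pi$ is an orthogonal projection; together these give $|\Pi u|_H^2+|z^{u\cdot\nnn}|_{\NN^\perp}^2\le C'|u|_{L^2_{\diver}(\Omega,\,\R^3)}^2$. For the reverse inequality I would use Lemma~\ref{L:eq.normsL2div}, which already tells us $|u|_{L^2_{\diver}(\Omega,\,\R^3)}^2\le C\bigl(|\Pi u|_H^2+|u\cdot\nnn|_{H^{-1/2}_{\rm av}(\Gamma,\,\R)}^2\bigr)$, so it remains to bound $|u\cdot\nnn|_{H^{-1/2}_{\rm av}(\Gamma,\,\R)}$ by $|z^{u\cdot\nnn}|_{\NN^\perp}$. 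But $(u\cdot\nnn)\nnn\rest\Gamma=\Xi z^{u\cdot\nnn}$ and $z^{u\cdot\nnn}$ ranges over the finite-dimensional space $\NN^\bot Q^M_f\R^{2M}$ on which the map $z\mapsto\Xi z$ is injective (that is the very definition of $\NN$); hence $z\mapsto|\Xi z|_{H^{-1/2}(\Gamma,\,\R^3)}$ and $z\mapsto|z|_{\R^{2M}}$ are both norms on this finite-dimensional space and therefore equivalent, giving $|u\cdot\nnn|_{H^{-1/2}_{\rm av}(\Gamma,\,\R)}\le C|z^{u\cdot\nnn}|_{\NN^\perp}$.

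Finally, since $\aA_{\Xi_1}$ is already known to be a Hilbert space with the $L^2_{\diver}(\Omega,\,\R^3)$-norm (Corollary~\ref{C:Ad-closed} together with the convention fixed just before the statement of Lemma~\ref{L:eq.normsL2div}), and the new norm is equivalent to it, completeness transfers automatically; moreover the new norm visibly comes from the inner product $(u,\,\tilde u)\mapsto(\Pi u,\,\Pi\tilde u)_H+(z^{u\cdot\nnn},\,z^{\tilde u\cdot\nnn})_{\NN^\perp}$, so $\aA_{\Xi_1}$ is a Hilbert space for it. I expect the only mildly delicate point to be the bookkeeping that $z^{u\cdot\nnn}$ really lives in the finite-dimensional space $\NN^\bot Q^M_f\R^{2M}$ on which $\Xi$ is injective — this is where the commutation relation \eqref{comut} and the definition \eqref{N.sp} of $\NN$ are used — but this has effectively been set up already in the paragraph preceding the lemma, so the proof should be short.
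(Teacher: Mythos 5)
Your proof is correct and follows essentially the same route as the paper's: it reduces the equivalence to Lemma~\ref{L:eq.normsL2div} plus the equivalence of the two norms $|u\cdot\nnn|_{H^{-1/2}(\Gamma,\,\R)}$ and $|z^{u\cdot\nnn}|_{\NN^\perp}$ on the finite-dimensional space $\Xi Q^M_f\NN^\perp$ (on which $\Xi$ is injective by the very definition of $\NN$), and then invokes Corollary~\ref{C:Ad-closed} for completeness. The only difference is that you supply a few details the paper leaves implicit — the easy direction via \eqref{zun.cont}, the point-separation check (which is anyway subsumed by the equivalence once it is established), and the explicit inner product — none of which change the argument.
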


\begin{proof}
By Corollary~\ref{C:Ad-closed},  $\aA_{\Xi_1}$ endowed with the norm inherited from $L^2_{\diver}(\Omega,\,\R^3)$
is a Hilbert space. The equivalence follows from Lemma~\ref{L:eq.normsL2div} and from the fact that
$|u\cdot\nnn|_{H^{-\frac{1}{2}}(\Gamma,\,\R)}$ and $|z^{u\cdot\nnn}|_{\NN^\perp}$ are equivalent norms in
$\Xi Q^M_f\R^{2M}=\Xi Q^M_f\NN^\perp$.
\end{proof}

\begin{remark}\label{R:Nn}
Notice that the space $\NN$ defined in~\eqref{N.sp} is not necessarily trivial, that is, it may contain nonzero vectors.
See the Example in Section~\ref{sS:ex-nottriv} in the Appendix.
\end{remark}

\begin{corollary}\label{C:L2=HH12}
A element $u\in L^2_{\diver}(\Omega,\,\R^3)$ can be defined by~$(\Pi u,\,u\cdot\nnn)\in H\times H^{-\frac{1}{2}}_{\rm av}(\Gamma,\,\R)$.
\end{corollary}
\begin{proof}
Given~$(h^1,\,h^2)\in H\times H^{-\frac{1}{2}}_{\rm av}(\Gamma,\,\R)$, and two elements~$u^1$ and~$u^2$ in~$L^2_{\diver}(\Omega,\,\R^3)$ such that
$\Pi u^1=h^1=\Pi u^2\quad\mbox{and}\quad u^1\cdot\nnn=h^2=u^2\cdot\nnn,$
then for~$u\coloneqq u^1-u^2$ we find $u=(1-\Pi) u=\nabla P_\nabla u$ and $u\cdot\nnn=0$, which implies $\Delta P_\nabla u=0$ and
$\nabla P_\nabla u\cdot\nnn=0$. Therefore $u=\nabla P_\nabla u=0$, that is, $u^1=u^2$.
\end{proof}

\begin{corollary}\label{C:L2=Hzun}
A element $u\in \aA_{\Xi_1}$ can be defined by~$(\Pi u,\,z^{u\cdot\nnn})\in H\times Q^M_f\NN^\perp$.
\end{corollary}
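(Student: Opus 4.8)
The plan is to show that the linear mapping
$
\iota\colon\aA_{\Xi_1}\to H\times Q^M_f\NN^\perp$, $\iota u\coloneqq(\Pi u,\,z^{u\cdot\nnn})$,
is a bijection. By Lemma~\ref{L:eq-normsAd1} it is then automatically a topological isomorphism (it is bounded, and bounded below on its range, with respect to the norms $|u|_{L^2_{\diver}(\Omega,\,\R^3)}$ on $\aA_{\Xi_1}$ and $(|\Pi u|_H^2+|z^{u\cdot\nnn}|_{\NN^\perp}^2)^{1/2}$ on $H\times Q^M_f\NN^\perp$), so that identifying $u$ with the pair $(\Pi u,\,z^{u\cdot\nnn})$ loses nothing, neither algebraically nor topologically.

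For injectivity, suppose $u^1,u^2\in\aA_{\Xi_1}$ satisfy $\Pi u^1=\Pi u^2$ and $z^{u^1\cdot\nnn}=z^{u^2\cdot\nnn}$. By~\eqref{zun} we have $(u^j\cdot\nnn)\nnn\rest\Gamma=\Xi z^{u^j\cdot\nnn}$ for $j\in\{1,2\}$, hence $u^1\cdot\nnn=u^2\cdot\nnn$ on $\Gamma$; Corollary~\ref{C:L2=HH12} then yields $u^1=u^2$ (equivalently, one repeats the short argument in its proof). Note that this step relies on $\Xi$ being injective on $Q^M_f\NN^\perp$, which is immediate from the definition~\eqref{N.sp} of $\NN$ together with the commutation relations~\eqref{comut}.

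For surjectivity, fix $(h^1,\,h^2)\in H\times Q^M_f\NN^\perp$. By~\eqref{comut} the vector $h^2$ is fixed by both $Q^M_f$ and $P_{\NN^\perp}$, and the function $\psi$ determined by $\psi\nnn\coloneqq\Xi h^2=\Xi Q^M_f h^2$ is a (sufficiently regular) element of $H^{-\frac12}_{\rm av}(\Gamma,\,\R)$, its zero average being exactly the compatibility condition recorded in the Remark following~\eqref{Xi}. Arguing as in the proof of Corollary~\ref{C:L2=HH12} (cf.\ also~\eqref{Pnabla}), let $\phi\in H^1(\Omega,\,\R)$ solve $\Delta\phi=0$, $\nabla\phi\cdot\nnn=\psi$, $\int_\Omega\phi\,\ed\Omega=0$, and set $u\coloneqq h^1+\nabla\phi$. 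Then $u\in L^2_{\diver}(\Omega,\,\R^3)$ with $\Pi u=h^1$ and $u\cdot\nnn=\psi$, so $(u\cdot\nnn)\nnn\rest\Gamma=\Xi Q^M_f h^2$; by the characterization~\eqref{Axi1} this gives $u\in\aA_{\Xi_1}$, and then, from the definition~\eqref{zun} of $z^{u\cdot\nnn}$, $z^{u\cdot\nnn}=P_{\NN^\perp}Q^M_f h^2=h^2$. Hence $\iota u=(h^1,h^2)$.

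I do not expect any genuine difficulty: the statement is essentially Corollary~\ref{C:L2=HH12} transported through the linear correspondence $u\cdot\nnn\leftrightarrow z^{u\cdot\nnn}$ between $\{u\cdot\nnn\mid u\in\aA_{\Xi_1}\}=\Xi Q^M_f\R^{2M}$ and $Q^M_f\NN^\perp$. The only point deserving a line of care is that this correspondence is genuinely bijective, i.e.\ that no information is lost in passing from the normal trace to its finitely many coordinates; this is guaranteed by the definition of $\NN$ (the kernel of $\Xi$ on the relevant coordinate block) and by~\eqref{comut}.
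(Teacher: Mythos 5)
Your core argument — recover $(u\cdot\nnn)\nnn=\Xi z^{u\cdot\nnn}$ from $z^{u\cdot\nnn}$, then combine with $\Pi u$ via Corollary~\ref{C:L2=HH12} — is exactly the paper's one-line proof, which simply notes that $\Xi$ is injective on $\NN^\perp$ and that $\Xi z^{u\cdot\nnn}=(u\cdot\nnn)\nnn\rest\Gamma$. The added surjectivity and topological-isomorphism discussion is correct but goes beyond what the paper proves; note also that the injectivity of $\Xi$ on $Q^M_f\NN^\perp$ is not actually used in your injectivity step (you only apply $\Xi$, never invert it there) — it enters in the surjectivity step and in the well-definedness of $z^{u\cdot\nnn}$ established at~\eqref{zun}.
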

\begin{proof}
The mapping~$\Xi\colon\NN^\perp\to L^2(\Gamma,\,\R^3)$ is injective and~$\Xi z^{u\cdot\nnn}=(u\cdot\nnn)\nnn\rest\Gamma$.
\end{proof}

Now, let us be given four nonnegative constants
\[
 0\le a<b,\quad0<\varepsilon,\quad0<\delta,
\]
and two
functions~$\varphi,\,\tilde\varphi\in C^1([a,\,b],\,\R)$ such that
$\supp(\varphi)\ne\emptyset$, $\tilde\varphi(t)\geq\varepsilon$ for all $t\in\supp(\varphi)$, and $\tilde\varphi(t)=0$
for $t\in[a,\,a+\delta]\cup[b-\delta,\,b]$.
Further let $\vartheta\in C^2(\Gamma,\,\R)$ be a function such that
$\supp(\vartheta)\subseteq\overline\OO$ and $\vartheta(x)\geq\varepsilon$ for all $x\in\Gamma_{\rm c}$; see~\eqref{GammacO}.

Given a Hilbert space $X$, we define the orthogonal projection $P_M^t$ in $L^2((a,\,b),\,X)$:
\[
P_M^tf\coloneqq \sum_{n=1}^{M}\left(\int_a^bf(\tau)\sigma_n(\tau)\,\ed\tau\right)\sigma_n
\]
where the $\sigma_n$, $n\in\N_0$, are the eigenfunctions of the Dirichlet Laplacian $\Delta_t\coloneqq \p_t\p_t$ in $(a,\,b)$:
$\{\sigma_n(t)\coloneqq (\frac{2}{b-a})^{\frac{1}{2}}\sin(n\pi(\frac{t-a}{b-a}))\mid n\in\N_0\}$, and
$\{\lambda_n=-(\frac{n\pi}{b-a})^2\mid n\in\N_0\}$; $\Delta_t\sigma_n=\lambda_n\sigma_n$.

Recalling the notations from Section~\ref{sS:ct-space} and inspired by an Example in~\cite[Section~5]{Rod15-cocv}, we consider the auxiliary
``control'' space
\begin{align*}
\GG_M&\coloneqq \varphi\chi\E_{0}^\OO P_{\chi^\bot}^\OO P_M^\OO P_M^t
\tilde\varphi\vartheta G^2((a,\,b),\,\Gamma)\rest{\OO}\\
&\coloneqq \{\zeta\mid
\zeta=\varphi\chi\E_{0}^\OO P_{\chi^\bot}^\OO P_M^\OO P_M^t
(\tilde\varphi\vartheta\eta\rest{\OO})\mbox{ and }\eta\in G^2((a,\,b),\,\Gamma)\}.\notag
\end{align*}
and consider the operator
\begin{equation}\label{KO-oper}
 \eta\mapsto K^\OO_t\eta\coloneqq \varphi\chi\E_{0}^\OO P_{\chi^\bot}^\OO P_M^\OO P_M^t
(\tilde\varphi\vartheta\eta\rest{\OO}).
\end{equation}

Recalling
the space $H_N$ and the orthogonal projection $\Pi_N\colon H\to H_N$, see Section~\ref{sS:ct-space},
from~\cite[Section~5]{Rod15-cocv} we know the following controllability result: 
\begin{theorem}\label{T:ct-PiNvb=0.2}
Let us be given $\hat u\in\WW^{(a,\,b)|\rm st}$ and $N\in\mathbb N$, then there exists an integer
$M=\overline C_{\left[N,\,|\hat u|_{\WW^{(a,\,b)|\rm st}}\right]}\in\N_0$ with the following property:
for every $v_0\in H$, we can find $\eta=\eta(v_0)\in G^2((a,\,b),\,\Gamma)$, depending linearly on~$v_0$,
such that the control
$\zeta=K^\OO_t\eta=\varphi\chi\E_{0}^\OO P_{\chi^\bot}^\OO P_M^\OO P_M^t(\tilde\varphi\vartheta\eta\rest\OO)$
drives the system~\eqref{sys-v} to
a vector $v(b)\in V$ such that $\Pi_Nv(b)=0$. 
Moreover, there exists a constant
$\overline C_{\left[|\hat u|_{\WW^{(a,\,b)|\rm st}}\right]}$, depending
on~$|\hat u|_{\WW^{(a,\,b)|\rm st}}$, $\varphi$, $\tilde\varphi$, and~$b-a$, but not on the pair~$(N,\,v_0)$,
such that
\begin{equation} \label{ctPn0-bdd}
|\eta(v_0)|_{G^2((a,\,b),\,\Gamma)}^2\leq \overline C_{\left[|\hat u|_{\WW^{(a,\,b)|\rm st}}\right]}|v_0|_H^2.
\end{equation}
\end{theorem}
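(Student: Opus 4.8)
\emph{Overall strategy.} The plan is to obtain the statement by duality, as an observability inequality for the backward adjoint Oseen--Stokes system, to prove that inequality by a compactness--uniqueness argument that (letting $M\to\infty$) reduces to a unique continuation property already contained in~\cite{Rod14-na,Rod15-cocv}, and then to combine it with the exact boundary null controllability of~\cite{Rod14-na} so as to make the final estimate independent of $N$. Fix $N$ and $M$. Since $K^\OO_t\eta=\Xi\kappa$ for some $\kappa\in H^1((a,\,b),\,\R^{2M})$ (by linearity of $\Xi$, of $P_{\chi^\bot}^\OO$ and of the projections, the time-modes $\sigma_n$ giving $P_M^t$ a $C^\infty$ range that vanishes at $t\in\{a,b\}$, so $\kappa(a)=\kappa(b)=0$), Theorem~\ref{T:exist-wsol-v} and Lemma~\ref{L:smooth-prop} apply: the weak solution of~\eqref{sys-v} with $v_0\in H$ and $\zeta=K^\OO_t\eta$ is $v=Sv_0+L_M\eta$, where $S\colon H\to H$ is the uncontrolled flow $v_0\mapsto v(b)$ and $L_M\colon G^2((a,\,b),\,\Gamma)\to H$, $\eta\mapsto v(b)$, is linear, continuous and of finite rank; moreover $v(b)\in V$, since $\zeta(b)=0$ while Lemma~\ref{L:smooth-prop} gives $v(b)\in H^2(\Omega,\,\R^3)$. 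Thus ``$\Pi_Nv(b)=0$ for a suitable structured control'' means $-\Pi_NSv_0\in{\rm Range}\,\Lambda_M$, with $\Lambda_M\coloneqq\Pi_NL_M\colon G^2((a,\,b),\,\Gamma)\to H_N$, and by the closed range theorem this holds for every $v_0\in H$ iff $\Lambda_M$ is onto $H_N$, i.e. iff $\Lambda_M^*\colon H_N\to(G^2((a,\,b),\,\Gamma))'$ is injective. Testing~\eqref{sys-v} against the solution $q$ of the backward adjoint problem $-\p_tq+\BB^*(\hat u)q-\nu\Delta q+\nabla p_q=0$, $\diver q=0$, $q\rest\Gamma=0$, $q(b)=z\in H_N$, and integrating by parts, one finds $\Lambda_M^*z=(K^\OO_t)^*\bigl((\nnn\cdot\nabla q+p_q\nnn)\rest\Gamma\bigr)$; as $K^\OO_t$ carries $P_M^\OO$, $P_M^t$ and multiplication by the fixed cut-offs $\varphi,\tilde\varphi,\vartheta,\chi$, the quantity $\Lambda_M^*z$ is a finite collection of weighted Fourier coefficients of the conormal trace of $q$ over $(a,\,b)\times\OO$, in the $\OO$-modes $\{\pi_i\nnn,\tau_i\}_{i\le M}$ and the time-modes $\{\sigma_n\}_{n\le M}$.

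\emph{Observability by compactness--uniqueness.} Fixing $N$, I would prove that there are $M_0=\overline C_{\left[N,\,|\hat u|_{\WW^{(a,\,b)|\rm st}}\right]}$ and $C_N=\overline C_{\left[N,\,|\hat u|_{\WW^{(a,\,b)|\rm st}}\right]}$ such that, for every $M\ge M_0$, $|z|_H^2\le C_N|\Lambda_M^*z|^2$ for all $z\in H_N$, uniformly for $\hat u$ in bounded subsets of $\WW^{(a,\,b)|\rm st}$. If this failed there would be $M_k\to\infty$ (all $\ge M_0$), $\hat u_k$ bounded in $\WW^{(a,\,b)|\rm st}$, and $z_k\in H_N$ with $|z_k|_H=1$ and $\Lambda_{M_k}^*z_k\to0$. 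As $H_N$ is finite-dimensional and bounded subsets of $\WW^{(a,\,b)|\rm st}$ are precompact in a topology under which $\BB^*(\hat u_k)\to\BB^*(\hat u)$ suitably, along a subsequence $z_k\to z$ in $H_N$ with $|z|_H=1$, $\hat u_k\to\hat u$, and the corresponding adjoint states $q_k\to q$ with $q(b)=z$. Since $P_M^\OO\to\Id$ and $P_M^t\to\Id$ strongly, the limiting identity is $\varphi\chi\E_0^\OO P_{\chi^\bot}^\OO\bigl(\tilde\varphi\vartheta\,(\nnn\cdot\nabla q+p_q\nnn)\rest\OO\bigr)=0$; using that $\varphi,\tilde\varphi,\vartheta$ are bounded below on the relevant sets, that $\E_0^\OO$ is injective, that $\supp(\chi)=\overline{\Gamma_{\rm c}}$, and that $P_{\chi^\bot}^\OO$ removes only the one-dimensional $\chi\nnn$-direction (which corresponds to the already vanishing average normal flux), one deduces that the conormal trace of $q$ vanishes on a nonempty open subcylinder of $(a,\,b)\times\Gamma_{\rm c}$. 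The unique continuation property for the Oseen--Stokes system underlying~\cite{Rod14-na,Rod15-cocv} then forces $q\equiv0$, so $z=q(b)=0$, contradicting $|z|_H=1$. Hence, for $M\ge M_0$, $\Lambda_M$ is onto $H_N$ with right-inverse norm $\|\Lambda_M^\dagger\|\le C_N^{1/2}$.

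\emph{Removing the $N$-dependence and conclusion.} By the exact boundary null controllability of~\cite{Rod14-na} there is $\zeta^*\in G^2((a,\,b),\,\Gamma)$, supported in $\overline{\Gamma_{\rm c}}$ and arranged to vanish near $t\in\{a,b\}$, linear in $v_0$, with $|\zeta^*|\le C|v_0|_H$ and $C$ independent of $N$, driving $v_0$ to a zero final state. Let $\zeta^\sharp=K^\OO_t\eta^\sharp$ be its projection onto $\GG_M$; then $|\eta^\sharp|_{G^2}\le C|v_0|_H$ and, by continuous dependence on the control (Theorems~\ref{T:exist-wsol-v}--\ref{T:exist-ssol-v}), the final state $v^\sharp\coloneqq Sv_0+L_M\eta^\sharp\in H$ obeys $|v^\sharp|_H\le\omega(M)|v_0|_H$ with $\omega(M)\to0$ as $M\to\infty$, $\omega$ independent of $N$ and $v_0$ (the two final states differ only through the control $\zeta^\sharp-\zeta^*\to0$); a fortiori $|\Pi_Nv^\sharp|_H\le\omega(M)|v_0|_H$. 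Choosing $M\ge M_0$ with $\omega(M)\,C_N^{1/2}\le1$ — so that indeed $M=\overline C_{\left[N,\,|\hat u|_{\WW^{(a,\,b)|\rm st}}\right]}$ — and setting $\eta(v_0)\coloneqq\eta^\sharp+\Lambda_M^\dagger(-\Pi_Nv^\sharp)$, the control $\zeta=K^\OO_t\eta(v_0)$ depends linearly on $v_0$, yields $\Pi_Nv(b)=0$ and $v(b)\in V$, and $|\eta(v_0)|_{G^2((a,\,b),\,\Gamma)}\le|\eta^\sharp|_{G^2}+C_N^{1/2}\omega(M)|v_0|_H\le(C+1)|v_0|_H$, the desired $N$-independent bound. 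The main obstacle is the compactness--uniqueness step: one must verify that the adjoint solution operator is genuinely compact in the topologies used — so that the conormal traces converge and the limiting identity is legitimate — and depends continuously on $\hat u$ on bounded subsets of $\WW^{(a,\,b)|\rm st}$, and one must invoke the precise unique continuation statement for the linearized Navier--Stokes system with conormal observation on $\Gamma_{\rm c}$; the accompanying bookkeeping, whereby $M$ is allowed to depend on $N$ while the final estimate is not, is the remaining technical point.
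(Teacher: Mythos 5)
You should first note that the paper does not prove this statement at all: it is quoted verbatim from~\cite[Section~5]{Rod15-cocv}, where it is obtained by duality from a \emph{truncated} boundary observability inequality. There the full observability inequality for the adjoint Oseen--Stokes system with conormal observation on~$\Gamma_{\rm c}$ (the one behind the exact controllability of~\cite{Rod14-na}, cf.~\cite[Inequality~(3.4)]{Rod15-cocv}) is perturbed: for final data in~$H_N$ the adjoint solution enjoys quantified extra regularity, so the error committed by inserting the projections $P_M^\OO$, $P_M^t$ and the weights $\varphi,\tilde\varphi,\vartheta,\chi$ is absorbed into the left-hand side once $M\ge M(N,|\hat u|_{\WW^{(a,\,b)|\rm st}})$; this yields a truncated observability constant independent of~$N$, and hence~\eqref{ctPn0-bdd} directly. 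Your duality frame ($\Lambda_M$, $\Lambda_M^*$ as weighted Fourier coefficients of the conormal trace, finite-dimensional surjectivity) is the right one, but your proof of the key inequality and your removal of the $N$-dependence follow a different route that has genuine gaps.

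The main gap is the step ``removing the $N$-dependence''. The claim $|v^\sharp|_H\le\omega(M)|v_0|_H$ with $\omega(M)\to0$ \emph{uniformly} in $v_0$ is unsupported: strong convergence $P_M^\OO,P_M^t\to\Id$ only gives pointwise convergence of the projected control, and a uniform (operator-norm) estimate would require compactness of the null-control map $v_0\mapsto\zeta^*$, which you neither state nor prove. Worse, the class $\GG_M$ does \emph{not} exhaust $G^2((a,\,b),\,\Gamma)$ as $M\to\infty$: every element carries the fixed factors $\varphi\chi\tilde\varphi\vartheta$ and is orthogonal to $\chi\nnn\rest\OO$, so ``projecting'' $\zeta^*$ onto $\GG_M$ does not approximate $\zeta^*$ even for a single $v_0$, and there is no reason the resulting final state is small. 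Producing an $H_N$-annihilating control of this weighted, $\chi\nnn$-free form with $N$-independent cost is essentially the content of the theorem, so as written this step is circular. Two further points need repair in the compactness--uniqueness part: the limiting identity only says that $\tilde\varphi\vartheta(\langle\nnn\cdot\nabla\rangle q+p_q\nnn)\rest\OO$ is pointwise proportional to $\chi\nnn$ on the subcylinder (since $P_{\chi^\bot}^\OO$ kills exactly that direction, and the pressure is normalized only up to a time-dependent constant, not up to a multiple of $\chi$), so the reduction to the unique continuation/observability statement requires an additional argument; and the asserted uniformity of $C_N$ over bounded sets of $\hat u$ in $\WW^{(a,\,b)|\rm st}$ needs a genuine compactness/continuity argument for the adjoint solution and its conormal trace under merely weak-$*$ convergent coefficients, which is only sketched. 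The perturbation argument of~\cite{Rod15-cocv} avoids all of this and delivers the $N$-independent constant in one stroke.
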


\subsection{Proof of Theorem \ref{T:ex.st-ct.lin}}
Let us fix a sufficiently large $N\geq1$ and let~$M$ be the integer given in Theorem~\ref{T:ct-PiNvb=0.2}. We organize the proof
into~\ref{stctest} main steps.
\begin{enumerate}[noitemsep,topsep=5pt,parsep=5pt,partopsep=0pt,leftmargin=0em]%
\renewcommand{\theenumi}{{\sf\arabic{enumi}}} 
 \renewcommand{\labelenumi}{} 
 \item \textcircled{\bf s}~Step~\theenumi:\label{st01} {\em driving the system, from $v(0)=v_0$ at time
$t=0$, to a vector $v(1)=v_1\in V$ at time $t=1$.}
Let $z^{v_0\cdot\nnn}\in P_{\NN^\bot}Q^M_f\R^{2M}$ be the vector defined as in~\eqref{zun}, and let $\phi\in C^1([0,\,1],\,\R)$
be a function taking the value $1$ in a neighborhood ${[0,\,\delta)}$ of $t=0$,
and the value $0$ in a neighborhood ${(1-\delta,\,1]}$ of $t=1$, with $\delta<\frac{1}{2}$. Let us also be given $\kappa^0_\tau\in Q^M_l\R^{2M}$.
Then, the function
$\kappa_\phi(v_0,\,\kappa^0_\tau)=\phi (z^{v_0\cdot\nnn}+\kappa^0_\tau)
=\phi(z^{v_0\cdot\nnn}_1,\,z^{v_0\cdot\nnn}_2,\,\dots,\,z^{v_0\cdot\nnn}_M,\,\kappa^0_{\tau,\,M+1},\,
\kappa^0_{\tau,\,M+2},\,\dots,\,\kappa^0_{\tau,\,2M})$ is in $C^1([0,\,1],\,\R^{2M})$.

Next we consider the system~\eqref{sys-v} in $(0,\,1)\times\Omega$, and the control $\zeta=\Xi\kappa_\phi$:
\[
\begin{array}{rclcrcl}
 \p_t v+\BB(\hat u)v
 -\nu\Delta v+\nabla p_v&=&0,&\;& \diver v &=&0,\\
 v\rest \Gamma &=&\Xi\kappa_\phi,&\;& v(0)&=&v_0;
\end{array}
\]
since $\left((v_0-E_1\Xi\kappa_\phi)\cdot\nnn\right)\nnn\rest\Gamma
=(v_0\cdot\nnn)\nnn\rest\Gamma-\Xi z^{v_0\cdot\nnn}=0$, we have 
$v_0-E_1\Xi\kappa_\phi\in H$. By
Theorem~\ref{T:exist-wsol-v}
there exists a weak
solution~$v$ satisfying the estimate $|v|_{W((0,\,1),\,H^1_{\diver}(\Omega,\,\R^3),\,H^{-1}(\Omega,\,\R^3))}^2
\le\overline C_{\left[M,\,|\hat u|_{\WW^{(0,\,1)|\rm wk}}\right]}
\left(|v_0|_{L^2_{\diver}(\Omega,\,\R^3)}^2+|\kappa_\phi|_{H^1((0,\,1),\,\R^{2M})}^2\right)$,
from which we can derive
\begin{equation}\label{stab-est01}
|v|_{W((0,\,1),\,H^1_{\diver}(\Omega,\,\R^3),\,H^{-1}(\Omega,\,\R^3))}^2
\le\overline C_{\left[M,\,|\hat u|_{\WW^{(0,\,1)|\rm wk}}\right]}
\left(|v_0|_{L^2_{\diver}(\Omega,\,\R^3)}^2+|\kappa^0_\tau|_{\R^{2M}}^2\right).
\end{equation}
Further, $v\rest\Gamma$ vanishes in a neighborhood of $t=1$, which implies that $v_1\coloneqq v(1)\in H$. Furthermore,
from Lemma~\ref{L:smooth-prop}, since
$\hat u\rest{(0,\,1)}\in \WW^{(0,\,1)|\rm st}$, we actually have $v(1)\in V\subset H$. 

\item \textcircled{\bf s}~Step~\theenumi:\label{stn1} {\em driving the system from $v(n)=v_n\in V$ at time $t=n\in\N_0$ to a
vector $v(n+1)=v_{n+1}\in V$ at time $t=n+1$, with
$|v_{n+1}|_H^2\le \ex^{-\lambda}|v_n|_H^2$.} 
Now we consider the system~\eqref{sys-v} in $(n,\,n+1)\times\Omega$, and the
control $\zeta=K^\OO_t\bar\eta^{\hat u,n}(v_n)$, where $\bar\eta^{\hat u,n}(v_n)=\eta(v_n)$ is
given in Theorem~\ref{T:ct-PiNvb=0.2}, with $(a,\,b)=(n,\,n+1)$:
\begin{equation*}
\begin{array}{rclcrcl}
 \p_t v+\BB(\hat u)v
 -\nu\Delta v+\nabla p_v&=&0,&\;& \diver v &=&0,\\
 v\rest \Gamma &=&K^\OO_t\bar\eta^{\hat u,n}(v_n),&\;& v(n)&=&v_n.
\end{array}
\end{equation*}
Now, we observe that $\sin(m\pi(t-n))=(-1)^{mn}\sin(m\pi t)$ and
$\{\sqrt{2}\sin(m\pi t)\mid m\in\N_0\}\subset H^1_0((n,\,n+1),\,\R)$
is an orthonormal basis in $L^2((n,\,n+1),\,\R)$. For time $t\in(n,\,n+1)$, the control
$K^\OO_t\bar\eta^{\hat u,n}(v_n)$ can be rewritten as
\[
\varphi\chi\E_{0}^\OO \sum_{i=1}^M\sum_{m=1}^M
\left(\eta_{i,m}^\nnn \sigma_m(t) P_{\chi^\bot}^\OO\pi_i\nnn+\eta_{i,m}^\ttt\sigma_m(t)\tau_i\right)
\]
where $\eta_{i,m}^\nnn$ and $\eta_{i,m}^\ttt$ are constants, and $\sigma_m(t)=\sqrt{2}\sin(m\pi t)$. Define 
$K^{[n]}\colon \MM_{2M\times M}\to H^1((n,\,n+1),\,H^2(\Omega,\,\R^3))$, mapping a matrix $A=[A_{j,m}]$, with real
entries $A_{j,m}\in\R$ for
$j=1,\,2,\,\dots,\, 2M$ and $m=1,\,2,\,\dots,\,M$, to
\[
K^{[n]}A\coloneqq \varphi\chi\E_{0}^\OO \sum_{i=1}^M\sum_{m=1}^M
\left(A_{i,m}\sigma_m(t) P_{\chi^\bot}^\OO\pi_i\nnn+A_{M+i,m}\sigma_m(t)\tau_i\right),
\]
and consider the space of matrices
\begin{equation}\label{N.spx}
\NN_\times\coloneqq \left\{A\in \MM_{2M\times M}\biggl| K^{[n]} A=0\right\};
\end{equation}
we suppose $\MM_{2M\times M}\sim \R^{2M^2}$  endowed with the scalar product
$(A,\,B)_\MM\coloneqq \sum_{i=1}^M\sum_{m=1}^M (A_{i,\,m}B_{i,\,m}+A_{M+i,\,m}B_{M+i,\,m})$. It follows that 
$A\mapsto |P_{\NN_\times^\bot} A|_{\MM}$ defines a norm in the range $K^\OO_t G^2((n,\,n+1),\,\Gamma)= K^{[n]}\MM_{2M\times M}$.
Since $K^\OO_t$ is linear and continuous from $\eta\in G^2((a,\,b),\,\Gamma)$ into $G^2_{\rm av}((a,\,b),\,\Gamma)$
(cf.~\cite[Proposition~5.1]{Rod15-cocv}), from the finite dimensionality of $K^\OO_t G^2((n,\,n+1),\,\Gamma)$, it follows that
$|P_{\NN_\times^\bot} A|_{\MM}\le C|\eta|_{G^2((n,\,n+1),\,\Gamma)}$, where $A$ is any matrix satisfying
$K^{[n]} A=K^\OO_t\eta$. Moreover the mapping $\eta\mapsto A^\eta\coloneqq P_{\NN_\times^\bot} A$ from $G^2((n,\,n+1),\,\Gamma)$ into
$\NN_\times^\bot$ is well defined, that is, $A^\eta$
is the unique element in $\NN_\times^\bot$ that solves $K^{[n]}A^\eta\coloneqq K^\OO_t\eta$.

As a consequence of~\eqref{ctPn0-bdd} we have that $|A^{\bar\eta^{\hat u,n}}|_\MM^2\le
\overline C_{\left[|\hat u|_{\WW^{\rm st}}\right]}|v_n|_H^2$. Defining, for each $1\le j\le 2M$, the functions
$\bar\kappa_j^{\hat u}(n,\,t)\coloneqq \sum_{m=1}^M A^{\bar\eta^{\hat u,n}}_{j,\,m}\sigma_m(n,\,t)$, we find that
$\bar\kappa^{\hat u}(n,\,\Bigcdot)=\bar\kappa^{\hat u}(n,\,\Bigcdot)(v_n)
\coloneqq (\bar\kappa_1^{\hat u}(n,\,\Bigcdot),\,\bar\kappa_2^{\hat u}(n,\,\Bigcdot),\,
\dots,\,\bar\kappa_{2M}^{\hat u}(n,\,\Bigcdot))$ is in
$C^1([n,\,n+1],\,\R^{2M})$; furthermore
\begin{equation}\label{ct-bddC1}
K^\OO_t\bar\eta^{\hat u,\,n}=\Xi\bar\kappa^{\hat u}(n,\,\Bigcdot)\quad \mbox{and}\quad
|\bar\kappa^{\hat u}(n,\,\Bigcdot)(v_n)|_{C^1([n,\,n+1],\,\R^{2M})}^2\le
\overline C_{\left[|\hat u|_{\WW^{\rm st}}\right]}|v_n|_H^2.
\end{equation}

Now, Lemma~\ref{L:smooth-prop} and the continuity of the mapping $v\mapsto v(n+1)$, from
the space $W((n,\,n+1),\,H^2_{\diver}(\Omega,\,\R^3),\,L^2(\Omega,\,\R^3))$
into $H^1_{\diver}(\Omega,\,\R^3)$, imply that
\[
|v(n+1)|_{H^1_{\diver}(\Omega,\,\R^3)}^2\le\overline C_{\left[|\hat u|_{\WW^{(n,\,n+1)|\rm st}}\right]}
\left(|v_n|_{L^2_{\diver}(\Omega,\,\R^3)}^2 +|\bar\kappa^{\hat u}(n,\,\Bigcdot)(v_n)|_{H^1((n,\,n+1),\,\R^{2M})}^2\right);
\]
on the other hand from the definition of $K^\OO_t$, in~\eqref{KO-oper}, we have that $v\rest \Gamma$ vanishes
in a neighborhood of $t=n+1$ and, since $\bar\kappa^{\hat u}(n,\,\Bigcdot)$ satisfies~\eqref{ct-bddC1}, we have that
$|v(n+1)|_{V}^2\le\overline C_{\left[|\hat u|_{\WW^{(n,\,n+1)|\rm st}}\right]}
|v_n|_{H}^2$. Now we use the fact that $\Pi_Nv(n+1)=0$ to obtain
$\alpha_N|v(n+1)|_{H}^2\le|v(n+1)|_{V}^2$, where $\alpha_N$ is the $N$th eigenvalue of the Stokes operator
(see Section~\ref{sS:ct-space}), which allow us to write
$|v(n+1)|_{H}^2\le\alpha_N^{-1}\overline C_{\left[|\hat u|_{\WW^{(n,\,n+1)|\rm st}}\right]}
|v_n|_{H}^2$; then, for big enough $N$, such that $\alpha_N
\geq\ex^\lambda\overline C_{\left[|\hat u|_{\WW^{(n,\,n+1)|\rm st}}\right]}$, we have
that $v_{n+1}\coloneqq v(n+1)$ satisfies
\begin{equation}\label{stab.n}
|v_{n+1}|_{H}^2\le\ex^{-\lambda}|v_n|_{H}^2.
\end{equation}

\item \textcircled{\bf s}~Step~\theenumi:\label{stconc} {\em concatenation; a stabilizing control.}
First of all, we fix the functions $\varphi$
and $\tilde\varphi$ (appearing in Theorem~\ref{T:ct-PiNvb=0.2}) for the interval $(a,\,b)=(1,\,2)$ and
then set $\varphi(t)\coloneqq\varphi(t-n+1)$
and $\tilde\varphi(t)\coloneqq\tilde\varphi(t-n+1)$ for $t\in(n,\,n+1)$. Since $\hat u\in\WW^{\rm st}$ (cf.~\eqref{Wspaces}),
the integer~$N$ in Step~\ref{stn1} may be taken the same in each interval
$(n,\,n+1)$, $n\in\N_0$; then,
the same holds for the integer~$M$ in Theorem~\ref{T:ct-PiNvb=0.2}, with $(a,\,b)=(n,\,n+1)$. 
Now we show that, given $(v_0,\,\kappa^0_\tau)\in\aA_{\Xi_1}\times Q^M_l\R^{2M}$, the control
\begin{equation}\label{stab.contr}
\zeta_{\hat u}^\lambda=\zeta_{\hat u}^\lambda(v_0,\,\kappa^0_\tau)
\coloneqq \left\{\begin{array}{ll}
\Xi\kappa_\phi(v_0,\,\kappa^0_\tau),&\mbox{if }t\in[0,\,1);\\
\Xi\bar\kappa^{\hat u}(n,\,\Bigcdot)(v_n),&\mbox{if }t\in[n,\,n+1),\mbox{ with }n\in\N_0;
\end{array}\right.
\end{equation}
stabilizes system~\eqref{sys-v} to the zero solution.
Here, for $n\in\N_0$, $v_n\coloneqq v(n)$ where $v$ is the solution
of the system~\eqref{sys-v} in $(0,\,n)\times\Omega$ with control $\zeta_{\hat u}^\lambda\rest{(0,\,n)\times\Gamma}$.

From~\eqref{stab-est01}, and the inequality $1\le\ex^\lambda\ex^{-\lambda t}$, for $t\in[0,\,1]$, we obtain
\begin{equation}\label{est.exp01}
|v(t)|_{L^2_{\diver}(\Omega,\,\R^3)}^2
\le\overline C_{\left[\lambda,\,|\hat u|_{\WW^{\rm st}}\right]}\ex^{-\lambda t}
\left(|v_0|_{L^2_{\diver}(\Omega,\,\R^3)}^2+|\kappa^0_\tau|_{\R^{2M}}^2\right),\mbox{ for all }t\in[0,\,1];
\end{equation}
on the other hand for $t\geq 1$ we also have,
$|v(t)|_{L^2_{\diver}(\Omega,\,\R^3)}^2\le \overline C_{\left[|\hat u|_{\WW^{\rm st}}\right]}
|v(\lfloor t\rfloor)|_{H}^2$, where~$\lfloor t\rfloor\ge1$ denotes the biggest integer that is smaller than~$t$, defined by
\begin{equation}\label{floor}
\lfloor r\rfloor\in\Z\quad\mbox{and}\quad\lfloor r\rfloor+1>r\geq\lfloor r\rfloor,\quad\mbox{for all } r\in\R.  
\end{equation}
Thus we obtain
$|v(t)|_{L^2_{\diver}(\Omega,\,\R^3)}^2\le \overline C_{\left[|\hat u|_{\WW^{\rm st}}\right]}
\ex^{-\lambda(\lfloor t\rfloor-1)}|v(1)|_{H}^2 =\overline C_{\left[|\hat u|_{\WW^{\rm st}}\right]}
\ex^{\lambda(t-\lfloor t\rfloor)}\ex^{-\lambda (t-1)}|v(1)|_{H}^2
\le \overline C_{\left[|\hat u|_{\WW^{\rm st}}\right]}
\ex^{\lambda}\ex^{-\lambda (t-1)}|v(1)|_{H}^2$. Using~\eqref{est.exp01} (with $t=1$), we can conclude that
\begin{equation}\label{est.exp}
|v(t)|_{L^2_{\diver}(\Omega,\,\R^3)}^2
\le\overline C_{\left[\lambda,\,|\hat u|_{\WW^{\rm st}}\right]}\ex^{-\lambda t}
\left(|v_0|_{L^2_{\diver}(\Omega,\,\R^3)}^2+|\kappa^0_\tau|_{\R^{2M}}^2\right),\mbox{ for all }t\in[0,\,+\infty).
\end{equation}

\item \textcircled{\bf s}~Step~\theenumi:\label{stctest} {\em control estimate.}
Defining the mapping
\begin{equation}
\kappa^{\hat u,\lambda}=\kappa^{\hat u,\lambda}(v_0,\,\kappa^0_\tau)
\coloneqq \left\{\begin{array}{ll}
\kappa_\phi(v_0,\,\kappa^0_\tau),&\mbox{if }t\in{[0,\,1)};\\
\bar\kappa^{\hat u}(n,\,\Bigcdot)(v_n),&\mbox{if }t\in{[n,\,n+1)},\mbox{ with }n\in\N_0;
\end{array}\right.
\end{equation}
we see that the control in~\eqref{stab.contr} can be rewritten as
\begin{equation}\label{stab.ctXi}
\zeta_{\hat u}^\lambda=\Xi\kappa^{\hat u,\lambda}=\Xi\kappa^{\hat u,\lambda}(v_0,\,\kappa^0_\tau).
\end{equation}
Notice that for any given positive integer $n\in\N_0$, the control function $\kappa^{\hat u,\lambda}$
vanishes in a neighborhood of~$n$. Indeed, from Step~\ref{st01}, $\kappa^{\hat u,\lambda}$ vanishes in $[1-\delta,\,1]$, and from
Step~\ref{stn1},
it also vanishes in $[n,\,n+\delta]\cup[n+1-\delta,\,n+1]$,
because $\supp(\varphi\rest{[n,\,n+1]})\subset\supp(\tilde\varphi\rest{[n,\,n+1]})
\subseteq[n+\delta,\,n+1-\delta]$. 
Let us be given $\hat\lambda\in{[0,\,\lambda)}$; then the mapping
$v_0\mapsto \ex^{\frac{\hat\lambda}{2} t}\kappa^{\hat u,\lambda}(v_0,\,\kappa^0_\tau)$ is linear and continuous, from
$\aA_{\Xi_1}\times Q^M_l\R^{2M}$ into $H^1(\R_0,\,\R^{2M})$. Indeed, the linearity follows essentially
from the linearity of system~\eqref{sys-v} and from the linearity of the mappings
$(v_0,\,\kappa^0_\tau)\mapsto\kappa_\phi(v_0,\,\kappa^0_\tau)$ and $v_n\mapsto\bar\kappa^{\hat u}(n,\,\Bigcdot)(v_n)$.
The boundedness follows by direct computations: we find
\begin{align*}
\left|\ex^{\frac{\hat\lambda}{2} \Bigcdot}\kappa^{\hat u,\lambda}\right|_{H^1(\R_0,\,\R^{2M})}^2
&=\left|\ex^{\frac{\hat\lambda}{2}\Bigcdot}
\kappa_\phi(v_0,\,\kappa^0_\tau)\right|_{H^1((0,\,1),\,\R^{2M})}^2+\sum_{n\in\mathbb N_0} \left|\ex^{\frac{\hat\lambda}{2}\Bigcdot}
\bar\kappa^{\hat u}(n,\,\Bigcdot)(v_n)\right|_{H^1((n,\,n+1),\,\R^{2M})}^2\\
&\le \overline C_{\left[\hat\lambda\right]}\left(
\left|\kappa_\phi(v_0,\,\kappa^0_\tau)\right|_{H^1((0,\,1),\,\R^{2M})}^2
+\sum_{n\in\mathbb N_0} \ex^{\hat\lambda n}
\left|\bar\kappa^{\hat u}(n,\,\Bigcdot)(v_n)\right|_{H^1((n,\,n+1),\,\R^{2M})}^2\right)\\
&\le\overline C_{\left[\lambda,\,|\hat u|_{\WW^{\rm st}}\right]}
\left(|v_0|_{L^2_{\diver}(\Omega,\,\R^3)}^2+|\kappa^0_\tau|_{\R^{2M}}^2
+\sum_{n\in\mathbb N_0} \ex^{\hat\lambda n}|v_n|_{H}^2\right)
\end{align*}
and, using~\eqref{est.exp} (with $t=n$) and the identity
$\sum_{n\in\mathbb N_0} \ex^{(\hat\lambda-\lambda) n}=\frac{\ex^{(\hat\lambda-\lambda)}}{1-\ex^{(\hat\lambda-\lambda)}}$,
it follows that
\begin{equation}\label{ex.eta-cont}
\left|\ex^{\frac{\hat\lambda}{2} \Bigcdot}\kappa_{\hat u,\lambda}(v_0,\,\kappa^0_\tau)\right|_{H^1(\R_0,\,\R^{2M})}^2
\le\overline C_{\left[(\lambda-\hat\lambda)^{-1},\,\lambda,\,|\hat u|_{\WW^{\rm st}}\right]}
\left(|v_0|_{L^2_{\diver}(\Omega,\,\R^3)}^2+|\kappa^0_\tau|_{\R^{2M}}^2\right),
\end{equation}
for any given $0\le\hat\lambda<\lambda$, which finishes the proof of Theorem \ref{T:ex.st-ct.lin}.
\hfil\qed
\end{enumerate}

\begin{corollary}\label{C:ex.H1-cont}
The solution $v=v(v_0,\,\kappa^0_\tau)$ in Theorem~\ref{T:ex.st-ct.lin} satisfies the estimate
\begin{equation}\label{ex.H1-cont}
\left|\ex^{\frac{\hat\lambda}{2} \Bigcdot}v\right|_{W(\R_0,\,H^1_{\diver}(\Omega,\,\R^3),\,H^{-1}(\Omega,\,\R^3))}^2
\le\overline C_{\left[(\lambda-\hat\lambda)^{-1},\,\lambda,\,|\hat u|_{\WW^{\rm st}}\right]}
\left(|v_0|_{L^2_{\diver}(\Omega,\,\R^3)}^2+|\kappa^0_\tau|_{\R^{2M}}^2\right).
\end{equation}
\end{corollary}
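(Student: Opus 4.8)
The plan is to derive~\eqref{ex.H1-cont} by localizing on the unit time-intervals that tile $\R_0$, transferring into the space--time norm both the pointwise-in-time decay of Theorem~\ref{T:ex.st-ct.lin} and the weighted control bound~\eqref{ex.eta-cont}. On the first interval $(0,\,1)$ the solution $v$ has initial datum $v_0\in\aA_{\Xi_1}$ and control $\Xi\kappa_\phi(v_0,\,\kappa^0_\tau)$, so Theorem~\ref{T:exist-wsol-v} together with~\eqref{stab-est01} already bounds $|v|_{W((0,\,1),\,H^1_{\diver}(\Omega,\,\R^3),\,H^{-1}(\Omega,\,\R^3))}^2$ by $\overline C_{\left[\lambda,\,|\hat u|_{\WW^{\rm st}}\right]}(|v_0|_{L^2_{\diver}(\Omega,\,\R^3)}^2+|\kappa^0_\tau|_{\R^{2M}}^2)$; since $\ex^{\frac{\hat\lambda}{2}\Bigcdot}$ is bounded on $(0,\,1)$, this interval contributes no more than the right-hand side of~\eqref{ex.H1-cont}.

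On each interval $(n,\,n+1)$, $n\in\N_0$, the restriction of $v$ solves~\eqref{sys-v} with initial datum $v_n=v(n)\in V\subset H$ and control $\Xi\bar\kappa^{\hat u}(n,\,\Bigcdot)(v_n)$, whose trace vanishes near $t=n$ (so that the weak compatibility $v_n-(E_1\Xi\bar\kappa^{\hat u}(n,\,\Bigcdot)(v_n))(n)\in H$ holds, as already used in Step~\ref{stn1}); hence Theorem~\ref{T:exist-wsol-v} (with $g=0$) gives $|v|_{W((n,\,n+1),\,H^1_{\diver}(\Omega,\,\R^3),\,H^{-1}(\Omega,\,\R^3))}^2\le\overline C_{\left[|\hat u|_{\WW^{\rm st}}\right]}\bigl(|v_n|_{L^2_{\diver}(\Omega,\,\R^3)}^2+|\bar\kappa^{\hat u}(n,\,\Bigcdot)(v_n)|_{H^1((n,\,n+1),\,\R^{2M})}^2\bigr)$. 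Inserting the weight, using $\ex^{\hat\lambda t}\le\ex^{\hat\lambda}\ex^{\hat\lambda n}$ for $t\in(n,\,n+1)$, the product rule $\p_t(\ex^{\frac{\hat\lambda}{2}t}v)=\frac{\hat\lambda}{2}\ex^{\frac{\hat\lambda}{2}t}v+\ex^{\frac{\hat\lambda}{2}t}\p_tv$, and the embedding $H^1_{\diver}(\Omega,\,\R^3)\hookrightarrow H^{-1}(\Omega,\,\R^3)$ to absorb the lower-order term, one obtains $|\ex^{\frac{\hat\lambda}{2}\Bigcdot}v|_{W((n,\,n+1),\,H^1_{\diver}(\Omega,\,\R^3),\,H^{-1}(\Omega,\,\R^3))}^2\le\overline C_{\left[\hat\lambda,\,|\hat u|_{\WW^{\rm st}}\right]}\ex^{\hat\lambda n}\bigl(|v_n|_{L^2_{\diver}(\Omega,\,\R^3)}^2+|\bar\kappa^{\hat u}(n,\,\Bigcdot)(v_n)|_{H^1((n,\,n+1),\,\R^{2M})}^2\bigr)$.

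It then remains to sum over $n\in\N_0$ and add the $(0,\,1)$ contribution, using that $|\ex^{\frac{\hat\lambda}{2}\Bigcdot}v|_{W(\R_0,\,H^1_{\diver}(\Omega,\,\R^3),\,H^{-1}(\Omega,\,\R^3))}^2$ is the sum of the corresponding squared norms over the subintervals (the global time derivative being recovered piecewise since $v$ is continuous in $L^2_{\diver}(\Omega,\,\R^3)$ across integer times, so no concentration occurs there). The control part sums, up to a constant, to $|\ex^{\frac{\hat\lambda}{2}\Bigcdot}\kappa^{\hat u,\lambda}(v_0,\,\kappa^0_\tau)|_{H^1(\R_0,\,\R^{2M})}^2$, bounded by the right-hand side of~\eqref{ex.H1-cont} through~\eqref{ex.eta-cont}. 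For the remaining part one invokes the decay $|v_n|_{L^2_{\diver}(\Omega,\,\R^3)}^2=|v(n)|_{L^2_{\diver}(\Omega,\,\R^3)}^2\le\overline C_{\left[\lambda,\,|\hat u|_{\WW^{\rm st}}\right]}\ex^{-\lambda n}(|v_0|_{L^2_{\diver}(\Omega,\,\R^3)}^2+|\kappa^0_\tau|_{\R^{2M}}^2)$ of Theorem~\ref{T:ex.st-ct.lin}, so that $\sum_{n\in\N_0}\ex^{\hat\lambda n}|v_n|_{L^2_{\diver}(\Omega,\,\R^3)}^2$ is controlled by the convergent geometric series $\sum_{n\in\N_0}\ex^{(\hat\lambda-\lambda)n}=\frac{\ex^{\hat\lambda-\lambda}}{1-\ex^{\hat\lambda-\lambda}}=\overline C_{\left[(\lambda-\hat\lambda)^{-1}\right]}$ times $|v_0|_{L^2_{\diver}(\Omega,\,\R^3)}^2+|\kappa^0_\tau|_{\R^{2M}}^2$. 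Collecting the bounds yields~\eqref{ex.H1-cont}. The argument is essentially bookkeeping with the exponential weight; the only slightly delicate point, namely that differentiating $\ex^{\frac{\hat\lambda}{2}t}$ produces an extra term in $v$ measured in $H^{-1}(\Omega,\,\R^3)$, is harmless thanks to $H^1_{\diver}(\Omega,\,\R^3)\hookrightarrow H^{-1}(\Omega,\,\R^3)$, so no genuine obstacle is expected.
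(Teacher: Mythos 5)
Your proof is correct and takes essentially the same route as the paper: localize on the unit intervals $(n,\,n+1)$, invoke Theorem~\ref{T:exist-wsol-v} on each, sum using the decay estimate~\eqref{est.exp} and the weighted control bound~\eqref{ex.eta-cont}, and absorb the low-order term coming from differentiating the exponential weight. The only (organizational) difference is that the paper applies Theorem~\ref{T:exist-wsol-v} directly to the weighted function $w=\ex^{\frac{\hat\lambda}{2}\Bigcdot}v$, treating the commutator term $-\frac{\hat\lambda}{2}w$ as a source~$g$ in system~\eqref{sys-vg} and handling its global contribution by $|\ex^{\frac{\hat\lambda}{2}\Bigcdot}v|_{L^2(\R_0,\,H^{-1})}^2$ together with $L^2\subset H^{-1}$ and~\eqref{est.exp}, whereas you apply the theorem to~$v$ and insert the weight afterward, absorbing the same term locally through $H^1_{\diver}(\Omega,\,\R^3)\hookrightarrow H^{-1}(\Omega,\,\R^3)$; the two bookkeeping choices are interchangeable and give the same constant structure.
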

\begin{proof}
Proceeding as in the proof of Lemma~\ref{L:smooth-prop}, we start by noticing that
$w\coloneqq \ex^{\frac{\hat\lambda}{2} \Bigcdot}v$ solves system~\eqref{sys-vg}, in each interval of time $(a,\,b)\subset\R_0$,
with
$\left(w(a),\,g,\,K\eta\right)
=\left(\ex^{\frac{\hat\lambda}{2}a}v(a),\,-\frac{\hat\lambda}{2}\ex^{\frac{\hat\lambda}{2}\Bigcdot}v,\,
\Xi\ex^{\frac{\hat\lambda}{2}\Bigcdot}\kappa^{\hat u,\,\lambda}\right)$.
From Theorem~\ref{T:exist-wsol-v}, we have
\begin{align*}
&\quad\;|w|_{W((n,\,n+1),\,H^1_{\diver}(\Omega,\,\R^3),\,H^{-1}(\Omega,\,\R^3))}^2\\
&\le\overline C_{\left[|\hat u|_{\WW^{\rm wk}}\right]}
\left(\ex^{\hat\lambda n}|v(n)|_{L^2_{\diver}(\Omega,\,\R^3)}^2+
\textstyle\frac{\hat\lambda^2}{4}|\ex^{\frac{\hat\lambda}{2}\Bigcdot}v|_{L^2((n,\,n+1),\,H^{-1}(\Omega,\,\R^3))}^2
+|\ex^{\frac{\hat\lambda}{2}\Bigcdot}\kappa^{\hat u,\,\lambda}|_{H^1((n,\,n+1),\,\R^{2M})}^2\right);
\end{align*}
thus, from~\eqref{ex.eta-cont} and~\eqref{est.exp} and from the continuity of
the inclusion $L^2(\Omega,\,\R^3)\subset H^{-1}(\Omega,\,\R^3)$,
\begin{align*}
&\quad\,|w|_{W(\R_0,\,H^1_{\diver}(\Omega,\,\R^3),\,H^{-1}(\Omega,\,\R^3))}^2\\
&\le\overline C_{\left[\frac{1}{\lambda-\hat\lambda},\,\lambda,\,|\hat u|_{\WW^{\rm st}}\right]}
\left(\sum_{n\in\N}\ex^{\hat\lambda n}\ex^{-\lambda n}+\hspace*{-.2em}
\int_{\R_0}\ex^{\hat\lambda t}\ex^{-\lambda t}\,\ed t+1\right)\left(|v_0|_{L^2_{\diver}(\Omega,\,\R^3)}^2+|\kappa^0_\tau|_{\R^{2M}}^2\right),
\end{align*}
which implies~\eqref{ex.H1-cont}, because $\sum_{n\in\N}\ex^{\hat\lambda n}\ex^{-\lambda n}+
\int_{\R_0}\ex^{\hat\lambda t}\ex^{-\lambda t}\,\ed t+1
=\frac{1}{(1-\ex^{\hat\lambda-\lambda})}+\frac{1}{(\lambda-\hat\lambda)}+1$, since $\hat\lambda<\lambda$.
\end{proof}

\section{The Oseen--Stokes system: feedback stabilizing control}\label{S:linear-feed}
In this Section, we show that the finite-dimensional exponentially stabilizing control (cf.~Theorem~\ref{T:ex.st-ct.lin})
can be chosen in feedback form. In order to be more precise we will need to derive first some auxiliary results and consider a suitable
extended ``equivalent'' system.

\subsection{Some auxiliary results}
Once more we recall the projection
$\Pi$ and the space~$\NN$, see~\eqref{ProjH} and~\eqref{N.sp}.

\begin{lemma}\label{L:L2H1b-H1}
Let $v$ solve system~\eqref{sys-v}, with $\zeta=\Xi\kappa$. If
$\ex^{\frac{\lambda}{2} \Bigcdot}\Pi v\in L^2(\R_0,\,H)$ and
$\ex^{\frac{\lambda}{2} \Bigcdot}\kappa\in H^1(\R_0,\,\NN^\bot)$, then 
$\ex^{\frac{\lambda}{2} \Bigcdot}v\in W(\R_0,\,H^1_{\diver}(\Omega,\,\R^3),\,H^{-1}(\Omega,\,\R^3))$, with
\begin{align}
&\quad|\ex^{\frac{\lambda}{2} \Bigcdot}v|_{W(\R_0,\,H^1_{\diver}(\Omega,\,\R^3),\,H^{-1}(\Omega,\,\R^3))}^2\notag\\
&\le
\overline C_{\left[|\hat u|_{\WW^{\rm st}},\,\lambda\right]}\left(|v_0|_{L^2_{\diver}(\Omega,\,\R^3)}^2
+|\ex^{\frac{\lambda}{2} \Bigcdot}\Pi v|_{L^2(\R_0,\,H)}^2
+|\ex^{\frac{\lambda}{2} \Bigcdot}\kappa|_{H^1(\R_0,\,\NN^\bot)}^2\right).\label{vHtovH1}
\end{align}
\end{lemma}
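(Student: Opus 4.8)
The strategy has two parts: first we show that the hypotheses already force $\ex^{\frac{\lambda}{2}\Bigcdot}v\in L^2(\R_0,\,L^2_{\diver}(\Omega,\,\R^3))$, and then we upgrade this to the claimed $W$-regularity by localizing in time and invoking Theorem~\ref{T:exist-wsol-v} on windows of bounded length.

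\emph{Step~1: $\ex^{\frac{\lambda}{2}\Bigcdot}v\in L^2(\R_0,\,L^2_{\diver}(\Omega,\,\R^3))$.} By Theorem~\ref{T:exist-wsol-v}, on every bounded interval $v\in W((0,\,T),\,H^1_{\diver}(\Omega,\,\R^3),\,H^{-1}(\Omega,\,\R^3))\subset C([0,\,T],\,L^2_{\diver}(\Omega,\,\R^3))$, so $v(t)\in\aA_{\Xi_1}$ for every $t\ge0$, and arguing as in the derivation of~\eqref{Axi1} one gets $(v(t)\cdot\nnn)\nnn\rest\Gamma=\Xi Q^M_f\kappa(t)$. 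Since $\ex^{\frac{\lambda}{2}\Bigcdot}\kappa\in H^1(\R_0,\,\NN^\bot)$ we have $\kappa(t)\in\NN^\bot$, hence by~\eqref{comut} and the definition~\eqref{zun}, $z^{v(t)\cdot\nnn}=P_{\NN^\bot}Q^M_f\kappa(t)=Q^M_f\kappa(t)$, so $|z^{v(t)\cdot\nnn}|_{\NN^\bot}\le|\kappa(t)|_{\R^{2M}}$. The norm equivalence of Lemma~\ref{L:eq-normsAd1} then gives $|v(t)|_{L^2_{\diver}(\Omega,\,\R^3)}^2\le C\bigl(|\Pi v(t)|_H^2+|\kappa(t)|_{\R^{2M}}^2\bigr)$ for a.e.\ $t\ge0$; multiplying by $\ex^{\lambda t}$ and integrating over $\R_0$ yields
\[
|\ex^{\frac{\lambda}{2}\Bigcdot}v|_{L^2(\R_0,\,L^2_{\diver}(\Omega,\,\R^3))}^2\le C\Bigl(|\ex^{\frac{\lambda}{2}\Bigcdot}\Pi v|_{L^2(\R_0,\,H)}^2+|\ex^{\frac{\lambda}{2}\Bigcdot}\kappa|_{H^1(\R_0,\,\NN^\bot)}^2\Bigr).
\]

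\emph{Step~2: localization and summation.} Set $w\coloneqq\ex^{\frac{\lambda}{2}\Bigcdot}v$. Checking Definition~\ref{D:wsol-l} directly (using Remark~\ref{R:sol-ind-ext} to work with the extension $\ex^{\frac{\lambda}{2}\Bigcdot}E_1\zeta$), $w$ is a weak solution of~\eqref{sys-vg} on every bounded $(a,\,b)\subset\R_0$ with data $\bigl(w(a),\,g,\,K\eta\bigr)=\bigl(\ex^{\frac{\lambda}{2}a}v(a),\,-\tfrac{\lambda}{2}w,\,\Xi(\ex^{\frac{\lambda}{2}\Bigcdot}\kappa)\bigr)$; by Step~1, $g\in L^2(\R_0,\,L^2(\Omega,\,\R^3))\subset L^2(\R_0,\,H^{-1}(\Omega,\,\R^3))$, $\ex^{\frac{\lambda}{2}\Bigcdot}\kappa\in H^1$, and $w(a)-E_1\Xi(\ex^{\frac{\lambda}{2}a}\kappa(a))=\ex^{\frac{\lambda}{2}a}\bigl(v(a)-E_1\Xi\kappa(a)\bigr)\in H$, so Theorem~\ref{T:exist-wsol-v} applies on every bounded subinterval. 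Now fix cut-offs $\theta_n\in C^\infty(\R,\,[0,\,1])$, $n\in\N_0$, with $\theta_n\equiv1$ on $[n,\,n+1]$, $\supp\theta_n\subset(n-1,\,n+2)$, and $|\theta_n'|\le C$ uniformly in $n$. Then $\theta_nw$ is a weak solution of~\eqref{sys-vg} on $(n-1,\,n+2)$ with vanishing initial condition, forcing $-\tfrac{\lambda}{2}\theta_nw-\theta_n'w$, and control $\Xi(\theta_n\ex^{\frac{\lambda}{2}\Bigcdot}\kappa)$; since $|\hat u|_{\WW^{(n-1,\,n+2)|\rm wk}}\le C|\hat u|_{\WW^{\rm st}}$ (cf.~\eqref{Wspaces}), Theorem~\ref{T:exist-wsol-v} together with the continuous inclusion $L^2(\Omega,\,\R^3)\subset H^{-1}(\Omega,\,\R^3)$ gives, with a constant uniform in $n$,
\[
|w|_{W((n,\,n+1),\,H^1_{\diver}(\Omega,\,\R^3),\,H^{-1}(\Omega,\,\R^3))}^2\le\overline C_{\left[|\hat u|_{\WW^{\rm st}},\,\lambda\right]}\Bigl(|w|_{L^2((n-1,\,n+2),\,L^2_{\diver}(\Omega,\,\R^3))}^2+|\ex^{\frac{\lambda}{2}\Bigcdot}\kappa|_{H^1((n-1,\,n+2),\,\NN^\bot)}^2\Bigr).
\]
The remaining window $(0,\,1)$ is covered by applying Theorem~\ref{T:exist-wsol-v} to $w$ directly on $(0,\,2)$ with initial datum $v_0\in\aA_{\Xi_1}$, which gives the analogous bound with an extra term $|v_0|_{L^2_{\diver}(\Omega,\,\R^3)}^2$. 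Summing these estimates over $n\in\N$ — the windows $(n-1,\,n+2)$ and $(0,\,2)$ have bounded overlap multiplicity — and inserting the bound from Step~1 for $|w|_{L^2(\R_0,\,L^2_{\diver}(\Omega,\,\R^3))}^2$, we obtain~\eqref{vHtovH1}.

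\emph{Expected main obstacle.} The delicate point is Step~1, and correspondingly the choice of localization rather than the naive concatenation of Theorem~\ref{T:exist-wsol-v} over the intervals $[n,\,n+1]$: the hypothesis controls $\Pi v$ only in the weakest topology $L^2(\R_0,\,H)$, so the traces $\Pi v(n)$ at integer times are not a priori bounded, and a direct concatenation would merely bound the $W(\R_0,\dots)$-norm in terms of itself. The cut-off windows remove this difficulty because on each window the initial datum vanishes and the right-hand side is controlled by $|v|_{L^2(L^2_{\diver})}$ alone; and that norm is recovered in Step~1 from $\Pi v$ and $\kappa$ only, through the finite-dimensional structure of the control operator $\Xi$, the commutation relation~\eqref{comut}, and the norm equivalence of Lemma~\ref{L:eq-normsAd1}.
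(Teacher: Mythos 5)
Your proof is correct, but it takes a genuinely different route than the paper's. The paper's proof invokes the weak-to-strong smoothing property (Lemma~\ref{L:smooth-prop}) to bound $|v(t)|_{H^1_{\diver}(\Omega,\,\R^3)}^2$ for $t\ge1$ in terms of $|v(t-1)|_{L^2_{\diver}(\Omega,\,\R^3)}^2$ plus $|\kappa|_{H^1((t-1,\,t),\,\NN^\bot)}^2$, and then integrates this pointwise bound in $t$ to obtain the $L^2(\R_1,\,H^1_{\diver})$-estimate; the $\p_t$-estimate is derived directly from the equation, and the norm equivalence of Lemma~\ref{L:eq-normsAd1} is applied at the end to pass from $|\ex^{\frac{\lambda}{2}\Bigcdot}v|_{L^2(\R_0,\,L^2_{\diver})}$ to the hypotheses on $\Pi v$ and $\kappa$. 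You instead avoid Lemma~\ref{L:smooth-prop} entirely, using a temporal partition of unity and applying only the basic weak-solution estimate Theorem~\ref{T:exist-wsol-v} on overlapping length-$3$ windows, with cut-offs chosen so that each local initial datum vanishes; you use Lemma~\ref{L:eq-normsAd1} at the start to control $|\ex^{\frac{\lambda}{2}\Bigcdot}v|_{L^2(\R_0,\,L^2_{\diver})}$. Both routes circumvent the same obstruction that you correctly identify — the hypotheses give no pointwise bound on $\Pi v(n)$, so a naive concatenation over $[n,\,n+1]$ would be circular. A modest advantage of your route is that it uses no strong-solution/smoothing machinery, so the constant need only depend on $|\hat u|_{\WW^{\rm wk}}$ rather than $|\hat u|_{\WW^{\rm st}}$ (which enters the paper's proof through Lemma~\ref{L:smooth-prop}); the trade-off is the extra bookkeeping with overlapping windows, the separate treatment of the window touching $t=0$, and the need to observe that the constant in Theorem~\ref{T:exist-wsol-v} is uniform over intervals of fixed length $3$ because $|\hat u|_{\WW^{(n-1,\,n+2)|\rm wk}}\le 3^{1/2}|\hat u|_{\WW^{\rm wk}}$.
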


\begin{proof}
We rewrite
$|\ex^{\frac{\lambda}{2} \Bigcdot}v|_{W(\R_0,\,H^1_{\diver}(\Omega,\,\R^3),\,H^{-1}(\Omega,\,\R^3))}^2$ as
the sum $|\ex^{\frac{\lambda}{2} \Bigcdot}v|_{W((0,\,1),\,H^1_{\diver}(\Omega,\,\R^3),\,H^{-1}(\Omega,\,\R^3))}^2
+|\ex^{\frac{\lambda}{2} \Bigcdot}v|_{W(\R_1,\,H^1_{\diver}(\Omega,\,\R^3),\,H^{-1}(\Omega,\,\R^3))}^2$;
from Theorem~\ref{T:exist-wsol-v} we can derive that 
\begin{align}
|\ex^{\frac{\lambda}{2} \Bigcdot}v|_{W((0,\,1),\,H^1_{\diver}(\Omega,\,\R^3),\,H^{-1}(\Omega,\,\R^3))}^2
&\le C|\ex^{\frac{\lambda}{2} \Bigcdot}|_{C^1([0,\,1],\,\R)}^2
|v|_{W((0,\,1),\,H^1_{\diver}(\Omega,\,\R^3),\,H^{-1}(\Omega,\,\R^3))}^2\notag\\
&\le\overline C_{\left[|\hat u|_{\WW^{\rm st}},\,\lambda\right]}\left(|v_0|_{L^2_{\diver}(\Omega,\,\R^3)}^2
+|\kappa|_{H^1((0,\,1),\,\NN^\bot)}^2\right)\label{ev.W1}
\end{align}
and, from Lemma~\ref{L:smooth-prop} we have that for all $t\ge1$
\[
|v(t)|_{H^1_{\diver}(\Omega,\,\R^3)}^2\le
\overline C_{\left[|\hat u|_{\WW^{\rm st}}\right]}\left(|v(t-1)|_{L^2_{\diver}(\Omega,\,\R^3)}^2
+|\kappa|_{H^1((t-1,\,t),\,\NN^\bot)}^2\right),
\]
which allow us to obtain
\begin{align*}
&\quad|\ex^{\frac{\lambda}{2} \Bigcdot}v|_{L^2(\R_1,\,H^1_{\diver}(\Omega,\,\R^3))}^2
\le\sum_{n=1}^{+\infty} \ex^{(n+1)\lambda}|v|_{L^2((n,\,n+1),\,H^1_{\diver}(\Omega,\,\R^3))}^2\\
&\le\overline C_{\left[|\hat u|_{\WW^{\rm st}}\right]}\sum_{n=1}^{+\infty}
\ex^{\lambda(n+1)}\int_{n}^{n+1} |v(t-1)|_{L^2_{\diver}(\Omega,\,\R^3)}^2
+|\kappa|_{H^1((t-1,\,t),\,\NN^\bot)}^2\,\ed t\\
&\le\overline C_{\left[|\hat u|_{\WW^{\rm st}}\right]}\sum_{n=1}^{+\infty}\left(
\ex^{2\lambda}\int_{n}^{n+1} |\ex^{\frac{\lambda}{2}(t-1)}v(t-1)|_{L^2_{\diver}(\Omega,\,\R^3)}^2\,\ed t
+\ex^{\lambda(n+1)}|\kappa|_{H^1((n-1,\,n+1),\,\NN^\bot)}^2\right)\\
&\le\overline C_{\left[|\hat u|_{\WW^{\rm st}}\right]}\ex^{2\lambda}
\Biggl(\int_{0}^{+\infty} |\ex^{\frac{\lambda}{2} t}v(t)|_{L^2_{\diver}(\Omega,\,\R^3)}^2\,\ed t\\
&\hspace*{8em}+
\sum_{n=1}^{+\infty}\left(|\ex^{\frac{\lambda}{2} \Bigcdot}\kappa|_{L^2((n-1,\,n+1),\,\NN^\bot)}^2
+|\ex^{\frac{\lambda}{2} \Bigcdot}\p_t\kappa|_{L^2((n-1,\,n+1),\,\NN^\bot)}^2\right)\Biggr).
\end{align*}
Since $\ex^{\frac{\lambda}{2} \Bigcdot}\p_t\kappa=\p_t(\ex^{\frac{\lambda}{2} \Bigcdot}\kappa)
-\frac{\lambda}{2}\ex^{\frac{\lambda}{2} \Bigcdot}\kappa$, we can derive that
\begin{align*}
|\ex^{\frac{\lambda}{2} \Bigcdot}v|_{L^2(\R_1,\,H^1_{\diver}(\Omega,\,\R^3))}^2
&\le\overline C_{\left[|\hat u|_{\WW^{\rm st}},\,\lambda\right]}
\left(|\ex^{\frac{\lambda}{2} \Bigcdot}v|_{L^2(\R_0,\,L^2_{\diver}(\Omega,\,\R^3))}^2
+\sum_{n=1}^{+\infty}|\ex^{\frac{\lambda}{2} \Bigcdot}\kappa|_{H^1((n-1,\,n+1),\,\NN^\bot)}^2\right)\\
&\le \overline C_{\left[|\hat u|_{\WW^{\rm st}},\,\lambda\right]}\left(
|\ex^{\frac{\lambda}{2} \Bigcdot}v|_{L^2(\R_0,\,L^2_{\diver}(\Omega,\,\R^3))}^2
+2|\ex^{\frac{\lambda}{2} \Bigcdot}\kappa|_{H^1(\R_0,\,\NN^\bot)}^2\right).
\end{align*}
Thus, from $\p_t(\ex^{\frac{\lambda}{2} \Bigcdot}v)=\frac{\lambda}{2}\ex^{\frac{\lambda}{2} \Bigcdot}v
+\ex^{\frac{\lambda}{2} \Bigcdot}\p_tv$, and since $v$ solves system~\eqref{sys-v}, we can obtain the estimate
$|\p_t(\ex^{\frac{\lambda}{2} \Bigcdot}v)|_{L^2(\R_1,\,H^{-1}(\Omega,\,\R^3))}^2
\le \overline C_{\left[|\hat u|_{\WW^{\rm st}},\,\lambda\right]}
|\ex^{\frac{\lambda}{2} \Bigcdot}v|_{L^2(\R_1,\,H^1_{\diver}(\Omega,\,\R^3))}^2$, which allow us to derive
\begin{align*}
|\ex^{\frac{\lambda}{2} \Bigcdot}v|_{W(\R_1,\,H^1_{\diver}(\Omega,\,\R^3),\,H^{-1}_{\diver}(\Omega,\,\R^3))}^2
&\le \overline C_{\left[|\hat u|_{\WW^{\rm st}},\,\lambda\right]}\left(
|\ex^{\frac{\lambda}{2} \Bigcdot}v|_{L^2(\R_0,\,L^2_{\diver}(\Omega,\,\R^3))}^2
+|\ex^{\frac{\lambda}{2} \Bigcdot}\kappa|_{H^1(\R_0,\,\NN^\bot)}^2\right);
\end{align*}
and then, using~\eqref{ev.W1}, we arrive to
\begin{align*}
&\quad|\ex^{\frac{\lambda}{2} \Bigcdot}v|_{W(\R_0,\,H^1_{\diver}(\Omega,\,\R^3),\,H^{-1}(\Omega,\,\R^3))}^2\\
&\le\overline C_{\left[|\hat u|_{\WW^{\rm st}},\,\lambda\right]}\hspace*{-.2em}\left(|v_0|_{L^2_{\diver}(\Omega,\,\R^3)}^2
+|\ex^{\frac{\lambda}{2} \Bigcdot}v|_{L^2(\R_0,\,L^2_{\diver}(\Omega,\,\R^3))}^2
+|\ex^{\frac{\lambda}{2} \Bigcdot}\kappa|_{H^1(\R_0,\,\NN^\bot)}^2\right).
\end{align*}
Finally, from $\kappa(t)\in\NN^\bot$, we have $Q^M_f\kappa(t)=Q^M_fP_{\NN^\bot}\kappa(t)=z^{v\cdot\nnn(t)}$;
from Lemma~\ref{L:eq-normsAd1},
it follows that $|v(t)|_{L^2_{\diver}(\Omega,\,\R^3)}^2\leq C(|\Pi v(t)|_H^2+|Q^M_fk(t)|_{\NN^\bot}^2)$, which
allow us to derive~\eqref{vHtovH1}.
\end{proof}

\begin{corollary}\label{C:L2H1b-H1}
Let $s\geq 0$ and let $v$ solve system~\eqref{sys-v}, in $\R_s\times\Omega$, with $\zeta=\Xi\kappa$ and $v(s)=v_s$. If
$\ex^{\frac{\lambda}{2} \Bigcdot}\Pi v\in L^2(\R_s,\,H)$ and
$\ex^{\frac{\lambda}{2} \Bigcdot}\kappa\in H^1(\R_s,\,\NN^\bot)$, then 
\begin{align}
&\quad|\ex^{\frac{\lambda}{2} \Bigcdot}v|_{W(\R_s,\,H^1_{\diver}(\Omega,\,\R^3),\,H^{-1}(\Omega,\,\R^3))}^2\notag\\
&\le
\overline C_{\left[|\hat u|_{\WW^{\rm st}},\,\lambda\right]}\hspace*{-.2em}\left(\ex^{\lambda s}|v_s|_{L^2_{\diver}(\Omega,\,\R^3)}^2
+|\ex^{\frac{\lambda}{2} \Bigcdot}\Pi v|_{L^2(\R_s,\,H)}^2
+|\ex^{\frac{\lambda}{2} \Bigcdot}\kappa|_{H^1(\R_s,\,\NN^\bot)}^2\hspace*{-.2em}\right)\hspace*{-.2em}.\label{vHtovH1-s}
\end{align}
\end{corollary}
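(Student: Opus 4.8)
The plan is to obtain Corollary~\ref{C:L2H1b-H1} from Lemma~\ref{L:L2H1b-H1} by a translation in time, which turns the half-line $\R_s$ into $\R_0$. Given $v$ solving~\eqref{sys-v} on $\R_s\times\Omega$ with $v(s)=v_s$ and control $\zeta=\Xi\kappa$, I would introduce the translates $w(t)\coloneqq v(t+s)$, $p_w(t)\coloneqq p_v(t+s)$, $\kappa_s(t)\coloneqq\kappa(t+s)$, and $\hat u_s(t)\coloneqq\hat u(t+s)$. A direct check shows that $(w,\,p_w)$ is the weak solution of the Oseen--Stokes system~\eqref{sys-v} on $\R_0\times\Omega$, with the coefficient $\hat u$ replaced by $\hat u_s$, with boundary datum $\Xi\kappa_s$, and with initial condition $w(0)=v_s$; moreover $v_s$ is an admissible initial condition for this system because the description~\eqref{Axi1} of $\aA_{\Xi_1}$ does not involve time.

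There are two elementary bookkeeping items to record. First, the constant produced by applying Lemma~\ref{L:L2H1b-H1} to $w$ can be taken to be $\overline C_{\left[|\hat u|_{\WW^{\rm st}},\,\lambda\right]}$, uniformly in $s\ge0$: indeed $|\hat u_s|_{\WW^{(\tau,\,\tau+1)|\rm st}}=|\hat u|_{\WW^{(\tau+s,\,\tau+s+1)|\rm st}}$, so $|\hat u_s|_{\WW^{\rm st}}=\sup_{\tau\ge s}|\hat u|_{\WW^{(\tau,\,\tau+1)|\rm st}}\le|\hat u|_{\WW^{\rm st}}$, and by definition these constants increase in each of their arguments. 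Second, writing $t=\tau+s$ one has $\ex^{\frac{\lambda}{2}t}v(t)=\ex^{\frac{\lambda}{2}s}\ex^{\frac{\lambda}{2}\tau}w(\tau)$, whence, by translation invariance of the Lebesgue measure, $|\ex^{\frac{\lambda}{2}\Bigcdot}v|_{L^2(\R_s,\,X)}=\ex^{\frac{\lambda}{2}s}|\ex^{\frac{\lambda}{2}\Bigcdot}w|_{L^2(\R_0,\,X)}$ for each relevant Banach space $X$, and likewise for the $W$-norm appearing in~\eqref{vHtovH1} and the $H^1$-norm in time. In particular, the hypotheses $\ex^{\frac{\lambda}{2}\Bigcdot}\Pi v\in L^2(\R_s,\,H)$ and $\ex^{\frac{\lambda}{2}\Bigcdot}\kappa\in H^1(\R_s,\,\NN^\bot)$ are equivalent to $\ex^{\frac{\lambda}{2}\Bigcdot}\Pi w\in L^2(\R_0,\,H)$ and $\ex^{\frac{\lambda}{2}\Bigcdot}\kappa_s\in H^1(\R_0,\,\NN^\bot)$.

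Finally I would apply Lemma~\ref{L:L2H1b-H1} to $w$ (with $\hat u_s$, $\kappa_s$, and $w(0)=v_s$), multiply the resulting inequality by $\ex^{\lambda s}$, and use the weight identities above: the left-hand side becomes $|\ex^{\frac{\lambda}{2}\Bigcdot}v|_{W(\R_s,\,H^1_{\diver}(\Omega,\,\R^3),\,H^{-1}(\Omega,\,\R^3))}^2$, while the three terms on the right become $\ex^{\lambda s}|v_s|_{L^2_{\diver}(\Omega,\,\R^3)}^2$, $|\ex^{\frac{\lambda}{2}\Bigcdot}\Pi v|_{L^2(\R_s,\,H)}^2$ and $|\ex^{\frac{\lambda}{2}\Bigcdot}\kappa|_{H^1(\R_s,\,\NN^\bot)}^2$, which is exactly~\eqref{vHtovH1-s}. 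There is no genuine obstacle in this argument; the only points demanding attention are precisely the two bookkeeping items, namely the monotonicity $|\hat u_s|_{\WW^{\rm st}}\le|\hat u|_{\WW^{\rm st}}$ that makes the constant independent of $s$, and the correct tracking of the factor $\ex^{\lambda s}$ through the translation.
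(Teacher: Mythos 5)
Your proposal is correct and follows essentially the same approach as the paper's own proof: a time translation $w(r)\coloneqq v(r+s)$ reduces the claim to Lemma~\ref{L:L2H1b-H1} applied with $\hat u_s$ in place of $\hat u$, using the monotonicity $|\hat u_s|_{\WW^{\rm st}}\le|\hat u|_{\WW^{\rm st}}$, after which multiplying the resulting inequality by $\ex^{\lambda s}$ gives~\eqref{vHtovH1-s}. You have merely spelled out the weight bookkeeping a bit more explicitly than the paper does.
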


\begin{proof}
 Since $v$ solve system~\eqref{sys-v}, in $\R_s\times\Omega$, we have that $w=w(r)\coloneqq v(r+s)$ solves system~\eqref{sys-v},
 in $\R_0\times\Omega$, with $w(0)=v_s$,  $\zeta=\Xi\xi(r)\coloneqq \Xi\kappa(r+s)$, and $\hat u_s(r)\coloneqq \hat u(r+s)$ in the place
 of $\hat u$.
 Since $|\hat u_s|_{\WW^{\rm st}}\leq |\hat u|_{\WW^{\rm st}}$, by Lemma~\ref{L:L2H1b-H1}, we have that
 $|\ex^{\frac{\lambda}{2} \Bigcdot}w|_{W(\R_0,\,H^1_{\diver}(\Omega,\,\R^3),\,H^{-1}(\Omega,\,\R^3))}^2$
 is bounded by $\overline C_{\left[|\hat u|_{\WW^{\rm st}},\,\lambda\right]}\left(|v_s|_{L^2_{\diver}(\Omega,\,\R^3)}^2
 +|\ex^{\frac{\lambda}{2} \Bigcdot}\Pi w|_{L^2(\R_0,\,H)}^2
 +|\ex^{\frac{\lambda}{2} \Bigcdot}\xi|_{H^1(\R_0,\,\NN^\bot)}^2\right)$, 
that is,
\begin{align}
&\quad|\ex^{\frac{\lambda}{2} (\Bigcdot-s)}v|_{W(\R_s,\,H^1_{\diver}(\Omega,\,\R^3),\,H^{-1}(\Omega,\,\R^3))}^2\label{vHtovH1-ts}\\
&\le
\overline C_{\left[|\hat u|_{\WW^{\rm st}},\,\lambda\right]}\left(|v_s|_{L^2_{\diver}(\Omega,\,\R^3)}^2
+|\ex^{\frac{\lambda}{2} (\Bigcdot-s)}\Pi v|_{L^2(\R_s,\,H)}^2
+|\ex^{\frac{\lambda}{2} (\Bigcdot-s)}\kappa|_{H^1(\R_s,\,\NN^\bot)}^2\right),\notag
\end{align}
which implies~\eqref{vHtovH1-s}.
\end{proof}

We will also need the following corollary of Theorem~\ref{T:ex.st-ct.lin} and Corollary~\ref{C:ex.H1-cont}.

\begin{corollary}\label{C:ex.st.s}
 Let us be given $s\ge0$, $\lambda>0$ and $\hat u\in\WW^{\rm st}$. Then there exists
$M=\overline C_{\left[|\hat u|_{\WW^{\rm st}},\lambda\right]}\geq1$ with the following property:
for each $(v_s,\,\kappa^s_\tau)\in \aA_{\Xi_1}\times Q_l^M\R^{2M}$,
there exists a ``control'' vector function
$\kappa^{\hat u,\lambda}=\kappa^{\hat u,\lambda}(v_s,\,\kappa^s_\tau)\in H^1(\R_s,\,\R^{2M})$ such that the weak
solution~$v$ of system~\eqref{sys-v} in $\R_s\times\Omega$, with $\zeta=\Xi\kappa^{\hat u,\lambda}$, satisfies the inequality
\begin{align}
&\quad\,\left|\ex^{\frac{\lambda}{2} \Bigcdot}v\right|_{W(\R_s,\,H^1_{\diver}(\Omega,\,\R^3),\,H^{-1}(\Omega,\,\R^3))}^2
+\bigl|\ex^{\frac{\lambda}{2}\Bigcdot}\kappa^{\hat u,\lambda}(v_s,\,\kappa^s_\tau)\bigr|_{H^1(\R_s,\,\R^{2M})}^2\notag\\
&\le\overline C_{\left[\frac{1}{\lambda},\,\lambda,\,|\hat u|_{\WW^{\rm st}}\right]}
\ex^{\lambda s}\left(|v_s|_{L^2_{\diver}(\Omega,\,\R^3)}^2+|\kappa^s_\tau|_{\R^{2M}}^2\right).\label{ex.H1-cont.s}
\end{align}
Moreover, the mapping
$(v_0,\,\kappa^0_\tau)\mapsto \kappa^{\hat u,\lambda}(v_0,\,\kappa^0_\tau)$ is linear.
\end{corollary}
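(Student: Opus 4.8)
The plan is to obtain Corollary~\ref{C:ex.st.s} from Theorem~\ref{T:ex.st-ct.lin} and Corollary~\ref{C:ex.H1-cont} by combining two elementary manipulations: a time translation moving the initial time from $s$ to $0$, and an application of those two results with a strictly larger decay rate, so that the weight $\ex^{\frac{\lambda}{2}\Bigcdot}$ becomes an admissible ``$\ex^{\frac{\hat\lambda}{2}\Bigcdot}$-weight'' in the sense of the ``moreover'' parts, with $\hat\lambda=\lambda$ and the larger rate playing the role of $\lambda$ there. Concretely I would run Theorem~\ref{T:ex.st-ct.lin} and Corollary~\ref{C:ex.H1-cont} with the rate $2\lambda$ in place of $\lambda$ and then take $\hat\lambda=\lambda$, which is legitimate since $\lambda<2\lambda$; the price is a constant depending on $\frac{1}{2\lambda-\lambda}=\frac1\lambda$, which is exactly the dependence allowed in the statement.

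First I would pass to the translated system. Given $(v_s,\kappa^s_\tau)\in\aA_{\Xi_1}\times Q_l^M\R^{2M}$, put $\hat u_s(r)\coloneqq\hat u(r+s)$; then $\hat u_s\in\WW^{\rm st}$ with $|\hat u_s|_{\WW^{\rm st}}\le|\hat u|_{\WW^{\rm st}}$, and since the spaces $\aA_{\Xi_1}$ and $Q_l^M\R^{2M}$ do not depend on time, $(v_s,\kappa^s_\tau)$ is again an admissible pair of data. Applying Theorem~\ref{T:ex.st-ct.lin} to system~\eqref{sys-v} with $\hat u_s$ in place of $\hat u$ and with rate $2\lambda$ furnishes an integer $M=\overline C_{\left[|\hat u|_{\WW^{\rm st}},\lambda\right]}\ge1$ (note $|\hat u_s|_{\WW^{\rm st}}$ and $2\lambda$ are controlled by $|\hat u|_{\WW^{\rm st}}$ and $\lambda$) and a control $\hat\kappa=\hat\kappa(v_s,\kappa^s_\tau)\in H^1(\R_0,\R^{2M})$, linear in $(v_s,\kappa^s_\tau)$, such that the weak solution $w$ of~\eqref{sys-v} on $\R_0\times\Omega$ with $w(0)=v_s$, $\zeta=\Xi\hat\kappa$, decays at rate $2\lambda$, and, by the ``moreover'' part with $\hat\lambda=\lambda$, satisfies $|\ex^{\frac{\lambda}{2}\Bigcdot}\hat\kappa|_{H^1(\R_0,\R^{2M})}^2\le\overline C_{\left[\frac1\lambda,\lambda,|\hat u|_{\WW^{\rm st}}\right]}\bigl(|v_s|_{L^2_{\diver}(\Omega,\,\R^3)}^2+|\kappa^s_\tau|_{\R^{2M}}^2\bigr)$; likewise Corollary~\ref{C:ex.H1-cont}, applied with the same $\hat\lambda=\lambda$ and rate $2\lambda$, gives $|\ex^{\frac{\lambda}{2}\Bigcdot}w|_{W(\R_0,H^1_{\diver}(\Omega,\R^3),H^{-1}(\Omega,\R^3))}^2\le\overline C_{\left[\frac1\lambda,\lambda,|\hat u|_{\WW^{\rm st}}\right]}\bigl(|v_s|_{L^2_{\diver}(\Omega,\,\R^3)}^2+|\kappa^s_\tau|_{\R^{2M}}^2\bigr)$.

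Then I would translate back: define $\kappa^{\hat u,\lambda}(v_s,\kappa^s_\tau)(t)\coloneqq\hat\kappa(t-s)$ for $t\ge s$, so that $\zeta=\Xi\kappa^{\hat u,\lambda}$, and by uniqueness (Theorem~\ref{T:exist-wsol-v}) the weak solution of~\eqref{sys-v} on $\R_s\times\Omega$ with $v(s)=v_s$ is $v(t)=w(t-s)$. Under the substitution $t=r+s$, the $W(\R_0,\cdot,\cdot)$- and $H^1(\R_0,\R^{2M})$-norms of $\ex^{\frac{\lambda}{2}\Bigcdot}w$ and $\ex^{\frac{\lambda}{2}\Bigcdot}\hat\kappa$ become the $W(\R_s,\cdot,\cdot)$- and $H^1(\R_s,\R^{2M})$-norms of $\ex^{\frac{\lambda}{2}\Bigcdot}v$ and $\ex^{\frac{\lambda}{2}\Bigcdot}\kappa^{\hat u,\lambda}$ up to the factor $\ex^{\lambda s}$, because $\ex^{\frac{\lambda}{2}t}=\ex^{\frac{\lambda}{2}s}\ex^{\frac{\lambda}{2}(t-s)}$; adding the two bounds of the previous paragraph therefore yields~\eqref{ex.H1-cont.s}. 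Linearity of $(v_s,\kappa^s_\tau)\mapsto\kappa^{\hat u,\lambda}(v_s,\kappa^s_\tau)$ is immediate, as it is the composition of the (time-independent) translation with the linear map produced by Theorem~\ref{T:ex.st-ct.lin}.

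There is no real obstacle in this argument; the only points needing a bit of attention are the choice of the auxiliary rate (taking it to be $2\lambda$, rather than an arbitrary $\lambda'>\lambda$, is what keeps the resulting constant in the class $\overline C_{\left[\frac1\lambda,\lambda,|\hat u|_{\WW^{\rm st}}\right]}$ asserted in the statement) and the bookkeeping ensuring that $M$ and every constant depend on $\hat u$ only through $|\hat u|_{\WW^{\rm st}}$, which is stable under the shift since $|\hat u_s|_{\WW^{\rm st}}\le|\hat u|_{\WW^{\rm st}}$.
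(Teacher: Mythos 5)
Your argument is correct and is essentially the paper's own proof: translate time by $s$ (using $|\hat u(\cdot+s)|_{\WW^{\rm st}}\le|\hat u|_{\WW^{\rm st}}$), invoke Theorem~\ref{T:ex.st-ct.lin} and Corollary~\ref{C:ex.H1-cont} with decay rate $2\lambda$ and weight rate $\hat\lambda=\lambda$ so that the $\frac{1}{2\lambda-\lambda}=\frac{1}{\lambda}$ dependence appears, and translate back to pick up the factor $\ex^{\lambda s}$. You have simply spelled out the substitution bookkeeping that the paper leaves implicit.
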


\begin{proof}
 As in the proof of Corollary~\ref{C:L2H1b-H1} we change the time variable $t=r+s$ and can reduce the problem to the cylinder $\R_0\times\Omega$.
 We may take $(\lambda,\,2\lambda)$ in the place of $(\hat\lambda,\,\lambda)$ in Theorem~\ref{T:ex.st-ct.lin}, and
 $\lambda$ in the place of $\hat\lambda$ in Corollary~\ref{C:ex.H1-cont}.
 
 From Theorem~\ref{T:ex.st-ct.lin} and Corollary~\ref{C:ex.H1-cont} there is a linear
 mapping $(v_s,\,\kappa^s_\tau)\mapsto\xi(v_s,\,\kappa^s_\tau)=\xi(v_s,\,\kappa^s_\tau)(r)$, $r\in\R_0$,
 such that the solution $w=w(r)$ for system~\eqref{sys-v},
 with $w\rest\Gamma=\Xi\xi$ and with~$\hat u_s(r)=\hat u(s+r)$ in the place of $\hat u$, satisfies $(w,\,Q^M_l\xi)(0)=(v_s,\,\kappa^s_\tau)$ and
 \begin{align*}
&\quad\,\left|\ex^{\frac{\lambda}{2} \Bigcdot}w\right|_{W(\R_0,\,H^1_{\diver}(\Omega,\,\R^3),\,H^{-1}(\Omega,\,\R^3))}^2
+\bigl|\ex^{\frac{\lambda}{2}\Bigcdot}\xi(v_s,\,\kappa^s_\tau)\bigr|_{H^1(\R_0,\,\R^{2M})}^2\\
&\le\overline C_{\left[\frac{1}{\lambda},\,\lambda,\,|\hat u_s|_{\WW^{\rm st}}\right]}
\left(|v_s|_{L^2_{\diver}(\Omega,\,\R^3)}^2+|\kappa^s_\tau|_{\R^{2M}}^2\right).
\end{align*}
Then we can conclude that $(v,\,\kappa)(t)\coloneqq(w,\,\xi)(t-s)$ solves system~\eqref{sys-v} in $\R_s\times\Omega$ with $(v,\,Q^M_l\kappa)(s)=(v_s,\,\kappa^s_\tau)$
and that the estimate 
$\left|\ex^{\frac{\lambda}{2} (\Bigcdot-s)}v\right|_{W(\R_s,\,H^1_{\diver}(\Omega,\,\R^3),\,H^{-1}(\Omega,\,\R^3))}^2
+\bigl|\ex^{\frac{\lambda}{2}(\Bigcdot-s)}\kappa(v_s,\,\kappa^s_\tau)\bigr|_{H^1(\R_s,\,\R^{2M})}^2
\le\overline C_{\left[\frac{1}{\lambda},\,\lambda,\,|\hat u_s|_{\WW^{\rm st}}\right]}
\left(|v_s|_{L^2_{\diver}(\Omega,\,\R^3)}^2+|\kappa^s_\tau|_{\R^{2M}}^2\right)$ holds, which implies~\eqref{ex.H1-cont.s}. 
\end{proof}

\subsection{The extended system}\label{sS:extsyst}
In order to be able to use the dynamical programming principle, we will need to rewrite system~\eqref{sys-v} in a suitable way. We start
by observing that the mapping
\[
 \FF_\aA\colon\aA_{\Xi_1}\to H\times Q^M_f\NN^\perp,\qquad u\mapsto(\Pi u,\,z^{u\cdot\nnn})
\]
is continuous and surjective. From Corollary~\ref{C:L2=Hzun} it is also injective. Then by the Inverse Mapping
Theorem (cf.~\cite[Section~2.3, Corollary~2.7]{Brezis11}) it has a continuous inverse
$\FF_\aA^{-1}\in\LL(H\times Q^M_f\NN^\perp\to \aA_{\Xi_1}).$ Further, since~$\Xi Q^M_f\NN^\perp\subset H^\frac{3}{2}(\Omega,\,\R)\nnn$ we have that
$\FF_\aA^{-1}(0,\,Q^M_f\NN^\perp)\subset H^2_{\diver}(\Omega,\,\R^3)$, because any $u\in\FF_\aA^{-1}(0,\,Q^M_f\NN^\perp)\subset H^\perp$
satisfies $u=\nabla P_\nabla u$, $\Delta P_\nabla u=0$, and~$\nnn\cdot\nabla P_\nabla u=\nnn\cdot u=\nnn\cdot\Xi z^{u\cdot\nnn} \in H^\frac{3}{2}(\Omega,\,\R)$, which
implies~$P_\nabla u\in H^3(\Omega,\,\R)$ (e.g., see~\cite[Chapter~5, Proposition~7.7]{Taylor97}).

Now we rewrite system~\eqref{sys-v}, in $\R_s\times\Omega$
with $\zeta=\Xi\kappa$, in the extended form
\begin{subequations}\label{sys-v-ext}
\begin{align}
 \p_t v+\BB(\hat u)v
 -\nu\Delta v+\nabla p_v&=0,& \p_t\kappa &=\varkappa,\label{sys-v-ext-dyn}\\
 \diver v &=0,& v\rest \Gamma &=\Xi\kappa, \label{sys-v-ext-divbdry}\\
 v(s)&=\FF_\aA^{-1}(v_s^H,\, Q^M_f\kappa_s),& \kappa(s)&=\kappa_s,\label{sys-v-ext-ic}
\end{align}
\end{subequations}
where~$(v_s^H,\,\kappa_s,\,\varkappa)$ will be taken in~$H\times \NN^\perp\times L^2(\R_s,\,\NN^\perp)$.

\begin{remark}
Notice that the compatibility
condition~$(v(s)\cdot\nnn)\nnn=\Xi Q^M_f\kappa(s)$, which is required to guarantee the existence of
weak solutions for the ``equivalent'' system~\eqref{sys-v}, is indeed guaranteed by the condition~$v(s)=\FF_\aA^{-1}(v_s^H,\,Q^M_f\kappa_s)$.
Notice also that if we impose~$\kappa(s)$ at the boundary, then we can impose only a tangential initial condition~$\Pi v(s)\in H$ for~$v(s)$.
Essentially, the  initial ``weak''
condition for the extended system~\eqref{sys-v-ext}  is the pair~$(v_s^H,\,\kappa_s)$.
\end{remark}

Observe that $(v_s^H,\,\kappa_s)\mapsto\left(\FF_\aA^{-1}(v_s^H,\,Q^M_f\kappa_s),\,Q^M_l\kappa_s\right)$ is an isomorphism
in~$\LL(H\times \NN^\perp\to \aA_{\Xi_1}\times Q^M_l\NN^\perp)$. Form Corollary~\ref{C:ex.st.s}, it makes sense to consider the following problem:
\begin{problem}\label{Pb:Ms}
Let us be given $s\ge0$, $\lambda>0$, $\hat u\in\WW^{\rm st}$, and let $M\in\N$ be given by Corollary~\ref{C:ex.st.s}. Then
for given $(v_s^H,\,\kappa_s)\in H\times \NN^\perp$, find
the minimum of the functional
\[
M_s^\lambda(v,\,\kappa,\,\varkappa)\coloneqq \left|\ex^{\frac{\lambda}{2}\Bigcdot}\Pi v\right|_{L^2(\R_s,\,H)}^2
+\left|\ex^{\frac{\lambda}{2}\Bigcdot}\kappa\right|_{L^2(\R_s,\,\NN^\bot)}^2+\left|\ex^{\frac{\lambda}{2}\Bigcdot}\varkappa\right|_{L^2(\R_s,\,\NN^\bot)}^2
\]
on the set of functions
\[
\XX_s^{1,1}\coloneqq \left\{(v,\,\kappa,\,\varkappa)\in \ZZ^{1,\,1}_s \left|
\begin{array}{l}
\ex^{\frac{\lambda}{2}\Bigcdot}(v,\,\kappa,\,\varkappa)\in \ZZ^{1,\,1}_s\mbox{ and }
(v,\,\kappa,\,\varkappa)\mbox{ solves~\eqref{sys-v-ext-dyn}--\eqref{sys-v-ext-divbdry}}
\end{array}\right.\hspace*{-.5em}\right\}
\]
satisfying $A(v,\,\kappa,\,\varkappa)\coloneqq (\Pi v(s),\,\kappa(s))=(v_s^H,\,\kappa_s)$; where
\begin{align*}
\ZZ^{1}_s&\coloneqq W(\R_s,\,H^1_{\diver}(\Omega,\,\R^3),\,H^{-1}(\Omega,\,\R^3))\times
H^1(\R_s,\,\NN^\bot)\times L^2(\R_s,\,\NN^\bot)\\
\ZZ^{1,\,1}_s&\coloneqq \{(v,\,\kappa,\,\varkappa)\in \ZZ^{1}_s\mid v(s)\in\aA_{\Xi_1}\}.
\end{align*}
\end{problem}%

\begin{lemma} \label{L:Ms}
Problem~\ref{Pb:Ms} has a unique minimizer~$(v_s^*,\,\kappa_s^*,\,\varkappa_s^*)$.
Moreover, there exists a continuous linear self-adjoint operator $R_{\hat
u}^{\lambda,\,s}\in\LL(H\times \NN^\perp)$ such that
\begin{subequations}\label{R-opt.norm}
\begin{align}
(R_{\hat u}^{\lambda,\,s}(v_s^H,\,\kappa_s),\,(v_s^H,\,\kappa_s))_{H\times \NN^\perp}
&=M_s^\lambda(v_s^*,\,\kappa_s^*,\,\varkappa_s^*)\label{optimalcost}\\
|R_{\hat u}^{\lambda,\,s}|_{\LL(H\times \NN^\perp)}&\le
\overline C_{\left[|\hat u|_{\WW^{\rm st}},\,\lambda,\,\frac{1}{\lambda}\right]}\ex^{\lambda s}.\label{normofR}
\end{align}
\end{subequations}
\end{lemma}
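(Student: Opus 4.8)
The plan is to recognize Problem~\ref{Pb:Ms} as a standard linear-quadratic optimal control problem and to apply the classical machinery (existence of a minimizer by the direct method, uniqueness by strict convexity, and the quadratic structure of the value function). First I would establish that the admissible set is nonempty: by Corollary~\ref{C:ex.st.s}, for the initial data $(v_s^H,\kappa_s)\in H\times\NN^\perp$ there is a control~$\kappa^{\hat u,\lambda}$ driving the solution to zero exponentially with the weighted estimate~\eqref{ex.H1-cont.s}; setting $v(s)=\FF_\aA^{-1}(v_s^H,Q^M_f\kappa_s)$ (which lies in $\aA_{\Xi_1}$) and $\varkappa=\p_t\kappa^{\hat u,\lambda}\in L^2(\R_s,\NN^\perp)$, one gets an element of~$\XX_s^{1,1}$ with $A(v,\kappa,\varkappa)=(v_s^H,\kappa_s)$ and finite cost $M_s^\lambda$, bounded by $\overline C_{[\frac1\lambda,\lambda,|\hat u|_{\WW^{\rm st}}]}\ex^{\lambda s}(|v_s^H|_H^2+|\kappa_s|_{\NN^\perp}^2)$. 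This already gives the bound~\eqref{normofR} once the value function is identified, since $M_s^\lambda(v_s^*,\kappa_s^*,\varkappa_s^*)$ is $\le$ the cost of this particular admissible triple.

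Next I would prove existence of a minimizer by the direct method. The functional $M_s^\lambda$ is nonnegative, so let $(v^k,\kappa^k,\varkappa^k)$ be a minimizing sequence; then $\ex^{\frac\lambda2\Bigcdot}\Pi v^k$, $\ex^{\frac\lambda2\Bigcdot}\kappa^k$, $\ex^{\frac\lambda2\Bigcdot}\varkappa^k$ are bounded in the respective $L^2$-spaces. The constraint fixes $(\Pi v^k(s),\kappa^k(s))=(v_s^H,\kappa_s)$; combining the bound on $\ex^{\frac\lambda2\Bigcdot}\Pi v^k$ and $\ex^{\frac\lambda2\Bigcdot}\kappa^k$ (hence on $z^{v^k\cdot\nnn}$ via $Q^M_f\kappa^k=z^{v^k\cdot\nnn}$) with Lemma~\ref{L:L2H1b-H1} (applied on $\R_s$, i.e.\ Corollary~\ref{C:L2H1b-H1}), we get that $\ex^{\frac\lambda2\Bigcdot}v^k$ is bounded in $W(\R_s,H^1_{\diver},H^{-1})$ and $\ex^{\frac\lambda2\Bigcdot}\kappa^k$ in $H^1(\R_s,\NN^\perp)$. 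Extract a weakly convergent subsequence with limit $(v^*,\kappa^*,\varkappa^*)$; the equations~\eqref{sys-v-ext-dyn}--\eqref{sys-v-ext-divbdry} pass to the weak limit (they are linear), the trace and initial conditions pass to the limit by continuity/compactness of the relevant trace maps, $v^*(s)\in\aA_{\Xi_1}$ by closedness (Corollary~\ref{C:Ad-closed}), and $M_s^\lambda$ is weakly lower semicontinuous as a sum of squared norms. Hence the limit is an admissible minimizer. Uniqueness follows because $M_s^\lambda$ is strictly convex on the affine admissible set: the map $(v_s^H,\kappa_s)\mapsto$ minimal value is then well-defined, and the usual parallelogram argument shows $\XX_s^{1,1}$ with constraint $=(v_s^H,\kappa_s)$ has at most one minimizer.

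For the operator $R_{\hat u}^{\lambda,s}$, I would argue that the data-to-optimum map $(v_s^H,\kappa_s)\mapsto(v_s^*,\kappa_s^*,\varkappa_s^*)$ is linear: this is the standard consequence of the problem being a quadratic minimization over an affine subspace of a Hilbert space, together with uniqueness. Then $(v_s^H,\kappa_s)\mapsto M_s^\lambda(v_s^*,\kappa_s^*,\varkappa_s^*)$ is a nonnegative quadratic form on $H\times\NN^\perp$, bounded by $\overline C_{[|\hat u|_{\WW^{\rm st}},\lambda,\frac1\lambda]}\ex^{\lambda s}(|v_s^H|_H^2+|\kappa_s|_{\NN^\perp}^2)$ from the admissible competitor above; by the Riesz representation for bounded symmetric bilinear forms it equals $(R_{\hat u}^{\lambda,s}(v_s^H,\kappa_s),(v_s^H,\kappa_s))_{H\times\NN^\perp}$ for a unique self-adjoint nonnegative $R_{\hat u}^{\lambda,s}\in\LL(H\times\NN^\perp)$, and the operator-norm bound~\eqref{normofR} is read off from the quadratic-form bound by polarization. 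I expect the main obstacle to be the verification that the minimizing sequence is actually bounded in the strong norms $\ZZ^{1,1}_s$ (not merely that the cost terms are bounded): the cost only controls $\ex^{\frac\lambda2\Bigcdot}\Pi v$, $\ex^{\frac\lambda2\Bigcdot}\kappa$, $\ex^{\frac\lambda2\Bigcdot}\varkappa$, and one must feed these through Corollary~\ref{C:L2H1b-H1} together with the fixed-initial-data constraint to recover control of the full weighted norm of $v$ and to ensure $v(s)\in\aA_{\Xi_1}$, so that weak compactness applies in the correct spaces and the limit lies in $\XX_s^{1,1}$. Everything else is routine linear-quadratic theory.
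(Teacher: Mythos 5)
Your proof is correct and takes essentially the same approach as the paper: both hinge on Corollary~\ref{C:ex.st.s} to produce an admissible competitor with cost bounded by $\overline C\,\ex^{\lambda s}\norm{(v_s^H,\kappa_s)}{H\times\NN^\perp}^2$, and on Corollary~\ref{C:L2H1b-H1} to recover the full $\ZZ_s^{1,1}$-norm from the cost (coercivity), with the operator~$R_{\hat u}^{\lambda,s}$ then obtained from the quadratic value function by polarization and Riesz representation. The only cosmetic difference is that the paper recognizes $M_s^\lambda$ as a scalar product and cites a prepackaged projection lemma for existence, uniqueness, and linear dependence of the minimizer, whereas you carry out the direct method (minimizing sequence, weak compactness, weak lower semicontinuity, strict convexity) explicitly; the two routes are standard equivalents in this Hilbert-space setting.
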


\begin{proof}
We suppose $\XX_s^{1,1}$ endowed with the norm inherited from $\ZZ^{1}_s$ and we start by observing that
the set $A_{v_s^H,\,\kappa_s,\,c}\coloneqq \{(v,\,\kappa,\,\varkappa)\in\XX_s^{1,1}\mid A(v,\,\kappa,\,\varkappa)
=(v_s^H,\,\kappa_s)\mbox{ and }M_s^\lambda(v,\,\kappa,\,\varkappa)\le c\}$
is bounded, for any $(v_s^H,\,\kappa_s,\,c)\in H\times \NN^\perp\times\R_0$. Indeed, since the initial condition
$(v_s^H,\,\kappa_s)$ and initial time $t=s$ are fixed, the boundedness
follows from Corollary~\ref{C:L2H1b-H1}. 
On the other hand, from Corollary~\ref{C:ex.st.s}, the set $A_{v_s^H,\,\kappa_s}\coloneqq \{(v,\,\kappa,\,\varkappa)\in\XX_s^{1,1}\mid A(v,\,\kappa,\,\varkappa)
=(v_s^H,\,\kappa_s)\}$ is nonempty for any given $(v_s^H,\,\kappa_s)\in H\times \NN^\perp$, that is,
the mapping
$A\colon \XX_s^{1,1}\to H\times \NN^\perp$
is surjective. Further, we observe that $M_s^\lambda(v,\,\kappa,\,\varkappa)$
induces a scalar product in $\ZZ_s^{1,1}$:
\begin{align*}
&\quad\Bigl((v,\,\kappa,\,\varkappa),\,(u,\,\eta,\,\xi)\Bigr)_{M_s^\lambda}\\
&\coloneqq 
\left(\ex^{\frac{\lambda}{2}\Bigcdot}\Pi v
,\,\ex^{\frac{\lambda}{2}\Bigcdot}\Pi u\right)_{L^2(\R_s,\,V)}
+\left(\ex^{\frac{\lambda}{2}\Bigcdot}\kappa,\,\ex^{\frac{\lambda}{2}\Bigcdot}\eta\right)_{L^2(\R_s,\,\NN^\bot)}
+\left(\ex^{\frac{\lambda}{2}\Bigcdot}\varkappa,\,\ex^{\frac{\lambda}{2}\Bigcdot}\xi\right)_{L^2(\R_s,\,\NN^\bot)}.
\end{align*}
We can derive that
Problem~\ref{Pb:Ms} has
a unique minimizer~$(v_s^*,\,\kappa_s^*,\,\varkappa_s^*)=(v_s^*,\,\kappa_s^*,\,\varkappa_s^*)(v_s^H,\,\kappa_s)$, which linearly
depends on~$(v_s^H,\,\kappa_s)$
(cf.~\cite[Appendix, Lemma~A.14 and Remark~A.15]{Rod15-cocv}).

Again
from Corollary~\ref{C:ex.st.s}, we have that the mapping
\[
\left((v_s^{H,1},\,\kappa_s^{1}),\,(v_s^{H,2},\,\kappa_s^{2})\right)\mapsto
\left(\,(v_s^*,\,\kappa_s^*,\,\varkappa_s^*)(v_s^{H,1},\,\kappa_s^{1}),\,
(v_s^*,\,\kappa_s^*,\,\varkappa_s^*)(v_s^{H,2},\,\kappa_s^{2})\,\right)_{M_s^\lambda}
\]
is a symmetric continuous bilinear form on~$H\times \NN^\perp$ which is bounded by
$\overline C_{\left[|\hat u|_{\WW^{\rm st}},\,\lambda,\,\frac{1}{\lambda}\right]}\ex^{\lambda s}$ on the unit ball;
thus, the optimal cost
can be written as~\eqref{optimalcost}, where~$R_{\hat u}^{\lambda,\,s}$ is a
bounded and self-adjoint operator,  which norm satisfy~\eqref{normofR}.
\end{proof}

Next we consider another minimization problem related to Problem~\ref{Pb:Ms}.

\begin{problem}\label{Pb:Ns}
Let us be given $s>s_0\ge0$, $\lambda>0$, $\hat u\in\WW^{\rm st}$, and let $M\in\N$ be given by Corollary~\ref{C:ex.st.s}.
Given $(v_{s_0}^H,\,\kappa_{s_0})\in H\times \NN^\perp$, find the minimum of the functional
\begin{align*}
N_{s_0,\,s}^\lambda(v,\,\kappa,\,\varkappa)\coloneqq &\left|\ex^{\frac{\lambda}{2}\Bigcdot}\Pi v\right|_{L^2((s_0,\,s),\,H))}^2
+\left|\ex^{\frac{\lambda}{2}\Bigcdot}\kappa\right|_{L^2((s_0,\,s),\,\NN^\bot)}^2
+\left|\ex^{\frac{\lambda}{2}\Bigcdot}\varkappa\right|_{L^2((s_0,\,s),\,\NN^\bot)}^2\\
&+\left(R_{\hat u}^{\lambda,\,s}\left(\Pi v(s),\,\kappa(s)\right),\,\left(\Pi v(s)\,\kappa(s)\right)\right)
\end{align*}
on the set of functions
\[
(v,\,\kappa,\,\varkappa)\in\widehat\XX\coloneqq \left\{(v,\,\kappa,\,\varkappa)\in
\ZZ_{(s_0,\,s)}^{1,\,1}\left|
\begin{array}{l}
v\mbox{ solves~~\eqref{sys-v-ext-dyn}--\eqref{sys-v-ext-divbdry}}
\end{array}\right.\hspace*{-.5em}\right\}
\]
that satisfy $A(v,\,\kappa,\,\varkappa)\coloneqq (\Pi v(s_0),\,\kappa(s_0))=(v_{s_0}^H,\,\kappa_{s_0})$;
where~$\ZZ_{(s_0,\,s)}^{1,\,1}\coloneqq \ZZ_{s_0}^{1,\,1}\rest{(s_0,\,s)}.$
\end{problem}

Reasoning as in the proof of Lemma~\ref{L:Ms}, we can derive that Problem~\ref{Pb:Ns}
has a unique minimizer $(v_{s_0,\,s}^\bullet,\,\kappa_{s_0,\,s}^\bullet,\,\varkappa_{s_0,\,s}^\bullet)(v_{s_0}^H,\,\kappa_{s_0})$,
which is a linear function of~$(v_{s_0}^H,\,\kappa_{s_0})\in H\times \NN^\perp$. 
The following Lemma is the {\em dynamic programming principle\/} for Problem~\ref{Pb:Ms} (with~$s=s_0$).

\begin{lemma} \label{L:MsNs}
The minimizers of Problems~\ref{Pb:Ms} and~\ref{Pb:Ns} have the following properties: the restriction
of~$(v_{s_0}^*,\,\kappa_{s_0}^*,\,\varkappa_{s_0}^*)(v_{s_0}^H,\,\kappa_{s_0})$ to the time interval $(s_0,\,s)$ does coincide with
$(v_{s_0,\,s}^\bullet,\,\kappa_{s_0,\,s}^\bullet,\,\varkappa_{s_0,\,s}^\bullet)(v_{s_0}^H,\,\kappa_{s_0})$, and the restriction
of~$(v_{s_0}^*,\,\kappa_{s_0}^*,\,\varkappa_{s_0}^*)(v_{s_0}^H,\,\kappa_{s_0})$ to the half-line~$\R_s=(s,\,+\infty)$ does coincide with
$(v_{s}^*,\,\kappa_{s}^*,\,\varkappa_{s}^*)(\Pi v_{s_0}^*(s),\,\kappa_{s_0}^*(s))$.
\end{lemma}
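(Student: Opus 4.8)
The plan is to prove the dynamic programming identity in the classical way: exploit the additivity in time of the running cost of $M_{s_0}^\lambda$ together with formula~\eqref{optimalcost} of Lemma~\ref{L:Ms}, which identifies the optimal value of the ``tail'' Problem~\ref{Pb:Ms}, started at an arbitrary time $s$ from a state $(h,\,k)\in H\times\NN^\perp$, with the quadratic form $\bigl(R_{\hat u}^{\lambda,\,s}(h,\,k),\,(h,\,k)\bigr)_{H\times\NN^\perp}$. First I would record the splitting: for every $(v,\,\kappa,\,\varkappa)\in\XX_{s_0}^{1,1}$ with $A(v,\,\kappa,\,\varkappa)=(v_{s_0}^H,\,\kappa_{s_0})$, its restriction to $(s_0,\,s)$ lies in $\widehat\XX$ and its restriction to $\R_s$ lies in $\XX_s^{1,1}$ with $A$-value $(\Pi v(s),\,\kappa(s))$, and
\begin{align*}
M_{s_0}^\lambda(v,\,\kappa,\,\varkappa)
&=\Bigl(\bigl|\ex^{\frac{\lambda}{2}\Bigcdot}\Pi v\bigr|_{L^2((s_0,\,s),\,H)}^2
+\bigl|\ex^{\frac{\lambda}{2}\Bigcdot}\kappa\bigr|_{L^2((s_0,\,s),\,\NN^\perp)}^2
+\bigl|\ex^{\frac{\lambda}{2}\Bigcdot}\varkappa\bigr|_{L^2((s_0,\,s),\,\NN^\perp)}^2\Bigr)\\
&\quad+M_s^\lambda\bigl(v\rest{\R_s},\,\kappa\rest{\R_s},\,\varkappa\rest{\R_s}\bigr).
\end{align*}
By Lemma~\ref{L:Ms} the last summand is $\ge\bigl(R_{\hat u}^{\lambda,\,s}(\Pi v(s),\,\kappa(s)),\,(\Pi v(s),\,\kappa(s))\bigr)$, with equality if and only if $(v,\,\kappa,\,\varkappa)\rest{\R_s}$ is the (unique) minimizer $(v_s^*,\,\kappa_s^*,\,\varkappa_s^*)(\Pi v(s),\,\kappa(s))$ of Problem~\ref{Pb:Ms}. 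Hence $M_{s_0}^\lambda(v,\,\kappa,\,\varkappa)\ge N_{s_0,\,s}^\lambda\bigl(v\rest{(s_0,\,s)},\,\kappa\rest{(s_0,\,s)},\,\varkappa\rest{(s_0,\,s)}\bigr)\ge\min N_{s_0,\,s}^\lambda$, so $\min M_{s_0}^\lambda\ge\min N_{s_0,\,s}^\lambda$.

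For the reverse inequality I would concatenate: take the minimizer $(v_{s_0,\,s}^\bullet,\,\kappa_{s_0,\,s}^\bullet,\,\varkappa_{s_0,\,s}^\bullet)(v_{s_0}^H,\,\kappa_{s_0})$ of Problem~\ref{Pb:Ns} on $(s_0,\,s)$ and glue it at $t=s$ to the tail minimizer $(v_s^*,\,\kappa_s^*,\,\varkappa_s^*)\bigl(\Pi v_{s_0,\,s}^\bullet(s),\,\kappa_{s_0,\,s}^\bullet(s)\bigr)$ of Problem~\ref{Pb:Ms}. I would then check that the resulting triple $(\tilde v,\,\tilde\kappa,\,\tilde\varkappa)$ belongs to $\XX_{s_0}^{1,1}$ and solves~\eqref{sys-v-ext-dyn}--\eqref{sys-v-ext-divbdry} on $\R_{s_0}$: $\tilde\kappa$ is continuous at $s$ (both pieces equal $\kappa_{s_0,\,s}^\bullet(s)$ there) and $\p_t\tilde\kappa=\tilde\varkappa\in L^2(\R_{s_0},\,\NN^\perp)$, so $\tilde\kappa\in H^1(\R_{s_0},\,\NN^\perp)$; the two pieces of $\tilde v$ agree at $t=s$ as elements of $L^2_{\diver}(\Omega,\,\R^3)$ because they share the tangential part $\Pi v_{s_0,\,s}^\bullet(s)$ and the normal trace $\Xi Q^M_f\kappa_{s_0,\,s}^\bullet(s)$, whence by Corollary~\ref{C:L2=Hzun} both equal $\FF_\aA^{-1}\bigl(\Pi v_{s_0,\,s}^\bullet(s),\,Q^M_f\kappa_{s_0,\,s}^\bullet(s)\bigr)$; consequently $\tilde v$ is continuous in $L^2_{\diver}(\Omega,\,\R^3)$ across $s$, has no distributional jump there, and is therefore, by uniqueness of weak solutions (Theorem~\ref{T:exist-wsol-v}), the weak solution with $\tilde v(s_0)=\FF_\aA^{-1}(v_{s_0}^H,\,Q^M_f\kappa_{s_0})$ and $\tilde\kappa(s_0)=\kappa_{s_0}$, i.e.\ admissible for Problem~\ref{Pb:Ms} with $s=s_0$. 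Evaluating the splitting above on $(\tilde v,\,\tilde\kappa,\,\tilde\varkappa)$ and using~\eqref{optimalcost} gives $M_{s_0}^\lambda(\tilde v,\,\tilde\kappa,\,\tilde\varkappa)=N_{s_0,\,s}^\lambda(v_{s_0,\,s}^\bullet,\,\kappa_{s_0,\,s}^\bullet,\,\varkappa_{s_0,\,s}^\bullet)=\min N_{s_0,\,s}^\lambda$, so $\min M_{s_0}^\lambda\le\min N_{s_0,\,s}^\lambda$.

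Combining the two bounds, $\min M_{s_0}^\lambda=\min N_{s_0,\,s}^\lambda$, and hence every inequality of the first paragraph becomes an equality when evaluated at the minimizer $(v_{s_0}^*,\,\kappa_{s_0}^*,\,\varkappa_{s_0}^*)(v_{s_0}^H,\,\kappa_{s_0})$ of Problem~\ref{Pb:Ms}: equality in $M_{s_0}^\lambda\ge N_{s_0,\,s}^\lambda\ge\min N_{s_0,\,s}^\lambda$ forces its restriction to $(s_0,\,s)$ to be the unique minimizer $(v_{s_0,\,s}^\bullet,\,\kappa_{s_0,\,s}^\bullet,\,\varkappa_{s_0,\,s}^\bullet)(v_{s_0}^H,\,\kappa_{s_0})$ of Problem~\ref{Pb:Ns}, while equality in the lower bound for the tail summand forces its restriction to $\R_s$ to be the unique minimizer $(v_s^*,\,\kappa_s^*,\,\varkappa_s^*)\bigl(\Pi v_{s_0}^*(s),\,\kappa_{s_0}^*(s)\bigr)$ of Problem~\ref{Pb:Ms} at time $s$; this is precisely the assertion. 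The step I expect to be the main obstacle is the gluing/regularity argument — ensuring that splicing a finite-horizon optimal trajectory with an infinite-horizon optimal tail at $t=s$ produces a genuine element of $\ZZ^{1,1}_{s_0}$ solving the extended system — where the delicate points are the $H^1$-in-time continuity of the glued $\kappa$ and the identification, via Corollaries~\ref{C:L2=HH12} and~\ref{C:L2=Hzun}, of the full velocity state $v(s)\in L^2_{\diver}(\Omega,\,\R^3)$ from the pair $(\Pi v(s),\,\kappa(s))$, after which uniqueness of weak solutions closes the gap.
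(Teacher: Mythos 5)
Your argument is correct and takes essentially the same route as the paper: concatenation made admissible via the compatibility condition $(v\cdot\nnn)\nnn=\Xi Q^M_f\kappa$ together with Corollary~\ref{C:L2=Hzun} and $\FF_\aA^{-1}$, the $R_{\hat u}^{\lambda,\,s}$-characterization~\eqref{optimalcost} of the optimal tail cost from Lemma~\ref{L:Ms}, and uniqueness of minimizers. Your organization — establishing $\min M_{s_0}^\lambda=\min N_{s_0,\,s}^\lambda$ first and then reading off both conclusions from the equality chain evaluated at $(v_{s_0}^*,\,\kappa_{s_0}^*,\,\varkappa_{s_0}^*)$ — is a mild streamlining that lets you dispense with the second concatenation $(\widehat v,\,\widehat\kappa,\,\widehat\varkappa)$ the paper constructs explicitly.
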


\begin{proof}
From $(v_{s_0,\,s}^\bullet,\,\kappa_{s_0,\,s}^\bullet,\,\varkappa_{s_0,\,s}^\bullet)(v_{s_0}^H,\,\kappa_{s_0})\in\widehat\XX$ and
$(v_{s}^*,\,\kappa_{s}^*,\,\varkappa_{s}^*)(\Pi v_{s_0,\,s}^\bullet(s),\,\kappa_{s_0,\,s}^\bullet(s))\in \XX_{s}^{1,1}$, we necessarily have
$(v_{s_0,\,s}^\bullet(t)\cdot\nnn)\nnn=\Xi Q^M_f\kappa_{s_0,\,s}^\bullet(t)$ for any~$t\in[s_0,\,s]$, and
$(v_{s}^*(r)\cdot\nnn)\nnn=\Xi Q^M_f\kappa_{s}^*(r)$ for any~$r\ge s$.
Thus, since $\kappa_{s_0,\,s}^\bullet(s)=\kappa_{s}^*(s)$, we find that~$v_{s_0,\,s}^\bullet(s)\cdot\nnn=v_{s}^*(s)\cdot\nnn$, and
from~$\Pi v_{s_0,\,s}^\bullet(s)=\Pi v_{s}^*(s)$, it follows that
$v_{s_0,\,s}^\bullet(s)=\FF_\aA^{-1}(\Pi v_{s_0,\,s}^\bullet(s),\,z^{v_{s_0,\,s}^\bullet(s)\cdot\nnn})
=\FF_\aA^{-1}(\Pi v_{s}^*(s),\,z^{v_{s}^*(s)\cdot\nnn})=v_{s}^*(s)$.
Therefore, the concatenation
\[
(\widetilde v,\,\widetilde \kappa,\,\widetilde \varkappa)(t)\coloneqq
\left\{\begin{array}{ll}
(v_{s_0,\,s}^\bullet,\,\kappa_{s_0,\,s}^\bullet,\,\varkappa_{s_0,\,s}^\bullet)(v_{s_0}^H,\,\kappa_{s_0})(t),&\mbox{if }t\in[s_0,\,s];\\
(v_{s}^*,\,\kappa_{s}^*,\,\varkappa_{s}^*)(\Pi v_{s_0,\,s}^\bullet(s),\,\kappa_{s_0,\,s}^\bullet(s))(t),&\mbox{if }t\in\ge s;
\end{array}\right.
\]
is a function in~$\XX_{s_0}^{1,1}$.
Analogously, we can see that also the concatenation
\[
(\widehat v,\,\widehat \kappa,\,\widehat \varkappa)(t)\coloneqq
\left\{\begin{array}{ll}
(v_{s_0}^*,\,\kappa_{s_0}^*,\,\varkappa_{s_0}^*)(v_{s_0}^H,\,\kappa_{s_0})(t),&\mbox{if }t\in[s_0,\,s];\\
(v_{s}^*,\,\kappa_{s}^*,\,\varkappa_{s}^*)(\Pi v_{s_0}^*(s),\,\kappa_{s_0}^*(s))(t),&\mbox{if }t\in\ge s;
\end{array}\right.
\]
is a function in~$\XX_{s_0}^{1,1}$.

By the definition of $(v_{s_0}^*,\,\kappa_{s_0}^*,\,\varkappa_{s_0}^*)$ and $(v_{s}^*,\,\kappa_{s}^*,\,\varkappa_{s}^*)$ we can conclude that
\begin{align}
&\quad M_{s_0}^\lambda(v_{s_0}^*,\,\kappa_{s_0}^*,\,\varkappa_{s_0}^*)(v_{s_0}^H,\,\kappa_{s_0})
\le M_{s_0}^\lambda (\widehat v,\,\widehat \kappa,\,\widehat \varkappa)
= N_{s_0,\,s}^\lambda(v_{s_0}^*,\,\kappa_{s_0}^*,\,\varkappa_{s_0}^*)(v_{s_0}^H,\,\kappa_{s_0})\rest{(s_0,\,s)}\notag\\
&= \left|\ex^{\frac{\lambda}{2}t}\Pi v_{s_0}^*\right|_{L^2((s_0,\,s),\,H))}^2
+\left|\ex^{\frac{\lambda}{2}t}\kappa_{s_0}^*\right|_{L^2((s_0,\,s),\,\NN^\bot)}^2
+\left|\ex^{\frac{\lambda}{2}t}\varkappa_{s_0}^*\right|_{L^2((s_0,\,s),\,\NN^\bot)}^2\notag\\
&\quad +\left(R_{\hat u}^{\lambda,\,s}\left(\Pi v_{s_0}^*(s),\,\kappa_{s_0}^*(s)\right),\,\left(\Pi v_{s_0}^*(s)\,\kappa_{s_0}^*(s)\right)\right)
\le M_{s_0}^\lambda(v_{s_0}^*,\,\kappa_{s_0}^*,\,\varkappa_{s_0}^*)(v_{s_0}^H,\,\kappa_{s_0}).\label{dyn1}
\end{align}
From the uniqueness of the minimizer for Problem~\ref{Pb:Ms}, it follows
$(v_{s_0}^*,\,\kappa_{s_0}^*,\,\varkappa_{s_0}^*)(v_{s_0}^H,\,\kappa_{s_0})=(\widehat v,\,\widehat \kappa,\,\widehat \varkappa).$ In particular,
$(v_{s_0}^*,\,\kappa_{s_0}^*,\,\varkappa_{s_0}^*)(v_{s_0}^H,\,\kappa_{s_0})\rest{\R_s}
=(v_{s}^*,\,\kappa_{s}^*,\,\varkappa_{s}^*)(\Pi v_{s_0}^*(s),\,\kappa_{s_0}^*(s)).$

On the other side, from the definition of $(v_{s_0,\,s}^\bullet,\,\kappa_{s_0,\,s}^\bullet,\,\varkappa_{s_0,\,s}^\bullet)(v_{s_0}^H,\,\kappa_{s_0})$
and~\eqref{dyn1},
we also have
\begin{align*}
&\quad M_{s_0}^\lambda (\widetilde v,\,\widetilde \kappa,\,\widetilde \varkappa)
=N_{s_0,\,s}^\lambda(v_{s_0,\,s}^\bullet,\,\kappa_{s_0,\,s}^\bullet,\,\varkappa_{s_0,\,s}^\bullet)(v_{s_0}^H,\,\kappa_{s_0})
\le N_{s_0,\,s}^\lambda(v_{s_0}^*,\,\kappa_{s_0}^*,\,\varkappa_{s_0}^*)(v_{s_0}^H,\,\kappa_{s_0})\rest{(s_0,\,s)}\\
&=M_{s_0}^\lambda (\widehat v,\,\widehat \kappa,\,\widehat \varkappa)=M_{s_0}^\lambda(v_{s_0}^*,\,\kappa_{s_0}^*,\,\varkappa_{s_0}^*)(v_{s_0}^H,\,\kappa_{s_0})\le M_{s_0}^\lambda (\widetilde v,\,\widetilde \kappa,\,\widetilde \varkappa).
\end{align*}
Necessarily, it follows that  
$(\widetilde v,\,\widetilde \kappa,\,\widetilde \varkappa)=(v_{s_0}^*,\,\kappa_{s_0}^*,\,\varkappa_{s_0}^*)(v_{s_0}^H,\,\kappa_{s_0})$
and $(v_{s_0}^*,\,\kappa_{s_0}^*,\,\varkappa_{s_0}^*)(v_{s_0}^H,\,\kappa_{s_0})\rest{(s_0,\,s)}
=(v_{s_0,\,s}^\bullet,\,\kappa_{s_0,\,s}^\bullet,\,\varkappa_{s_0,\,s}^\bullet)(v_{s_0}^H,\,\kappa_{s_0}).$
\end{proof}

\subsection{Linear feedback stabilization for the extended system}\label{S:feedback_extsys}
In this section we prove the following Theorem~\ref{T:feedback} which says
that if we see~$\varkappa$ as our control and $(v,\,\kappa)$ as the state in~\eqref{sys-v-ext}, then $\varkappa$ can be taken in linear feedback form.

\begin{theorem} \label{T:feedback}
Given $\hat u\in\WW^{\rm st}$ and $\lambda>0$, let $M=\overline C_{\left[|\hat
u|_\WW,\lambda\right]}\in\N$ be the integer constructed in
Theorem~\ref{T:ex.st-ct.lin}. Then there is a family of
operators $\KK_{\hat u}^{\lambda,\,s}\in\LL(H\times \NN^\perp\to\NN^\perp)$ such that the
following properties hold:
\begin{enumerate}
 \renewcommand{\theenumi}{{\sf\roman{enumi}}} 
 \renewcommand{\labelenumi}{{\sf(\roman{enumi}).}} 
 \item\label{feedTcont}
The function $s\mapsto \KK_{\hat u}^{\lambda,\,s}$, $s\in{[0,\,+\infty)}$, is continuous in the weak operator topology,
and it holds~$\norm{\KK_{\hat u}^{\lambda,\,s}}{\LL(H\times \NN^\perp\to\,\NN^\perp)}
\le\overline C_{\left[|\hat u|_{\WW^{\rm st}},\,\lambda,\,\frac{1}{\lambda}\right]}$.
 \item\label{feedTest}
For any given $s_0\ge0$ and $(v_{s_0}^H,\,\kappa_{s_0})\in H\times \NN^\perp$, the solution of the system~\eqref{sys-v-ext}
with~$\varkappa=\KK_{\hat u}^{\lambda,\,t}(\Pi v(t),\,\kappa(t))$
exists, in~$\R_{s_0}\times\Omega$, and satisfies the estimate
\begin{equation}\label{estTfeed} 
\hspace*{-2.0em}\norm{\ex^{\frac{\lambda}{2}(\Bigcdot-a)}(v,\,\kappa)}{W(\R_a,\,H^1_{\diver}(\Omega,\,\R^3),\,
H^{-1}(\Omega,\,\R^3))\times H^1(\R_a,\,\NN^\perp)}^2
\le \overline C_{\left[|\hat u|_{\WW^{\rm st}},\,\lambda,\,\frac{1}{\lambda}\right]} \norm{(\Pi v(a),\,\kappa(a)}{H\times\NN^\perp}^2
\end{equation}
for all $a\ge s_0$. 
\end{enumerate}
\end{theorem}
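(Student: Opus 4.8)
The plan is to identify $\varkappa$ with the control and $(v,\kappa)$ with the state of the extended system~\eqref{sys-v-ext}, to read the feedback rule off the optimal controls of Problem~\ref{Pb:Ms}, and then to use the dynamic programming principle (Lemma~\ref{L:MsNs}) to show that the closed loop is exactly the optimal trajectory. Concretely, for each $s\ge0$ and $(v_s^H,\kappa_s)\in H\times\NN^\perp$ let $(v_s^*,\kappa_s^*,\varkappa_s^*)=(v_s^*,\kappa_s^*,\varkappa_s^*)(v_s^H,\kappa_s)$ be the unique minimizer given by Lemma~\ref{L:Ms}; I would set $\KK_{\hat u}^{\lambda,\,s}(v_s^H,\kappa_s):=\varkappa_s^*(v_s^H,\kappa_s)(s)$, which is linear by the linear dependence of the minimizer on the data.

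The first technical point is to make sense of, and bound, this trace at the initial time. Minimizing the coercive quadratic functional $M_s^\lambda$ over $\XX_s^{1,1}$ under the affine constraints~\eqref{sys-v-ext-dyn}--\eqref{sys-v-ext-divbdry} and $(\Pi v(s),\kappa(s))=(v_s^H,\kappa_s)$ is a convex problem, so by the Karush--Kuhn--Tucker theorem there is a Lagrange multiplier, i.e.\ an adjoint pair $(q_s,\mu_s)$ solving the system adjoint to~\eqref{sys-v-ext}: a backward Oseen--Stokes equation for $q_s$ forced by $\ex^{\lambda\Bigcdot}\Pi v_s^*$, coupled to a backward equation for $\mu_s$ in the finite-dimensional space $\NN^\perp$ forced by $\ex^{\lambda\Bigcdot}\kappa_s^*$ and by the trace of $\nnn\cdot\nabla q_s+p_{q_s}\nnn$ on $\Gamma$ coming from the boundary coupling $v\rest\Gamma=\Xi\kappa$. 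Stationarity in $\varkappa$ gives $\ex^{\lambda\Bigcdot}\varkappa_s^*=-\mu_s$, i.e.\ $\varkappa_s^*(t)=-\ex^{-\lambda t}\mu_s(t)$; the point of the extended formulation is precisely that $\mu_s$, solving a backward equation with $L^2$-in-time right-hand side, lies in $H^1(\R_s,\NN^\perp)\subset C([s,+\infty),\NN^\perp)$, so the traces $\varkappa_s^*(t)$ are well defined and depend linearly and continuously on $(v_s^H,\kappa_s)$. For the operator bound I would use $M_s^\lambda(v_s^*,\kappa_s^*,\varkappa_s^*)=(R_{\hat u}^{\lambda,\,s}(v_s^H,\kappa_s),(v_s^H,\kappa_s))_{H\times\NN^\perp}\le\overline C_{\left[|\hat u|_{\WW^{\rm st}},\,\lambda,\,\frac1\lambda\right]}\ex^{\lambda s}|(v_s^H,\kappa_s)|^2$ from~\eqref{normofR}; since $\ex^{\lambda\Bigcdot}\ge\ex^{\lambda s}$ on $\R_s$, this bounds the relevant (local, $s$-independent) norm of $\varkappa_s^*$, and the bound on the trace $\varkappa_s^*(s)$ inherited from the regularity above comes out uniformly in $s$, which is assertion~\ref{feedTcont} together with the weak-operator continuity of $s\mapsto\KK_{\hat u}^{\lambda,\,s}$; the latter follows from continuity in $s$ of the minimizers and of $R_{\hat u}^{\lambda,\,s}$, argued as in \cite[Appendix]{Rod15-cocv} and \cite{BarRodShi11}.

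For assertion~\ref{feedTest}, fix $s_0\ge0$ and $(v_{s_0}^H,\kappa_{s_0})$ with minimizer $(v_{s_0}^*,\kappa_{s_0}^*,\varkappa_{s_0}^*)$. By Lemma~\ref{L:MsNs} the restriction of this minimizer to $\R_t$ is the minimizer of Problem~\ref{Pb:Ms} started at time $t$ from $(\Pi v_{s_0}^*(t),\kappa_{s_0}^*(t))$ for every $t>s_0$; taking traces at $t$ (legitimate by the regularity of Step~2) yields $\varkappa_{s_0}^*(t)=\KK_{\hat u}^{\lambda,\,t}(\Pi v_{s_0}^*(t),\kappa_{s_0}^*(t))$ for all $t\ge s_0$, so $(v_{s_0}^*,\kappa_{s_0}^*,\varkappa_{s_0}^*)$ solves the closed-loop system~\eqref{sys-v-ext} with $\varkappa=\KK_{\hat u}^{\lambda,\,t}(\Pi v(t),\kappa(t))$; uniqueness of the closed-loop solution follows from the linearity of~\eqref{sys-v-ext}, the uniform bound on $\KK_{\hat u}^{\lambda,\,t}$, Theorem~\ref{T:exist-wsol-v}, and a Gronwall argument. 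For the estimate~\eqref{estTfeed}, given $a\ge s_0$ the same dynamic-programming step shows that $(v,\kappa,\varkappa)\rest{\R_a}$ is the minimizer of Problem~\ref{Pb:Ms} started at $a$ from $(\Pi v(a),\kappa(a))$, so its cost equals $(R_{\hat u}^{\lambda,\,a}(\Pi v(a),\kappa(a)),(\Pi v(a),\kappa(a)))\le\overline C_{\left[|\hat u|_{\WW^{\rm st}},\,\lambda,\,\frac1\lambda\right]}\ex^{\lambda a}|(\Pi v(a),\kappa(a))|^2$; in particular $\ex^{\frac\lambda2\Bigcdot}\Pi v\in L^2(\R_a,H)$ and $\ex^{\frac\lambda2\Bigcdot}\kappa\in H^1(\R_a,\NN^\perp)$ with those norms controlled by the same quantity, whence Corollary~\ref{C:L2H1b-H1} upgrades $v$ to $\ex^{\frac\lambda2\Bigcdot}v\in W(\R_a,H^1_{\diver}(\Omega,\R^3),H^{-1}(\Omega,\R^3))$ with a bound of the same type. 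Multiplying by $\ex^{-\lambda a}$ turns the weight $\ex^{\frac\lambda2\Bigcdot}$ into $\ex^{\frac\lambda2(\Bigcdot-a)}$ and cancels the factor $\ex^{\lambda a}$, giving~\eqref{estTfeed}.

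The main obstacle is Step~2: proving that the optimal control $\varkappa_s^*$ is regular enough in time to possess traces $\varkappa_s^*(t)$ depending on the state boundedly and uniformly in $s$ and $t$. This rests on a careful analysis of the KKT optimality system, and in particular on the regularity of the adjoint state $(q_s,\mu_s)$ near $\Gamma$, where the compatibility between $\nnn\cdot\nabla q_s+p_{q_s}\nnn$ and the finite-dimensional structure of the control space enters; this is exactly the difficulty that the auxiliary extended system~\eqref{sys-v-ext} (penalizing $\varkappa$ in $L^2$ while keeping $\kappa$ in the state) is designed to resolve, so I expect the bulk of the work to be in verifying the well-posedness and regularity of that adjoint system rather than in the dynamic-programming argument of Step~3.
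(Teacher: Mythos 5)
Your overall strategy (KKT optimality system, dynamic programming to identify the closed loop with the optimal trajectory, and bounding via the Riccati operator $R_{\hat u}^{\lambda,\,s}$) is exactly the paper's, and your assertion~(ii) argument — restrict by Lemma~\ref{L:MsNs}, control the cost by $R_{\hat u}^{\lambda,\,a}$, then upgrade to $W(\R_a,H^1_{\diver},H^{-1})\times H^1(\R_a,\NN^\perp)$ via Corollary~\ref{C:L2H1b-H1} — is essentially what Step~3 of the paper's proof does. One noteworthy structural difference: you apply the KKT theorem directly to the infinite-horizon Problem~\ref{Pb:Ms}, whereas the paper applies it to the \emph{finite-horizon} Problem~\ref{Pb:Ns} on $(s_0,s)$ and only then passes to the infinite horizon via dynamic programming. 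This is not cosmetic: the surjectivity of the linearized constraint map, needed to invoke a Lagrange multiplier, is established in the paper by solving a boundary-value problem on the bounded cylinder $(s_0,s)\times\Omega$; doing this on all of $\R_{s_0}$ with the exponential weight $\ex^{\lambda\Bigcdot}$ present requires additional justification that you do not supply.

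Beyond that, two of the harder pieces of the proof are acknowledged but not actually carried out. The regularity of the adjoint (your ``Step~2'') is precisely the crux, and it is not enough to say that $\mu_s$ (the paper's $\varpi_s$) solves a backward ODE with $L^2$ right-hand side: the paper has to show that $q_s$, and hence $\langle\nnn\cdot\nabla\rangle q_s - p_{q_s}\nnn$, is \emph{independent of the horizon $s$} — this uses the observability inequality of~\cite{Rod15-cocv} applied to the difference $q_s-q_{s_1}$ — before it can upgrade $q_s$ to $W(\D(L),H)$, solve a one-dimensional Poisson equation~\eqref{poisson.eq} to get $\kappa^\bullet\in H^2$, and finally read off $\varkappa^*_{s_0}(s) = -\ex^{-\lambda s}R_{\hat u,2}^{\lambda,s}(\Pi v^*_{s_0}(s),\kappa^*_{s_0}(s))$ via a boundary test in the KKT identity~\eqref{s.z}. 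More seriously, the weak-operator continuity of $s\mapsto\KK_{\hat u}^{\lambda,\,s}$ (assertion~(i)) is dispatched by you in a single clause (``argued as in~\cite[Appendix]{Rod15-cocv} and~\cite{BarRodShi11}'') — but this is Step~4 of the paper's proof and is substantial: reducing to continuity of $s\mapsto(R_{\hat u}^{\lambda,\,s}w,w)$, treating $s\searrow s_0$ and $s\nearrow s_0$ separately, and for the increasing case, handling a product like $\delta_n|w_n^V|_V^2$ that does not go to zero for a naive choice of $V$-approximants $w_n^V$ of $w^H\in H$; the paper introduces a ``slowed-down'' projection $w_n^V=\Pi_{N_n}w^H$ with $N_n$ chosen from the Stokes eigenvalues so that $\delta_n\alpha_{N_n}|w^H|_H^2\lesssim 1/N_n\to0$. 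None of this is present or foreshadowed in your proposal, so while the plan is sound, a genuine proof would require filling these gaps.
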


\begin{proof}
We organize the proof into~\ref{stwot} main steps. In Step~\ref{stKKT} we use a Lagrange multiplier approach to derive two key optimality conditions
for the minimizer $(v_{s_0,\,s}^\bullet,\,\kappa_{s_0,\,s}^\bullet,\,\varkappa_{s_0,\,s}^\bullet)$ of Problem~\ref{Pb:Ns}. In Step~\ref{stPropOpti}, we use those
conditions and the dynamic programming principle to find the linear feedback rule.
In Step~\ref{stdecay} we show the
uniqueness of the solution under the feedback controller, prove the bound of the feedback operator norm, for each instant of time,
and prove estimate~\eqref{estTfeed}.
Finally in Step~\ref{stwot} we prove
the continuity of the time-dependent family of feedback operators in the weak operator topology.
%
\begin{enumerate}[noitemsep,topsep=5pt,parsep=5pt,partopsep=0pt,leftmargin=0em]%
\renewcommand{\theenumi}{{\sf\arabic{enumi}}} 
 \renewcommand{\labelenumi}{} 
\item \textcircled{\bf s}~Step~\theenumi:\label{stKKT} {\em Karush--Kuhn--Tucker Theorem.}
First we notice that, considering as usual $H$ as a pivot space, we can extend the projection $\Pi\colon L^2(\Omega,\,\R^3)\to H$
to a mapping $\Pi\colon H^{-1}(\Omega,\,\R^3)\to V'$ by simply setting
$\langle\Pi f,\,u\rangle_{V',\,V}\coloneqq \langle f,\,u\rangle_{H^{-1}(\Omega,\,\R^3),\,H^1_0(\Omega,\,\R^3)}$
for all $u\in V$ (cf.~beginning of Proof of Theorem~5.3 in~\cite{Rod15-cocv}).
Then, we define the spaces
\begin{align*}
\XX_0&\coloneqq W((s_0,\,s),\,H^1_{\diver}(\Omega,\,\R^3),\,H^{-1}(\Omega,\,\R^3))
\times H^1((s_0,\,s),\,\NN^\bot)\times L^2((s_0,\,s),\,\NN^\bot),\\
\XX_{\rm S}&\coloneqq\left\{(v,\,\kappa)\mid (v,\,\kappa,\,0)\in\XX_0,\; v(s_0)\in\aA_{\Xi_1},
\mbox{ and }v(s_0)=\FF_\aA^{-1}(\Pi v(s),\,Q^M_f\kappa(s))\right\},\\
\XX&\coloneqq \XX_{\rm S}\times L^2((s_0,\,s),\,\NN^\perp),\\
\YY&\coloneqq H\times \NN^\perp\times L^2((s_0,\,s),\,V')\times L^2((s_0,\,s),\,\NN^\perp)\times G^{1}_{{\rm av},\,0}((s_0,\,s),\,\Gamma),
\end{align*}
where we denote
\[
 G^{1}_{{\rm av},\,0}((s_0,\,s),\,\Gamma)\coloneqq \{u\in G^{1}_{{\rm av}}((s_0,\,s),\,\Gamma)\mid
u(s_0)\cdot\nnn=0\}
\]
(where we may understand $u(s_0)\cdot\nnn$ as $(E_1u)(s_0)\cdot\nnn$, with the extension~$E_1$ as in Section~\ref{sS:funspaces}).

Next we define the affine operator $F\colon \XX\to\YY$, by
\begin{equation*}
F(v,\,\kappa,\,\varkappa)\coloneqq 
\bigl(\Pi v(s_0)-v_{s_0}^H,\,\kappa(s_0)-\kappa_{s_0},\,\Pi(\p_tv+\BB(\hat u)v-\nu\Delta v),\,
\p_t\kappa-\varkappa,\,v\rest\Gamma-\Xi\kappa\bigr).
\end{equation*}
We show now that the derivative $\ed F$ of $F$, $\ed F(v,\,\kappa,\,\varkappa)=F(v,\,\kappa,\,\varkappa)+(v_{s_0}^H,\,\kappa_{s_0},\,0,\,0,\,0)$, is surjective. Indeed
let us be given $(y_1,\,y_2,\,y_3,\,y_4,\,y_5)\in\YY$.
Setting $\kappa(t)=y_2$, for all~$t\in(s_0,\,s)$, $\varkappa=-y_4$, and taking $K$ in system~\eqref{sys-vg} to be the inclusion
from $G^{1}_{{\rm av},\,0}((s_0,\,s),\,\Gamma)$ into $G^1_{{\rm av}}((s_0,\,s),\,\Gamma)$:
$\eta\mapsto \zeta=K\eta=\eta$, then
by~\cite[Theorem~2.11]{Rod15-cocv}, we
have that system~\eqref{sys-vg}, in $(s_0,\,s)\times\Omega$, has a weak solution~$v$
for the data $(v_0,\,g,\,\zeta)=(\FF_\aA^{-1}(y_1,\,Q^M_fy_2),\,y_3,\,y_5+\Xi y_2)$, because necessarily
$(\FF_\aA^{-1}(y_1,\,Q^M_fy_2)-E_1(y_5+\Xi y_2)(s_0))\cdot\nnn=-(\Xi Q^M_l y_2)\cdot\nnn=0$.

Therefore, we find 
$\ed F(v,\,\kappa,\,\varkappa)=(y_1,\,y_2,\,y_3,\,y_4,\,y_5)$
and can conclude that $\ed F$ is surjective.

We see that the minimizer of the cost $N_{s_0,\,s}^\lambda$, in Problem~\ref{Pb:Ns}, is
a minimizer in $\XX$, that is $(v_{s_0,\,s}^\bullet,\,\kappa_{s_0,\,s}^\bullet,\,\varkappa_{s_0,\,s}^\bullet)\in\XX$, and satisfies
$F(v_{s_0,\,s}^\bullet,\,\kappa_{s_0,\,s}^\bullet,\,\varkappa_{s_0,\,s}^\bullet)=0$.
By the Karush--Kuhn--Tucker Theorem (e.g., see~\cite[Theorem~A.1]{BarRodShi11}),
there exists a Lagrange multiplier 
\[
(\mu_s,\,\theta_s,\,q_s,\,\varpi_s,\,\gamma_s)\in\YY',
\]
with $\YY'=
H\times \NN^\perp\times L^2((s_0,\,s),\,V)\times L^2((s_0,\,s),\,\NN^\perp)\times G^1_{{\rm av},\,0}((s_0,\,s),\,\Gamma)'
$,
such that
\[
\ed N_{s_0,\,s}^\lambda(v_{s_0,\,s}^\bullet,\,\kappa_{s_0,\,s}^\bullet,\,\varkappa_{s_0,\,s}^\bullet)+
(\mu_s,\,\theta_s,\,q_s,\,\varpi_s,\,\gamma_s)\circ \ed F(v_{s_0,\,s}^\bullet,\,\kappa_{s_0,\,s}^\bullet,\,\varkappa_{s_0,\,s}^\bullet)=0.
\]
Hence, for all $(z,\,\varsigma)\in \XX_{\rm S}$ and all $\xi\in
L^2((s_0,\,s),\,\NN^\bot)$ we have
\begin{align}
0&=2\int_{s_0}^s\ex^{\lambda t}(\Pi v_{s_0,\,s}^\bullet,\,\Pi z)_{H}\,\ed t
+2\int_{s_0}^s\ex^{\lambda t}(\kappa_{s_0,\,s}^\bullet(t),\,\varsigma(t))_{\NN^\perp}\,\ed t\label{s.z}\\
&\quad+2\left(R_{\hat u}^{\lambda,\,s}(\Pi v_{s_0,\,s}^\bullet(s),\,\kappa_{s_0,\,s}^\bullet(s)),\,
(\Pi z(s),\,\varsigma(s))\right)_{H\times \NN^\perp}\notag\\
&\quad+\left((\Pi z(s_0),\,\varsigma(s_0)),\,(\mu_s,\,\theta_s)\right)_{H\times \NN^\perp}
+\int_{s_0}^s ( \varpi_s,\,\p_t\varsigma)_{\NN^\perp}\,\ed t\notag\\
&\quad
+\int_{s_0}^s\langle \Pi(\p_tz+\BB(\hat u)z-\nu\Delta z),\, q_s\rangle_{V',\,V}\,\ed t
+\langle \gamma_s,\,z\rest\Gamma-\Xi\varsigma\rangle_{(G^{1,\,s}_{0,\,\Gamma})',\,G^{1,\,s}_{0,\,\Gamma}}\;,\notag\\
0&=2\int_{s_0}^s\ex^{\lambda t}(\varkappa_{s_0,\,s}^\bullet,\,\xi)_{\NN^\bot}\,\ed t
+\int_{s_0}^s ( \varpi_s,\,-\xi)_{\NN^\perp}\,\ed t\;,\label{s.xi}
\end{align}
where for simplicity we denote $G^{1,\,s}_{0,\,\Gamma}\coloneqq G^1_{{\rm av},\,0}((s_0,\,s),\,\Gamma)$.

\item \textcircled{\bf s}~Step~\theenumi:\label{stPropOpti} {\em properties of the optimal triple.}
Letting $z$ run over all $z\in W((s_0,\,s),\,V,\,V')$, with $z(s_0)=z(s)=0$, and taking $\varsigma=0$, then from~\eqref{s.z} and from
$H^{-1}(\Omega,\,\R^3)=V'\oplus\{\nabla p\mid p\in L^2(\Omega,\,\R)\}$ (see, e.g.,~\cite[Chapter~1, Section~1.4]{Temam01}),
we can see that for some $p_{q_s}\in L^2((s_0,\,s),\,L^2(\Omega,\,\R))$,
\begin{equation}\label{eq.qs}
-\p_tq_s-\nu\Delta q_s+\BB^\ast(\hat u)q_s+\nabla p_{q_s}+2\ex^{\lambda t}\Pi v_{s_0,\,s}^\bullet(t)=0
\end{equation}
where $\BB^\ast(\hat u)$ is the formal adjoint to $\BB(\hat u)$: defined for given $q\in
V$ and $v\in H^{1}_{\diver}(\Omega,\,\R^3)$ by
$(v,\,\BB^\ast(\hat u)q)_{L^2(\Omega,\,\R^3)}
\coloneqq \langle \BB(\hat u)v,\,q\rangle_{H^{-1}(\Omega,\,\R^3),\,H^1_0(\Omega,\,\R^3)}$. Further, we suppose that we have fixed an appropriate
choice for the pressure function~$p_{q_s}$ (cf.~\cite[Section~3.2]{Rod15-cocv}). To fix ideas, let us choose $p_{q_s}$ satisfying $\int_\Omega p_{q_s}\,\ed\Omega=0$.

From $\Pi v_{s_0,\,s}^\bullet\in L^2((s_0,s),H)$ and
$q_s\in
L^2((s_0,s),V)$, it follows $\p_tq_s\in
L^2((s_0,s),V')$, 
in particular $q_s\in C([s_0,\,s],\,H)$ (cf.~\cite[Chapter~1, Theorem~3.1]{LioMag72-I}). Using
again~\eqref{s.z}, with arbitrary $z\in W((s_0,\,s),\,V,\,V')\subset C([s_0,\,s],\,H)$ and $\varsigma=0$, we derive that $q_s(s_0)=\mu_s$ and
\begin{equation} \label{qs1}
q_s(s)=-2 R_{\hat u,\,1}^{\lambda,\,s}(\Pi v_{s_0,\,s}^\bullet(s),\,\kappa_{s_0,\,s}^\bullet(s))\in H
\end{equation}
where, recalling that~$R_{\hat u}^{\lambda,\,s}(\Pi v_{s_0,\,s}^\bullet(s),\,\kappa_{s_0,\,s}^\bullet(s))\in H\times\NN^\perp$, we
define
\begin{equation}\label{R=R1R2}
\begin{array}{ll}
\left(R_{\hat u,\,1}^{\lambda,\,s}(\Pi v_{s_0,\,s}^\bullet(s),\,\kappa_{s_0,\,s}^\bullet(s)),\,
R_{\hat u,\,2}^{\lambda,\,s}(\Pi v_{s_0,\,s}^\bullet(s),\,\kappa_{s_0,\,s}^\bullet(s))\right)
\coloneqq R_{\hat u}^{\lambda,\,s}(\Pi v_{s_0,\,s}^\bullet(s),\,\kappa_{s_0,\,s}^\bullet(s));\\
 \left(R_{\hat u,\,1}^{\lambda,\,s}(\Pi v_{s_0,\,s}^\bullet(s),\,\kappa_{s_0,\,s}^\bullet(s)),\,
 R_{\hat u,\,2}^{\lambda,\,s}(\Pi v_{s_0,\,s}^\bullet(s),\,\kappa_{s_0,\,s}^\bullet(s))\right)\in H\times\NN^\perp.
 \end{array}
\end{equation}
Since $\Pi v_{s_0,\,s}^\bullet\in L^2((s_0,s),H)$, by standard arguments we can prove that
$q_s\in W((s_0,s-2\epsilon),\,\D(L),\,H)$ for any $0<\epsilon<\frac{s-s_0}{2}$; taking,  again in~\eqref{s.z}, arbitrary
$z$ supported in $[s_0+\epsilon,\,s-\epsilon]$ with $z\in W((s_0,\,s),\,H^1_{\diver}(\Omega,\,\R^3),\,H^{-1}(\Omega,\,\R^3))$, and $\varsigma=0$,
we obtain, using~\eqref{eq.qs},
\begin{equation}\label{qp-gamma}
0=\left(\langle\nnn\cdot\nabla\rangle q_s-p_{q_s}\nnn,\,z\rest\Gamma\right)_{L^2((s_0,\,s),\,L^2(\Gamma,\,\R^3))}
+\langle \gamma_s,\,z\rest\Gamma\rangle_{(G^{1,\,s}_{0,\,\Gamma})',\,G^{1,\,s}_{0,\,\Gamma}}.
\end{equation}

On the other hand, relation~\eqref{s.xi} implies that
\begin{equation}\label{varpikappa}
\varpi_s=2\ex^{\lambda t}\varkappa_{s_0,\,s}^\bullet,
\end{equation}
which shows us that $\varpi_s$ is independent of $s$ (or, more precisely $\varpi_s(t)$ is independent of $s$, for $t\in[s_0,\,s]$), because
by Lemma~\ref{L:MsNs} we have $(v_{s_0,\,s}^\bullet,\,\kappa_{s_0,\,s}^\bullet,\,\varkappa_{s_0,\,s}^\bullet)(v_{s_0}^H,\,\kappa_{s_0})
=(v_{s_0}^*,\,\kappa_{s_0}^*,\,\varkappa_{s_0}^*)(v_{s_0}^H,\,\kappa_{s_0})\rest{(s_0,\,s)}$.

Now taking $z=0$ and $\varsigma\in H^1_0((s_0,\,s),\,\NN^\perp)$, from~\eqref{s.z} we obtain
\begin{align}
0&=2\int_{s_0}^s\ex^{\lambda t}(\kappa_{s_0,\,s}^\bullet(t),\,\varsigma(t))_{\NN^\perp}\,\ed t
+\int_{s_0}^s ( \varpi_s,\,\p_t\varsigma)_{\NN^\perp}\,\ed t
+\langle \gamma_s,\,-\Xi\varsigma\rangle_{(G^{1,\,s}_{0,\,\Gamma})',\,G^{1,\,s}_{0,\,\Gamma}}\;\notag
\end{align}
and, introducing the adjoint $\Xi^*\colon (G^{1,\,s}_{0,\,\Gamma})'\to\NN^\bot$ of $\Xi\colon \NN^\bot\to G^{1,\,s}_{0,\,\Gamma}$, defined by
\begin{equation*}
(\Xi^*\gamma,\,k)_{\NN^\bot}\coloneqq \langle\gamma,\,\Xi k\rangle_{(G^{1,\,s}_{0,\,\Gamma})',\,G^{1,\,s}_{0,\,\Gamma}}\,,\quad\mbox{for all }k\in\NN^\bot, 
\end{equation*}
we arrive to
\begin{equation}\label{Poisson_k_1}
 \Xi^*\gamma_s=2\ex^{\lambda t}\kappa_{s_0,\,s}^\bullet
-\p_t\varpi_s
=2\ex^{\lambda t}(1-\p_t\p_t)\kappa_{s_0,\,s}^\bullet-2\lambda\ex^{\lambda t}\varkappa_{s_0,\,s}^\bullet.
\end{equation}
Then, we see that $\gamma_s$ is independent of $s$. Since the space $\{f\in G^{1,\,s}_{0,\,\Gamma}\mid \supp(f)\subset(s_0,\,s)\}\supset
\{f\in C^\infty([s_0,\,s],\,H^1(\Gamma,\,\R^3))\mid\supp(f)\subset(s_0,\,s)\}$
is dense in $L^2((s_0,\,s),\,L^2(\Gamma,\,\R^3))$, by using~\eqref{qp-gamma}
we obtain that $\langle\nnn\cdot\nabla\rangle q_s-p_{q_s}\nnn=\gamma_s$ is independent of $s$.
Here, the last equality is to be understood in~$(G^{1,\,s}_{0,\,\Gamma})'$.

Suppose now that we are given another $s_1$ with $s_0<s<s_1$. Then, the difference
$(\bar q,\,p_{\bar q})\coloneqq (q_s-q_{s_1},\,p_{q_s}-p_{q_{s_1}})$ satisfies
\begin{equation*}
 -\p_t\bar q-\nu\Delta \bar q+\BB^\ast(\hat u)\bar q+\nabla p_{\bar q}=0
\end{equation*}
in $(s_0,\,s)\times\Omega$.
From the observability inequality in~\cite[Inequality~(3.4)]{Rod15-cocv}, we obtain
\[
|\bar q(s-2\epsilon)|_H^2\leq C|p_{\bar q}\nnn
-\langle\nnn\cdot\nabla\rangle \bar q|_{G^1((s-2\epsilon,\,s),\,\Gamma)'}^2
=0\mbox{ for all }0<\epsilon\le\textstyle\frac{s-s_0}{2}.
\]
Since $\bar q\in C([s_0,\,s],\,H)$, it follows that $\bar q=0$ in $[s_0,\,s]$.
In particular, we can conclude that~$q_s$ and~$\nabla p_{q_s}$ do not depend on~$s$ and that
\begin{equation}\label{q_inds1}
-2 R_{\hat u,\,1}^{\lambda,\,s}(\Pi v_{s_0,\,s}^\bullet(s),\,\kappa_{s_0,\,s}^\bullet(s))=q_s(s)=q_{s_1}(s)\in V,\quad
\mbox{for all }s\in{[s_0,\,s_1]}
\end{equation}
which, in turn, implies that $q_s\in W((s_0,\,s),\,\D(L),\,H)$ and 
$p_{q_s}\in L^2((s_0,\,s),\,H^1(\Omega,\,\R))$ (cf.~\cite[Section~3.2]{Rod15-cocv}). In particular, if we choose~$p_{q_s}$ satisfying
$\int_\Gamma p_{q_s}\,\ed\Gamma=0$ then  the identity
$\gamma_s=\langle\nnn\cdot\nabla\rangle q_s-p_{q_s}\nnn$ is meaningful in the subset
$L^2((s_0,\,s),\,H^{\frac{1}{2}}_{\rm av}(\Gamma,\,\R^3))\subset(G^{1,\,s}_{0,\,\Gamma})'$.

Now,  we observe that the function $\eta\coloneqq \kappa_{s_0,\,s}^\bullet
+\frac{1}{s-s_0}\left((\Bigcdot-s)\kappa_{s_0,\,s}^\bullet(s_0)
-(\Bigcdot-s_0)\kappa_{s_0,\,s}^\bullet(s)\right)$ is in $H^1_0((s_0,\,s),\,\NN^\bot)$ and, from~\eqref{Poisson_k_1}, solves
the Poisson equation
\begin{align}
 (1-\Delta_t)\eta&=\textstyle\frac{1}{2}\ex^{-\lambda \Bigcdot}\Xi^*\gamma_s+\lambda\varkappa_{s_0,\,s}^\bullet
 +\frac{1}{s-s_0}\left((\Bigcdot-s)\kappa_{s_0,\,s}^\bullet(s_0)
-(\Bigcdot-s_0)\kappa_{s_0,\,s}^\bullet(s)\right),\label{poisson.eq}
\end{align}
with~$\Delta_t\coloneqq\p_t\p_t$. Since the right hand side is in $L^2((s_0,\,s),\,\NN^\bot)$, by standard arguments (for elliptic equations) it follows that
$\eta\in H^1_0((s_0,\,s),\,\NN^\bot)\cap H^2((s_0,\,s),\,\NN^\bot)$, which implies that 
$\kappa_{s_0,\,s}^\bullet\in H^2((s_0,\,s),\,\NN^\bot)$.

Whence, taking~$z=0$ and an arbitrary $\varsigma\in H^1((s_0,\,s),\,\NN^\bot)$ in~\eqref{s.z}, with~$\varsigma(s_0)=0$, 
and using~\eqref{R=R1R2} and~\eqref{Poisson_k_1} we can arrive to
$0=2R_{\hat u,\,2}^{\lambda,\,s}(\Pi v_{s_0,\,s}^\bullet(s),\,\kappa_{s_0,\,s}^\bullet(s))
+\varpi_s(s)$. Thus, from~\eqref{varpikappa}, we obtain
$\varkappa_{s_0,\,s}^\bullet(s)=-\ex^{-\lambda s}R_{\hat u,\,2}^{\lambda,\,s}(\Pi v_{s_0,\,s}^\bullet(s),\,\kappa_{s_0,\,s}^\bullet(s))$, and
Lemma~\ref{L:MsNs} leads to
\begin{equation}\label{linfeed-ext}
\varkappa_{s_0}^*(s)=\KK_{\hat u}^{\lambda,\,s}(\Pi v_{s_0}^*(s),\,\kappa_{s_0}^*(s)),\quad\mbox{with }
\KK_{\hat u}^{\lambda,\,s}\coloneqq-\ex^{-\lambda s}R_{\hat u,\,2}^{\lambda,\,s}\,.
\end{equation}

%
\item \textcircled{\bf s}~Step~\theenumi:\label{stdecay} {\em uniqueness, operator norm, and exponential decay estimates.}
We know that given $(v_{s_0}^H,\,\kappa_{s_0})\in H\times\NN^\perp$ the pair~$(v,\,\kappa)=(v_{s_0}^*,\,\kappa_{s_0}^*)(v_{s_0}^H,\,\kappa_{s_0})$
solves~\eqref{sys-v-ext}, in~$\R_{s_0}\times\Omega$, with the feedback control~\eqref{linfeed-ext}:
\begin{subequations}\label{sys-v-ext-feed}
\begin{align}
 \p_t v+\BB(\hat u)v
 -\nu\Delta v+\nabla p_v&=0,& \p_t\kappa &=\KK_{\hat u}^{\lambda,\,\Bigcdot}(\Pi v,\,\kappa),\label{sys-v-ext-feed-dyn}\\
 \diver v &=0,& v\rest \Gamma &=\Xi\kappa, \label{sys-v-ext-feed-divbdry}\\
 v(s_0)&=\FF_\aA^{-1}(v_{s_0}^H,\, Q^M_f\kappa_{s_0}),& \kappa(s_0)&=\kappa_{s_0},\label{sys-v-ext-feed-ic}
\end{align}
\end{subequations}
This solution is unique: indeed,  if $(\widetilde v,\,\widetilde \kappa)$ is another solution
with $(\Pi\widetilde v,\,\widetilde \kappa)(s_0)=(v_{s_0}^H,\,\kappa_{s_0})$,
then $(d^v,\,d^\kappa)=(v-\widetilde v,\,\kappa-\widetilde \kappa)$ solves
\begin{align*}
 \p_t d^v+\BB(\hat u)d^v
 -\nu\Delta d^v+\nabla p_{d^v}&=0,&\quad \diver d^v &=0,\hspace*{3em} d^v\rest \Gamma= \Xi d^\kappa,\\
 (d^v(s_0),\,d^\kappa(s_0))&=(0,\,0),& d^\kappa &=\int_{s_0}^t\KK_{\hat u}^{\lambda,\,r}(\Pi d^v(r),\,d^\kappa(r))\,\ed r, \\
 \end{align*}
and, from~\eqref{normofR}, \eqref{R=R1R2}, and~\eqref{linfeed-ext}, we have
\begin{equation}\label{Opnorm}
|\KK_{\hat u}^{\lambda,\,r}(\Pi d^v(r),\,d^\kappa(r))|_{\NN^\perp}^2
\le \overline C_{\left[|\hat u|_{\WW^{\rm st}},\,\lambda,\,\frac{1}{\lambda}\right]}
|(\Pi d^v(r),\,d^\kappa(r))|_{H\times\NN^\perp}^2.
\end{equation}
Then, for the function $z(t)\coloneqq
\norm{\KK_{\hat u}^{\lambda,\,t}(\Pi d^v(t),\,d^\kappa(t))}{\NN^\perp}^2$,
using Theorem~\ref{T:exist-wsol-v} and $d^v(s_0)=0$, 
\begin{align*}
z(r)
&\le\overline C_{\left[|\hat u|_{\WW^{\rm st}},\,\lambda,\,\frac{1}{\lambda}\right]}
\left(|d^v(r)|_{L^2_{\diver}(\Omega,\,\R^3)}^2+|d^\kappa(r))|_{\NN^\perp}^2\right)\\
&\le
\overline C_{\left[|\hat u|_{\WW^{\rm st}},\,\lambda,\,\frac{1}{\lambda}\right]}
\left(\left|\int_{s_0}^{\Bigcdot} \KK_{\hat u}^{\lambda,\,s}(\Pi d^v(s),\,d^\kappa(s))\,\ed s\right|_{H^1((0,\,r),\,\NN^\perp)}^2
+(r-s_0)\int_{s_0}^rz(t)\,\ed t\right)\\
&=
\overline C_{\left[|\hat u|_{\WW^{\rm st}},\,\lambda,\,\frac{1}{\lambda}\right]}
\int_{s_0}^r\left(\left|\int_{s_0}^{t} \KK_{\hat u}^{\lambda,\,s}(\Pi d^v(s),\,d^\kappa(s))\,\ed s\right|_{\NN^\perp}^2
+(1 +r-s_0)z(t)\right)\,\ed t\\
&\le
\overline C_{\left[|\hat u|_{\WW^{\rm st}},\,\lambda,\,\frac{1}{\lambda}\right]}
\int_{s_0}^r\left((t-s_0)\int_{s_0}^{t} z(s)\,\ed s
+(1 +r-s_0)z(t)\right)\,\ed t.
\end{align*}
Hence, for any $\widehat r>s_0$ and $r\in[s_0,\,\widehat r]$ we have
\begin{align*}
&\quad z(r)
\le
\int_{s_0}^r\overline C_{\left[|\hat u|_{\WW^{\rm st}},\,\lambda,\,\frac{1}{\lambda},\,\widehat r\right]}\left( z(t)
+\int_{s_0}^t z(s)\,\ed s\right)\,\ed t\notag
\end{align*}
and from the~Gronwall--Bellman inequality (see~\cite[Theorem~1]{Pachpatte73}) we obtain
$z(r)=0$. Since $\widehat r\ge s_0$ can be taken arbitrary, we can conclude that $z=0$ which implies $d^\kappa=0$.
Using again Theorem~\ref{T:exist-wsol-v}, we arrive to $d^v=0$.

The uniqueness of the solution of system~\eqref{sys-v-ext-feed}, implies that we need to prove the
estimate~\eqref{estTfeed}, for the optimal trajectory $(v_{s_0}^*,\,\kappa_{s_0}^*)(v_{s_0}^H,\,\kappa_{s_0})$ solving Problem~\ref{Pb:Ms}
(with~$s_0$ in the role of~$s$). In this case, from~\eqref{R-opt.norm} and for any $a\ge s_0$,
we already know that $M_a^\lambda(v_{s_0}^*\rest{\R_a},\,\kappa_{s_0}^*\rest{\R_a},\,\varkappa_{s_0}^*\rest{\R_a})\le
\overline C_{\left[|\hat u|_\WW,\lambda,\,\frac{1}{\lambda}\right]}\ex^{\lambda a}\norm{(\Pi v_{s_0}^*(a),\,\kappa_{s_0}^*(a))}{H\times\NN^\perp}^2$.
Hence from~\eqref{vHtovH1-ts}, 
\begin{align*}
&\quad|\ex^{\frac{\lambda}{2} (\Bigcdot-a)}v_{s_0}^*|_{W(\R_a,\,H^1_{\diver}(\Omega,\,\R^3),\,H^{-1}(\Omega,\,\R^3))}^2
+|\ex^{\frac{\lambda}{2} (\Bigcdot-a)}\kappa_{s_0}^*|_{H^1(\R_a,\,\NN^\bot)}^2\\
&\le
\overline C_{\left[|\hat u|_{\WW^{\rm st}},\,\lambda\right]}\left(|v_{s_0}^*(a)|_{L^2_{\diver}(\Omega,\,\R^3)}^2
+|\ex^{\frac{\lambda}{2} (\Bigcdot-a)}\Pi v_{s_0}^*|_{L^2(\R_a,\,H)}^2
+|\ex^{\frac{\lambda}{2} (\Bigcdot-a)}\kappa_{s_0}^*|_{H^1(\R_a,\,\NN^\bot)}^2\right)\\
&\le
\overline C_{\left[|\hat u|_{\WW^{\rm st}},\,\lambda\right]}\left(\norm{(\Pi v_{s_0}^*(a),\,Q^M_f\kappa_{s_0}^*(a))}{H\times\NN^\perp}^2
+\ex^{-\lambda a}(1+\lambda^2)M_a^\lambda(v_{s_0}^*\rest{\R_a},\,\kappa_{s_0}^*\rest{\R_a},\,\varkappa_{s_0}^*\rest{\R_a})\right)\\
&\le \overline C_{\left[|\hat u|_\WW,\lambda,\,\frac{1}{\lambda}\right]}
\norm{(\Pi v_{s_0}^*(a),\,\kappa_{s_0}^*(a))}{H\times\NN^\perp}^2,
\end{align*}
that is, estimate~\eqref{estTfeed} holds.

%
\item \textcircled{\bf s}~Step~\theenumi:\label{stwot} {\em continuity in the weak operator topology.}
By definition,~$\{\KK_{\hat u}^{\lambda,\,s}\mid s\in[0,\,+\infty)\}\subseteq\LL(H\times\NN^\perp\to\NN^\perp)$
does depend continuously 
on~$s$ in the weak operator topology if for all $(w,\,\xi)\in (H\times\NN^\perp)\times \NN^\perp$ and $s_0\in[0,\,+\infty)$,
$(\KK_{\hat u}^{\lambda,\,s}w,\,\xi)_{\NN^\perp}$ goes to $(\KK_{\hat u}^{\lambda,\,s_0}w,\,\xi)_{\NN^\perp}$
as $s$ goes to $s_0$; with $s\in[0,\,+\infty)$.
Recalling~\eqref{R=R1R2} and~\eqref{linfeed-ext}, we see that this property holds if it holds for the
family~$\{R_{\hat u}^{\lambda,\,s}\mid s\in[0,\,+\infty)\}\subseteq\LL(H\times\NN^\perp\to H\times\NN^\perp)$.

Given a pair~$(w^1,\,w^2)\in (H\times\NN^\perp)\times (H\times\NN^\perp)$, we can write 
$2(R_{\hat u}^{\lambda,\,s}w^1,\,w^2)_{H\times\NN^\perp}
=(R_{\hat u}^{\lambda,\,s}(w^1+w^2),\,w^1+w^2)_{H\times\NN^\perp}
-(R_{\hat u}^{\lambda,\,s}w^1,\,w^1)_{H\times\NN^\perp}-(R_{\hat u}^{\lambda,\,s}w^2,\,w^2)_{H\times\NN^\perp}$, and we see that
it suffices to prove that
\begin{equation} \label{wot}
(R_{\hat u}^{\lambda,\,s}w,w)_{H\times\NN^\perp}\to(R_{\hat u}^{\lambda,\,s_0}w,w)_{H\times\NN^\perp}
\quad\mbox{as $s\to s_0$, for any $w\in H\times\NN^\perp$},
\end{equation}
or equivalently (cf.~\cite[Chapter~IX, Proposition~1.3.(e)]{Conway85}) that
\begin{equation} \label{wot.seq}
(R_{\hat u}^{\lambda,\,s_0\pm\delta_n}w,w)_{H\times\NN^\perp}\to(R_{\hat u}^{\lambda,\,s_0}w,w)_{H\times\NN^\perp}\mbox{ as }n\to+\infty,
\end{equation}
for any sequence $(\delta_n)_{n\in\N}$ of real numbers, with $0< \delta_n<1$, $\delta_n\to 0$,
and any $w\in H\times\NN^\perp$ (still, with $s=s_0\pm\delta_n\geq0$).

We consider separately two cases: $s\searrow s_0$ and $s\nearrow s_0$. To shorten a little the notation we denote
$\mathbf z^*_{s_0}(w)\coloneqq(\Pi v^*_{s_0},\,\kappa^*_{s_0})(w)$, for a given $w\in H\times\NN^\perp$.

%
\noindent{\sf (a)} {\em The case $s\searrow s_0$.}
If $s=s_0+\delta_n$, we write 
\begin{align}
&\quad\left(R_{\hat u}^{\lambda,\,s_0+\delta_n}w,w\right)_{H\times\NN^\perp}\label{wot.se1}\\
&=\left(R_{\hat u}^{\lambda,\,s_0+\delta_n}(w-\mathbf z^*_{s_0}(w)(s_0+\delta_n)),w\right)_{H\times\NN^\perp}
+\left(R_{\hat u}^{\lambda,\,s_0+\delta_n}
\mathbf z^*_{s_0}(w)(s_0+\delta_n),\,w\right)_{H\times\NN^\perp}.\notag
\end{align}
Rewriting the last term in equation~\eqref{wot.se1} as
$
\left(R_{\hat u}^{\lambda,\,s_0+\delta_n}
\mathbf z_{s_0}^*(w)(s_0+\delta_n),\,w-\mathbf z_{s_0}^*(w)(s_0+\delta_n)\right)_{H\times\NN^\perp}
+\left(R_{\hat u}^{\lambda,\,s_0+\delta_n}
\mathbf z_{s_0}^*(w)(s_0+\delta_n),\,\mathbf z_{s_0}^*(w)(s_0+\delta_n)\right)_{H\times\NN^\perp}
$
which, recalling~Lemma~\ref{L:MsNs} (the dynamical programming principle), can be written as
\begin{align*}
&\left(R_{\hat u}^{\lambda,\,s_0+\delta_n}
\mathbf z_{s_0}^*(w)(s_0+\delta_n),\,w-\mathbf z_{s_0}^*(w)(s_0+\delta_n)\right)_{H\times\NN^\perp}
+\left(R_{\hat u}^{\lambda,\,s_0}
w,\,w\right)_{H\times\NN^\perp}\\
-&|\ex^{\frac{\lambda}{2} \Bigcdot}\Pi v_{s_0}^*(w)|_{L^2((s_0,\,s_0+\delta_n),\,H)}^2
-|\ex^{\frac{\lambda}{2} \Bigcdot}\kappa_{s_0}^*(w)|_{L^2((s_0,\,s_0+\delta_n),\,\NN^\bot)}^2
-|\ex^{\frac{\lambda}{2} \Bigcdot}\varkappa_{s_0}^*(w)|_{L^2((s_0,\,s_0+\delta_n),\,\NN^\bot)}^2.
\end{align*}
Using the self-adjointness of~$R_{\hat u}^{\lambda,\,s_0+\delta_n}$, we arrive to
\begin{align}
&\quad\left(R_{\hat u}^{\lambda,\,s_0+\delta_n}w,w\right)_{H\times\NN^\perp}-\left(R_{\hat u}^{\lambda,\,s_0}
w,\,w\right)_{H\times\NN^\perp}\label{wot.se2}\\
&= \left(R_{\hat u}^{\lambda,\,s_0+\delta_n}(w-\mathbf z_{s_0}^*(w)(s_0+\delta_n)),w
+\mathbf z_{s_0}^*(w)(s_0+\delta_n)\right)_{H\times\NN^\perp}\notag\\
&\quad
-|\ex^{\frac{\lambda}{2} \Bigcdot}\Pi v_{s_0}^*(w)|_{L^2((s_0,\,s_0+\delta_n),\,H)}^2
-|\ex^{\frac{\lambda}{2} \Bigcdot}\kappa_{s_0}^*(w)|_{L^2((s_0,\,s_0+\delta_n),\,\NN^\bot)}^2
-|\ex^{\frac{\lambda}{2} \Bigcdot}\varkappa_{s_0}^*(w)|_{L^2((s_0,\,s_0+\delta_n),\,\NN^\bot)}^2.\notag
\end{align}
Now, for the first term on the right-hand side of~\eqref{wot.se2}, we have
\begin{align*}
&\quad\left(R_{\hat u}^{\lambda,\,s_0+\delta_n}(w-\mathbf z_{s_0}^*(w)(s_0+\delta_n)),w
+\mathbf z_{s_0}^*(w)(s_0+\delta_n)\right)_{H\times\NN^\perp}\\
&\leq\overline C_{\left[|\hat u|_{\WW^{\rm st}},\,\lambda,\,\frac{1}{\lambda}\right]}
\ex^{\lambda(s_0+\delta_n)}
|w-\mathbf z_{s_0}^*(w)(s_0+\delta_n)|_{H\times\NN^\perp}|w
+\mathbf z_{s_0}^*(w)(s_0+\delta_n)|_{H\times\NN^\perp}
\end{align*}
and, from
the continuity of $\mathbf z_{s_0}^*(w)(t)$ in the time variable $t$, we can conclude that the this term goes to zero with $\delta_n$.
On the other hand, the remaining (last three) terms on the right hand side of~\eqref{wot.se2} also go to zero with $\delta_n$,
because the triple $(v_{s_0}^*,\,\kappa_{s_0}^*,\,\varkappa_{s_0}^*)(w)$ do not depend on $\delta_n$. Therefore we can derive
\begin{equation}\label{wot.se}
\lim_{n\to+\infty}\left(R_{\hat u}^{\lambda,\,s_0+\delta_n}w,w\right)_{H\times\NN^\perp}
=\left(R_{\hat u}^{\lambda,\,s_0}w,w\right)_{H\times\NN^\perp}. 
\end{equation}

%
\noindent{\sf (b)} {\em The case $s\nearrow s_0$.} Though we will follow the same idea, this case carries a few additional
difficulties. If $s=s_0-\delta_n$, we write 
\begin{multline*}
\left(R_{\hat u}^{\lambda,\,s_0-\delta_n}w,w\right)_{H\times\NN^\perp}
=|\ex^{\frac{\lambda}{2} \Bigcdot}\Pi v_{s_0-\delta_n}^*(w)|_{L^2((s_0-\delta_n,\,s_0),\,H)}^2
+|\ex^{\frac{\lambda}{2} \Bigcdot}\kappa_{s_0-\delta_n}^*(w)|_{L^2((s_0-\delta_n,\,s_0),\,\NN^\bot)}^2\\
+|\ex^{\frac{\lambda}{2} \Bigcdot}\varkappa_{s_0-\delta_n}^*(w)|_{L^2((s_0-\delta_n,\,s_0),\,\NN^\bot)}^2
+\left(R_{\hat u}^{\lambda,\,s_0}\mathbf z_{s_0-\delta_n}^*(w)(s_0),\mathbf z_{s_0-\delta_n}^*(w)(s_0)\right)_{H\times\NN^\perp}
\end{multline*}
and, writing $\mathbf z_{s_0-\delta_n}^*(w)(s_0)=\mathbf z_{s_0-\delta_n}^*(w)(s_0)-w+w$ in the last term, we obtain
\begin{align}
&\quad\left(R_{\hat u}^{\lambda,\,s_0-\delta_n}w,w\right)_{H\times\NN^\perp}-\left(R_{\hat u}^{\lambda,\,s_0}
w,\,w\right)_{H\times\NN^\perp}
=|\ex^{\frac{\lambda}{2} \Bigcdot}\Pi v_{s_0-\delta_n}^*(w)|_{L^2((s_0-\delta_n,\,s_0),\,H)}^2\label{wot.ne1}\\
&+|\ex^{\frac{\lambda}{2} \Bigcdot}\kappa_{s_0-\delta_n}^*(w)|_{L^2((s_0-\delta_n,\,s_0),\,\NN^\bot)}^2
+|\ex^{\frac{\lambda}{2} \Bigcdot}\varkappa_{s_0-\delta_n}^*(w)|_{L^2((s_0-\delta_n,\,s_0),\,\NN^\bot)}^2\notag\\
&+\left(R_{\hat u}^{\lambda,\,s_0}(\mathbf z_{s_0-\delta_n}^*(w)(s_0)-w),\,\mathbf z_{s_0-\delta_n}^*(w)(s_0)-w\right)_{H\times\NN^\perp}\notag\\
&+2\left(R_{\hat u}^{\lambda,\,s_0}w,\,\mathbf z_{s_0-\delta_n}^*(w)(s_0)-w\right)_{H\times\NN^\perp}\notag
\end{align}
Now by~\ref{estTfeed} we have, in particular,
\begin{align}
&\quad|\ex^{\frac{\lambda}{2} \Bigcdot}v_{s_0-\delta_n}^*(w)|_{C([s_0-\delta_n,\,s_0-\delta_n+1],\,\aA_{\Xi_1})}^2
+|\ex^{\frac{\lambda}{2} \Bigcdot}\kappa_{s_0-\delta_n}^*(w)|_{C([s_0-\delta_n,\,s_0-\delta_n+1],\,\NN^\perp)}^2\notag\\
&\le
\overline C_{\left[|\hat u|_{\WW^{\rm st}},\,\lambda,\,\frac{1}{\lambda},\,s_0\right]}
|w|_{H\times\NN^\perp}^2\label{wot.ne1a}
\end{align}
which implies, since $\delta_n<1$, 
\begin{align}
&\quad|\ex^{\frac{\lambda}{2} \Bigcdot}\Pi v_{s_0-\delta_n}^*(w)|_{L^2((s_0-\delta_n,\,s_0),\,H)}^2
+|\ex^{\frac{\lambda}{2} \Bigcdot}\kappa_{s_0-\delta_n}^*(w)|_{L^2((s_0-\delta_n,\,s_0),\,\NN^\perp)}^2\notag\\
&\le \overline C_{\left[|\hat u|_{\WW^{\rm st}},\,\lambda,\,\frac{1}{\lambda},\,s_0\right]}
|w|_{H\times\NN^\perp}^2\delta_n.\label{wot.ne1b}
\end{align}
On the other hand, from~$\varkappa_{s_0-\delta_n}^*(w)=\KK_{\hat u}^{\lambda,\,\Bigcdot}(\Pi v_{s_0-\delta_n}^*(w)
 ,\,\kappa_{s_0-\delta_n}^*(w))$ and~\eqref{wot.ne1b},
\begin{align}
 &\quad|\ex^{\frac{\lambda}{2}\Bigcdot}\varkappa_{s_0-\delta_n}^*(w)|_{L^2((s_0-\delta_n,\,s_0),\,\NN^\bot)}^2\notag\\
 &\le\overline C_{\left[|\hat u|_{\WW^{\rm st}},\,\lambda,\,\frac{1}{\lambda}\right]}
 \left(|\ex^{\frac{\lambda}{2} \Bigcdot}\Pi v_{s_0-\delta_n}^*(w)|_{L^2((s_0-\delta_n,\,s_0),\,H)}^2
+|\ex^{\frac{\lambda}{2} \Bigcdot}\kappa_{s_0-\delta_n}^*(w)|_{L^2((s_0-\delta_n,\,s_0),\,\NN^\perp)}^2\right)\notag\\
&\le \overline C_{\left[|\hat u|_{\WW^{\rm st}},\,\lambda,\,\frac{1}{\lambda},\,s_0\right]}
|w|_{H\times\NN^\perp}^2\delta_n.\label{wot.ne1c}
\end{align}
and so, from~\eqref{wot.ne1}, \eqref{wot.ne1b}, and~\eqref{wot.ne1c}, we arrive to
\begin{align}
&\quad\lim_{n\to+\infty}\left(\hspace*{-.3em}\left(R_{\hat u}^{\lambda,\,s_0-\delta_n}w,w\right)_{\hspace*{-.2em}H\times\NN^\perp}\hspace*{-.7em}
-\left(R_{\hat u}^{\lambda,\,s_0}(\mathbf z_{s_0-\delta_n}^*(w)(s_0)+w),\,\mathbf z_{s_0-\delta_n}^*(w)(s_0)-w\right)_{\hspace*{-.2em}H\times\NN^\perp}
\hspace*{-.2em}\right)\label{wot.ne2}\\
&=\left(R_{\hat u}^{\lambda,\,s_0}
w,\,w)\right)_{\hspace*{-.2em}H\times\NN^\perp}.\notag
\end{align}

Denoting $(w^H,\,w^\NN)\coloneqq w\in H\times\NN^\perp$, since $V$ is dense in $H$, there
exists a sequence $w_n=(w_n^V,\,w^\NN)_{n\in\N}$, with $w_n^V\in V$,
such that $|w_n-w|_{H\times\NN^\perp}$ goes to zero as $n$ goes to $+\infty$. Thus, writing $w=w-w_n+w_n$ and using~\eqref{wot.ne1a}
we obtain that
\begin{align*}
&\quad\;|\mathbf z_{s_0-\delta_n}^*(w)(s_0)-w|_{H\times\NN^\perp}\\
&\le  |\mathbf z_{s_0-\delta_n}^*(w-w_n)(s_0)-w+w_n|_{H\times\NN^\perp}
+ |\mathbf z_{s_0-\delta_n}^*(w_n)(s_0)-w_n|_{H\times\NN^\perp}\\
&\le \overline C_{\left[|\hat u|_{\WW^{\rm st}},\,\lambda,\,\frac{1}{\lambda},\,s_0\right]}
|w-w_n|_{H\times\NN^\perp}+|\mathbf z_{s_0-\delta_n}^*(w_n)(s_0)-w_n|_{H\times\NN^\perp},
\end{align*}
and 
\begin{align*}
&\quad \left|\left(R_{\hat u}^{\lambda,\,s_0}(\mathbf z_{s_0-\delta_n}^*(w)(s_0)+w),\,
\mathbf z_{s_0-\delta_n}^*(w)(s_0)-w\right)_{H\times\NN^\perp}\right|_\R\\
 &\le \overline C_{\left[|\hat u|_{\WW^{\rm st}},\,\lambda,\,\frac{1}{\lambda},\,s_0\right]}
 |w|_{H\times\NN^\perp}\left(|w-w_n|_{H\times\NN^\perp}+|\mathbf z_{s_0-\delta_n}^*(w_n)(s_0)-w_n|_{H\times\NN^\perp}\right),
\end{align*}
which implies that
\begin{align}
&\quad \lim_{n\to+\infty}\left|\left(R_{\hat u}^{\lambda,\,s_0}(\mathbf z_{s_0-\delta_n}^*(w)(s_0)+w),\,
\mathbf z_{s_0-\delta_n}^*(w)(s_0)-w\right)_{H\times\NN^\perp}\right|_\R\label{wot.ne3}\\
 &\le \overline C_{\left[|\hat u|_{\WW^{\rm st}},\,\lambda,\,\frac{1}{\lambda},\,s_0\right]}
 |w|_{H\times\NN^\perp}\lim_{n\to+\infty}|\mathbf z_{s_0-\delta_n}^*(w_n)(s_0)-w_n|_{H\times\NN^\perp}.\notag
\end{align}
It turns out that $v_n=v_n(\Bigcdot)\coloneqq v_{s_0-\delta_n}^*(w_n)(\Bigcdot)-w_n^V$ solves~\eqref{sys-vg}, in
$(s_0-\delta_n,\,s_0)\times\Omega$, with $g=\BB(\hat u)w_n^V-\nu\Delta w_n^V$, $K=\Xi$,
$\eta=\kappa_{s_0-\delta_n}^*(w_n)(t)$, and $v_n(s_0-\delta_n)=0$. From Theorem~\ref{T:exist-wsol-v}, and
$|g(t)|_{H^{-1}(\Omega,\,\R^3)}^2
\le C(|\hat u(t)|_{L^\infty_{\diver}(\Omega,\,\R^3)}^2+1)|w_n^V|_{H^1_{\diver}(\Omega,\,\R^3)}^2$,
we obtain that
\begin{align*}
&\quad|v_{s_0-\delta_n}^*(w_n)(s_0)-w_n^V|_{L^2_{\diver}(\Omega,\,\R^3)}^2=|v_n(s_0)|_{L^2_{\diver}(\Omega,\,\R^3)}^2\\
&\le
\overline C_{\left[|\hat u|_{\WW^{(s_0-\delta_n,\,s_0)|\rm wk}}\right]}
\left(|w_n^V|_{L^2((s_0-\delta_n,\,s_0),\,V)}^2
+|\kappa_{s_0-\delta_n}^*(w_n)|_{H^1((s_0-\delta_n,\,s_0),\,\NN^\bot)}^2\right);
\end{align*}
on the other hand we also have $\kappa_{s_0-\delta_n}^*(w_n)(\Bigcdot)-w^\NN
=\int_{s_0-\delta_n}^{\Bigcdot}\varkappa_{s_0-\delta_n}^*(w_n)(t)\,\ed t$ and
\[
\norm{\kappa_{s_0-\delta_n}^*(w_n)(s_0)-w^\NN}{\NN^\perp}^2
\le\delta_n\int_{s_0-\delta_n}^{s_0}\norm{\varkappa_{s_0-\delta_n}^*(w_n)(t)}{\NN^\perp}^2\,\ed t.
\]
Then, by~\eqref{wot.ne1b}, \eqref{wot.ne1c}, and~\eqref{wot.ne3}, we obtain
\begin{align}
&\quad\lim_{n\to+\infty}\left|\left(R_{\hat u}^{\lambda,\,s_0}(\mathbf z_{s_0-\delta_n}^*(w)(s_0)+w),\,
\mathbf z_{s_0-\delta_n}^*(w)(s_0)-w\right)_{H\times\NN^\perp}\right|_\R\notag\\
 &\le \overline C_{\left[|\hat u|_{\WW^{\rm st}},\,\lambda,\,\frac{1}{\lambda},\,s_0\right]}
 |w|_{H\times\NN^\perp}\lim_{n\to+\infty}\delta_n\left(|w_n^V|_{V}^2+|w^\NN|_{\NN^\perp}^2\right)\notag\\
& =\overline C_{\left[|\hat u|_{\WW^{\rm st}},\,\lambda,\,\frac{1}{\lambda},\,s_0\right]}
 |w|_{H\times\NN^\perp}\lim_{n\to+\infty}\delta_n|w_n^V|_{V}^2.\label{wot.ne4}
\end{align}
Recall that the inclusion $V\subset H$ is continuous. Notice that $|w_n^V|_{V}$ will go to $+\infty$ with $n$,
if $w^H\in H\setminus V$.
We need to show that the sequence $(w_n^V)_{n\in\N}$ can be chosen such that
$\delta_n|w_n^V|_{V}^2\to 0$, as $n\to+\infty$.

We know that the sequence $\Pi_{n+1}w^H\in H_{n+1}\subset V$ converges to $w^H$, in~$H$, where $\Pi_N\colon H\to H_N$ is
the orthogonal projection
in $H$ onto the space $H_N$ spanned by the first $N$ eigenfunctions of the Stokes operator~$L$ (see~\eqref{HN}) but, we
have no guarantee that $\delta_n|\Pi_{n+1}\bar w|_V^2\to 0$, that is, $\Pi_{n+1} w^H$ may be not a good choice for~$w^V_n$. 
Next we show that this issue can be overcome by somehow ``slowing down'' the convergence of $\Pi_{n+1}w^H$ to $w^H$, in~$H$:
we define the sequence $N\colon \N\to\N_0$, $n\mapsto N_n$ by
\[
\begin{cases}
N_n\coloneqq 1,\quad&\mbox{ if }\quad \frac{1}{2\alpha_{2}}<\delta_n\\
N_n\coloneqq j,\quad&\mbox{ if }\quad j\ge 2\;\mbox{ and }\;\frac{1}{(j+1)\alpha_{j+1}}<\delta_n\le \frac{1}{j\alpha_j}
\end{cases};
\]
where~$(\alpha_j)_{j\in\N_0}$ is the nondecreasing sequence of (repeated) eigenvalues of the Stokes operator (cf. Section~\ref{sS:ct-space}).
We can see that $N_n$ goes to $+\infty$ with $n$, because $\delta_n$ goes to~$0$.
Hence, setting $w_n^V\coloneqq \Pi_{N_n} w^H$, we have that
$w_n^V\to w^H$, in~$H$, and $|w_n^V|_{V}^2\le C|\nabla w_n^V|_{L^2(\Omega,\,\R^9)}^2=C(L w_n^V,\,w_n^V)_H
\le C\alpha_{N_n}|w_n^V|_H^2$ and, then $\delta_n|w_n^V|_{V}^2\le C\delta_n\alpha_{N_n}|w_n^V|_H^2
\le\frac{1}{N_n}C|w^H|_H^2$,
for all $N_n\ge 2$;
thus $\delta_n|w_n^V|_{V}^2\to0$ as $n\to+\infty$.
From~\eqref{wot.ne2} and~\eqref{wot.ne4}, we arrive to
\begin{equation}\label{wot.ne}
\lim_{n\to+\infty}\left(R_{\hat u}^{\lambda,\,s_0-\delta_n}w,w\right)_{H\times\NN^\perp}
=\left(R_{\hat u}^{\lambda,\,s_0}w,\,w)\right)_{H\times\NN^\perp}.
\end{equation}

Finally, from~\eqref{wot.se} and~\eqref{wot.ne}, we see that~\eqref{wot.seq} holds. That is, the family
$R_{\hat u}^{\lambda,\,s}$ depends continuously on $s$, in the weak operator topology.
\end{enumerate}

This ends the proof of Theorem~\ref{T:feedback}.
\quad\end{proof}

%
\subsection{Miscellaneous remarks}\label{sS:miscRems}
As we have said in the Introduction, looking for feedback finite-dimensional controllers supported in a small subset of the boundary,
is motivated by the
importance of such controllers in applications.
We give a few remarks concerning this point. 

\subsubsection{Dimension of the controller} The range $E_M$
of the controller depends only on the norm $|\hat u|_{\WW^{\rm st}}$ of the 
targeted solution~$\hat u$, and the feedback rule
depends on time. We do not address here the problem of finding an estimate for $M$, that is of crucial importance for application purposes
(e.g., numerical simulations). For internal controls, this problem has been started in~\cite{KroRod15,KroRod-ecc15} in the simpler case of the {1D}
Burgers system in a bounded interval $(0,\,L)$, and estimates on the number $M$ of needed controls is given that depend
exponentially in~$M_{\rm ref}\coloneqq(\nu^{-2}|\hat u|_{\WW}^2+\nu^{-1}\lambda)^\frac{1}{2}$
in the general case, and that are proportional to
$M_{\rm ref}$ in the case of no constraint on the support of the control, with $\WW=L^\infty(\R_0,\,L^\infty((0,\,L),\,\R))$.
However also in~\cite{KroRod15} the results of numerical simulations
suggest that it might be sufficient to take  $M$ proportional to
$M_{\rm ref}$ also in the general case. Following~\cite{KroRod15}, we see that in the particular case, the estimate
for~$M$ is derived from an inequality like $\alpha_M\ge \nu C M_{\rm ref}^2$, where $\alpha_M$ is the $M$th eigenvalue
of the Laplacian operator. Hence we may (perhaps) conjecture that
in the {$d$D} case, $d\in\{2,\,3\}$, it should be enough to take a number $M$ of internal controls proportional to
$(\nu^{-2}|\hat u|_{\WW^{\rm st}}^2+\nu^{-1}\lambda)^\frac{d}{2}$, because $\alpha_M\sim \nu C_1M^\frac{2}{d}$ (cf.~\cite{Ilyin09}).
Does the conjecture hold true? Can we derive similar estimates for the case of the boundary controls we treat here?
These questions will be addressed in future works.

Recall that, in the case of stationary~$\hat u$, for example
in~\cite{BarbuTri04,BadTakah11,RaymThev10}, we can find rather sharp estimates, though $M$ does depend on~$\hat u$ and not only in the
norm~$|\hat u|_{\WW^{\rm st}}$. The method cannot be (at least not straightforwardly) used in the nonstationary case.

\subsubsection{Lyapunov functions} Once a feedback control is constructed, it is easy to find
a time-dependent Lyapunov function for the problem in question.
Indeed, the functional
\[
\varPhi(r,w)=\int_r^\infty |\FF_\aA\sS_{r,\,t}w|_{H\times\NN^\perp}^2\,\ed t
\]
decays along the trajectories of~\eqref{sys-v-ext-feed}, where $\sS_{r,\,t}w$ denotes the solution $(v_r^*,\,\kappa_r^*)(t)$
of~\eqref{sys-v-ext-feed}, in $\R_r\times\Omega$, with the initial condition $(\Pi v_r^*,\,\kappa_r^*)(r)=w$:
we may write
\begin{align*}
\varPhi(r,(\Pi v_0^*,\,\kappa_0^*)(r))&=\int_r^\infty |\FF_\aA\sS_{r,\,\tau}(\Pi v_0^*,\,\kappa_0^*)(r)|_{H\times\NN^\perp}^2\,\ed \tau
=\int_r^\infty |\FF_\aA\sS_{0,\,\tau}(\Pi v_0^*,\,\kappa_0^*)(0)|_{H\times\NN^\perp}^2\,\ed \tau\\
&=\int_r^\infty |(\Pi v_0^*,\,\kappa_0^*)(\tau)|_{H\times\NN^\perp}^2\,\ed \tau,
\end{align*}
from which, together with~\eqref{estTfeed}, we can obtain
\begin{align*}
&\quad\left.\frac{\ed}{\ed s}\right|_{s=r}\varPhi(s,(\Pi v_0^*,\,\kappa_0^*)(s))
=-|(\Pi v_0^*,\,\kappa_0^*)(r)|_{H\times\NN^\perp}^2\\
&\le -\overline C_{\left[|\hat u|_{\WW^{\rm st}},\,\lambda,\,\frac{1}{\lambda}\right]}^{\;-1}
|\ex^{\frac{\lambda}{2}(\Bigcdot-r)}(v_0^*,\,\kappa_0^*)|_{W(\R_r,\,H^1_{\diver}(\Omega,\,\R^3),\,
H^{-1}(\Omega,\,\R^3))\times H^1(\R_r,\,\NN^\perp)}^2\\
&\le -\overline C_{\left[|\hat u|_{\WW^{\rm st}},\,\lambda,\,\frac{1}{\lambda}\right]}^{\;-1}
C|\ex^{\frac{\lambda}{2}(\Bigcdot-r)}(\Pi v_0^*,\,\kappa_0^*)|_{L^2(\R_r,\,H\times\NN^\perp))}^2
\le -\overline C_{\left[|\hat u|_{\WW^{\rm st}},\,\lambda,\,\frac{1}{\lambda}\right]}^{\;-1}
C\varPhi(r,(\Pi v_0^*,\,\kappa_0^*)(r)).
\end{align*}

Another Lyapunov function is the ``cost to go'' from time $t=r$ onwards, that is $\varPsi(r,\,w)=(R^r w,\,w)_{H\times\NN^\perp}$, where
$R^r\coloneqq R_{\hat u}^{\lambda,\,r}$ is the operator defining the optimal
cost. Indeed, from the dynamical principle we have
\begin{align*}
\varPsi(r,\,(\Pi v_0^*,\,\kappa_0^*)(r))
&=\varPsi(0,\,(\Pi v_0^*,\,\kappa_0^*)(0))\\
&\quad-\norm{\ex^{\frac{\lambda}{2}\Bigcdot}\Pi v_0^*}{L^2((0,\,r),\,H)}^2
-\norm{\ex^{\frac{\lambda}{2}\Bigcdot}\kappa_0^*}{L^2((0,\,r),\,\NN^\perp)}^2-\norm{\ex^{\frac{\lambda}{2}\Bigcdot}\p_t\kappa_0^*}{L^2((0,\,r),\,\NN^\perp)}^2
\end{align*}
which implies, using~\eqref{normofR} and~\eqref{linfeed-ext},
\begin{align*}
&\quad\left.\frac{\ed}{\ed s}\right|_{s=r}\varPsi(s,\,(\Pi v_0^*,\,\kappa_0^*)(s))
=-\ex^{\lambda r}\norm{\Pi v_0^*(r)}{H}^2
-\ex^{\lambda r}\norm{\kappa_0^*(r)}{\NN^\perp}^2
-\ex^{\lambda r}\norm{\textstyle\frac{\ed}{\ed s}\rest{s=r} \kappa_0^*(s)}{\NN^\perp}^2\\
&\le-\ex^{\lambda r}\norm{(\Pi v_0^*,\,\kappa_0^*)(r)}{H\times\NN^\perp}^2
\le -\overline C_{\left[|\hat u|_{\WW^{\rm st}},\,\lambda,\,\frac{1}{\lambda}\right]}^{\;-1}
(R^r(\Pi v_0^*,\,\kappa_0^*)(r),\,(\Pi v_0^*,\,\kappa_0^*)(r))_{H\times\NN^\perp}\\
&=-\overline C_{\left[|\hat u|_{\WW^{\rm st}},\,\lambda,\,\frac{1}{\lambda}\right]}^{\;-1}
\varPsi(r,\,(\Pi v_0^*,\,\kappa_0^*)(r)).
\end{align*}

It is, however, difficult to write down the functions $\varPhi$ and $\varPsi$ in a more explicit form.

\subsubsection{Riccati equation} In the case of internal controls it was shown
in~\cite[Remark~3.11(b)]{BarRodShi11} that the operator defining the optimal cost, and from which we can obtain the feedback law,
satisfies a suitable differential Riccati equation. 
For applications (e.g., simulations) it is important to have such equation at our disposal; 
for example, we refer to~\cite{KroRod15}
where the optimal feedback rule has been
obtained by solving (numerically) a similar differential Riccati equation,
in the simpler case of the {1D} Burgers system and internal controls.

It turns out that also in our case the operator $R\coloneqq R^t\coloneqq R_{\hat u}^{\lambda,\,t}$,
from which we can obtain the boundary feedback rule, satisfies a differential Riccati equation.

First of all we recall~\eqref{R=R1R2} and rewrite $R=(R_1,\,R_2)$, where
\[
 R_1\in\LL(H\times\NN^\perp\to H),\quad R_2\in\LL(H\times\NN^\perp\to \NN^\perp),
\]
and for any two pairs $(w^1,\,w^2)$ and $(z^1,\,z^2)$ in~$H\times\NN^\perp$, we have
\[
 (R(w^1,\,w^2),\,(z^1,\,z^2))_{H\times\NN^\perp}=(R_1(w^1,\,w^2),\,z^1)_{H}+(R_2(w^1,\,w^2),\,z^2)_{\NN^\perp}.
\]

Now observe that we can further decompose $(R_1,\,R_2)$ and define
$R_{1,1}\in \LL(H)$, $R_{1,2}\in \LL(\NN^\perp\to H)$, $R_{2,1}\in \LL(H\to \NN^\perp)$, and $R_{2,2}\in \LL(\NN^\perp)$  by
\begin{align*}
    (z^1,\,R_{1,1}w^1)_H&\coloneqq ((z^1,\,0),\,R(w^1,\,0))_{H\times\NN^\perp}\,,
    &(z^1,\,R_{1,2}w^2)_H&\coloneqq((z^1,\,0),\,R(0,\,w^2))_{H\times\NN^\perp}\,,\\
    (z^2,\,R_{2,1}w^1)_{\NN^\perp}&\coloneqq((0,\,z^2),\,R(w^1,\,0))_{H\times\NN^\perp}\,,
    &(z^2,\,R_{2,2}w^2)_{\NN^\perp}&\coloneqq((0,\,z^2),\,R(0,\,w^2))_{H\times\NN^\perp}\,.
\end{align*}
It follows that we can write
\[
R=\begin{bmatrix}
    R_{1,1}&R_{1,2}\\R_{2,1}&R_{2,2}
   \end{bmatrix},\quad (R(w^1,\,w^2),\,(z^1,\,z^2))_{H\times\NN^\perp}\eqqcolon \begin{bmatrix} z^1 & z^2\end{bmatrix}\begin{bmatrix}
    R_{1,1}&R_{1,2}\\R_{2,1}&R_{2,2}
   \end{bmatrix}\begin{bmatrix}w^1\\ w^2\end{bmatrix}
\]
with $z^1R_{1,j}w^j\coloneqq (z^1,\,R_{1,j}w^j)_H$ and $z^2R_{2,j}w^j\coloneqq (z^2,\,R_{2,j}w^j)_{\NN^\perp}$.

The operator $\RRR\coloneqq \ex^{-\lambda t}R$ satisfies, for all $t\in\R_0$
\begin{equation} \label{riccati}
\dot\RRR-\RRR\AAA- \AAA^*\RRR-\RRR\BBB\BBB^*\RRR +\CCC=0
\end{equation}
with
\[
\AAA\coloneqq\begin{bmatrix}L_{\hat u(t)}& L_{\hat u(t)}^\FF\\ 0& 0\end{bmatrix}-\frac{\lambda}{2}\begin{bmatrix}1&0\\ 0& 1\end{bmatrix};
\quad\BBB\coloneqq\begin{bmatrix}0&0\\ 0& 1\end{bmatrix};
\quad \CCC\coloneqq\begin{bmatrix}1&0\\ 0& 1\end{bmatrix};
 \]
where $L_{\hat u(t)}$ stands for 
the Oseen--Stokes operator
\[
 L_{\hat u(t)}\colon H^1_{\diver}(\Omega,\,\R^3)\to V',\quad
v\mapsto \Pi(-\nu\Delta v+\BB(\hat u(t))v),
\]
and $L_{\hat u(t)}^\FF$ is defined by
\[
 L_{\hat u(t)}^\FF\colon \NN^\perp\to H,\quad
\kappa\mapsto L_{\hat u(t)}\FF_\aA^{-1}(0,\,Q^M_f\kappa).
\]

Observe that, from Lemmas~\ref{L:Ms} and~\ref{L:MsNs}, we have
\[
(R^s(\Pi v_0^*,\,\kappa_0^*)(s),\,(\Pi v_0^*,\,\kappa_0^*)(s))_{H\times\NN^\perp}
=M^\lambda_s(v_0^*,\,\kappa_0^*,\,\p_t\kappa_0^*)
\]
from which we can derive that, formally,
\begin{align*}
&\quad\;(\p_s\rest{s=t} R^s(\Pi v_0^*,\,\kappa_0^*)(t),\,(\Pi v_0^*,\,\kappa_0^*)(t))_{H\times\NN^\perp}\\
&+(R^t\p_s\rest{s=t} (\Pi v_0^*,\,\kappa_0^*)(s),\,(\Pi v_0^*,\,\kappa_0^*)(t))_{H\times\NN^\perp}
+(R^t(\Pi v_0^*,\,\kappa_0^*)(t),\,\p_s\rest{s=t} (\Pi v_0^*,\,\kappa_0^*)(s))_{H\times\NN^\perp}\\
=&-\left|\ex^{\frac{\lambda}{2}t}\Pi v_0^*(t)\right|_{H}^2
-\left|\ex^{\frac{\lambda}{2}t}\kappa_0^*(t)\right|_{\NN^\bot}^2
-\left|\ex^{\frac{\lambda}{2}t}\varkappa_0^*(t)\right|_{\NN^\bot}^2 
\end{align*}
where~$\varkappa_0^*=\p_t\kappa_0^*$.
From~$\p_s\rest{s=t} (\Pi v_0^*,\,\kappa_0^*)(s)=(-L_{\hat u(t)} v_0^*(t),\,\varkappa_0^*(t))$, recalling~\eqref{linfeed-ext} and
using~$L_{\hat u(t)} v_0^*=L_{\hat u(t)} \FF_\aA^{-1}(\Pi v_0^*,\,Q^M_f\kappa_0^*)=L_{\hat u(t)} \Pi v_0^*
+L_{\hat u(t)}\FF_\aA^{-1}(0,\,Q^M_f\kappa_0^*)$,
in the matrix form notation we can write
\begin{align*}
&\quad\,\begin{bmatrix}\Pi v_0^*& \kappa_0^*\end{bmatrix}\dot R \begin{bmatrix}\Pi v_0^*\\ \kappa_0^*\end{bmatrix}
+\begin{bmatrix}\Pi v_0^*& \kappa_0^*\end{bmatrix}R
\begin{bmatrix}-L_{\hat u}&-L_{\hat u(t)}^\FF\\ -\ex^{-\lambda\Bigcdot}R_{2,1}& -\ex^{-\lambda\Bigcdot}R_{2,2}\end{bmatrix}
\begin{bmatrix}\Pi v_0^*\\ \kappa_0^*\end{bmatrix}\\
&\quad+\begin{bmatrix}\Pi v_0^*& \kappa_0^*\end{bmatrix}
\begin{bmatrix}-(L_{\hat u})^*&-\ex^{-\lambda\Bigcdot}R_{2,1}^*\\ -(L_{\hat u(t)}^\FF)^*& -\ex^{-\lambda\Bigcdot}R_{2,2}^*\end{bmatrix}R
\begin{bmatrix}\Pi v_0^*\\ \kappa_0^*\end{bmatrix}\\
&=-\ex^{\lambda\Bigcdot}\begin{bmatrix}\Pi v_0^*& \kappa_0^*\end{bmatrix}\begin{bmatrix}1& 0\\ 0&1\end{bmatrix}
\begin{bmatrix}\Pi v_0^*\\ \kappa_0^*\end{bmatrix}
-\ex^{-\lambda\Bigcdot}\begin{bmatrix}\Pi v_0^*& \kappa_0^*\end{bmatrix}
\begin{bmatrix}0&R_{2,1}^*\\ 0& R_{2,2}^*\end{bmatrix}
\begin{bmatrix}0& 0\\ R_{2,1}& R_{2,2}\end{bmatrix}
\begin{bmatrix}\Pi v_0^*\\ \kappa_0^*\end{bmatrix}.
\end{align*}

Therefore, we can conclude that
\begin{align}
&\quad\,\dot R -R\begin{bmatrix}L_{\hat u}&L_{\hat u(t)}^\FF\\ 0& 0\end{bmatrix}-\begin{bmatrix}(L_{\hat u})^*&0\\ (L_{\hat u(t)}^\FF)^*& 0\end{bmatrix}R
+\ex^{\lambda\Bigcdot}\begin{bmatrix}1& 0\\ 0&1\end{bmatrix}\notag\\
&=\ex^{-\lambda\Bigcdot}R\begin{bmatrix}0&0\\ R_{2,1}& R_{2,2}\end{bmatrix}
+\ex^{-\lambda\Bigcdot}\begin{bmatrix}0&R_{2,1}^*\\ 0& R_{2,2}^*\end{bmatrix}R
-\ex^{-\lambda\Bigcdot}
\begin{bmatrix}0&R_{2,1}^*\\ 0& R_{2,2}^*\end{bmatrix}
\begin{bmatrix}0& 0\\ R_{2,1}& R_{2,2}\end{bmatrix}.\label{Ricc1}
\end{align}

Using the symmetry of~$R$, we have that~$R_{2,1}^*=R_{1,2}$ and $R_{2,2}^*=R_{2,2}$, and we can derive
\begin{align*}
&\quad\ex^{-\lambda\Bigcdot}R\begin{bmatrix}0&0\\ R_{2,1}& R_{2,2}\end{bmatrix}
+\ex^{-\lambda\Bigcdot}\begin{bmatrix}0&R_{2,1}^*\\ 0& R_{2,2}^*\end{bmatrix}R
-\ex^{-\lambda\Bigcdot}
\begin{bmatrix}0&R_{2,1}^*\\ 0& R_{2,2}^*\end{bmatrix}
\begin{bmatrix}0& 0\\ R_{2,1}& R_{2,2}\end{bmatrix}\\
&=\ex^{-\lambda\Bigcdot}
\begin{bmatrix}0&R_{1,2}\\ 0& R_{2,2}\end{bmatrix}
\begin{bmatrix}0& 0\\ R_{2,1}& R_{2,2}\end{bmatrix}
=\ex^{-\lambda\Bigcdot} R\BBB\BBB R.
\end{align*}

Hence, from~\eqref{Ricc1} and $\BBB=\BBB^*$, it follows that
\[
\dot R -R\begin{bmatrix}L_{\hat u}&L_{\hat u(t)}^\FF\\ 0& 0\end{bmatrix}-\begin{bmatrix}(L_{\hat u})^*&0\\ (L_{\hat u(t)}^\FF)^*& 0\end{bmatrix}R
+\ex^{\lambda\Bigcdot}\CCC- \ex^{-\lambda\Bigcdot}R\BBB\BBB^* R
=0
\]
which implies that for~$\RRR=\ex^{-\lambda\Bigcdot}R$, using~$\dot\RRR=\ex^{-\lambda\Bigcdot}\dot R-\lambda \RRR$, we have the identity
\[
\dot \RRR -\RRR\begin{bmatrix}L_{\hat u}&L_{\hat u(t)}^\FF\\ 0& 0\end{bmatrix}-\begin{bmatrix}(L_{\hat u})^*&0\\ (L_{\hat u(t)}^\FF)^*& 0\end{bmatrix}\RRR
+\CCC-\ex^{-2\lambda\Bigcdot} R\BBB\BBB^* R+\lambda \RRR
=0,
\]
which is equivalent to~\eqref{riccati}.

Once we have $\RRR=\begin{bmatrix}\RRR_{1,1}&\RRR_{2,1}^*\\\RRR_{2,1}&\RRR_{2,2}\end{bmatrix}$,
the feedback control, in Theorem~\ref{T:feedback} (cf. system~\eqref{sys-v-ext-feed}, and~\eqref{linfeed-ext} ) is then given by
\[
\varkappa=\KK_{\hat u}^{\lambda,\,\Bigcdot}(\Pi v,\,\kappa)(\Bigcdot)=-\RRR_{2,1}\Pi v -\RRR_{2,2}\kappa.
\]

The structure of the feedback control is comparable with that proposed in~\cite[below Equation~(1.22)]{Badra09-cocv}, in the case of a stationary
targeted solution. We also remark that here, taking advantage of the finite-dimensionality of the controls,
we consider a simple dynamics~$\partial_t\kappa=\varkappa$ on the boundary, while in~\cite[Equation~(1.12)]{Badra09-cocv} 
a different dynamics is proposed
to deal with a larger class of initial conditions, but with controls that are not necessary finite-dimensional.

Solving~\eqref{riccati} numerically, say with finite element method as in~\cite{KroRod15}, can become a quite demanding problem
as the number~$n_p$ of mesh points increase. In the case of internal controls we will end up in solving
a matrix $n_p\times n_p$-dimensional problem.
On the other hand, in the boundary case and considering the extended system instead, the problem will not
get much worse because the dimension of the problem just increase by the number~$M$ of controls, and $M$ will (or, is expected to be) be much smaller than
$n_p$. We may expect that the numerical demand in solving the resulting matrix $(n_p+M)\times(n_p+M)$-dimensional problem is comparable
with that in solving a matrix~$n_p\times n_p$-dimensional problem.

Notice also that the ``less standard'' operator $\kappa\mapsto\FF_\aA^{-1}(0,\,Q^M_f\kappa)$, from $\NN^\perp$ to~$\aA_{\Xi_1}$ can be constructed by
solving $\widehat M=\dim Q^M_f\NN^\perp\le M$ Laplace equations of the form
\[
\Delta p_{\bar\kappa}=0,\qquad\nnn\cdot\nabla p_{\bar\kappa}=\nnn\cdot\Xi \bar\kappa,
\]
(for each $\bar\kappa\in Q^M_f\NN^\perp$) and then assigning $\FF_\aA^{-1}(0,\,Q^M_f\kappa)=\nabla p_{Q^M_f\kappa}$.

Concerning, the numerical solution of the Riccati equation
we refer the reader, in particular, to the works
in~\cite{BanschBennerSaakWech15,Benner06,BennerLaubMehrmann97}.
See also \cite{BanKunisch84,BurnsSachsZietsman08,KunkelMehrmann90}.

\section{Stabilization of the Navier--Stokes system}\label{S:nonlinear}
In this Section we prove that the feedback controller constructed in Section~\ref{S:linear-feed}, to stabilize the linear extended Oseen--Stokes
system~\eqref{sys-v-ext} to zero, also
stabilizes locally the corresponding extension of the nonlinear system~\eqref{sys-vnonlin} to zero. The main result of the paper is given in
Section~\ref{sS:mainTh}; before, due to some well known issues related with the existence and
uniqueness of solutions we need to recall some definitions
(cf.~\cite[Section~5]{Rod14-na}). 

\subsection{Solutions for the nonlinear systems}
Let $a,\,b\in\R$ be two real numbers with
$0\le a<b$.
\begin{definition}\label{D:wsol-sys-y-nl}
Given $f\in
L^2((a,\,b),\,H^{-1}(\Omega,\,\R^3))$, $k\in L^4((a,\,b),\,L^{6}(\Omega,\,\R^3))$, and $y_0\in H$, we say that $y\in
L^2((a,\,b),\,V)\cap L^\infty((a,\,b),\,H)$,
with $\p_ty\in L^1((a,\,b),\,V')$, is a weak solution for system
\begin{equation}\label{sys-y-nonl}
\begin{array}{rclcrcl}
\p_t y+\langle y\cdot\nabla\rangle y+\BB(k)y-\nu\Delta
y+\nabla p_y+f&=&0,&\quad&\diver y&=&0,\\
 y\rest \Gamma &=&0,&\quad& y(a)&=&y_0,
\end{array}
\end{equation}
in $(a,\,b)\times\Omega$, if it is a weak solution in the classical sense of~\cite[Chapter~3, Section~3.1]{Temam01}.
\end{definition}

For simplicity, and for $r>1,\;k>1$, we define the subspace
\begin{equation*}
\Theta^{r,k}_{(a,\,b)}
\coloneqq W((a,\,b),\,H^{1}_{\diver}(\Omega,\,\R^3),\,H^{-1}(\Omega,\,\R^3))\cap L^r((a,\,b),\,L^{k}(\Omega,\,R^3)).
\end{equation*}

\begin{definition}\label{D:wsol-sys-u-bdry}
Given $h\in
L^2((a,\,b),\,H^{-1}(\Omega,\,\R^3))$, $z\in\Theta^{4,6}_{(a,\,b)}$, and $u_0\in L^2_{\diver}(\Omega,\,\R^3)$,  we say that~$u$ satisfying
\begin{align*}
u&\in
L^2((a,\,b),\,H^1_{\diver}(\Omega,\,\R^3))\cap L^\infty((a,\,b),\,L^2_{\diver}(\Omega,\,\R^3)),\quad
\p_tu&\in L^1((a,\,b),\,H^{-1}(\Omega,\,\R^3)),
\end{align*}
is a weak solution for system~\eqref{sys-u-bdry},
in $(a,\,b)\times\Omega$, with
$\gamma+\zeta=z\rest{\Gamma}$ and $u(a)=u_0$, if $y=u-z$ is a weak solution for the system~\eqref{sys-y-nonl}
with $f=h+\p_tz+\langle z\cdot\nabla\rangle z-\nu\Delta z$, $k=z$, and
$y_0=u_0-z(a)\in H$.
\end{definition}

\begin{definition}\label{D:wsol-vnonlin}
Given $\hat u\in L^\infty((a,\,b),\,L^\infty_{\diver}(\Omega,\,\R^3))$, $z\in\Theta^{4,6}_{(a,\,b)}$,
and $v_0\in L^2_{\diver}(\Omega,\,\R^3)$, we say that~$v$ satisfying
\begin{align*}
v\in
L^2((a,\,b),\,H^1_{\diver}(\Omega,\,\R^3))\cap L^\infty((a,\,b),\,L^2_{\diver}(\Omega,\,\R^3)),\quad \p_tv\in L^1((a,\,b),\,H^{-1}(\Omega,\,\R^3))
\end{align*}
is a weak solution for system~\eqref{sys-vnonlin},
in $(a,\,b)\times\Omega$, with
$\zeta=z\rest{\Gamma}$ and $v(a)=v_0$, if $y=v-z$ is a weak solution for the system~\eqref{sys-y-nonl}
with $f=\p_tz+\langle z\cdot\nabla\rangle z-\nu\Delta z+\BB(\hat u)z$, $k=\hat u+z$ and
$y_0=v_0-z(a)\in H$.
\end{definition}

Analogously, we define the global solutions in $\R_0\times\Omega$: let us denote, for simplicity,
$\XX_{\rm loc}(\R_0)\coloneqq \{f\mid f\rest{(0,\,T)}\in\XX((0,\,T))\mbox{ for all } T>0\}$, where $\XX((0,\,T))$ is a suitable space of
functions defined in $(0,\,T)\subset\R_0$.

\begin{definition}\label{D:wsol-sys-u-bdry-glob}
Given $h\in
L^2_{\rm loc}(\R_0,\,H^{-1}(\Omega,\,\R^3))$, $z\in\Theta^{4,6}_{\R_0,\,{\rm loc}}$, and $u_0\in L^2_{\diver}(\Omega,\,\R^3)$,
we say that $u$ is a weak solution for system~\eqref{sys-u-bdry}--\eqref{ini-u},
in $\R_0\times\Omega$, if $u\rest{(0,\,T)}$ is a weak solution in $(0,\,T)\times\Omega$, for all $T>0$, for the same system
with the data $h\rest{(0,\,T)},\,k\rest{(0,\,T)}$.
\end{definition}

\begin{definition}\label{D:wsol-vnonlin-glob}
Given $\hat u\in L^\infty_{\rm loc}((0,\,T),\,L^\infty_{\diver}(\Omega,\,\R^3))$, $z\in\Theta^{4,6}_{(a,\,b),\,{\rm loc}}$,
and $v_0\in L^2_{\diver,\,{\rm loc}}(\Omega,\,\R^3)$, we say that $v$ is a weak solution for system~\eqref{sys-vnonlin},
in $\R_0\times\Omega$, if $v\rest{(0,\,T)}$ is a weak solution in $(0,\,T)\times\Omega$, for all $T>0$, for the same system
with the data $h\rest{(0,\,T)},\,k\rest{(0,\,T)}$.
\end{definition}

The existence of the solution in Definitions~\ref{D:wsol-sys-y-nl}, \ref{D:wsol-sys-u-bdry} and~\ref{D:wsol-vnonlin}
can be proven following classical arguments, and some continuity observations as mentioned
in~\cite[Remarks~5.1 and~5.2]{Rod14-na}. Then it also follow the existence of the solutions in
Definitions~\ref{D:wsol-sys-u-bdry-glob} and~\ref{D:wsol-vnonlin-glob}. Concerning the uniqueness, following an argument
as in the proof of Lemma~5.3 in\cite{Rod14-na}, we have the following:

\begin{lemma}\label{L:unisol}
The solution in Definitions~\ref{D:wsol-sys-y-nl}, \ref{D:wsol-sys-u-bdry} and~\ref{D:wsol-vnonlin}, is unique
if it is in $\Theta^{4,6}_{(a,\,b)}$.  The solution in Definitions~\ref{D:wsol-sys-u-bdry-glob}, and~\ref{D:wsol-vnonlin-glob},
is unique
if it is in $\Theta^{4,6}_{\R_0,\,{\rm loc}}$. 
\end{lemma}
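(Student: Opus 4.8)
The plan is to prove uniqueness by the classical energy-type argument: if $y^1$ and $y^2$ are two weak solutions of~\eqref{sys-y-nonl} with the same data $(f,\,k,\,y_0)$ and at least one of them, say $y^1$, lies in $\Theta^{4,6}_{(a,\,b)}$, then the difference $d\coloneqq y^1-y^2$ solves a linear equation and one derives a Gronwall inequality forcing $d=0$. First I would subtract the two equations satisfied by $y^1$ and $y^2$ to find that $d$ solves, in the weak sense,
\[
\p_t d+\langle d\cdot\nabla\rangle y^1+\langle y^2\cdot\nabla\rangle d+\BB(k)d-\nu\Delta d+\nabla p_d=0,\quad\diver d=0,\quad d\rest\Gamma=0,\quad d(a)=0,
\]
using the bilinearity identity $\langle y^1\cdot\nabla\rangle y^1-\langle y^2\cdot\nabla\rangle y^2=\langle d\cdot\nabla\rangle y^1+\langle y^2\cdot\nabla\rangle d$. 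Then I would test this equation with $d$ itself (this is legitimate since $d\in L^2((a,\,b),\,V)\cap L^\infty((a,\,b),\,H)$ with $\p_t d\in L^1((a,\,b),\,V')$, so that $t\mapsto|d(t)|_H^2$ is absolutely continuous), use $(\langle y^2\cdot\nabla\rangle d,\,d)_{L^2}=0$ and the corresponding cancellation in the $\BB(k)$ term, and arrive at
\[
\tfrac12\tfrac{\ed}{\ed t}|d(t)|_H^2+\nu|\nabla d(t)|_{L^2(\Omega,\,\R^9)}^2=-(\langle d\cdot\nabla\rangle y^1,\,d)_{L^2(\Omega,\,\R^3)}-(\langle k\cdot\nabla\rangle d,\,d)_{L^2(\Omega,\,\R^3)}'
\]
where the last term collects the non-cancelling part of $\BB(k)d$, namely $(\langle d\cdot\nabla\rangle k,\,d)_{L^2}$.

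Next I would estimate the right-hand side. For the term with $y^1$, I would integrate by parts to write it as $(\langle d\cdot\nabla\rangle d,\,y^1)_{L^2}$ and bound it by $|d|_{L^3}|\nabla d|_{L^2}|y^1|_{L^6}$, then use the 3D interpolation $|d|_{L^3}\le C|d|_{H}^{1/2}|d|_{V}^{1/2}$ together with Young's inequality to absorb a factor $\nu|\nabla d|_{L^2}^2/4$ into the left-hand side, leaving a term $\le C\nu^{-3}|y^1|_{L^6}^4|d|_H^2$. The term $(\langle d\cdot\nabla\rangle k,\,d)_{L^2}$ with $k\in L^4((a,\,b),\,L^6(\Omega,\,\R^3))$ is handled in exactly the same way (it has the same structure, with $k$ in place of $y^1$), giving a contribution $\le C\nu^{-3}|k|_{L^6}^4|d|_H^2$ after absorbing another $\nu|\nabla d|_{L^2}^2/4$. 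The point of requiring one solution in $\Theta^{4,6}_{(a,\,b)}$ is precisely that $|y^1|_{L^6}^4\in L^1((a,\,b),\,\R)$, so that the resulting differential inequality
\[
\tfrac{\ed}{\ed t}|d(t)|_H^2\le C\nu^{-3}\bigl(|y^1(t)|_{L^6}^4+|k(t)|_{L^6}^4\bigr)|d(t)|_H^2,\qquad |d(a)|_H^2=0,
\]
has an $L^1$ coefficient, and the Gronwall--Bellman inequality yields $|d(t)|_H^2=0$ on $(a,\,b)$, i.e.\ $y^1=y^2$. For the statements about~\eqref{sys-u-bdry} and~\eqref{sys-vnonlin} I would simply invoke Definitions~\ref{D:wsol-sys-u-bdry} and~\ref{D:wsol-vnonlin}: a solution $u$ (resp.\ $v$) in $\Theta^{4,6}_{(a,\,b)}$ corresponds, via $y=u-z$ (resp.\ $y=v-z$) with $z\in\Theta^{4,6}_{(a,\,b)}$, to a solution $y$ of~\eqref{sys-y-nonl} that is again in $\Theta^{4,6}_{(a,\,b)}$, and the appropriate $k$ ($=z$, resp.\ $=\hat u+z$) lies in $L^4((a,\,b),\,L^6(\Omega,\,\R^3))$ since $z\in\Theta^{4,6}_{(a,\,b)}$ and $\hat u\in L^\infty$; so uniqueness for~\eqref{sys-y-nonl} transfers verbatim. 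Finally the global statements in Definitions~\ref{D:wsol-sys-u-bdry-glob} and~\ref{D:wsol-vnonlin-glob} follow by applying the local uniqueness on each $(0,\,T)$ and letting $T\to+\infty$.

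The main obstacle is the justification of testing the difference equation with $d$ and of the integration-by-parts steps at the regularity level available: $y^2$ is only a Leray-type weak solution, so $\langle y^2\cdot\nabla\rangle d\in L^1$ at best, and one must check carefully that $\p_t d\in L^1((a,\,b),\,V')$ so that the identity $\langle\p_t d,\,d\rangle=\frac12\frac{\ed}{\ed t}|d|_H^2$ holds and the duality pairings make sense; this is the standard weak-strong uniqueness subtlety, and it is handled exactly as in the proof of Lemma~5.3 in~\cite{Rod14-na}, to which I would refer for the technical details rather than reproducing them. Everything else is routine: the cancellations $(\langle w\cdot\nabla\rangle d,\,d)_{L^2}=0$ for divergence-free $w$ vanishing on $\Gamma$, the 3D Ladyzhenskaya/Gagliardo--Nirenberg interpolation, Young's inequality, and Gronwall.
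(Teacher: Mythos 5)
Your proposal is correct and takes essentially the same approach as the paper: the paper itself gives no detailed argument but simply refers to the proof of Lemma~5.3 in~\cite{Rod14-na}, and your energy/Gronwall sketch (difference equation, cancellations for divergence-free fields vanishing on $\Gamma$, $L^3$--$L^6$ interpolation with Young, the $L^4_tL^6_x$ hypothesis making the Gronwall coefficient integrable, and deferral to that same reference for the $\p_td\in L^1(V')$ subtlety) is precisely the standard weak--strong uniqueness argument the paper is invoking.
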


\subsection{Main Theorem}\label{sS:mainTh}
To have enough regularity to deal with the nonlinear problems,
we take strong admissible initial conditions
in $\aA_{\Xi_2}= \{u\in H^1_{\diver}(\Omega,\,\R^3)\mid u\rest\Gamma=\Xi z\mbox{ for some }z\in\NN^\perp\}
\subset\aA_{\Xi_1}$. Recall that $\aA_{\Xi_2}$ is the set of admissible strong initial conditions
for the Oseen--Stokes system
(see Section~\ref{sS:exist-solut}). 

Now we rewrite system~\eqref{sys-vnonlin}, in $\R_0\times\Omega$
with $\zeta=\Xi\kappa$, in the extended form
\begin{subequations}\label{sys-vnonlin-ext}
\begin{align}
 \p_t v+\BB(\hat u)v+\langle v\cdot\nabla\rangle v
 -\nu\Delta v+\nabla p_v&=0,& \p_t\kappa &=\KK_{\hat u}^{\lambda,\Bigcdot}(\Pi v,\,\kappa),\label{sys-vnonlin-ext-dyn}\\
 \diver v &=0,& v\rest \Gamma &=\Xi\kappa, \label{sys-vnonlin-ext-divbdry}\\
 v(0)&=\FF_\aA^{-1}(v_0^H,\, Q^M_f\kappa_0),& \kappa(0)&=\kappa_0,\label{sys-vnonlin-ext-ic}
\end{align}
\end{subequations}
where~$(v_s^H,\,\kappa_s)$ will be taken in~$H\times \NN^\perp\times L^2(\R_s,\,\NN^\perp)$, and $\KK_{\hat u}^{\lambda,\Bigcdot}$
is as in Theorem~\ref{T:feedback}
(cf.~system~\eqref{sys-v-ext-feed}).

\begin{theorem}\label{T:ex.st-ct}
For given $\hat u\in\WW^{\rm st}$ and $\lambda>0$, let
$M=\overline C_{\left[|\hat u|_{\WW^{\rm st}},\lambda\right]}$ be as in Theorem~\ref{T:feedback}. Then there exists $\epsilon>0$ with the following property:
if 
\begin{equation}\label{st-ic-ext}
(v_0^H,\,\kappa_0)\in (H\cap H^1_{\diver}(\Omega,\,\R^3))\times\NN^\perp,\quad
\mbox{with }~\FF_\aA^{-1}(v_0^H,\, Q^M_f\kappa_0)\rest\Gamma=\Xi\kappa_0,
\end{equation}
 and $\norm{\FF_\aA^{-1}(v_0^H,\, Q^M_f\kappa_0)}{H^1_{\diver}(\Omega,\,\R^3)}<\epsilon$, then
there exists a weak
solution~$(v,\,\kappa)$ in the product space $W_{\rm loc}(\R_0,\,H^2_{\diver}(\Omega,\,\R^3),\,L^2(\Omega,\,\R^3))\times H^1_{\rm loc}(\R_0,\,\NN^\perp)$
 for system~\eqref{sys-vnonlin-ext}, which
is unique and
satisfies the inequality
\begin{equation} \label{expH1.main}
|v(t)|_{H^1_{\diver}(\Omega,\,\R^3)}^2
\le\overline C_{\left[|\hat u|_{\WW^{\rm st}},\,\lambda,\,\frac{1}{\lambda}\right]}\ex^{-\lambda t}
\left(|v_0|_{H^1_{\diver}(\Omega,\,\R^3)}^2\right), \quad t\ge0.
\end{equation}
\end{theorem}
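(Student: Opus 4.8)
The strategy is the classical one for passing from a linear feedback stabilization result to a local nonlinear one: we view the nonlinear term $\langle v\cdot\nabla\rangle v$ as a perturbation of the linear extended closed-loop system~\eqref{sys-v-ext-feed}, and set up a fixed-point argument in a space of functions decaying exponentially with rate $\lambda/2$. First I would fix the initial data as in~\eqref{st-ic-ext} with $\|\FF_\aA^{-1}(v_0^H,Q^M_f\kappa_0)\|_{H^1_{\diver}}<\epsilon$, and note that by Theorem~\ref{T:feedback}\ref{feedTest} (applied with $s_0=0$, $a=0$) the solution $(v^L,\kappa^L)$ of the linear closed-loop system~\eqref{sys-v-ext-feed} satisfies $\|\ex^{\frac{\lambda}{2}\Bigcdot}(v^L,\kappa^L)\|_{W(\R_0,H^1_{\diver},H^{-1})\times H^1(\R_0,\NN^\perp)}^2\le\overline C\,\|(\Pi v_0,\kappa_0)\|_{H\times\NN^\perp}^2$; combined with the smoothing Lemma~\ref{L:smooth-prop} (and Theorem~\ref{T:exist-ssol-v} on the first unit interval, since now $v_0\in\aA_{\Xi_2}$ is a strong admissible condition) this upgrades to a bound in $W(\R_0,H^2_{\diver},L^2)\times H^1(\R_0,\NN^\perp)$ for $\ex^{\frac{\lambda}{2}\Bigcdot}(v^L,\kappa^L)$, controlled by $\|v_0\|_{H^1_{\diver}}$. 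The natural space for the fixed point is therefore
\[
\BB_\rho\coloneqq\Bigl\{(v,\kappa)\ \Big|\ \bigl|\ex^{\tfrac{\lambda}{2}\Bigcdot}(v,\kappa)\bigr|_{W(\R_0,H^2_{\diver}(\Omega,\R^3),L^2(\Omega,\R^3))\times H^1(\R_0,\NN^\perp)}\le\rho\Bigr\},
\]
a closed ball of small radius $\rho\sim\epsilon$.

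Next I would define the map $\Lambda$ on $\BB_\rho$ by letting $\Lambda(v,\kappa)=(\bar v,\bar\kappa)$ be the solution of the \emph{linear} closed-loop extended system with the extra forcing term $g=-\langle v\cdot\nabla\rangle v$ inserted into~\eqref{sys-v-ext-feed-dyn} and with the same initial data. The crucial point is that, for $(v,\kappa)\in\BB_\rho$, the quadratic term is controlled: using $W(\R_0,H^2_{\diver},L^2)\hookrightarrow L^\infty(\R_0,H^1_{\diver})\cap L^2(\R_0,H^2)$, the bilinear estimate $|\langle a\cdot\nabla\rangle b|_{L^2(\Omega)}\le C|a|_{H^2}|b|_{H^1}$ (valid in 3D), and the fact that the exponential weights combine favourably ($\ex^{\lambda t}=\ex^{\frac{\lambda}{2}t}\ex^{\frac{\lambda}{2}t}$), one gets
\[
\bigl|\ex^{\tfrac{\lambda}{2}\Bigcdot}\langle v\cdot\nabla\rangle v\bigr|_{L^2(\R_0,L^2(\Omega,\R^3))}\le C\,\bigl|\ex^{\tfrac{\lambda}{2}\Bigcdot}v\bigr|_{L^\infty(\R_0,H^1_{\diver})}\bigl|\ex^{-\tfrac{\lambda}{2}\Bigcdot}\Bigr|_{L^\infty(\R_0,\R)}\bigl|v\bigr|_{L^2(\R_0,H^2_{\diver})}\le C\rho^2 ,
\]
so the forcing lives in the right weighted $L^2$ space with norm $O(\rho^2)$. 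Feeding this $g$ through the linear closed-loop estimate — which requires the weighted analogue of Theorem~\ref{T:feedback}\ref{feedTest} with an inhomogeneity, obtained exactly as Corollary~\ref{C:ex.H1-cont} was obtained (write the linear closed-loop evolution with source, use Theorems~\ref{T:exist-ssol-v} and~\ref{T:ex.st-ct.lin}, Lemma~\ref{L:L2H1b-H1}/Corollary~\ref{C:L2H1b-H1}, and sum the geometric series in $n$) — one concludes $\|\ex^{\frac{\lambda}{2}\Bigcdot}\Lambda(v,\kappa)\|\le C(\|v_0\|_{H^1_{\diver}}+\rho^2)\le C\epsilon+C\rho^2$. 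Choosing first $\rho$ small so that $C\rho<\tfrac12$ and then $\epsilon$ small so that $C\epsilon<\tfrac{\rho}{2}$, we get $\Lambda(\BB_\rho)\subseteq\BB_\rho$; an identical computation on the difference $\langle v_1\cdot\nabla\rangle v_1-\langle v_2\cdot\nabla\rangle v_2=\langle(v_1-v_2)\cdot\nabla\rangle v_1+\langle v_2\cdot\nabla\rangle(v_1-v_2)$ gives $\|\Lambda(v_1,\kappa_1)-\Lambda(v_2,\kappa_2)\|\le C\rho\,\|(v_1,\kappa_1)-(v_2,\kappa_2)\|$, so $\Lambda$ is a contraction. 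The Banach fixed point theorem yields the desired global solution $(v,\kappa)$ in $W_{\rm loc}(\R_0,H^2_{\diver},L^2)\times H^1_{\rm loc}(\R_0,\NN^\perp)$, and the fixed-point bound $\|\ex^{\frac{\lambda}{2}\Bigcdot}v\|_{W(\R_0,H^2_{\diver},L^2)}\le\rho$ together with the embedding $W(\R_0,H^2_{\diver},L^2)\hookrightarrow C_b(\R_0,H^1_{\diver})$ gives~\eqref{expH1.main}. Uniqueness follows from Lemma~\ref{L:unisol}: any weak solution with the regularity stated lies in $\Theta^{4,6}_{\R_0,{\rm loc}}$ (by the $H^2$–$L^2$ parabolic regularity and Sobolev embedding), hence coincides with the constructed one; note the feedback $\kappa$-component is determined by $v$ via the bounded operator $\KK_{\hat u}^{\lambda,\Bigcdot}$ so uniqueness of $v$ forces uniqueness of the pair.

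The main obstacle — and the point to handle with care — is the regularity/compatibility bookkeeping that makes the fixed-point space self-consistent. One must check that the perturbed data $(g=-\langle v\cdot\nabla\rangle v\in L^2_{\rm loc}(\R_0,L^2),\ v_0\in\aA_{\Xi_2},\ \kappa_0\in\NN^\perp)$ satisfy exactly the hypotheses under which the linear closed-loop system is well posed with the $H^2_{\diver}$–$L^2$ estimate and preserves the compatibility relation $v(t)\cdot\nnn=\Xi Q^M_f\kappa(t)$ built into the extended formulation~\eqref{sys-v-ext}; this is where the choice $v_0=\FF_\aA^{-1}(v_0^H,Q^M_f\kappa_0)\in\aA_{\Xi_2}$ and the condition $\FF_\aA^{-1}(v_0^H,Q^M_f\kappa_0)\rest\Gamma=\Xi\kappa_0$ in~\eqref{st-ic-ext} are used. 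A secondary technical point is verifying that the linear closed-loop map with an $L^2(\R_0,L^2)$-source indeed admits the weighted estimate with the same constant $\overline C_{[|\hat u|_{\WW^{\rm st}},\lambda,1/\lambda]}$ uniformly — this is not quite Theorem~\ref{T:feedback} as stated (which handles only the homogeneous case) but is a routine corollary, proved by the same concatenation/geometric-series argument as Corollary~\ref{C:ex.H1-cont} and Step~\ref{stdecay} of the proof of Theorem~\ref{T:feedback}, combined with the smoothing Lemma~\ref{L:smooth-prop} to recover the $H^2$ spatial regularity on each unit interval.
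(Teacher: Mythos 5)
Your high-level strategy matches the paper's: treat $\langle v\cdot\nabla\rangle v$ as a perturbation of the closed-loop linear extended system~\eqref{sys-v-ext-feed}, use the feedback estimate from Theorem~\ref{T:feedback} plus the smoothing of Lemma~\ref{L:smooth-prop} to get an $H^2$-level bound for the linear flow, and run a contraction argument. However, there is a genuine gap in the choice of norm for the fixed-point space, and a related mismatch in the weight used for the forcing.

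You define $\BB_\rho$ using the \emph{global} weighted norm
$\bigl|\ex^{\frac{\lambda}{2}\Bigcdot}(v,\kappa)\bigr|_{W(\R_0,H^2_{\diver}(\Omega,\,\R^3),L^2(\Omega,\,\R^3))\times H^1(\R_0,\NN^\perp)}$,
and you assert that the smoothing of Lemma~\ref{L:smooth-prop} together with Theorem~\ref{T:exist-ssol-v} "upgrades" the $H^1$--$H^{-1}$ bound of Theorem~\ref{T:feedback}\ref{feedTest} to this global weighted $H^2$--$L^2$ bound. That step is not justified. The smoothing estimates in Lemma~\ref{L:smooth-prop} and Theorem~\ref{T:exist-ssol-v} are interval-wise: on each $(r,r+1)$ they bound $\norm{\ex^{\frac{\lambda}{2}\Bigcdot}v}{W((r,r+1),H^2_{\diver},L^2)}^2$ by quantities that decay like $\ex^{-\lambda r}$ once weighted, i.e.\ by a constant \emph{uniformly} in $r$ (this is precisely what Lemma~\ref{L:reg.a} in the paper proves). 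Summing over $r$ to recover the global $L^2(\R_0)$-in-time weighted norm gives a divergent series. This is exactly why the paper does not use $W(\R_0,\ldots)$ at all: it sets the fixed point in the Banach space $\ZZ^\lambda$ with the Morrey-type norm
$\sup_{r\ge0}\norm{\ex^{\frac{\lambda}{2}\Bigcdot}z}{W((r,r+1),H^2_{\diver},L^2)}$,
for which the linear closed-loop bound (Lemma~\ref{L:reg.a}) holds and for which a clean inhomogeneous estimate~\eqref{estzf} is available. With the global weighted norm, one would first have to prove the norm is finite for the linear solution, which requires additional bootstrapping beyond what Lemma~\ref{L:smooth-prop} provides.

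The second, related, gap is in your forcing estimate. You bound the forcing in the norm $\bigl|\ex^{\frac{\lambda}{2}\Bigcdot}\langle v\cdot\nabla\rangle v\bigr|_{L^2(\R_0,L^2)}$. But the Duhamel argument one uses to feed a source $f$ through the closed-loop linear semigroup (whose decay rate is exactly $\frac{\lambda}{2}$, the same rate targeted for the output) requires a \emph{stronger} weight on the source: the split $\ex^{\frac{\lambda}{2}s}=\ex^{-\frac{\lambda}{2}s}\ex^{\lambda s}$ and a Cauchy–Schwarz step on unit intervals lead to the Morrey-type norm $\sup_k\int_k^{k+1}\ex^{2\lambda s}\norm{f(s)}{H}^2\,\ed s$; this is what appears in~\eqref{supzHf} and~\eqref{estzf}. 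The quadratic structure of the nonlinearity is precisely what makes this stronger weight affordable: since $\langle\bar z\cdot\nabla\rangle\bar z$ is degree two in $\bar z$, the extra factor of $\ex^{\lambda s}$ can be absorbed into the two copies of $\bar z$, one estimated in $\sup_s\ex^{\lambda s}|\bar z(s)|_{H^1}^2$ and the other in $\sup_k\int_k^{k+1}\ex^{\lambda s}|\bar z|_{H^2}^2\,\ed s$. With the weight $\ex^{\lambda s}$ (squared) that you propose, the Duhamel integral $\int_0^t\ex^{\frac{\lambda}{2}s}|f(s)|\,\ed s$ is not controlled by Cauchy–Schwarz in general, and the linear estimate you are implicitly invoking is not proved by the paper and would fail. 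Your citation of Corollary~\ref{C:ex.H1-cont} as a template is also slightly off: that corollary covers the homogeneous problem at a strictly smaller rate $\hat\lambda<\lambda$ in $H^1$--$H^{-1}$, whereas what is needed here is the inhomogeneous $H^2$--$L^2$ estimate at the critical rate with the $\ex^{2\lambda}$ forcing weight. In short: keep the contraction strategy, but replace the global weighted norm by the sup-over-unit-intervals Morrey norm (the paper's $\ZZ^\lambda$), and formulate the inhomogeneous closed-loop estimate as in~\eqref{estzf}.

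Two minor remarks: (i) the paper fixes only the $v$ component in the contraction and determines $\kappa$ from $v$ via the feedback law; carrying $(v,\kappa)$ jointly is harmless provided the $\kappa$-component is redundant. (ii) For uniqueness, the paper does not invoke Lemma~\ref{L:unisol} directly; because $\kappa$ depends on $v$ through an integral feedback, it builds a comparison function via the extension operator $\widehat F$ (Stokes lift of the boundary value), absorbs the feedback term using~\eqref{Opnorm}, and runs a Gronwall estimate. Your shortcut through Lemma~\ref{L:unisol} needs to be checked carefully because the boundary data are not the same a priori for two candidate solutions (they differ through the integrated feedback), which is what the paper's argument is designed to handle.
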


Before the proof we need some auxiliary results.

\begin{lemma}\label{L:reg.a}
If $(v_0^H,\,\kappa_0)$ satisfies~\eqref{st-ic-ext}, the solution $v$ in Theorem~\ref{T:feedback} satisfies 
\begin{align*} 
\sup_{r\geq 0}|\ex^{\frac{\lambda}{2}\Bigcdot}v|_{W((r,\,r+1),\,H^2_{\diver}(\Omega,\,\R^3),\,
L^2(\Omega,\,\R^3))}^2
&\le \overline C_{\left[|\hat u|_{\WW^{\rm st}},\,\lambda,\,\frac{1}{\lambda}\right]}
|v(0)|_{H^1_{\diver}(\Omega,\,\R^3)}^2.
\end{align*}
\end{lemma}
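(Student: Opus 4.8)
The plan is to bootstrap the regularity of the feedback solution $(v,\kappa)$ from Theorem~\ref{T:feedback} by using the smoothing property (Lemma~\ref{L:smooth-prop}) on each unit time interval, combined with the already established exponentially decaying estimate~\eqref{estTfeed}. First I would observe that, by Theorem~\ref{T:feedback}\ref{feedTest} applied with $s_0=0$, we have the bound
\[
\norm{\ex^{\frac{\lambda}{2}(\Bigcdot-a)}(v,\,\kappa)}{W(\R_a,\,H^1_{\diver}(\Omega,\,\R^3),\,H^{-1}(\Omega,\,\R^3))\times H^1(\R_a,\,\NN^\perp)}^2
\le \overline C_{\left[|\hat u|_{\WW^{\rm st}},\,\lambda,\,\frac{1}{\lambda}\right]}\norm{(\Pi v(a),\,\kappa(a))}{H\times\NN^\perp}^2
\]
for every $a\ge0$, and (taking $a=0$) that the right-hand side is bounded by $\overline C_{\left[|\hat u|_{\WW^{\rm st}},\,\lambda,\,\frac{1}{\lambda}\right]}|v(0)|_{H^1_{\diver}(\Omega,\,\R^3)}^2$ since $v(0)\in\aA_{\Xi_2}\subset\aA_{\Xi_1}$ and $\Pi$ has norm $1$. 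In particular, for each integer $n\ge0$, this gives $|\ex^{\frac{\lambda}{2}n}v(n)|_{L^2_{\diver}(\Omega,\,\R^3)}^2 + |\ex^{\frac{\lambda}{2}n}\kappa(n)|_{\NN^\perp}^2 \le \overline C_{\left[|\hat u|_{\WW^{\rm st}},\,\lambda,\,\frac{1}{\lambda}\right]}|v(0)|_{H^1_{\diver}(\Omega,\,\R^3)}^2$, using the continuous embedding $W((n,n+1),H^1_{\diver},H^{-1})\times H^1((n,n+1),\NN^\perp)\hookrightarrow C([n,n+1],L^2_{\diver}\times\NN^\perp)$.

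Next I would apply Lemma~\ref{L:smooth-prop} on the interval $(n-1,n+1)$ (or, to get the estimate on $(r,r+1)$ for real $r$, on $(\lfloor r\rfloor,\lfloor r\rfloor+2)$), with $g=0$, initial time $a=n-1$, initial condition $v(n-1)\in\aA_{\Xi_1}$, and $\eta=\kappa\rest{(n-1,n+1)}$ recalling $\kappa$ takes values in $\NN^\perp\subset\R^{2M}$. This yields that $(\Bigcdot-(n-1))v\in W((n-1,n+1),H^2_{\diver}(\Omega,\R^3),L^2(\Omega,\R^3))$ with norm controlled by $\overline C_{\left[|\hat u|_{\WW^{\rm st}}\right]}\big(|v(n-1)|_{L^2_{\diver}(\Omega,\,\R^3)}^2 + |\kappa|_{H^1((n-1,n+1),\NN^\perp)}^2\big)$. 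Since the cut-off function $\Bigcdot-(n-1)$ equals $1$ on $[n,n+1]$, the restriction gives control of $|v|_{W((n,n+1),H^2_{\diver},L^2)}^2$ by the same quantity. Then I multiply through by $\ex^{\lambda n}$ and estimate the factor $\ex^{\frac{\lambda}{2}\Bigcdot}$ on $(n-1,n+1)$ against $\ex^{\frac{\lambda}{2}(n-1)}$ times $\ex^{\lambda}$, using $|\ex^{\frac{\lambda}{2}\Bigcdot}v|_{W((n,n+1),H^2_{\diver},L^2)}^2 \le \overline C_{[\lambda]}\,\ex^{\lambda n}|v|_{W((n,n+1),H^2_{\diver},L^2)}^2$. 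Combining with the two bounds from the previous paragraph (the one on $|\ex^{\frac{\lambda}{2}(n-1)}v(n-1)|_{L^2_{\diver}}^2$ and the one on $|\ex^{\frac{\lambda}{2}\Bigcdot}\kappa|_{H^1((n-1,n+1),\NN^\perp)}^2$, the latter obtained from~\eqref{estTfeed} with $a=n-1$) gives
\[
|\ex^{\frac{\lambda}{2}\Bigcdot}v|_{W((n,\,n+1),\,H^2_{\diver}(\Omega,\,\R^3),\,L^2(\Omega,\,\R^3))}^2 \le \overline C_{\left[|\hat u|_{\WW^{\rm st}},\,\lambda,\,\frac{1}{\lambda}\right]}|v(0)|_{H^1_{\diver}(\Omega,\,\R^3)}^2,
\]
uniformly in $n$. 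For the boundary behaviour near $t=0$ I would treat the interval $(0,2)$ directly via Theorem~\ref{T:exist-ssol-v} using $v(0)\in\aA_{\Xi_2}$, so that no cut-off is needed there. Taking the supremum over $r\ge0$ of the $W((r,r+1),\cdot)$-norms — each such interval is covered by one of the unit-or-double intervals already estimated up to absorbing a constant — yields the claimed bound.

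The main obstacle I anticipate is bookkeeping the weights $\ex^{\frac{\lambda}{2}\Bigcdot}$ through the smoothing estimate: Lemma~\ref{L:smooth-prop} is stated without exponential weights, so one must carefully insert and extract the factors $\ex^{\pm\frac{\lambda}{2}n}$ on shifted unit intervals and verify the constants remain of the allowed form $\overline C_{[|\hat u|_{\WW^{\rm st}},\lambda,1/\lambda]}$ — in particular that the bound on $|\kappa|_{H^1((n-1,n+1),\NN^\perp)}$ coming from~\eqref{estTfeed} carries the correct $\ex^{\lambda(n-1)}$ weight that cancels against the $\ex^{\lambda n}$ prefactor. A secondary point is that $v(n-1)$ lies in $\aA_{\Xi_1}$ but not necessarily in $V$ after time zero, so one must use the weak-to-strong smoothing of Lemma~\ref{L:smooth-prop} rather than Theorem~\ref{T:exist-ssol-v} for $n\ge2$; this is exactly what Lemma~\ref{L:smooth-prop} is designed for, so it goes through, but the distinction must be respected. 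Everything else is a routine summation of a geometric-type series and use of the embedding $W((a,b),H^2_{\diver},L^2)\hookrightarrow C([a,b],H^1_{\diver})$.
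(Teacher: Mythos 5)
Your proposal is correct and lands on essentially the same ingredients as the paper (Lemma~\ref{L:smooth-prop}, estimate~\eqref{estTfeed}, the regularity of $v(0)\in\aA_{\Xi_2}$, a split between $t\in[0,1)$ and $t\ge1$), but it is organized around a slightly different pivot. The paper first substitutes $w=\ex^{\frac{\lambda}{2}\Bigcdot}v$, observes that $w$ solves~\eqref{sys-vg} with $g=-\frac{\lambda}{2}v$, and applies the \emph{strong} existence Theorem~\ref{T:exist-ssol-v} on each $(r,r+1)$ with the strong initial datum $w(r)\in\aA_{\Xi_2}$; this requires it to go back and bound $|v(\lfloor r\rfloor)|_{H^1}$ separately, which it does with Lemma~\ref{L:smooth-prop} on the preceding unit interval, and it controls the $\kappa$-contribution through the optimal-cost quantity $M_0^\lambda$ and~\eqref{normofR}. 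You instead apply the weak-to-strong smoothing of Lemma~\ref{L:smooth-prop} directly on the overlapping double interval $(n-1,n+1)$, starting from the merely weak admissible datum $v(n-1)\in\aA_{\Xi_1}$, restrict to $(n,n+1)$ where the cut-off is bounded away from zero, and only afterwards insert the exponential weights, using~\eqref{estTfeed} at $a=n-1$ and $a=0$ so the factors $\ex^{\pm\lambda(n-1)}$ cancel. Your route avoids both the $w$-substitution and the detour through $M_0^\lambda$, at the cost of the bookkeeping you flagged; the paper's route keeps everything in the weighted variable and reads the $\kappa$-bound off the optimal cost. Both are valid.

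Two small imprecisions worth cleaning up: the cut-off $\Bigcdot-(n-1)$ is not identically $1$ on $[n,n+1]$ but ranges over $[1,2]$; what you actually need is that it is bounded below by $1$ there so that, after undoing the product rule $\p_t\big((\Bigcdot-(n-1))v\big)=v+(\Bigcdot-(n-1))\p_tv$ and using $|v|_{L^2((n,n+1),H^2)}\le|(\Bigcdot-(n-1))v|_{L^2((n,n+1),H^2)}$, the norm of $v$ on $(n,n+1)$ is controlled up to a fixed multiplicative constant — this is exactly what your inequality requires. Also, the final step is a supremum over $r\ge0$ with uniformly bounded constants, not a geometric summation; the geometric cancellation already happened when you matched $\ex^{\lambda(n-1)}$ against $\ex^{-\lambda(n-1)}$ from~\eqref{estTfeed} at $a=0$.
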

\begin{proof}
We know that, taking $a=0$ in~\eqref{estTfeed},
\begin{equation}\label{feedest.a=0} 
\hspace*{-2.0em}\norm{\ex^{\frac{\lambda}{2}\Bigcdot}(v,\,\kappa)}{W(\R_0,\,H^1_{\diver}(\Omega,\,\R^3),\,
H^{-1}(\Omega,\,\R^3))\times H^1(\R_0,\,\NN^\perp)}^2
\le \overline C_{\left[|\hat u|_{\WW^{\rm st}},\,\lambda,\,\frac{1}{\lambda}\right]} \norm{(v_0^H,\,\kappa_0)}{H\times\NN^\perp}^2;
\end{equation}
since
$w\coloneqq \ex^{\frac{\lambda}{2}\Bigcdot}v$ solves~\eqref{sys-vg} with $g=-\frac{\lambda}{2}v$,
 $\eta=\ex^{\frac{\lambda}{2}\Bigcdot}\kappa=\ex^{\frac{\lambda}{2}\Bigcdot}\left(\kappa_0+\int_0^{\Bigcdot}\KK_{\hat u}^{\lambda,s}(\Pi v(s),\,\kappa(s))\,\ed s\right)$, and $K=\Xi$,
from
Theorem~\ref{T:exist-ssol-v} we have that
\begin{align*}
&\quad|w|_{W((r,\,r+1),\,H^2_{\diver}(\Omega,\,\R^3),\,L^2(\Omega,\,\R^3))}^2\\
&\le \overline C_{\left[|\hat u|_{\WW^{\rm st}}\right]}
\left(|w(r)|_{H^1_{\diver}(\Omega,\,\R^3)}^2 + \textstyle\frac{\lambda^2}{4}|v|_{L^2((r,\,r+1),\,L^2_{\diver}(\Omega,\,\R^3))}^2
+\left|\eta\right|_{H^1((r,\,r+1),\,\NN^\bot)}^2\right).
\end{align*}
On the other hand, by construction, in the proof of Theorem~\ref{T:feedback} we know that the solution coincides with the
minimizer of Problem~\ref{Pb:Ms}, with $s=0$; and,
from~\eqref{feedest.a=0} it follows that $|v|_{L^2((r,\,r+1),\,L^2_{\diver}(\Omega,\,\R^3))}^2
\le \overline C_{\left[|\hat u|_{\WW^{\rm st}},\,\lambda,\,,\,\frac{1}{\lambda}\right]}
\norm{(v_0^H,\,\kappa_0)}{H\times\NN^\perp}^2$. Therefore, using $\p_t\eta=\frac{\lambda}{2}\eta+\ex^{\frac{\lambda}{2}\Bigcdot}\p_t\kappa$,
\begin{align*}
&\quad|w|_{W((r,\,r+1),\,H^2_{\diver}(\Omega,\,\R^3),\,L^2(\Omega,\,\R^3))}^2\\
&\le \overline C_{\left[|\hat u|_{\WW^{\rm st}},\,\lambda,\,\frac{1}{\lambda}\right]}
\hspace*{-.2em}\left(|w(r)|_{H^1_{\diver}(\Omega,\,\R^3)}^2 + \norm{(v_0^H,\,\kappa_0)}{H\times\NN^\perp}^2
+M_0^\lambda\left(v,\,\kappa,\,\p_t\kappa\right)\right);
\end{align*}
and from Lemma~\ref{L:Ms} we have
\begin{align*}
&\quad|w|_{W((r,\,r+1),\,H^2_{\diver}(\Omega,\,\R^3),\,L^2(\Omega,\,\R^3))}^2
\le \overline C_{\left[|\hat u|_{\WW^{\rm st}},\,\lambda,\,\frac{1}{\lambda}\right]}
\left(|w(r)|_{H^1_{\diver}(\Omega,\,\R^3)}^2 + \norm{(v_0^H,\,\kappa_0)}{H\times\NN^\perp}^2\right)\\
&\le \overline C_{\left[|\hat u|_{\WW^{\rm st}},\,\lambda,\,\frac{1}{\lambda}\right]}
\left(\ex^{\lambda r}|v(r)|_{H^1_{\diver}(\Omega,\,\R^3)}^2 + \norm{(v_0^H,\,\kappa_0)}{H\times\NN^\perp}^2\right).
\end{align*}
In particular, 
$|w(\lfloor r\rfloor+1)|_{H^1_{\diver}(\Omega,\,\R^3)}^2
\le \overline C_{\left[|\hat u|_{\WW^{\rm st}},\,\lambda,\,\frac{1}{\lambda}\right]}
\left(\ex^{\lambda \lfloor r\rfloor}|v(\lfloor r\rfloor)|_{H^1_{\diver}(\Omega,\,\R^3)}^2 + \norm{(v_0^H,\,\kappa_0)}{H\times\NN^\perp}^2
\right)$, with $\lfloor r\rfloor\in\N$ as in~\eqref{floor}, and
\begin{align}
&\quad|w|_{W((r,\,r+1),\,H^2_{\diver}(\Omega,\,\R^3),\,L^2(\Omega,\,\R^3))}^2\le
|w|_{W((\lfloor r\rfloor,\,\lfloor r\rfloor+2),\,H^2_{\diver}(\Omega,\,\R^3),\,L^2(\Omega,\,\R^3))}^2\notag\\
&\le \overline C_{\left[|\hat u|_{\WW^{\rm st}},\,\lambda,\,\frac{1}{\lambda}\right]}
\left(|w(\lfloor r\rfloor)|_{H^1_{\diver}(\Omega,\,\R^3)}^2 +
|w(\lfloor r\rfloor+1)|_{H^1_{\diver}(\Omega,\,\R^3)}^2 + 2\norm{(v_0^H,\,\kappa_0)}{H\times\NN^\perp}^2\right)\notag\\
&\le \overline C_{\left[|\hat u|_{\WW^{\rm st}},\,\lambda,\,\frac{1}{\lambda}\right]}
\left(\ex^{\lambda \lfloor r\rfloor}|v(\lfloor r\rfloor)|_{H^1_{\diver}(\Omega,\,\R^3)}^2
+\norm{(v_0^H,\,\kappa_0)}{H\times\NN^\perp}^2\right).\label{sup_Ir_1}
\end{align}
If $\lfloor r\rfloor>0$, that is, if $r\ge1$ by Lemma~\ref{L:smooth-prop} we also have
\begin{align}
|v\lfloor r\rfloor|_{H^1_{\diver}(\Omega,\,\R^3)}^2&\le
\overline C_{\left[|\hat u|_{\WW^{\rm st}}\right]}\left(|v(\lfloor r\rfloor-1)|_{L^2_{\diver}(\Omega,\,\R^3)}^2
+|\kappa|_{H^1((\lfloor r\rfloor-1,\,\lfloor r\rfloor),\,\NN^\bot)}^2\right),\label{sup_Ir_2}
\end{align}
and from $\eta=\ex^{\frac{\lambda}{2}\Bigcdot}\kappa$ and
$\p_t\eta=\frac{\lambda}{2}\eta+\ex^{\frac{\lambda}{2}\Bigcdot}\p_t\kappa$, we can conclude that
\begin{align}
 &\quad|\kappa|_{H^1((\lfloor r\rfloor-1,\,\lfloor r\rfloor),\,\NN^\bot)}^2=|\kappa|_{L^2((\lfloor r\rfloor-1,\,\lfloor r\rfloor),\,\NN^\bot)}^2
+|\p_t\kappa|_{L^2((\lfloor r\rfloor-1,\,\lfloor r\rfloor),\,\NN^\bot)}^2\notag\\
&\leq \ex^{-\lambda(\lfloor r\rfloor-1)}|\eta|_{L^2((\lfloor r\rfloor-1,\,\lfloor r\rfloor),\,\NN^\bot)}^2
+|\ex^{-\frac{\lambda}{2} t}(\p_t\eta-\textstyle\frac{\lambda}{2}\eta)|_{L^2((\lfloor r\rfloor-1,\,\lfloor r\rfloor),\,\NN^\bot)}^2\notag\\
&\leq \overline C_{\left[\lambda\right]}\ex^{-\lambda(\lfloor r\rfloor-1)}|\eta|_{H^1((\lfloor r\rfloor-1,\,\lfloor r\rfloor),\,\NN^\bot)}^2
\le \overline C_{\left[\lambda\right]}\ex^{-\lambda(\lfloor r\rfloor-1)}M_0^\lambda\left(v,\,\kappa,\,\p_t\kappa\right);
\label{sup_Ir_3}
\end{align}
thus, from~\eqref{sup_Ir_1}, \eqref{sup_Ir_2}, and~\eqref{sup_Ir_3},
and recalling~\eqref{optimalcost} and~\eqref{normofR}, it follows that if $r\ge1$, then
\begin{align*}
&\quad|\ex^{\frac{\lambda}{2}\Bigcdot}v|_{W((r,\,r+1),\,H^2_{\diver}(\Omega,\,\R^3),\,L^2(\Omega,\,\R^3))}^2
=|w|_{W((r,\,r+1),\,H^2_{\diver}(\Omega,\,\R^3),\,L^2(\Omega,\,\R^3))}^2\\
&\le \overline C_{\left[|\hat u|_{\WW^{\rm st}},\,\lambda,\,\frac{1}{\lambda}\right]}
\left(\ex^{\lambda \lfloor r\rfloor}|v(\lfloor r\rfloor-1)|_{L^2_{\diver}(\Omega,\,\R^3)}^2
+\norm{(v_0^H,\,\kappa_0)}{H\times\NN^\perp}^2\right).
\end{align*}
Now,~\eqref{feedest.a=0} implies $|v(\lfloor r\rfloor-1)|_{L^2_{\diver}(\Omega,\,\R^3)}^2\le
\overline C_{\left[|\hat u|_{\WW^{\rm st}},\,\lambda,\,\frac{1}{\lambda}\right]}
\ex^{-\lambda(\lfloor r\rfloor-1)}\norm{(v_0^H,\,\kappa_0)}{H\times\NN^\perp}^2$, thus
\begin{align*}
|\ex^{\frac{\lambda}{2}\Bigcdot}v|_{W((r,\,r+1),\,H^2_{\diver}(\Omega,\,\R^3),\,L^2(\Omega,\,\R^3))}^2
\le \overline C_{\left[|\hat u|_{\WW^{\rm st}},\,\lambda,\,\frac{1}{\lambda}\right]}
\norm{(v_0^H,\,\kappa_0)}{H\times\NN^\perp}^2, \mbox{ for }r\ge1.
\end{align*}
From~\eqref{sup_Ir_1} we also have
\[
|\ex^{\frac{\lambda}{2}\Bigcdot}v|_{W((r,\,r+1),\,H^2_{\diver}(\Omega,\,\R^3),\,L^2(\Omega,\,\R^3))}^2
\le\overline C_{\left[|\hat u|_{\WW^{\rm st}},\,\lambda,\,\frac{1}{\lambda}\right]}
\left(|v_0|_{H^1_{\diver}(\Omega,\,\R^3)}^2+\norm{\kappa_0}{\NN^\perp}^2\right), \mbox{ for }r\in[0,\,1).
\]
Since~$(v_0^H,\,\kappa_0)$ satisfies~\eqref{st-ic-ext}, we have $\norm{\kappa_0}{\NN^\perp}
\le C_1\norm{\Xi\kappa_0}{H^\frac{1}{2}_{\rm av}(\Gamma,\,\R^3)}\le C_2\norm{v(0)}{H^1_{\diver}(\Omega,\,\R^3)}$ and
\[
|\ex^{\frac{\lambda}{2}\Bigcdot}v|_{W((r,\,r+1),\,H^2_{\diver}(\Omega,\,\R^3),\,L^2(\Omega,\,\R^3))}^2
\le \overline C_{\left[|\hat u|_{\WW^{\rm st}},\,\lambda,\,\frac{1}{\lambda}\right]}
\norm{v(0)}{H^1_{\diver}(\Omega,\,\R^3)}^2, \mbox{ for }r\ge0. 
\]
which ends the proof.
\end{proof}

Inspired by Theorem~\ref{T:feedback} and Lemma~\ref{L:reg.a}, we define the Banach space
\[
\ZZ^\lambda\coloneqq \left\{z\in L^2_{\rm loc}\left(\R_0,\,H^2_{\diver}(\Omega,\,\R^3)\right)
\,\Bigl|\,
\;|z|_{\ZZ^\lambda}<\infty
\right\}
\]
endowed with the norm
$|z|_{\ZZ^\lambda}\coloneqq \sup_{r\geq 0}\norm{\ex^{\frac{\lambda}{2}\Bigcdot}z}{W((r,\,r+1),\,H^2_{\diver}(\Omega,\,\R^3),\,
L^2(\Omega,\,\R^3))}$.

For given constant~$\rho>0$ and 
$(v_0^H,\,\kappa_0)$ satisfying~\eqref{st-ic-ext}, we define the subset
\[
\ZZ^\lambda_\rho\coloneqq \left\{z\in\ZZ^\lambda\mid\,z(0)=\FF_\aA^{-1}(v_0^H,\,Q^M_f\kappa_0)\mbox{ and }
|z|_{\ZZ^\lambda}^2\leq \rho|z(0)|_{H^1_{\diver}(\Omega,\,\R^3)}^2\right\},
\]
and the mapping $\Psi\colon \ZZ^\lambda_\rho\to\ZZ^\lambda_{\rm loc}$, $\bar z\mapsto z$, where~$(z,\,\kappa)$ solves
\begin{subequations}\label{sys-z=Bz}
\begin{align}
 \p_t z+\BB(\hat u)z
 -\nu\Delta z+\nabla p_{z,\,\bar z}&=-\Pi(\langle \bar z\cdot\nabla\rangle \bar z),& \p_t\kappa &=\KK_{\hat u}^{\lambda,\Bigcdot}(\Pi z,\,\kappa),\\
 \diver z &=0,& z\rest \Gamma &=\Xi\kappa, \\
 z(0)&=\FF_\aA^{-1}(v_0^H,\, Q^M_f\kappa_0),& \kappa(0)&=\kappa_0,
\end{align}
\end{subequations}

\begin{lemma}\label{L:contract}
Under the hypotheses of Theorem~\ref{T:ex.st-ct}, there exists $\rho>0$
such that the following property holds: for any $\gamma\in(0,\,1)$ one can find a
constant $\epsilon=\epsilon_\gamma>0$ such that, for any $(v_0^H,\,\kappa_0)$ satisfying~\eqref{st-ic-ext}
and~$|\FF_\aA^{-1}(v_0^H,\,Q^M_f\kappa_0)|_{H^1_{\diver}(\Omega,\,\R^3)}\le\epsilon$, the mapping~$\Psi$
takes the set~$\ZZ^\lambda_\rho$ into itself and satisfies the inequality
\begin{equation} \label{contraction}
|\Psi(\bar z_1)-\Psi(\bar z_2)|_{\ZZ^\lambda}\leq\gamma|\bar z_1-\bar z_2|_{\ZZ^\lambda}\quad
\mbox{for all}\quad \bar z_1,\,\bar z_2\in\ZZ^\lambda_\rho.
\end{equation}
\end{lemma}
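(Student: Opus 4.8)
The plan is to set this up as a classical fixed-point argument: show $\Psi$ is a self-map on $\ZZ^\lambda_\rho$ and a contraction (for suitably chosen $\rho$ and small $\epsilon$), which together with the later Theorem~\ref{T:ex.st-ct} will give the global solution to the nonlinear extended system. Throughout we use that system~\eqref{sys-z=Bz} is exactly the feedback system~\eqref{sys-v-ext-feed} with the extra forcing term $g=-\Pi(\langle\bar z\cdot\nabla\rangle\bar z)$, so that $(z,\kappa)$ is simply the solution provided by Theorem~\ref{T:feedback} superposed with the response to $g$; concretely, writing $w\coloneqq\ex^{\frac{\lambda}{2}\Bigcdot}z$, $w$ solves~\eqref{sys-vg} with $g=-\frac{\lambda}{2}z-\ex^{\frac{\lambda}{2}\Bigcdot}\Pi(\langle\bar z\cdot\nabla\rangle\bar z)$, $\eta=\ex^{\frac{\lambda}{2}\Bigcdot}\kappa$, $K=\Xi$, and we apply Theorem~\ref{T:exist-ssol-v} on each interval $(r,r+1)$ together with the smoothing Lemma~\ref{L:smooth-prop}, exactly as in the proof of Lemma~\ref{L:reg.a}.

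First I would estimate the nonlinearity. For $\bar z\in\ZZ^\lambda$ the key bound is a product estimate: using $H^2_{\diver}(\Omega,\R^3)\hookrightarrow W^{1,\infty}$ and the interpolation/trace embeddings available here, one gets $|\ex^{\frac{\lambda}{2}\Bigcdot}\Pi(\langle\bar z\cdot\nabla\rangle\bar z)|_{L^2((r,r+1),L^2(\Omega,\R^3))}^2\le\overline C\,|\ex^{\frac{\lambda}{2}\Bigcdot}\bar z|_{W((r,r+1),H^2_{\diver},L^2)}^2\,|\bar z|_{\ZZ^\lambda}^2\le\overline C\,|\bar z|_{\ZZ^\lambda}^4$, where we have used $\ex^{-\frac{\lambda}{2}\Bigcdot}\le1$ to put the weight on one factor only. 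Feeding this $g$ into Theorem~\ref{T:exist-ssol-v} on $(r,r+1)$ and then running the same telescoping/summation argument as in Lemma~\ref{L:reg.a} (using~\eqref{estTfeed}, Lemma~\ref{L:smooth-prop}, and $z(0)=\FF_\aA^{-1}(v_0^H,Q^M_f\kappa_0)$), we obtain $|z|_{\ZZ^\lambda}^2=|\Psi(\bar z)|_{\ZZ^\lambda}^2\le\overline C_{[|\hat u|_{\WW^{\rm st}},\lambda,\frac1\lambda]}\bigl(|z(0)|_{H^1_{\diver}(\Omega,\R^3)}^2+|\bar z|_{\ZZ^\lambda}^4\bigr)$. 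Choosing $\rho\coloneqq 2\overline C_{[|\hat u|_{\WW^{\rm st}},\lambda,\frac1\lambda]}$ and then $\epsilon$ small enough that $\overline C\rho^2\epsilon^2\le\frac12$, for $\bar z\in\ZZ^\lambda_\rho$ we have $|\bar z|_{\ZZ^\lambda}^4\le\rho^2|z(0)|_{H^1}^4\le\rho^2\epsilon^2|z(0)|_{H^1}^2$, hence $|\Psi(\bar z)|_{\ZZ^\lambda}^2\le\rho|z(0)|_{H^1}^2$; since also $\Psi(\bar z)(0)=z(0)=\FF_\aA^{-1}(v_0^H,Q^M_f\kappa_0)$ by construction, $\Psi$ maps $\ZZ^\lambda_\rho$ into itself.

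For the contraction, set $d\coloneqq\Psi(\bar z_1)-\Psi(\bar z_2)$ with associated $\kappa$-difference $d^\kappa$; then $(d,d^\kappa)$ solves the feedback system~\eqref{sys-v-ext-feed} with zero initial data and forcing $g=-\Pi(\langle\bar z_1\cdot\nabla\rangle\bar z_1-\langle\bar z_2\cdot\nabla\rangle\bar z_2)=-\Pi(\langle(\bar z_1-\bar z_2)\cdot\nabla\rangle\bar z_1+\langle\bar z_2\cdot\nabla\rangle(\bar z_1-\bar z_2))$. The same machinery (Theorem~\ref{T:exist-ssol-v}, Lemma~\ref{L:smooth-prop}, the uniqueness/estimate~\eqref{estTfeed} for the feedback system, and the fact that the feedback operator norm is bounded by~\eqref{Opnorm}) gives $|d|_{\ZZ^\lambda}^2\le\overline C\,|g|^2\le\overline C\,\bigl(|\bar z_1|_{\ZZ^\lambda}^2+|\bar z_2|_{\ZZ^\lambda}^2\bigr)|\bar z_1-\bar z_2|_{\ZZ^\lambda}^2\le\overline C\,\rho^2\epsilon^2|\bar z_1-\bar z_2|_{\ZZ^\lambda}^2$ for $\bar z_1,\bar z_2\in\ZZ^\lambda_\rho$, using again $|v(0)|_{H^1_{\diver}}\le\epsilon$. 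Shrinking $\epsilon=\epsilon_\gamma$ further so that $\overline C\rho^2\epsilon^2\le\gamma^2$ yields~\eqref{contraction}. One technical point to be handled with care: in passing from the feedback-system estimate to an estimate involving $|d^\kappa|$ one must re-run the Gronwall-type absorption of the feedback term used in Step~\ref{stdecay} of the proof of Theorem~\ref{T:feedback}, so that the $H^1$-norm of $d^\kappa$ is controlled by $|g|$ alone; this is routine but must be stated.

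The main obstacle is the nonlinear product estimate in the weighted norm $|\cdot|_{\ZZ^\lambda}$: one needs $\langle\bar z\cdot\nabla\rangle\bar z\in L^2_{\rm loc}(\R_0,L^2(\Omega,\R^3))$ with a quantitative bound in terms of $|\bar z|_{\ZZ^\lambda}$, and one must be careful about how the exponential weight $\ex^{\frac{\lambda}{2}\Bigcdot}$ distributes across the quadratic term — the correct bookkeeping is to keep the weight on exactly one factor and use $\ex^{-\frac{\lambda}{2}t}\le1$ on the other, so that no loss of decay occurs and the right-hand side scales like $|\bar z|_{\ZZ^\lambda}^2\cdot|\bar z|_{\ZZ^\lambda}^2$ (resp.\ $(|\bar z_1|_{\ZZ^\lambda}+|\bar z_2|_{\ZZ^\lambda})|\bar z_1-\bar z_2|_{\ZZ^\lambda}$ for the difference). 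The embedding $H^2_{\diver}(\Omega,\R^3)\hookrightarrow L^\infty(\Omega,\R^3)\cap W^{1,6}(\Omega,\R^9)$ in three dimensions makes this work, and the $W((r,r+1),H^2,L^2)$-control on $\ex^{\frac{\lambda}{2}\Bigcdot}\bar z$ (which by interpolation also controls lower-order norms uniformly in $r$) supplies the second factor.
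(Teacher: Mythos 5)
Your overall three-step strategy (a preliminary estimate for the feedback system with nonzero forcing, then self-map, then contraction on $\ZZ^\lambda_\rho$) is exactly the paper's, and the target inequalities $|\Psi(\bar z)|_{\ZZ^\lambda}^2\le \overline C(|z(0)|_{H^1_{\diver}(\Omega,\,\R^3)}^2+|\bar z|_{\ZZ^\lambda}^4)$, the choice $\rho=2\overline C$, and the shrinkage of $\epsilon_\gamma$ match what the paper proves.

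The genuine gap is in the weight bookkeeping on the forcing. You claim it suffices to control $|\ex^{\frac{\lambda}{2}\Bigcdot}\Pi(\langle\bar z\cdot\nabla\rangle\bar z)|_{L^2((r,\,r+1),\,L^2)}$, explicitly asserting that the "correct bookkeeping is to keep the weight on exactly one factor and use $\ex^{-\frac{\lambda}{2}t}\le1$ on the other, so that no loss of decay occurs." That is precisely where the argument breaks. The preliminary estimate rests on the variation-of-constants formula $(z,\,\kappa)(t)=\sS_{0,\,t}^0(v_0^H,\,\kappa_0)+\int_0^t\sS_{s,\,t}^0(f(s),\,0)\,\ed s$, and since the homogeneous propagator $\sS_{s,\,t}^0$ decays only like $\ex^{-\frac{\lambda}{2}(t-s)}$, bounding $\ex^{\frac{\lambda}{2}t}|z(t)|$ forces one to bound $\int_0^t\ex^{\frac{\lambda}{2}s}|f(s)|\,\ed s$ uniformly in $t$. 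A uniform-in-$r$ bound on $|\ex^{\frac{\lambda}{2}\Bigcdot}f|_{L^2((r,\,r+1),\,L^2)}$ only gives a bound on each piece $\int_k^{k+1}\ex^{\frac{\lambda}{2}s}|f(s)|\,\ed s$ that does not decay in $k$, so the sum grows linearly and $|\Psi(\bar z)|_{\ZZ^\lambda}$ cannot be shown finite this way. The quadratic structure supplies exactly the extra decay you are discarding: splitting the weight evenly, $\ex^{\lambda s}|\langle\bar z\cdot\nabla\rangle\bar z|_{L^2}\le C(\ex^{\frac{\lambda}{2}s}|\bar z|_{H^2})(\ex^{\frac{\lambda}{2}s}|\bar z|_{H^1})$, so that the quantity to control is $\sup_{k\in\N}\int_k^{k+1}\ex^{2\lambda s}|f(s)|_H^2\,\ed s\le\overline C|\bar z|_{\ZZ^\lambda}^4$. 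This is exactly the right-hand side of the paper's preliminary estimate~\eqref{estzf}, and a Cauchy--Schwarz step $\int_k^{k+1}\ex^{\frac{\lambda}{2}s}|f|\,\ed s\le(\int_k^{k+1}\ex^{-\lambda s}\,\ed s)^{1/2}(\int_k^{k+1}\ex^{2\lambda s}|f|^2\,\ed s)^{1/2}$ produces the geometric factor $\ex^{-\frac{\lambda}{2}k}$ that makes the series converge. Invoking "the same telescoping argument as in Lemma~\ref{L:reg.a}" does not cover this: that lemma is proved for $f=0$ (where the feedback optimality alone yields the decay), whereas the forced case needs the variation-of-constants estimate above with the stronger weight. (A small arithmetic slip as well: for $\bar z_1,\,\bar z_2\in\ZZ^\lambda_\rho$ one has $|\bar z_1|_{\ZZ^\lambda}^2+|\bar z_2|_{\ZZ^\lambda}^2\le 2\rho|z(0)|_{H^1_{\diver}(\Omega,\,\R^3)}^2\le 2\rho\epsilon^2$, not $\rho^2\epsilon^2$; harmless since $\epsilon_\gamma$ is shrunk anyway.)
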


\begin{proof} We divide the proof into~\ref{stcontr} main steps:
%
\begin{enumerate}[noitemsep,topsep=5pt,parsep=5pt,partopsep=0pt,leftmargin=0em]%
\renewcommand{\theenumi}{{\sf\arabic{enumi}}} 
 \renewcommand{\labelenumi}{} 
\item \textcircled{\bf s}~Step~\theenumi:\label{stprel} {\em a preliminary estimate.} Consider the system
\begin{subequations}\label{sys-z=f}
\begin{align}
 \p_t z+\BB(\hat u)z
 -\nu\Delta z+\nabla p_z&=f,& \p_t\kappa &=\KK_{\hat u}^{\lambda,\Bigcdot}(\Pi z,\,\kappa),\\
 \diver z &=0,& z\rest \Gamma &=\Xi\kappa, \\
 z(0)&=\FF_\aA^{-1}(v_0^H,\, Q^M_f\kappa_0),& \kappa(0)&=\kappa_0,
\end{align}
\end{subequations}
where $f\in L_{\rm loc}^2(\R_0,\,H)$. 
If $(z,\,\kappa)$ is the solution of system~\eqref{sys-z=f} with $f=0$,
by Lemma~\ref{L:reg.a}
\begin{equation}\label{estz0}
\sup_{r\geq 0}|\ex^{\frac{\lambda}{2}\Bigcdot}z|_{W((r,\,r+1),\,H^2_{\diver}(\Omega,\,\R^3),\,
L^2(\Omega,\,\R^3))}^2
\le \overline C_{\left[|\hat u|_{\WW^{\rm st}},\,\lambda,\,\frac{1}{\lambda}\right]}
|z(0)|_{H^1_{\diver}(\Omega,\,\R^3)}^2.
\end{equation}
We are going to derive a version of this estimate for suitable nonzero $f$;
for that we denote by $\sS_{0,\,t}^f(v_0^H,\,\kappa_0)$ the solution~$(z,\,\kappa)$
of~\eqref{sys-z=f}. In the case $f=0$, the operator $\sS_{0,\,t}^0$ is linear;
by the Duhamel formula we can write
\begin{equation} \label{duhamel}
(z,\,\kappa)(t)=\sS_{0,\,t}^f(v_0^H,\, \kappa_0)=\sS_{0,\,t}^0 (v_0^H,\, \kappa_0)+\int_0^t\sS_{s,\,t}^0 (f(s),\,0)\,\ed s
\end{equation}
where $\sS_{s,\,t}^f(v_s^H,\,\kappa_s)$ denotes the solution of the system~\eqref{sys-z=f}, with the initial time moved to $t=s$,
and the initial condition $(v_s^H,\,\kappa_s)$.
On the other hand, from~\eqref{estTfeed}, it follows in particular that
$|\ex^{\frac{\lambda}{2}(t-s)}\sS_{s,\,t}^0w|_{L^2_{\diver}(\Omega,\,\R^3)\times\NN^\perp}^2
\le\overline C_{\left[|\hat u|_{\WW^{\rm st}},\,\lambda,\,\frac{1}{\lambda}\right]}|w|_{H\times\NN^\perp}^2$;
then
\begin{align}
&\quad\,|(z,\,\kappa)(t)|_{L^2_{\diver}(\Omega,\,\R^3)\times\NN^\perp}^2
\le2\left|\sS_{0,\,t}^0 (v_0^H,\, \kappa_0)\right|^2+2\left|\int_0^t\sS_{s,\,t}^0 (f(s),\,0)\,\ed s\right|^2\label{zHf}\\
&\leq \overline C_{\left[|\hat u|_{\WW^{\rm st}},\,\lambda,\,\frac{1}{\lambda}\right]}
\ex^{-\lambda t}\left(|(v_0^H,\,\kappa_0)|_{H\times\NN^\perp}^2
+\left(\int_0^t\ex^{\frac{\lambda}{2}s}
\left|(f(s),\,0)\right|_{H\times\NN^\perp}\,\ed s\right)^{\!\!2}\;\right)\notag
\end{align}
Now we can find, again with $\lfloor t\rfloor\in\N$ as in~\eqref{floor},
\begin{align*}
&\quad\int_0^{t}\ex^{\frac{\lambda}{2}s}|f(s)|_{H}\,\ed s\leq
\sum_{k=0}^{\lfloor t\rfloor} \int_k^{k+1}\ex^{-\frac{\lambda}{2}s}\ex^{\lambda s}|f(s)|_{H}\,\ed s\\
&\le \sum_{k=0}^{\lfloor t\rfloor} \left(\int_k^{k+1}\ex^{-\lambda s}\,\ed s\right)^{\frac{1}{2}}
\left(\int_k^{k+1}\ex^{2\lambda s}|f(s)|_{H}^2\,\ed s\right)^{\frac{1}{2}}\\
&\le \sup_{\begin{subarray}{l}j\in\N\\0\le j\le \lfloor t\rfloor\end{subarray}}
\left(\int_j^{j+1}\ex^{2\lambda s}|f(s)|_{H}^2\,\ed s\right)^{\frac{1}{2}}
\sum_{k=0}^{\lfloor t\rfloor} \left(\int_k^{k+1}\ex^{-\lambda s}\,\ed s\right)^{\frac{1}{2}}
\end{align*}
and for the sum of the series, we can find
$\sum_{k=0}^{\lfloor t\rfloor} \left(\int_k^{k+1}\ex^{-\lambda s}\,\ed s\right)^{\frac{1}{2}}\le
\sum_{k=0}^\infty\left(\int_k^{k+1}\ex^{-\lambda s}\,\ed s\right)^{\frac{1}{2}}
=\left(-\textstyle\frac{1}{\lambda}(\ex^{-\lambda}-1)\right)^{\frac{1}{2}}\sum_{k=0}^\infty
\ex^{-\frac{\lambda}{2} k}
= \frac{(1-\ex^{-\lambda})^{\frac{1}{2}}}{\lambda^{\frac{1}{2}}(  1-\ex^{-\frac{\lambda}{2}})}
=\overline C_{\left[\frac{1}{\lambda}\right]}.$
Hence we obtain the inequality
$\int_0^{t}\ex^{\frac{\lambda}{2}s}|f(s)|_{H}\,\ed s
\le \overline C_{\left[\frac{1}{\lambda}\right]}
\sup_{\begin{subarray}{l}j\in\N\\0\le j\le \lfloor t\rfloor\end{subarray}}
\left(\int_j^{j+1}\ex^{2\lambda s}|f(s)|_{H}^2\,\ed s\right)^{\frac{1}{2}}$
and, recalling~\eqref{zHf},
\begin{equation} \label{supzHf}
\ex^{\lambda t}|(z,\,\kappa)(t)|_{\aA_{\Xi_1}\times\NN^\perp}^2 \le
\overline C_{\left[|\hat u|_{\WW^{\rm st}},\,\lambda,\,\frac{1}{\lambda}\right]}
\left(|(v_0^H,\,\kappa_0)|_{H\times\NN^\perp}^2
+\sup_{\begin{subarray}{l}j\in\N\\0\le j\le \lfloor t\rfloor\end{subarray}}
\int_j^{j+1}\ex^{2\lambda s}|f(s)|_{\aA_{\Xi_1}}^2\,\ed s\right)
\end{equation}
for all $t\ge0$. Now we use Lemma~\ref{L:smooth-prop} to obtain
\begin{align}
\quad|z(r+1)|_{H^1_{\diver}(\Omega,\,\R^3)}^2&\le\overline C_{\left[|\hat u|_{\WW^{\rm st}}\right]}
\left(|z(r)|_{\aA_{\Xi_1}}^2 +|f|_{L^2((r,\,r+1),\,{H})}^2\right)\label{zH1f1}\\
&\quad+\overline C_{\left[|\hat u|_{\WW^{\rm st}}\right]}\left|\kappa(r)
 +\int_r^{\Bigcdot}\KK_{\hat u}^{\lambda,\,s}(\Pi z(s),\,\kappa(s))\,\ed s\right|_{H^1((r,\,r+1),\,\NN^\bot)}^2\hspace*{-.2em}.\notag
\end{align}

Recalling the bound in~\eqref{Opnorm}, we find
\begin{align}
\quad|z(r+1)|_{H^1_{\diver}(\Omega,\,\R^3)}^2&\le\overline C_{\left[|\hat u|_{\WW^{\rm st}}\right]}
\left(|z(r)|_{\aA_{\Xi_1}}^2 +|f|_{L^2((r,\,r+1),\,{H})}^2\right)\notag\\
&\quad+\overline C_{\left[|\hat u|_{\WW^{\rm st}},\,\lambda,\,\frac{1}{\lambda}\right]}
 \sup_{t\in(r,\,r+1)}|(\Pi z,\,\kappa)(t)|_{H\times\NN^\perp}^2.\label{zsupA1}
 \end{align}
From~\eqref{supzHf}, and~\eqref{zsupA1}, we obtain
\begin{align}
&\quad|z(r+1)|_{H^1_{\diver}(\Omega,\,\R^3)}^2
\le \overline C_{\left[|\hat u|_{\WW^{\rm st}},\,\lambda,\,\frac{1}{\lambda}\right]}
\left(\sup_{t\in(r,\,r+1)}|(z,\,\kappa)(t)|_{\aA_{\Xi_1}\times\NN^\perp}^2 +|f|_{L^2((r,\,r+1),\,H)}^2\right)\label{zH1f2}\\
&\le\overline C_{\left[|\hat u|_{\WW^{\rm st}},\,\lambda,\,\frac{1}{\lambda}\right]}
\ex^{-\lambda r}\hspace*{-.3em}\left(\hspace*{-.2em}|(v_0^H,\,\kappa_0)|_{H\times\NN^\perp}^2
+\sup_{\begin{subarray}{l}k\in\N\\0\le k\le \lfloor r+1\rfloor\end{subarray}}
\int_k^{k+1}\ex^{2\lambda s}|f(s)|_{H}^2\,\ed s\hspace*{-.2em}\right)\!;
\notag
\end{align}
because the inequalities $|f|_{L^2((r,\,r+1),\,H)}^2\le \ex^{-\lambda r}\int_r^{r+1}\ex^{\lambda s}|f(s)|_{H}^2\,\ed s\le
\ex^{-\lambda r}\int_{\lfloor r\rfloor}^{\lfloor r+1\rfloor+1}\ex^{2\lambda s}|f(s)|_{H}^2\,\ed s
\le 2\ex^{-\lambda r}\sup_{\begin{subarray}{l}k\in\N\\0\le k\le \lfloor r+1\rfloor\end{subarray}}
\int_k^{k+1}\ex^{2\lambda s}|f(s)|_{H}^2\,\ed s$ do hold true.

For $t\in(0,\,1)$, from Theorem~\ref{T:exist-ssol-v}, \eqref{Opnorm}, and~\eqref{supzHf}  we can also obtain
\begin{align}
\quad|z(t)|_{H^1_{\diver}(\Omega,\,\R^3)}^2&\le\overline C_{\left[|\hat u|_{\WW^{\rm st}}\right]}
\left(|z(0)|_{H^1_{\diver}(\Omega,\,\R^3)}^2 +|f|_{L^2((0,\,1),\,H)}^2\right)\label{zH1f3}\\
&\quad+\overline C_{\left[|\hat u|_{\WW^{\rm st}}\right]}\left|\kappa(0)
 +\int_0^{\Bigcdot}\KK_{\hat u}^{\lambda,\,s}(\Pi z(s),\,\kappa(s))\,\ed s\right|_{H^1((0,\,1),\,\NN^\bot)}^2\notag\\
&\le\overline C_{\left[|\hat u|_{\WW^{\rm st}}\right]}
\left(|z(0)|_{H^1_{\diver}(\Omega,\,\R^3)}^2 +|\kappa(0)|_{\NN^\perp}^2+|f|_{L^2((0,\,1),\,{H})}^2\right)\notag
\end{align}
which, together with~\eqref{zH1f2} give for all $t\geq 0$:
\begin{align*}
|z(t)|_{H^1_{\diver}(\Omega,\,\R^3)}^2
&\le\overline C_{\left[|\hat u|_{\WW^{\rm st}},\,\lambda,\,\frac{1}{\lambda}\right]}
\ex^{-\lambda t}\left(|z(0)|_{H^1_{\diver}(\Omega,\,\R^3)}^2
+\sup_{\begin{subarray}{l}k\in\N\\0\le k\le \lfloor t\rfloor\end{subarray}}\int_k^{k+1}\ex^{2\lambda s}|f(s)|_{H}^2\,\ed s\right);
\end{align*}
where we have used $0<\ex^{\lambda r}$, $\ex^{-\lambda r}=\ex^{\lambda}\ex^{-\lambda(r+1)}$, and $1<\ex^{\lambda}\ex^{-\lambda t}$ for
$r\geq0$ and $t\in(0,\,1)$ and also the fact that $\norm{\kappa(0)}{\NN^\perp}
\le C\norm{v(0)}{H^1_{\diver}(\Omega,\,\R^3)}$ because~$(v_0^H,\,\kappa_0)$ satisfies~\eqref{st-ic-ext}. 

Using again Theorem~\ref{T:exist-ssol-v} and proceeding as above, and using~$\norm{\kappa(t)}{\NN^\perp}
\le C\norm{v(t)}{H^1_{\diver}(\Omega,\,\R^3)}$, we can now derive
\begin{align*}
&\quad|z|_{W((r,\,r+1),\,H^2_{\diver}(\Omega,\,\R^3),\,L^2(\Omega,\,\R^3)}^2
\le\overline C_{\left[|\hat u|_{\WW^{\rm st}}\right]}
\left(|z(r)|_{H^1_{\diver}(\Omega,\,\R^3)}^2+|f|_{L^2((r,\,r+1),\,H)}^2\right)\\
&\hspace*{10em}+\overline C_{\left[|\hat u|_{\WW^{\rm st}}\right]}\left|\kappa(r)
 +\int_r^{\Bigcdot}\KK_{\hat u}^{\lambda,\,s}(\Pi z(s),\,\kappa(s))\,\ed s\right|_{H^1((r,\,r+1),\,\NN^\bot)}^2\\
&\le\overline C_{\left[|\hat u|_{\WW^{\rm st}},\,\lambda,\,\frac{1}{\lambda}\right]}
\ex^{-\lambda r}\left(|z(0)|_{H^1_{\diver}(\Omega,\,\R^3)}^2+\sup_{\begin{subarray}{l}k\in\N\\0\le k\le \lfloor r\rfloor\end{subarray}}\int_k^{k+1}\ex^{2\lambda s}|f(s)|_{H}^2\,\ed s\right);
\end{align*}
which implies, since $\p_t(\ex^{\frac{\lambda}{2} \Bigcdot}z)
=\frac{\lambda}{2}\ex^{\frac{\lambda}{2} \Bigcdot}z+\ex^{\frac{\lambda}{2} \Bigcdot}\p_tz$, that
\begin{align}
&\quad\sup_{r\ge0}|\ex^{\frac{\lambda}{2} \Bigcdot}z(t)|_{W((r,\,r+1),\,H^2_{\diver}(\Omega,\,\R^3),\,L^2(\Omega,\,\R^3)}^2
\label{estzf}\\
&\le\overline C_{\left[|\hat u|_{\WW^{\rm st}},\,\lambda,\,\frac{1}{\lambda}\right]}
\left(|z(0)|_{H^1_{\diver}(\Omega,\,\R^3)}^2
+\sup_{\begin{subarray}{l}k\in\N\end{subarray}}\int_k^{k+1}\ex^{2\lambda s}|f(s)|_{H}^2\,\ed s\right)\notag
\end{align}
which is a version of~\eqref{estz0} for nonzero $f$. 

%
\item \textcircled{\bf s}~Step~\theenumi:\label{stinto} {\em $\Psi$ maps $\ZZ^\lambda_\rho$ into itself,
if $|z(0)|_{H^1_{\diver}(\Omega,\,\R^3)}$ is small.}
We will replace $f$ by $-\Pi(\langle \bar z\cdot\nabla\rangle \bar z)$ in~\eqref{estzf}. First we recall some standard
estimates for the nonlinear term: from the Agmon inequality (see~\cite[Chapter II, Section 1.4]{Temam97}),
$|u|_{L^\infty(\Omega,\,\R^3)}
\le C_1|u|_{H^1(\Omega,\,\R^3)}^{\frac{1}{2}}|u|_{H^2(\Omega,\,\R^3)}^{\frac{1}{2}}
\le C_2|u|_{H^2(\Omega,\,\R^3)}$, we can obtain
$|\langle \bar z\cdot\nabla\rangle \bar w|_{L^2(\Omega,\,\R^3)}\le
C|\bar z|_{H^2_{\diver}(\Omega,\,\R^3)}|\bar w|_{H^1_{\diver}(\Omega,\,\R^3)}$
and
\begin{align*}
\sup_{\begin{subarray}{l}k\in\N\\0\le k\le \lfloor t\rfloor\end{subarray}}\hspace*{-.2em}\int_k^{k+1}\hspace*{-.5em}\ex^{2\lambda s}
|\langle \bar z\cdot\nabla\rangle \bar z|_{L^2(\Omega,\,\R^3)}^2\,\ed s
&\le\hspace*{-.5em}
\sup_{\begin{subarray}{l}s\in [0,\,\lfloor t\rfloor+1]\end{subarray}}\hspace*{-.2em}
|\ex^{\frac{\lambda}{2} s}\bar z(s)|_{H^1_{\diver}(\Omega,\,\R^3)}^2
\sup_{\begin{subarray}{l}k\in\N\\0\le k\le \lfloor t\rfloor\end{subarray}}\hspace*{-.2em}\int_k^{k+1}\hspace*{-.5em}
|\ex^{\frac{\lambda}{2} s}\bar z|_{H^2_{\diver}(\Omega,\,\R^3)}^2\,\ed s.
\end{align*}
Thus,
inequality~\eqref{estzf} with $f=-\Pi(\langle \bar z\cdot\nabla\rangle \bar z)$ gives us
\begin{equation} \label{Psiz1}
|\Psi(\bar z)|_{\ZZ^\lambda}^2
\le\overline C_{\left[|\hat u|_{\WW^{\rm st}},\,\lambda,\,\frac{1}{\lambda}\right]}
\left(|z(0)|_{H^1_{\diver}(\Omega,\,\R^3)}^2
+|\bar z|_{\ZZ^\lambda}^4\right).
\end{equation}
If $\bar z\in \ZZ^\lambda_\rho$, then
\begin{equation} \label{Psiz2}
|\Psi(\bar z)|_{\ZZ^\lambda}^2\le \overline C_{\left[|\hat u|_{\WW^{\rm st}},\,\lambda,\,\frac{1}{\lambda}\right]}
(1+\rho^2|z(0)|_{H^1_{\diver}(\Omega,\,\R^3)}^2)|z(0)|_{H^1_{\diver}(\Omega,\,\R^3)}^2
\end{equation}
and if we set~$\rho=2\overline C$ and $\epsilon<\frac{1}{\rho}$, where
$\overline C=\overline C_{\left[|\hat u|_{\WW^{\rm st}},\,\lambda,\,\frac{1}{\lambda}\right]}$ is the
constant in~\eqref{Psiz2}, then we obtain
$\overline C(1+\rho^2\epsilon^2)\le\rho$ if $|z(0)|_{H^1_{\diver}(\Omega,\,\R^3)}\le\epsilon$,
which means that~$\Psi(\bar z)\in\ZZ^\lambda_\rho$.

%
\item \textcircled{\bf s}~Step~\theenumi:\label{stcontr} {\em $\Psi$ is a contraction, if $|z(0)|_{H^1_{\diver}(\Omega,\,\R^3)}$ is smaller.}
It remains to prove~\eqref{contraction}.
Let us take two functions $\bar z_1, \bar z_2\in \ZZ^\lambda_\rho$ and let~$(\Psi(\bar z_1),\,\kappa_1)$ and~$(\Psi(\bar z_2),\,\kappa_2)$ 
be the corresponding solutions for~\eqref{sys-z=f}. Set $e=\bar z_1-\bar z_2$ and
$(d^z,\,d^\kappa)=(\Psi(\bar z_1)- \Psi(\bar z_2),\,\kappa_1-\kappa_2)$. Then~$(d^z,\,d^\kappa)$ solves~\eqref{sys-z=f} with $(d^z,\,d^\kappa)(0)=(0,\,0)$ and
$f=\Pi(\langle \bar z_2\cdot\nabla\rangle \bar z_2)-\Pi(\langle \bar z_1\cdot\nabla\rangle \bar z_1)$.
Therefore, by inequality~\eqref{estzf}, we have
\begin{equation} \label{dPsi1}
|\Psi(\bar z_1)-\Psi(\bar z_2)|_{\ZZ^\lambda}^2\le
\overline C_{\left[|\hat u|_{\WW^{\rm st}},\,\lambda,\,\frac{1}{\lambda}\right]}
\sup_{t\ge0}\int_t^{t+1}\ex^{2\lambda s}|\langle \bar z_2\cdot\nabla\rangle \bar z_2
-\langle \bar z_1\cdot\nabla\rangle \bar z_1|_{L^2(\Omega,\,\R^3)}^2\ed s.
\end{equation}
Straightforward computations give us
\begin{align*}
&\quad|\langle \bar z_2\cdot\nabla\rangle \bar z_2
-\langle \bar z_1\cdot\nabla\rangle \bar z_1|_{L^2(\Omega,\,\R^3)}^2
=\left|-\langle e\cdot\nabla\rangle \bar z_2
-\langle \bar z_1\cdot\nabla\rangle e\right|_{L^2(\Omega,\,\R^3)}^2\\
&\le C\left(|e|_{H^2_{\diver}(\Omega,\,\R^3)}^2|\bar z_2|_{H^1_{\diver}(\Omega,\,\R^3)}^2
+|\bar z_1|_{H^2_{\diver}(\Omega,\,\R^3)}^2|e|_{H^1_{\diver}(\Omega,\,\R^3)}^2\right)^2
\end{align*}
and, from~\eqref{dPsi1}, it follows $|\Psi(\bar z_1)-\Psi(\bar z_2)|_{\ZZ^\lambda}^2
\le \overline C_{\left[|\hat u|_{\WW^{\rm st}},\,\lambda,\,\frac{1}{\lambda}\right]}
\bigl(|\bar z_1|_{\ZZ^\lambda}^2+|\bar z_2|_{\ZZ^\lambda}^2\bigr) |e|_{\ZZ^\lambda}^2$, that is,
\begin{align}
 |\Psi(\bar z_1)-\Psi(\bar z_2)|_{\ZZ^\lambda}^2
\le \overline C_{\left[|\hat u|_{\WW^{\rm st}},\,\lambda,\,\frac{1}{\lambda}\right]}
2\rho |z(0)|_{H^1_{\diver}(\Omega,\,\R^3)}^2|\bar z_1-\bar z_2|_{\ZZ^\lambda}^2\label{contra-f}
\end{align}
Choosing $\epsilon>0$ (smaller than the one chosen in Step~\ref{stinto} and) such that $2\overline C_2
\rho \epsilon^2\le\gamma^2$, where $\overline C_2
=\overline C_{\left[|\hat u|_{\WW^{\rm st}},\,\lambda,\,\frac{1}{\lambda}\right]}$ is
the constant in~\eqref{contra-f}, we see that if $|z(0)|_{H^1_{\diver}(\Omega,\,\R^3)}\le\epsilon$,
then~\eqref{contraction} holds.
\end{enumerate}
The proof of Lemma~\ref{L:contract} is complete.
\end{proof}

\begin{proof}[Proof of Theorem~\ref{T:ex.st-ct}]
Form Lemma~\ref{L:contract} and the contraction
mapping principle (cf.~\cite[Chapter~10, Theorem~6.1]{Smith83}) it follows that if
$v(0)=\FF_\aA^{-1}(v_0^H,\,Q^M_f\kappa_0)\in H^1_{\diver}(\Omega,\,\R^3)$ satysfies~\eqref{st-ic-ext} and is sufficiently small,
$|v(0)|_{H^1_{\diver}(\Omega,\,\R^3)}<\epsilon$, then there exists a unique fixed point $v\in
\ZZ^\lambda_\rho$ for~$\Psi$. It follows from the
definitions of~$\Psi$ and~$\ZZ^\lambda_\rho$ that together with a suitable function~$\kappa$, the pair~$(v,\,\kappa)$
solves the system~\eqref{sys-z=Bz}, with $\bar z=v$.

Since we can write $\langle v\cdot\nabla\rangle v=\Pi(\langle v\cdot\nabla\rangle v)+\nabla p_{\breve v}$,
for a suitable function $p_{\breve v}\in H^1(\Omega,\,\R)$, it follows that if we set
$p_v\coloneqq p_{v,\,v}+p_{\breve v}$ then we can conclude that~$(v,\,\kappa)$ solves~\eqref{sys-vnonlin-ext}.

From~\eqref{Psiz2}, taking~$\bar z=v=z$, we find
$
|v|_{\ZZ^\lambda}^2\le \overline C_{\left[|\hat u|_{\WW^{\rm st}},\,\lambda,\,\frac{1}{\lambda}\right]}
(1+\rho^2\epsilon^2)|z(0)|_{H^1_{\diver}(\Omega,\,\R^3)}^2,
$
which implies~\eqref{expH1.main}.

It remains to prove the uniqueness of~$(v,\,\kappa)$ in the space
\[
\widehat\ZZ\coloneqq W_{\rm loc}(\R_0,\,H^2_{\diver}(\Omega,\,\R^3),\,L^2(\Omega,\,\R^3))\times
H^1_{\rm loc}(\R_0,\,\NN^\perp)\supset\ZZ^\lambda.
\]
Let $(\bar v,\,\bar\kappa)\in\widehat\ZZ$ be another solution for~\eqref{sys-vnonlin-ext}, and set
$(z,\,\theta)\coloneqq(\bar v-v,\,\bar\kappa-\kappa)$.
Then $z$ solves~\eqref{sys-vnonlin} with $\hat u+v$ in the place of $\hat u$, $z(0)=0$
and~$\zeta=\Xi\theta=\Xi\int_0^{\Bigcdot}\KK_{\hat u}^{\lambda,s}(\Pi z(s),\,Q^M_f\theta(s))\,\ed s=\Xi\int_0^{\Bigcdot}\KK_{\hat u}^{\lambda,s}\FF_\aA z(s)\,\ed s$.

We can extend a given function
$g\in H^{\frac{3}{2}}_{\rm av}(\Gamma,\,\R^3)$ to the solution
$\widehat F g\in H^{2}_{\diver}(\Omega,\,\R^3)$ of the Stokes
system
\begin{equation}\label{sys-stokes}
-\Delta\widehat F g+\nabla p_g=0,\,\diver\widehat Fg=0,\,\widehat Fg\rest\Gamma=g 
\end{equation}
and the mapping $g\mapsto\widehat F g$ is continuous (cf.~\cite[Chapter~1, Proposition~2.3]{Temam01}).
Then, we can define the extension
$\widehat F\colon \Xi H^1_{\rm loc}(\R_0,\,\NN^\perp)\to H^1_{\rm loc}(\R_0,\,H^2_{\diver}(\Omega,\,\R^3))$ by
\[
\widehat F\Xi\kappa(t)\coloneqq\sum_{i=1}^M\kappa_i(t)\widehat F\chi\E_0^\OO P_{\chi^\bot}^\OO \pi_i\nnn
+\kappa_{M+i}(t)\widehat F\chi\E_0^\OO \tau_i.
\]
Notice that $\widehat F$ is injective because $\widehat Fg=0$ implies $g=\widehat Fg\rest\Gamma=0$.
Observe also that, since $\widehat F\colon \Xi\NN^\perp$ is finite-dimensional, we have
$\norm{\widehat F\Xi\kappa(t)}{H^2_{\diver}(\Omega,\,\R^3)}\le C\norm{\widehat F\Xi\kappa(t)}{\aA_{\Xi_1}}.$ 

Moreover we can see that $y\coloneqq z-\widehat F\Xi\theta$ solves system~\eqref{sys-y-nonl} with $k=\hat u+v+\widehat F\Xi\theta$, 
$y(0)=0$, and $f=\p_t\widehat F\Xi\theta+\langle \widehat F\Xi\theta\cdot\nabla\rangle \widehat F\Xi\theta
-\nu\Delta \widehat F\Xi\theta+\BB(\hat u+v)\widehat F\Xi\theta$.
 Hence, by standard arguments and~$\frac{\ed}{\ed t}\norm{(y,\,\widehat F\Xi\theta)}{H\times\aA_{\Xi_1}}^2
=2(\p_ty,\,y)_H+2(\widehat F\Xi\p_t\theta,\,\widehat F\Xi\theta)_{\aA_{\Xi_1}}$ we can find
\begin{align*}
&\quad\,\frac{\ed}{\ed t}\norm{(y,\,\widehat F\Xi\theta)}{H\times\aA_{\Xi_1}}^2\\
&\le C_1\norm{\hat u+v+\widehat F\Xi\theta}{L^\infty(\Omega,\,\R^3)}^2\norm{y}{H}^2 +C_1\norm{-f+\p_t\widehat F\Xi\theta}{V'}^2
+2(\widehat F\Xi\p_t\theta,\,-y+\widehat F\Xi\theta)_{\aA_{\Xi_1}}
\end{align*}
and from
\begin{align*}
\norm{f-\p_t\widehat F\Xi\theta}{V'}^2
&\le C_2\norm{\widehat F\Xi\theta}{\aA_{\Xi_1}}^4+C_2\norm{\widehat F\Xi\theta}{\aA_{\Xi_1}}^2+C_2\norm{\hat u+v}{L^\infty(\Omega,\,\R^3)}^2\norm{\widehat F\Xi\theta}{\aA_{\Xi_1}}^2,\\
(\widehat F\Xi\p_t\theta,\,-y+\widehat F\Xi\theta)_{\aA_{\Xi_1}}
&=(\widehat F\Xi\KK_{\hat u}^{\lambda,\Bigcdot}\FF_{\aA}(y+\widehat F\Xi\theta),\,-y+\widehat F\Xi\theta)_{\aA_{\Xi_1}}
\le C_3\norm{(y,\,\widehat F\Xi\theta)}{H\times\aA_{\Xi_1}}^2,
\end{align*}
we arrive to
\[
\frac{\ed}{\ed t}\norm{(y,\,\widehat F\Xi\theta)}{H\times\aA_{\Xi_1}}^2
\le C_4\left(\norm{\hat u+v}{L^\infty(\Omega,\,\R^3)}^2 
+\norm{\widehat F\Xi\theta}{\aA_{\Xi_1}}^2+1\right)\norm{(y,\,\widehat F\Xi\theta)}{H\times\aA_{\Xi_1}}^2.
\]
Since $(v,\,\theta)\in\widehat\ZZ$ we have that $g=C_4\left(\norm{\hat u+v}{L^\infty(\Omega,\,\R^3)}^2 
+\norm{\widehat F\Xi\theta}{\aA_{\Xi_1}}^2+1\right)$ is locally integrable and from the Gronwall lemma it follows that
\[
\norm{(y,\,\widehat F\Xi\theta)(t)}{H\times\aA_{\Xi_1}}^2
\le \ex^{\int_0^tg(s)\,\ed s}\norm{(y,\,\widehat F\Xi\theta)(0)}{H\times\aA_{\Xi_1}}^2=0. 
\]
We can conclude that $\bar v=v+z=v+y+\widehat F\Xi\theta=v$, and also that $\bar \kappa=\kappa+\theta=\kappa$, because
$\widehat F\Xi:\NN^\perp \to \widehat F\Xi\NN^\perp$ is injective. That is, the solution~$(v,\,\kappa)$ for~\eqref{sys-vnonlin-ext} is unique in~$\widehat\ZZ$.
\end{proof}

\subsection{Integral feedback rule and stabilization to trajectories}
Given $v_0\in\aA_{\Xi_2}\subset H^1_{\diver}(\Omega,\,\R^3)$, there exists a unique~$\kappa_0\in\NN^\perp$
such that~$v_0\rest\Gamma=\Xi\kappa_0$. Let~$(v,\,\kappa)$ be the solution of~\eqref{sys-vnonlin-ext}, with
$(v_0^H,\,Q^M_f\kappa_0)=(\Pi v_0,\,Q^M_f\kappa_0)=\FF_\aA v_0$.
Then we observe
that~$v$ solves~\eqref{sys-vnonlin} with
$\zeta=\Xi\kappa$ and $v(0)=\FF_\aA^{-1}(\Pi v_0,\, Q^M_f\kappa_0)=v_0$.
From Theorem~\ref{T:ex.st-ct}, we can conclude that if $\norm{v_0}{H^1_{\diver}(\Omega,\,\R^3)}$ is small enough, then the integral feedback rule
\[
v\rest\Gamma=\Xi\kappa_0+\Xi\int_0^{\Bigcdot}\KK_{\hat u}^{\lambda,s}(\Pi v,\,Q^M_f\kappa)(s)\,\ed s
=v_0\rest\Gamma+\Xi\int_0^{\Bigcdot}\KK_{\hat u}^{\lambda,s}\FF_\aA v(s)\,\ed s
\]
does exponentially stabilize system~\eqref{sys-vnonlin} to zero with rate~$\frac{\lambda}{2}$.
That is, setting~$\mathbf K_{\hat u}^{\lambda,\Bigcdot}\coloneqq\KK_{\hat u}^{\lambda,\Bigcdot}\FF_\aA$
the feedback control $\zeta=(u_0-\hat u(0))\rest\Gamma+\Xi\int_0^{\Bigcdot}\mathbf K_{\hat u}^{\lambda,s}(u(s)-\hat u(s))\,\ed s$
does exponentially stabilize system~\eqref{sys-u-bdry} to the targeted trajectory~$\hat u$ with the same
rate~$\frac{\lambda}{2}$, provided $u_0-\hat u(0)\in\aA_{\Xi_2}\subset H^1_{\diver}(\Omega,\,\R^3)$ and~$\norm{u_0-\hat u(0)}{H^1_{\diver}(\Omega,\,\R^3)}$ is small enough.

Furthermore the corresponding solution~$u$ for system~\eqref{sys-u-bdry} is unique in the affine
subspace~$\hat u+W_{\rm loc}(\R_0,\,H^2_{\diver}(\Omega,\,\R^3),\,L^2(\Omega,\,\R^3))$.

\medskip
\addcontentsline{toc}{section}{Appendix}
\setcounter{section}{1}
\setcounter{section}{0}
\setcounter{subsection}{0}
\setcounter{theorem}{0}
\gdef\thesection{\Alph{section}}%

\section{Appendix}

\subsection{Proof of Property~\eqref{comut}}\label{sS:comut} 
From the definition of~$\Xi$ in~\eqref{Xi}, we have that for any $\kappa\in\R^{2M}$,
\[
 \Xi\kappa=0 \quad\mbox{ if, and only if,}\quad (\Xi Q^M_f\kappa,\,\Xi Q^M_l\kappa)=(0,\,0),
\]
because $\Xi Q^M_f\kappa\in L^2(\Gamma,\,\R)\nnn$ and $\Xi Q^M_l\kappa\in L^2(\Gamma,\,T\Gamma)$ are orthogonal in~$L^2(\Gamma,\,\R^3)$.
This means that $\kappa\in\NN$ if, and only if, $(Q^M_f\kappa,\,Q^M_l\kappa)\in\NN\times\NN$,
which implies that
\[
 QP_{\NN}=P_{\NN}QP_{\NN}, \quad\mbox{ for }\quad  Q\in\{Q^M_f,\,Q^M_l\}.
\]
Hence, since for any given $(\xi,\,\kappa)\in\R^{2M}\times\R^{2M}$ we have
\[
(P_{\NN}QP_{\NN^\perp}\xi,\, \kappa)_{\R^{2M}}=(P_{\NN^\perp}\xi,\, QP_{\NN}\kappa)_{\R^{2M}}
=(P_{\NN^\perp}\xi,\, P_{\NN}QP_{\NN}\kappa)_{\R^{2M}}=0,
\]
it follows that $P_{\NN}QP_{\NN^\perp}=0$ and
\[\begin{array}{l}
P_{\NN}Q= P_{\NN}QP_{\NN}+P_{\NN}QP_{\NN^\perp}=QP_{\NN},\\
P_{\NN^\perp}Q= Q-P_{\NN}Q=Q-QP_{\NN}=QP_{\NN^\perp},
\end{array}\quad\mbox{for all}\quad Q\in\{Q^M_f,\,Q^M_l\},
\]
which is the statement of~\eqref{comut}.\qed

\subsection{An Example concerning Remark~\ref{R:Nn} }\label{sS:ex-nottriv} 
We illustrate the fact that the space $\NN$ defined in~\eqref{N.sp} is not necessarily trivial.\newline
\begin{minipage}{.75\textwidth}
Let $\OO=(0,\,\pi)\times\T^1$; the family $\{\pi_i\mid i\in\N_0\}$ contains the family
$\{\sigma_{n}\mid n\in\N_0\}$, with $\sigma_{n}(r,\,s)\coloneqq \frac{1}{\pi}\sin(nr)$,
$(r,\,s)\in(0,\,\pi)\times\T^1\sim(0,\,\pi)\times[0,\,2\pi)$. Define the indicator operator
$\II_{(\frac{\pi}{3},\,\frac{2\pi}{3})}\colon C(\OO,\,\R)\to L^2(\OO,\,\R)$, sending $f$ to
$\II_{(\frac{\pi}{3},\,\frac{2\pi}{3})}f$ defined by
$\II_{(\frac{\pi}{3},\,\frac{2\pi}{3})}f(r,\,s)
\coloneqq \begin{cases}f(r,\,s)&\mbox{if }r\in(\frac{\pi}{3},\,\frac{2\pi}{3})\\
   0&\mbox{if }r\in (0,\,\pi)\setminus[\frac{\pi}{3},\,\frac{2\pi}{3}]
\end{cases}.$
\end{minipage}%
\hfill%
\begin{minipage}{.21\textwidth} 
\vspace*{.5cm}
\definecolor{bkground}{rgb}{1,1,1}
\definecolor{lightgrey9}{rgb}{.9,.9,.9}%
\definecolor{lightgrey8}{rgb}{.8,.8,.8}%
\definecolor{lightgrey7}{rgb}{.7,.7,.7}%
\definecolor{grey}{rgb}{.5,.5,.5}%
\linethickness{.2pt}
\begin{center}
\setlength{\unitlength}{2mm}
\begin{picture}(17,12)(0,-2)
\moveto(3,0)
\curveto(4,0)(8,0)(13,0) \curveto(15,0)(17,3)(17,6)
\curveto(17,9)(15,12)(13,12) \curveto(8,12)(4,12)(3,12)
\curveto(1,12)(-1,9)(-1,6)
\curveto(-1,3)(1,0)(3,0)
{\color{lightgrey9}\fillpath
}%
\moveto(3,0)
\curveto(4,0)(8,0)(13,0) \curveto(15,0)(17,3)(17,6)
\curveto(17,9)(15,12)(13,12) \curveto(8,12)(4,12)(3,12)
\curveto(1,12)(-1,9)(-1,6)
\curveto(-1,3)(1,0)(3,0)
{\color{black}\strokepath
}%
\moveto(7,0)
\curveto(7.5,1)(8,5)(8,6)
\curveto(8,7)(7.5,11)(7,12)
\curveto(7.5,12)(8,12)(11,12)
\curveto(11.5,11)(12,7)(12,6)
\curveto(12,5)(11.5,1)(11,0)
\curveto(10,0)(9,0)(7,0)
{\color{lightgrey7}\fillpath%
}%
\moveto(7,0)
\curveto(7.5,1)(8,5)(8,6)
\curveto(8,7)(7.5,11)(7,12)
\curveto(7.5,12)(8,12)(11,12)
\curveto(11.5,11)(12,7)(12,6)
\curveto(12,5)(11.5,1)(11,0)
\curveto(10,0)(9,0)(7,0)
{\color{grey}\linethickness{.2pt}\strokepath%
}%
\moveto(7,0)
\curveto(7.5,1)(8,5)(8,6)
\curveto(8,7)(7.5,11)(7,12)
\curveto(6.5,11)(6,7)(6,6)
\curveto(6,5)(6.5,1)(7,0)
{\color{lightgrey8}\fillpath%
}%
\moveto(7,0)
\curveto(7.5,1)(8,5)(8,6)
\curveto(8,7)(7.5,11)(7,12)
\curveto(6.5,11)(6,7)(6,6)
\curveto(6,5)(6.5,1)(7,0)
{\color{grey}\linethickness{.2pt}\strokepath%
}%

\put(10,3){\makebox(0,0){$\OO$}}
\put(3,9){\makebox(0,0){$\Omega$}}
\put(7,12.7){\makebox(0,0){\scriptsize$0$}}
\put(11,12.6){\makebox(0,0){\scriptsize$\pi$}}

\end{picture}
\end{center}
\end{minipage}

Now, setting the mapping
$\chi\coloneqq \II_{(\frac{\pi}{3},\,\frac{2\pi}{3})}\left(3\sigma_{3}-\sigma_9\right)$, from direct computations we
obtain the identities
$\frac{\p\chi}{\p s}=0$, $\frac{\p\chi}{\p r}
=\II_{(\frac{\pi}{3},\,\frac{2\pi}{3})}\frac{\p}{\p r}\left(3\sigma_{3}-\sigma_9\right)$, and
$\frac{\p^2\chi}{\p r^2}
=\II_{(\frac{\pi}{3},\,\frac{2\pi}{3})}\frac{\p^2}{\p r^2}\left(3\sigma_{3}-\sigma_9\right)$;
we can check that $\chi\in C^2(\OO,\,\R)$ and $\supp(\chi)=[\frac{\pi}{3},\,\frac{2\pi}{3}]\times\T^1\subset\OO$.
Now we can show that the functions
$\chi\E_0^\OO P_{\chi^\bot}^\OO(\sigma_3\nnn)$ and $\chi\E_0^\OO P_{\chi^\bot}^\OO(\sigma_9\nnn)$
are linearly dependent: we find
\begin{align*}
P_{\chi^\bot}^\OO(\sigma_3\nnn)
=\sigma_3\nnn-\frac{(\sigma_3\nnn,\,\chi\nnn)_{L^2(\OO,\,R)}}{|\chi\nnn|_{L^2(\OO,\,R)}^2}\chi\nnn
=\sigma_3\nnn-\frac{3}{10}\chi\nnn,\\
P_{\chi^\bot}^\OO(\sigma_9\nnn)
=\sigma_9\nnn-\frac{(\sigma_9\nnn,\,\chi\nnn)_{L^2(\OO,\,R)}}{|\chi\nnn|_{L^2(\OO,\,R)}^2}\chi\nnn
=\sigma_9\nnn+\frac{1}{10}\chi\nnn,
\end{align*}
from which it follows $3P_{\chi^\bot}^\OO(\sigma_3\nnn)-P_{\chi^\bot}^\OO(\sigma_9\nnn)
=3\sigma_{3}\nnn-\frac{9}{10}\chi\nnn-\sigma_9\nnn
-\frac{1}{10}\chi\nnn=0$. 
Therefore, if the functions $\chi\E_0^\OO P_{\chi^\bot}^\OO(\sigma_3\nnn)$ and
$\chi\E_0^\OO P_{\chi^\bot}^\OO(\sigma_9\nnn)$
are in the family $\{\chi\E_0^\OO P_{\chi^\bot}^\OO(\pi_i\nnn)\mid i\le M\}$, then the family is linearly dependent;
it follows that~$Q_f^M\NN\subset\NN$ contains nonzero vectors.

%
\bigskip\noindent
{\bf Acknowledgments.} The author gratefully acknowledges partial support from the Austrian Science Fund~(FWF): P~26034-N25.

%


\end{document}